\newtheorem{theorem}{Theorem}[chapter]
\newtheorem{lemma}[theorem]{Lemma}
\newtheorem{propos}[theorem]{Proposition}
\newtheorem{corol}[theorem]{Corollary}
\theoremstyle{definition}
\newtheorem{definition}[theorem]{Definition}
\theoremstyle{remark}
\newtheorem{remark}[theorem]{Remark}
\numberwithin{section}{chapter}
\numberwithin{equation}{chapter}
\newcommand{\eps}{{\varepsilon}}
\newcommand{\mint}{-\!\!\!\!\!\!\int}
\newcommand{\Lip}{{\text {Lip}}}
\newcommand{\diam}{{\text {diam}}}
\newcommand{\dist}{{\text {dist}}}
\newcommand{\e}{{\text {e}}}
\newcommand\weak{{\rightharpoonup}\,}
\newcommand\supp{{\rm supp}\,}
\newcommand\res{\mathop{\hbox{\vrule height 7pt width .5pt depth 0pt
\vrule height .5pt width 6pt depth 0pt}}\nolimits}
\newcommand\Id{{\rm Id}\,}
\newcommand\disk{{\mathbb D}}
\newcommand{\Om}{\Omega}
\def\I#1{{\mathcal{A}}_{#1}}
\def\Is#1{{\mathcal{A}}_{#1} (\R{n})}
\newcommand{\Iqs}{{\mathcal{A}}_Q(\R{n})}
\newcommand{\Iq}{{\mathcal{A}}_Q}
\def\a#1{\left\llbracket{#1}\right\rrbracket}
\newcommand{\abs}[1]{\left|#1\right|}
\newcommand{\norm}[2]{\left\|#1\right\|_{#2}}
\newcommand{\ra}{\rightarrow}
\newcommand{\D}{\textup{Dir}}
\newcommand{\de}{\partial}
\newcommand{\xii}{{\bm{\xi}}}
\newcommand{\ro}{{\bm{\rho}}}
\newcommand{\card}{{\textrm{card}}}
\newcommand{\etaa}{{\bm{\eta}}}
\newcommand{\ph}{\varphi}
\newcommand{\cG}{{\mathcal{G}}}
\newcommand{\cH}{{\mathcal{H}}}
\newcommand{\cQ}{{\mathcal{Q}}}
\newcommand{\Pe}{{\mathscr{P}}}
\def\R#1{{\mathbb R}^{#1}}
\newcommand\Z{{\mathbb Z}}
\newcommand\N{{\mathbb N}}
\newcommand\Q{{\mathbb Q}}
\newcommand\s{{\mathbb S}}
\newcommand\C{{\mathbb C}}
\begin{document}

\frontmatter

\title{Q-valued Functions Revisited}
\author{Camillo De Lellis}
\address{Institut f\"ur Mathematik, Universit\"at Z\"urich,
 Winterthurerstrasse 190 CH-8057 Z�rich}
\email{camillo.delellis@math.uzh.ch}

\author{Emanuele Nunzio Spadaro}
\email{emanuele.spadaro@math.uzh.ch}

\date{03.03.2009}
\subjclass[2000]{49Q20, 35J55, 54E40, 53A10}
\keywords{$Q$-valued functions; Dirichlet energy; existence and regularity;
metric spaces; harmonic maps}

\begin{abstract}
In this note we revisit Almgren's theory of $Q$-valued functions, that
are functions taking values in the space $\Iqs$ 
of unordered $Q$-tuples of points in $\R{n}$. 
In particular:
\begin{itemize}
\item we give shorter versions of Almgren's proofs of the existence 
of $\D$-minimizing $Q$-valued functions,
of their H\"older regularity and of the dimension estimate
of their singular set;
\item we propose an alternative, intrinsic approach to these results,
not relying on Almgren's biLipschitz embedding
$\xii: \Iqs\to\R{N(Q,n)}$;
\item we improve upon the estimate of the singular set 
of planar $\D$-minimizing
functions by showing that it consists of isolated points.
\end{itemize}
\end{abstract}

\maketitle

\setcounter{page}{4}

\tableofcontents

\mainmatter
\chapter*{Introduction}

The aim of this paper is to provide a simple, complete 
and self-contained reference for Almgren's
theory of $\D$-minimizing $Q$-valued functions, 
so to make it an easy step for the understanding of the
remaining parts of the Big regularity paper \cite{Alm}. 
We propose simpler and shorter proofs of the central
results on $Q$-valued functions contained there, 
suggesting new 
points of view on many of them. In addition, parallel
to Almgren's theory, we elaborate
an intrinsic one which reaches his main results 
avoiding the extrinsic mappings $\xii$ and $\ro$ (see
Section \ref{s:biLipschitz} and compare with 1.2 of \cite{Alm}). 
This ``metric'' point of view is clearly an original
contribution of this paper. The second new contribution
is Theorem \ref{t:finite} where we improve Almgren's estimate
of the singular set in the planar case, relying heavily
on computations of White \cite{Wh}
and Chang \cite{Ch}.

Simplified and intrinsic proofs of parts of Almgren's big
regularity paper have already been established in 
\cite{Gob2} and \cite{Gob1}. In fact our proof
of the Lipschitz extension property for $Q$-valued
functions is essentially the one given in \cite{Gob2}
(see Section \ref{s:extend}).
Just to compare this simplified 
approach to Almgren's, note that
the existence of the retraction $\ro$ is actually
an easy corollary of the existence of $\xii$ and
of the Lipschitz extension theorem.
In Almgren's paper, instead,
the Lipschitz extension theorem
is a corollary of the existence of $\ro$, 
which is constructed explicitly (see 1.3 in \cite{Alm}) . 
However, even where our proofs differ most 
from his, we have been clearly influenced
by his ideas and we cannot exclude the existence
of hints to our strategies in \cite{Alm}
or in his other papers \cite{Alm2}
and \cite{Alm1}: the amount of material is very large 
and we have not explored it in all the details. 

Almgren asserts that some of the
proofs in the first chapters of \cite{Alm}
are more involved than apparently needed because
of applications contained in the other chapters,
where he proves his celebrated partial
regularity theorem for area-minimizing currents. 
We instead avoid any complication which
looked unnecessary for the 
theory of $\D$-minimizing $Q$-functions. For
instance, we do not show the existence of Almgren's
improved Lipschitz retraction $\ro^*$ (see 1.3 of
\cite{Alm}), since it is not needed in the theory
of Dir-minimizing $Q$-valued functions. This retraction
is instead used in the approximation of area-minimizing
currents (see Chapter 3 of \cite{Alm}) and will be 
addressed in the forthcoming paper \cite{DLSp}. 

In our opinion the portion of Almgren's 
Big regularity paper regarding the theory
of $Q$-valued functions is simply
a combination of clean ideas from the theory of elliptic partial
differential equations with elementary observations of combinatorial
nature, the latter being much less complicated than what they look
at a first sight. In addition our new ``metric'' point of
view reduces further the combinatorial part, at the
expense of introducing other arguments of more analytic flavor.

\subsection*{The metric space $\Iqs$}
Roughly speaking, our intuition of $Q$-valued functions is
that of mappings taking their values 
in the unordered sets of $Q$ points of $\R{n}$,
with the understanding that multiplicity can occur. We
formalize this idea by identifying the space of 
$Q$ unordered points in $\R{n}$ with the 
set of positive atomic measures of mass $Q$.

\begin{definition}[Unordered $Q$-tuples]\label{d:IQ}
We denote by $\a{P_i}$ the Dirac mass in $P_i\in \R{n}$ and
we define the space of $Q$-points as
\begin{equation*}
\Iqs :=\left\{\sum_{i=1}^Q\a{P_i}\,:\,P_i\in\R{n}\;\textrm{for every  }i=1,\ldots,Q\right\}.
\end{equation*}
\end{definition}

In order to simplify the notation, we use
$\Iq$ in place of $\Iqs$ and we write $\sum_i 
\a{P_i}$ when $n$ and $Q$ are clear from the context. 
Clearly, the points
$P_i$ do not have to be distinct: for instance $Q\a{P}$
is an element of $\Iqs$. 
We endow $\Iqs$ with a metric which makes
it a complete metric space (the completeness is
an elementary exercise left to the reader). 

\begin{definition}\label{d:FeG} For every
$T_1, T_2\in\Iqs$, with $T_1=\sum_i\a{P_i}$ 
and $T_2=\sum_i\a{S_i}$, we define
\begin{equation*}
\cG(T_1,T_2):=\min_{\sigma\in\Pe_Q}\sqrt{\sum_i\abs{P_i-S_{\sigma(i)}}^2},
\end{equation*}
where $\Pe_Q$ denotes the group of permutations 
of $\left\{1,\ldots,Q\right\}$.
\end{definition}

\begin{remark}\label{r:metric}
$(\Iqs, \mathcal{G})$ is a closed subset of a ``convex'' 
complete metric space. Indeed, $\cG$ coincides with the
$L^2$-Wasserstein distance on the space
of positive measures with finite second moment (see 
for instance \cite{AmGiSa} and \cite{Villani}). In Section \ref{s:metric1}
we will also use the fact that
$(\Iqs, \mathcal{G})$ can be embedded isometrically in
a separable Banach space. 
\end{remark}
The metric theory of
$Q$-valued functions starts from this remark. It 
avoids the Euclidean embedding
and retraction theorems of Almgren but is anyway powerful enough to prove
the main results on $Q$-valued functions addressed in this note.
We develop it fully in Chapter 4 after presenting
(in Chapters 1, 2 and 3) Almgren's theory
with easier proofs. However, since the metric point of view 
allows a quick, intrinsic definition of Sobolev mappings and
of the Dirichlet energy, we use it already here
to state immediately the main theorems.

\subsection*{$Q$-valued functions and the Dirichlet
energy}
For the rest of the paper $\Om$ will be a bounded open
subset of the Euclidean space $\R{m}$. If not specified,
we will assume that the regularity of $\partial \Omega$ is Lipschitz.
Continuous, Lipschitz, H\"older and (Lebesgue) measurable 
functions from $\Omega$ into $\Iq$ are defined in the usual way.
As for the spaces $L^p (\Omega, \Iq)$, they consist of those
measurable maps $u: \Omega\to \Iq$ such that 
$\|\cG (u, Q\a{0})\|_{L^p}$ is finite. Observe that,
since $\Omega$ is bounded, this is equivalent to ask
that $\|\cG (u, T)\|_{L^p}$ is finite for every $T\in \Iq$.
 
It is a general fact (and we show it in Section \ref{s:select}) 
that any
measurable $Q$-valued function can be written as the ``sum'' of $Q$
measurable functions.

\begin{propos}[Measurable selection]\label{p:selection}
Let $B\subset\R{m}$ be a measurable set and let
$f: B\to \Iq$ be a measurable function. 
Then, there exist $f_1,\ldots,f_Q$ measurable $\R{n}$-valued
functions such that
\begin{equation}\label{e.10}
f(x)=\sum_i \a{f_i(x)}\quad\text{for a.e. }x\in B.
\end{equation}
\end{propos}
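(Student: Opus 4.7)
The plan is to induct on $Q$. The base case $Q=1$ is trivial, since $f(x) = \a{f_1(x)}$ identifies $f$ with a single $\R{n}$-valued map. For the inductive step, I would split $B$ into the set where all $Q$ masses of $f(x)$ coincide and its complement, and on the complement use a rational family of hyperplanes to decompose $f$ into pieces with strictly fewer than $Q$ values.

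More precisely, consider the continuous function $\delta: \Iq \to \R{}$ given by the diameter of the support, $\delta(\sum_i\a{P_i}) = \max_{i,j}|P_i-P_j|$, and the continuous barycenter $b(\sum_i\a{P_i}) = \tfrac{1}{Q}\sum_i P_i$. The set $E = \{x \in B : \delta(f(x)) = 0\}$ is measurable, and on $E$ we have $f(x) = Q\a{b(f(x))}$, so setting $f_1(x) = \ldots = f_Q(x) = b(f(x))$ yields a measurable decomposition on $E$. On $B \setminus E$ we enumerate all affine hyperplanes $H_k = \{y\in\R{n} : a_k\cdot y = c_k\}$ with $a_k \in \Q^n\setminus\{0\}$ and $c_k \in \Q$, and set
\begin{equation*}
B_k := \{x \in B\setminus E : \text{no point of } f(x) \text{ lies on } H_k, \text{ and both sides of } H_k \text{ meet } f(x)\}.
\end{equation*}
Each $B_k$ is measurable because the number of $P_i$'s of $f(x)$ lying in a given open or closed half-space is respectively lower or upper semicontinuous in $T = f(x) \in \Iq$, hence Borel measurable. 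The covering $B\setminus E = \bigcup_k B_k$ holds: for any $x$ with not all $P_i$ equal, one can choose the perpendicular bisector of two distinct $P_i, P_j$ and perturb it slightly with rational coefficients so as to avoid all finitely many $P_l$'s while still separating $P_i$ from $P_j$. Replacing $B_k$ by $F_k := B_k\setminus\bigcup_{j<k}B_j$ gives a disjoint measurable partition.

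On each $F_k$ we split $f = f^+_k + f^-_k$ where $f^\pm_k(x) := \sum_{P_i\in f(x) :\, \pm(a_k\cdot P_i - c_k) > 0} \a{P_i}$; because no $P_i$ lies on $H_k$, the maps $T \mapsto T^\pm$ are continuous at every $T$ satisfying the strict separation defining $B_k$, so $f^\pm_k$ are measurable $\I{Q^\pm_k}$-valued functions with $Q^+_k, Q^-_k < Q$ and $Q^+_k+Q^-_k = Q$. By the inductive hypothesis applied on $F_k$ to $f^+_k$ and $f^-_k$ we obtain $Q$ measurable $\R{n}$-valued functions $f^{(k)}_1,\ldots,f^{(k)}_Q$ on $F_k$ whose sum of Dirac masses equals $f$. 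Defining $f_i$ globally to equal $b\circ f$ on $E$ and $f^{(k)}_i$ on each $F_k$ produces the desired measurable selection.

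The main obstacle is purely combinatorial-measurability bookkeeping: one must verify that the countable family of rational hyperplanes genuinely exhausts $B\setminus E$ and that the two ``half-tuple'' maps $T \mapsto T^\pm$ are Borel on the set of strictly-separated tuples. Once these two points are settled, the induction runs mechanically and the gluing is automatic, since the $F_k$'s form a disjoint countable measurable partition on which each $f_i$ is separately measurable.
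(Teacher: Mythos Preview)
Your argument is correct in spirit and essentially complete, but note one slip: the numbers $Q_k^+$ and $Q_k^-$ in your splitting on $F_k$ are not constants---they depend on $x$. For instance, with $Q=3$ the same rational hyperplane may separate one point from two at some $x\in F_k$ and two from one at another. The fix is immediate from your own setup: since you observed that the count $N^+(T)$ of points in the open upper half-space is Borel (indeed lower semicontinuous) on $\Iq$, you can further partition each $F_k$ into the finitely many measurable level sets $F_k\cap\{x:N^+(f(x))=j\}$, $j=1,\dots,Q-1$; on each of these the half-tuple maps land in fixed $\I{j}$ and $\I{Q-j}$ and are continuous, so the induction applies cleanly.

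Your route differs from the paper's in how the splitting is organized. The paper works on the \emph{target}: it covers $\Iq\setminus\{Q\a{P}:P\in\R{n}\}$ by small open neighborhoods $U$ around each non-diagonal $T$, fixes once and for all a partition of the indices of $T$ into two nonempty groups $I_L,I_K$, and notes that the optimal-matching maps $S\mapsto(\tau(S),\sigma(S))$ splitting $S$ according to this partition are continuous on $U$. The preimage $f^{-1}(U)$ then carries a measurable decomposition with \emph{fixed} cardinalities $L,K$, and a countable subcover (separability of $\Iq$) finishes the job. Your approach works on the \emph{domain} via rational hyperplanes in $\R{n}$, which is more concrete but forces the extra partition by $N^+$ just mentioned. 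Both are equally elementary; the paper's version has slightly less bookkeeping because the splitting sizes are constant on each piece by construction.
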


Obviously, such a choice is far from being unique, but, in using
notation (\ref{e.10}), we will always
think of a measurable $Q$-valued function as coming together 
with such a selection.

We now introduce the Sobolev spaces of functions
taking values in the metric space of $Q$-points,
as defined independently by
Ambrosio in \cite{Amb} and Reshetnyak in \cite{Res}.

\begin{definition}[Sobolev $Q$-valued functions]
\label{d:W1p}
A measurable function $f:\Omega\ra\Iq$ is in the Sobolev class
$W^{1,p}$ ($1\leq p\leq\infty$) if there exist $m$ functions
$\varphi_j\in L^p(\Omega,\R{+})$
 such that 
\begin{itemize}
\item[(i)] $x\mapsto\cG (f(x),T)\in W^{1,p}(\Omega)$ for all $T\in \Iq$;
\item[(ii)] $\abs{\de_j\, \cG (f, T)}\leq\varphi_j$ almost everywhere in $\Omega$
for all $T\in \Iq$ and for all $j\in\{1, \ldots, m\}$.
\end{itemize}
\end{definition}

Definition \ref{d:W1p} can be easily generalized when
the domain is a
Riemannian manifold $M$. In this case we simply
ask that $f\circ x^{-1}$ is
a Sobolev $Q$-function for every open set $U\subset M$
and every chart $x:U\to \R{n}$. 
It is not difficult to show the existence 
of minimal functions $\tilde\varphi_j$ fulfilling (ii),
i.e. such that
\begin{equation*}
\tilde\varphi_j\leq\varphi_j\; \text{a.e. }\;\text{for any other $\varphi_j$ satisfying (ii),}
\end{equation*}
(see Proposition \ref{p:|Df|}).
We denote them by $|\de_j f|$.
We will later characterize 
$|\de_j f|$ by the following
property
(cp. with Proposition \ref{p:|Df|}): for every countable
dense subset $\{T_i\}_{i\in\N}$ of $\Iq$ and
for every $j=1,\ldots,m$,
\begin{equation}\label{e:def|D_jf|}
\abs{\de_j f}=\sup_{i\in\N}\abs{\de_j\,\cG(f,T_i)}
\quad\textrm{almost everywhere in } \Om.
\end{equation}
In the same way, given a vector field $X$,
we can define intrinsically $|\de_X f|$ and prove
the formula corresponding to \eqref{e:def|D_jf|}.
For functions $f\in W^{1,2}(\Omega,\Iq)$,
we set
\begin{equation}\label{e:def|Df|}
|Df|^2 :=\sum_{j=1}^m\abs{\de_jf}^2.
\end{equation}
For functions on a general Riemannian manifold $M$, 
we choose an orthonormal frame $X_1, \ldots X_m$
and set $|Df|^2 = \sum |\de_{X_i} f|^2$. This definition
is independent of the choice of coordinates (resp. of frames),
as it can be seen from Proposition \ref{p:equivalence def Dir}.

\begin{definition}\label{d:dirichlet}
The Dirichlet energy of $f\in W^{1,2}(U,\Iq)$, where $U$ is an open
subset of a Riemannian manifold,
is given by $\D(f, U) := \int_U |Df|^2$.
\end{definition}

It is not difficult to see that, when $f$ 
can be decomposed into finitely many 
regular single-valued functions, i.e. 
$f (x) = \sum_i \a{f_i (x)}$ for some
differentiable functions $f_i$, then 
\[
\D(f, U) = \sum_i \int_U |Df_i|^2 = \sum_i
\D(f_i, U) .
\]

The usual notion of trace at the boundary can be easily generalized to
this setting.

\begin{definition}[Trace of Sobolev $Q$-functions]
\label{d:Dirichlet problem}
Let $\Omega\subset \R{m}$ be a Lipschitz bounded open set
and $f\in W^{1,p} (\Omega, \Iq)$. A function $g$ belonging to
$L^p (\partial \Omega , \Iq)$ is said to be the trace of $f$ at $\partial \Omega$
(and we denote it by  $f|_{\partial\Omega}$) if,
for every $T\in \Iq$, the trace of the real-valued Sobolev function
$\cG (f, T)$ coincides with $\cG (g, T)$.
\end{definition}

It is straightforward to check that this notion of trace coincides
with the restriction of $f$ 
to the boundary when $f$ is a continuous
function which extends continuously
to $\overline{\Omega}$. In Section \ref{s:metric2}, we show
the existence and uniqueness of the trace for every $f\in W^{1,p}$. 
Hence, we can formulate a Dirichlet problem for $Q$-valued functions:
$f\in W^{1,2} (\Om,\Iq)$ is said to be $\D$-minimizing if
\begin{equation*}
\D (f, \Omega)\leq \D(g,\Omega)
\quad \mbox{for all $g\in W^{1,2}(\Om,\Iq)$ with }
f|_{\partial \Omega}=g|_{\partial\Omega}.
\end{equation*}

\subsection*{The main results proved in this paper}
We are now ready to state the main theorems of Almgren
reproved in this note:
an existence theorem and two regularity results.

\begin{theorem}[Existence for the Dirichlet Problem]
\label{t:existence}
Let $g\in W^{1,2} (\Om,\Iq)$. Then, there exists a $\D$-minimizing
function
$f\in W^{1,2}(\Om,\Iq)$ such that $f|_{\partial\Omega}=g|_{\partial\Omega}$.
\end{theorem}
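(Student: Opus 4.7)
The plan is to execute the direct method of the calculus of variations in the metric setting of $\Iq$. Set $m_0:=\inf\{\D(h,\Om): h\in W^{1,2}(\Om,\Iq),\ h|_{\partial\Om}=g|_{\partial\Om}\}$. Since $g$ is itself admissible, $m_0\le \D(g,\Om)<\infty$. Pick a minimizing sequence $\{f_k\}$, so that $\D(f_k,\Om)\to m_0$; note also that $\sup_k \|\cG(f_k,Q\a{0})\|_{L^2(\Om)}<\infty$, for instance by comparing with $g$ via the triangle inequality and using that $\cG(f_k,Q\a{0})\le \cG(f_k,g)+\cG(g,Q\a{0})$ together with a Poincar\'e-type bound on $\cG(f_k,g)$ (whose boundary trace vanishes as a scalar function). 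This gives a uniform $W^{1,2}$-type bound on $\cG(f_k,T)$ for every $T\in\Iq$.

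The crucial step is compactness: extract a subsequence (not relabeled) and a limit $f:\Om\to\Iq$ such that $f_k\to f$ strongly in $L^2(\Om,\Iq)$, i.e.\ $\cG(f_k,f)\to 0$ in $L^2(\Om)$. I would obtain this by composing with the isometric embedding of $(\Iq,\cG)$ into a separable Banach space mentioned in Remark \ref{r:metric} (or, alternatively, via Almgren's biLipschitz embedding $\xii$): the images form a sequence of vector-valued Sobolev maps with uniformly bounded $W^{1,2}$-norm in a ``slice'' of the Banach space whose essential image lies in a totally bounded subset (after truncation using the $L^2$-bound on $\cG(f_k,Q\a{0})$), so a classical Rellich--Kondrachov argument applied to each scalar function $x\mapsto \cG(f_k(x),T_i)$ for a countable dense $\{T_i\}\subset\Iq$, combined with a diagonal extraction and the completeness of $(\Iq,\cG)$, yields pointwise a.e.\ and $L^2$ convergence to a measurable $f$.

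Next, one verifies that $f\in W^{1,2}(\Om,\Iq)$ with $\D(f,\Om)\le m_0$ by lower semicontinuity. Using the characterization \eqref{e:def|D_jf|}, one has $\abs{\de_j f}=\sup_i \abs{\de_j \cG(f,T_i)}$; each scalar function $\cG(f_k,T_i)$ converges to $\cG(f,T_i)$ in $L^2$ and has uniformly bounded $W^{1,2}$-norm, so by the standard lower semicontinuity of the Dirichlet integral on $W^{1,2}(\Om)$ one gets $\int \abs{\de_j \cG(f,T_i)}^2\le \liminf_k \int \abs{\de_j \cG(f_k,T_i)}^2\le \liminf_k \int \abs{\de_j f_k}^2$. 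Taking the supremum over finite collections of $T_i$'s and summing in $j$ yields $\D(f,\Om)\le \liminf_k \D(f_k,\Om)=m_0$.

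Finally, one matches the boundary trace. By Definition \ref{d:Dirichlet problem}, it suffices to check that the real-valued trace of $\cG(f,T)$ equals $\cG(g,T)$ for every $T\in\Iq$. Since $\cG(f_k,T)\in W^{1,2}(\Om)$ converges to $\cG(f,T)$ in $L^2$ and is uniformly bounded in $W^{1,2}$, and since the classical trace operator is continuous with respect to $W^{1,2}$-weak / $L^2$-strong convergence, one obtains that the trace of $\cG(f,T)$ equals the $L^2(\partial\Om)$-limit of the traces $\cG(f_k,T)|_{\partial\Om}=\cG(g,T)$, which is constant in $k$. Hence $f|_{\partial\Om}=g|_{\partial\Om}$ and $f$ is the desired minimizer. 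The main obstacle is clearly the compactness step: because $\Iq$ is not a linear space one cannot invoke weak compactness directly, and one must either rely on the biLipschitz embedding of $\Iq$ into Euclidean space or on the intrinsic isometric embedding into a separable Banach space referenced in Remark \ref{r:metric}.
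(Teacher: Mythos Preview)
Your overall strategy coincides with the paper's: direct method with a minimizing sequence, compactness to extract an $L^2$-convergent subsequence (the paper packages this as Proposition \ref{p:Sembeddings} via $\xii$, or Proposition \ref{p:Sembeddings metric} intrinsically), preservation of the trace under weak convergence (Proposition \ref{p:trace}), and sequential weak lower semicontinuity of $\D$. The compactness and trace parts of your sketch are fine.

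The lower semicontinuity step, however, has a genuine gap. From
\[
\int_\Om |\de_j\cG(f,T_i)|^2 \le \liminf_k \int_\Om |\de_j f_k|^2
\]
for each fixed $i$ you only obtain $\sup_i \int_\Om |\de_j\cG(f,T_i)|^2 \le \liminf_k \int_\Om |\de_j f_k|^2$, whereas by \eqref{e:def|D_jf|} what you actually need is
\[
\int_\Om \sup_i |\de_j\cG(f,T_i)|^2 \le \liminf_k \int_\Om |\de_j f_k|^2.
\]
Supremum and integral do not commute, and ``taking the supremum over finite collections of $T_i$'s'' does not repair this: for a finite family $T_1,\dots,T_N$ the pointwise maximum $\max_{i\le N}|\de_j\cG(f,T_i)|^2$ is not controlled by the individual integrals. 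The paper bridges exactly this gap via the representation
\[
\int_\Om |\de_j f|^2 = \sup_N \sup_{P\in\mathcal{O}_N} \sum_{U_l\in P} \int_{U_l} \bigl(\de_j\cG(f,T_l)\bigr)^2,
\]
where $\mathcal{O}_N$ ranges over collections of $N$ pairwise disjoint open subsets of $\Om$. For each fixed finite family of disjoint open sets one uses the weak $L^2$-convergence $\de_j\cG(f_k,T_l)\rightharpoonup \de_j\cG(f,T_l)$ on each $U_l$ separately, sums over $l$, and only then takes the supremum over partitions and over $N$. This partition trick is the missing idea in your argument.
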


\begin{theorem}[H\"older regularity]\label{t:hoelder} 
There exists a positive constant $\alpha=\alpha (m,Q)>0$ with the
following property.
If $f\in W^{1,2} (\Om,\Iq)$ is $\D$-minimizing,
then $f\in C^{0,\alpha} (\Omega')$ 
for every $\Omega'\subset\subset\Omega\subset \R{m}$.
For two-dimensional domains, we have the explicit
constant $\alpha (2, Q)=1/Q$.
\end{theorem}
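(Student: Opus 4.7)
The plan is to follow the classical Morrey--Campanato scheme, adapted to $\Iq$-valued maps. The first step is to reduce the H\"older bound to an energy decay estimate for
\[
E_x(r) := \D(f, B_r(x))
\]
of the form $E_x(r) \le C\,(r/R)^{m-2+2\alpha}\, E_x(R)$, valid on every pair of concentric balls $B_r(x)\subset B_R(x)\subset\subset\Omega$. Once such a decay is in hand, the Poincar\'e inequality will give a Morrey-type control on $\int_{B_r(x)}\cG(f, T_x)^2\,dy$ for a suitable ``centre of mass'' $T_x\in\Iq$ (e.g.\ any $T$ minimizing the $L^2$-deviation from $f$ on $B_r(x)$), and the standard Campanato characterization of H\"older spaces will deliver $f\in C^{0,\alpha}(\Omega')$. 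The passage from the scalar to the $\Iq$-valued Campanato theory costs nothing: one either composes with the isometric Banach embedding recalled in Remark \ref{r:metric}, or works intrinsically with the separating family $\{\cG(f, T_i)\}_{i\in\N}$.

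The decay is then proved by comparison. For a.e.\ $r$ the trace $g := f|_{\partial B_r(x)}$ is itself a $W^{1,2}$ $\Iq$-valued map on the sphere, and slicing identifies (the tangential part of) $E_x'(r)$ with $\D(g,\partial B_r(x))$. The crucial missing ingredient is an extension estimate: construct $h\in W^{1,2}(B_r(x),\Iq)$ with $h|_{\partial B_r(x)}=g$ satisfying
\[
\D(h, B_r(x)) \le C(m,Q)\, r\, \D(g,\partial B_r(x)).
\]
Plugging this into the minimality inequality $\D(f, B_r(x))\le \D(h, B_r(x))$ yields the differential inequality $E_x(r)\le C\, r\, E_x'(r)$, which integrates directly to the desired power law with $\alpha = 1/(2C) > 0$.

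For the planar case $m=2$ I would sharpen this by an explicit Fourier argument. A $W^{1,2}$ map $g$ on the circle $\partial B_r$ admits, outside finitely many branch points, continuous single-valued selections, and the monodromy around the circle is a permutation of $\{1,\dots,Q\}$ whose cycles have length $q\le Q$. Lifting each cycle of length $q$ to a $q$-fold covering circle produces an honest $\R{n}$-valued map whose Fourier modes sit at frequencies $k/q$, $k\in\Z$. Taking the harmonic extension on the covering disk and projecting back to $B_r$ produces a $\Iq$-valued extension $h$ for which a direct computation gives
\[
\D(h, B_r) \le \frac{q}{2}\, r\, \D(g,\partial B_r) \le \frac{Q}{2}\, r\, \D(g,\partial B_r),
\]
the worst case being realized by the lowest admissible frequency $1/q$. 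Combined with the slicing bound $E_x'(r)\ge \D(g,\partial B_r)$ and minimality, this produces $E_x(r)\le (Q/2)\, r\, E_x'(r)$, and hence the explicit exponent $\alpha(2,Q)=1/Q$.

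The principal obstacle is precisely the extension estimate for $m\ge 3$. The naive cone $h(x) = g(r\,x/|x|)$ does not make sense intrinsically at the origin, where all $Q$ sheets must collapse, and there is no clean Fourier analogue on $S^{m-1}$. The most expedient remedy is to compose $g$ with Almgren's biLipschitz $\xii$, harmonically extend the resulting $\R{N(Q,n)}$-valued map on $B_r$, and push the result back into $\Iq$ through $\ro$; since $\xii$ and $\ro$ are Lipschitz the energy of the scalar harmonic extension controls $\D(h)$. A fully intrinsic alternative, aligned with the metric viewpoint of Chapter 4, is to split $g(\partial B_r)$ into well-separated sub-tuples whenever possible, extend each cluster by the classical scalar theory, and reduce the indecomposable case inductively to a strictly smaller value of $Q$. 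Tracking the energy loss at the interfaces and ensuring that the resulting constant depends only on $m$ and $Q$ will be the combinatorial heart of the argument.
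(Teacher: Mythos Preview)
Your global scheme --- prove an extension estimate $\D(h,B_r)\le C(m,Q)\,r\,\D(g,\partial B_r)$, feed it into minimality plus slicing to get a differential inequality for $E_x(r)$, then apply Campanato--Morrey --- is exactly the architecture of the paper (Theorem~\ref{t:hoelder2} and Proposition~\ref{p:basic}). But two of the load-bearing steps in your sketch are off.

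\textbf{Planar case.} The unrolling/harmonic-extension competitor is the paper's competitor, but the constant is $q$, not $q/2$: with the notations of Lemma~\ref{l:rolling} one has $\D(f_l,\disk)=\int_\disk|D\zeta_l|^2\le\int_{\s^1}|\partial_\tau\gamma_l|^2=k_l\,\D(g_l,\s^1)$, so at best $\D(h,B_r)\le Q\,r\,\D(g,\partial B_r)$. Combined merely with $E'(r)\ge\D(g,\partial B_r)$ this yields $\alpha=1/(2Q)$, not $1/Q$. The missing factor of $2$ comes from a property of the \emph{minimizer}, not of the competitor: the inner variation identity \eqref{e:cono} with $m=2$ gives equipartition of radial and tangential energy on $\partial B_r$, so that in fact $E'(r)=2\,\D(g,\partial B_r)$ (this is \eqref{e:g'2} in the paper). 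With this and the correct constant $Q$ you recover $\alpha=1/Q$.

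\textbf{Higher dimensions.} The embed--extend--retract route via $\xii,\ro$ does not deliver the inequality you need. Harmonic extension of $\xii\circ g$ gains at best a factor $1/(m-1)$, but retracting through $\ro$ and $\xii^{-1}$ multiplies the energy by $(\Lip(\xii^{-1})\Lip(\ro))^2$, a constant depending on $n,Q$ and in general much larger than $(m-1)/(m-2)$; so you do not get $C(m)<1/(m-2)$, and without that the differential inequality gives no decay. (Incidentally, the ``naive cone'' $h(x)=g(x/|x|)$ is a perfectly good $W^{1,2}$ competitor for $m\ge3$ and realizes exactly the borderline constant $1/(m-2)$; the whole difficulty is to beat it.) The paper's actual argument is an induction on $Q$ with two regimes, according to the diameter of a mean $\bar g$ of $g$: when $d(\bar g)$ is small one uses a \emph{radial} competitor $\sum_i\a{v+\varphi(|x|)(g_i(x/|x|)-v)}$ with a nontrivial profile $\varphi$, and a variational argument shows $\inf_\varphi I(\varphi)<1/(m-2)$; when $d(\bar g)$ is large one invokes the maximum principle/decomposition of Propositions~\ref{p:maximum}--\ref{p:split} to split $f$ into strictly fewer-valued $\D$-minimizers and applies the inductive hypothesis, gluing via the interpolation Lemma~\ref{l:technical}. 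Your option~(b) points toward the second regime but misreads the ``indecomposable'' case: that is precisely when one \emph{cannot} lower $Q$, and the radial competitor, not further splitting, is what wins there.
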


For the second regularity theorem we need the definition of
singular set of $f$.

\begin{definition}[Regular and singular points]\label{d:regular}
A $Q$-valued function $f$ is regular at a point
$x\in \Omega$ if there exists a neighborhood $B$ of $x$
and $Q$ analytic functions $f_i:B\to \R{n}$
such that
\begin{equation*}
f (y)= \sum_i \a{f_i (y)}
\quad \mbox{for almost every $y\in B$}
\end{equation*}
and either $f_i (x)\neq f_j (x)$ for every $x\in B$ or $f_i\equiv f_j$.
The singular set $\Sigma_f$ of $f$ is the complement
of the set of regular points.
\end{definition}

\begin{theorem}[Estimate of the singular set]\label{t:structure}
Let $f$ be a $\D$-minimizing function. Then, the singular set $\Sigma_f$ of
$f$ is relatively closed in $\Omega$. Moreover, if $m=2$,
then $\Sigma_f$ is at most countable, and if
$m\geq 3$, then the Hausdorff dimension of $\Sigma_f$ is at most
$m-2$. 
\end{theorem}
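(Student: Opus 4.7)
The closedness of $\Sigma_f$ is immediate from Definition \ref{d:regular}: if $x$ is regular then the analytic decomposition $f=\sum_i\a{f_i}$ exhibited on a neighborhood $B$ certifies regularity at every point of $B$, so the regular set is open. For the dimension bound I would follow Almgren's program: monotonicity of a frequency function, blow-up to homogeneous tangent maps, a splitting lemma for those tangents, and a Federer-type dimension reduction. The analytic heart is Almgren's frequency
\begin{equation*}
I_{x_0}(r) := \frac{r\int_{B_r(x_0)}|Df|^2}{\int_{\partial B_r(x_0)}\cG(f,Q\a{f(x_0)})^2},
\end{equation*}
which one shows is nondecreasing in $r$ using the inner and outer variations afforded by $\D$-minimality, so that $\alpha(x_0):=\lim_{r\downarrow 0}I_{x_0}(r)$ exists and is upper semicontinuous in $x_0$. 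Combined with the H\"older estimate of Theorem \ref{t:hoelder} and a compactness/closure package for $\D$-minimizers (strong $L^2$ limits of rescaled minimizers on balls are again $\D$-minimizing and their Dirichlet energies pass to the limit), this yields at every $x_0\in\Sigma_f$ a nontrivial \emph{tangent map} $g\colon \R{m}\to \Iq$: the limit along a sequence $r_k\downarrow 0$ of suitably renormalized rescalings of $f$ around $x_0$. Each $g$ is a $\D$-minimizer on all of $\R{m}$, is $\alpha(x_0)$-homogeneous, and has $0\in\Sigma_g$.

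The second ingredient is a splitting lemma: a homogeneous $\D$-minimizer $g$ on $\R{m}$ which is invariant under translations along a $k$-plane $V\subset \R{m}$ factors as $g(y',y'')=h(y'')$ for $(y',y'')\in V\times V^\perp$, with $h$ a $\D$-minimizer on $V^\perp$, and $\Sigma_g=V\times \Sigma_h$. Combined with the existence of tangent maps, this enables Federer's dimension reduction on the strata
\begin{equation*}
\Sigma^k_f := \bigl\{x\in \Sigma_f : \text{no tangent map at } x \text{ is translation invariant along a }(k{+}1)\text{-plane}\bigr\},
\end{equation*}
producing $\dim_{\cH}\Sigma_f\le m-2$, which is the claim for $m\ge 3$.

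For $m=2$ the above stratification yields only $\dim_{\cH}\Sigma_f\le 0$, and the sharper ``at most countable'' requires specifically planar input. I would exploit upper semicontinuity of $\alpha(\cdot)$ together with the rigidity of homogeneous $\D$-minimizers on $\R{2}$ (their admissible frequencies form a discrete set and the possible tangent profiles are tightly constrained) to show, by a blow-up argument, that singular points cannot accumulate in $\Omega$; since a relatively closed discrete subset of $\Omega\subset \R{2}$ is at most countable, this suffices. The main technical obstacle throughout is the compactness and energy-convergence package underpinning the blow-up analysis: strong $L^2$-closedness of $\D$-minimality, upper semicontinuity of the frequency, and convergence of Dirichlet energies for rescalings all need to be set up in the metric-valued setting of $\Iq$ without the linear tools ordinarily available for real-valued harmonic maps, and it is this step, rather than the combinatorics of the stratification, that bears the real analytic weight.
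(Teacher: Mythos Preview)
Your architecture—frequency monotonicity, blow-up compactness, homogeneous tangents, Federer reduction—is the right one and matches the paper's. The gap is the organizing device that makes the frequency/blow-up machinery applicable: the paper runs an \emph{induction on $Q$} via the set $\Sigma_{Q,f}=\{x:f(x)=Q\a{y}\text{ for some }y\in\R{n}\}$ of points of full multiplicity. The frequency function and the blow-up construction are only set up at such points: after subtracting the harmonic mean $\etaa\circ f$ one has $f(x_0)=Q\a{0}$ there, and the outer variation identity \eqref{e:perparti} gives $D(r)=\int_{\partial B_r}\sum_i\langle\partial_\nu f_i,f_i\rangle$, which matches $H(r)=\int_{\partial B_r}|f|^2$ and drives the monotonicity computation. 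Your formula $\int_{\partial B_r}\cG(f,Q\a{f(x_0)})^2$ is ill-typed (since $f(x_0)\in\Iq$), and even the natural repair $\cG(f,f(x_0))^2$ does not satisfy the differential identity needed for monotonicity when $\supp f(x_0)$ is not a singleton. The paper therefore proves the dimension bound for $\Sigma_{Q,f}$ first (Proposition~\ref{p:Qset}, via Federer reduction and the cylindrical blow-up Lemma~\ref{l:second blowup}), and then observes that on $\Omega\setminus\Sigma_{Q,f}$ the map $f$ splits locally as $\a{f_{Q_1}}+\a{f_{Q_2}}$ into $\D$-minimizing pieces with $Q_1,Q_2<Q$, so $\Sigma_f\setminus\Sigma_{Q,f}$ is handled by the inductive hypothesis. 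Without this decomposition step your stratification by translational symmetry of tangents cannot get off the ground at singular points outside $\Sigma_{Q,f}$.

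For $m=2$ the paper does not argue via discreteness of admissible frequencies and upper semicontinuity of $\alpha$. Instead, assuming $0\in\Sigma_{Q,f}$ is an accumulation point of $\Sigma_{Q,f}$, a first blow-up yields a homogeneous tangent $g$ with a second $\Sigma_Q$-point on $\s^1$; a second blow-up at that point (Lemma~\ref{l:second blowup}) produces a tangent $h$ depending on a single variable, i.e.\ a one-dimensional $\D$-minimizer $\hat h$ with $\hat h(0)=Q\a{0}$, $\etaa\circ\hat h\equiv 0$ and $\D(\hat h,I)>0$, which is impossible since one-dimensional minimizers are affine selections. Your proposed route would require the classification of $2$-d homogeneous tangents (Proposition~\ref{p:2d classification}), which in this paper is established only in Chapter~5, after Theorem~\ref{t:structure}.
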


Following in part ideas of \cite{Ch}, we improve this last
theorem in the following way.

\begin{theorem}[Improved estimate of the singular set]
\label{t:finite}
Let $f$ be $\D$-minimizing and $m=2$. Then, the singular set $\Sigma_f$ of
$f$ consists of isolated points.
\end{theorem}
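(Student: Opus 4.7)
The plan is a blow-up argument built on Almgren's frequency function, combined with a rigidity classification of planar homogeneous $\D$-minimizers, concluded by a contradiction ruling out accumulation of singular points.

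As the first ingredient I would develop (this is needed in any case for Theorem \ref{t:structure}) Almgren's frequency function. At a singular point $x_0\in\Sigma_f$, after translating the values so that the barycenter of $f(x_0)$ lies at the origin, set
\[
D(r) := \int_{B_r(x_0)} |Df|^2, \qquad H(r) := \int_{\partial B_r(x_0)} \cG(f, Q\a{0})^2, \qquad I(r) := \frac{r D(r)}{H(r)}.
\]
Standard first-variation computations give that $I$ is nondecreasing in $r$ with a positive limit $\alpha(x_0):=\lim_{r\downarrow 0} I(r)$. Combining this with Theorem \ref{t:hoelder} and $W^{1,2}$-compactness for $Q$-valued maps, for every $r_k\downarrow 0$ a subsequence of the rescalings
\[
f_{r_k}(y) := \frac{f(x_0 + r_k y)}{\sqrt{H(r_k)/r_k^{m-1}}}
\]
converges strongly in $W^{1,2}_{\rm loc}(\R{m},\Iq)$ and locally uniformly to a nontrivial $\alpha(x_0)$-homogeneous $\D$-minimizer $f_0:\R{m}\to\Iq$.

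As the second ingredient, specialising to $m=2$ and following \cite{Wh} and \cite{Ch}, I would prove the rigidity statement that every nontrivial $\alpha$-homogeneous $\D$-minimizer $g:\R{2}\to\Iq$ has $\Sigma_g\subset\{0\}$. Using Proposition \ref{p:selection} one decomposes $g$ into irreducible branches $g_j(\rho,\theta)=\rho^\alpha\phi_j(\theta)$, where each $\phi_j$ is an $\R{n}$-valued function on the circle which, by $\D$-minimality, solves the Euler--Lagrange equation $\phi_j''+\alpha^2\phi_j=0$ (so is an explicit trigonometric eigenfunction). A direct analysis of the kind carried out in \cite{Wh} and adapted in \cite{Ch} then shows that two distinct branches either coincide identically or are disjoint on $\R{2}\setminus\{0\}$, and hence by Definition \ref{d:regular} the singular set of $g$ is contained in $\{0\}$.

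For the final contradiction, assume $x_0\in\Sigma_f$ is an accumulation point of $\Sigma_f$ and choose $x_k\in\Sigma_f\setminus\{x_0\}$ with $r_k:=|x_k-x_0|\to 0$; passing to a subsequence, $y_k:=(x_k-x_0)/r_k\to y_\infty\in\s^1$ and $f_{r_k}\to f_0$ as above. Scaling invariance of Definition \ref{d:regular} gives $y_k\in\Sigma_{f_{r_k}}$, while the planar rigidity above gives $y_\infty\notin\Sigma_{f_0}$ since $|y_\infty|=1$. The main obstacle is the remaining upper semicontinuity step: that $y_k\in\Sigma_{f_{r_k}}$ together with $f_{r_k}\to f_0$ forces $y_\infty\in\Sigma_{f_0}$. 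I would derive it from the openness of the set of regular points under locally uniform plus $W^{1,2}$-convergence: an analytic decomposition of $f_0$ into non-colliding-or-coinciding branches on some $B_\rho(y_\infty)$ would, combined with the uniform closeness of $f_{r_k}$ to $f_0$ guaranteed by Theorem \ref{t:hoelder}, lift to an analogous decomposition of $f_{r_k}$ on $B_{\rho/2}(y_k)$ via a perturbative implicit-function argument applied at the crossing loci of the limit branches, contradicting $y_k\in\Sigma_{f_{r_k}}$ and closing the proof.
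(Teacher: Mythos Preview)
Your first two ingredients are sound and match the paper: the frequency monotonicity and blow-up compactness are exactly Theorem~\ref{t:frequency} and Theorem~\ref{t:blowup}, and your rigidity statement for planar homogeneous $\D$-minimizers is the content of Proposition~\ref{p:2d classification}.

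The gap is in your final contradiction, specifically the upper semicontinuity step. Your argument requires that if $y_\infty$ is regular for $f_0$ then $y_k$ is regular for $f_{r_k}$ for large $k$. But near $y_\infty$ the tangent function has the form $f_0=\sum_j m_j\a{g_j}$ with \emph{multiplicities} $m_j=k_j$ (and $m_0=k_0$), and in general some $k_j\geq 2$. Uniform closeness of $f_{r_k}$ to $f_0$ only gives a decomposition $f_{r_k}=\sum_j\a{h_{k,j}}$ into $m_j$-valued $\D$-minimizers $h_{k,j}$ close to $m_j\a{g_j}$; it does \emph{not} force $h_{k,j}$ to be $m_j$ copies of a single harmonic sheet. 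A genuinely $m_j$-valued $h_{k,j}$ can carry its own singular point near $y_\infty$, so $y_k\in\Sigma_{f_{r_k}}$ is perfectly compatible with $y_\infty$ being regular for $f_0$. Your ``implicit-function argument at the crossing loci of the limit branches'' does not address this: the limit branches $g_j$ do not cross each other at $y_\infty$, and the obstruction is the internal branching of $h_{k,j}$, which an implicit function theorem cannot rule out.

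The paper closes this gap by a different mechanism. It first proves a \emph{rate of convergence} for the frequency, $I(r)-\alpha\leq C r^\gamma$ (Proposition~\ref{p:rate I}), obtained via a comparison with the harmonic competitor from the planar case of Proposition~\ref{p:basic}. This rate yields \emph{uniqueness} of the tangent function (Theorem~\ref{t:unique blowup}): all blow-ups converge to the same $g$, hence $\cG(f(x),g(x))\leq\eps|x|^\alpha$ holds on a full punctured disk $B_{r_0}\setminus\{0\}$, not merely along a subsequence of radii. That uniform estimate lets one decompose $f$ \emph{itself} on $B_{r_0}$ as $f=\sum_j\a{f_j}$ into $\D$-minimizers with strictly fewer values (splitting into pieces in case~(i), or unrolling via the $Q^*$-th root in case~(ii)). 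One then concludes by induction on $Q$. The point is that working with a decomposition of $f$ on a fixed neighborhood, rather than of $f_{r_k}$ on $k$-dependent regions, is what makes the induction effective.
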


This note is divided into five parts.
Chapter 1 gives the ``elementary theory''
of $Q$-valued functions, while
Chapter 2 focuses on the ``combinatorial results''
of Almgren's theory. In particular we give there 
very simple proofs of
the existence of Almgren's biLipschitz embedding 
$\xii:
\Iqs\to \R{N(Q,n)}$ and 
of a Lipschitz retraction $\ro$ of $\R{N(Q,n)}$ onto ${\bf \xii} (\R{N(Q,n)})$.
Following Almgren's approach, $\xii$ and $\ro$ are then used
to generalize the classical Sobolev theory to 
$Q$-valued functions. In Chapter 4 we develop the 
intrinsic theory and show how the results of 
Chapter 2 can be recovered independently of the maps
$\xii$ and $\ro$.
Chapter 3 gives simplified proofs of Almgren's regularity 
theorems for $Q$-valued functions
and Chapter 5 contains the improved estimate
of Theorem \ref{t:finite}.
Therefore, to get a proof of the four main Theorems
listed above, the reader can choose
to follow Chapters 1, 2, 3 and 5, or to follow Chapters 1, 4, 3 and 5.  

\subsection*{Acknowledgements}
The first author
is indebted with Bernd Kirchheim for many enlightening
discussions on some topics of this paper. Both authors
acknowledge the support of the Swiss National Foundation.
The second author aknowledges the Forschungskredit of
the University of Z\"urich. 

\chapter{The elementary theory of $Q$-valued functions}

This chapter consists of three sections. The first one 
introduces a recurrent theme:
decomposing $Q$-valued functions in simpler pieces.
We will often build on this and prove our statements inductively
on $Q$, relying ultimately on well-known properties
of single-valued functions.
Section \ref{s:extend} contains an elementary proof of
the following fact: any Lipschitz map from a subset of $\R{m}$
into $\Iq$ can be extended to a Lipschitz map on the whole 
Euclidean space. This extension theorem, combined with
suitable truncation techniques,
is the basic tool of various approximation results.
Section \ref{s:rademacher} introduces a 
notion of differentiability for $Q$-valued maps and contains
some chain--rule formulas and a generalization of the
classical theorem of Rademacher. These are the main ingredients
of several computations in later sections.

\section{Decomposition and selection for
$Q$-valued functions}\label{s:select}

Given two elements $T\in \Is{Q_1}$ and $S\in \Is{Q_2}$,
the sum $T+S$ of the two measures belongs 
to $\Is{Q}=\Is{Q_1+Q_2}$. This observation 
leads directly to the following definition.

\begin{definition}\label{d:dec&sel}
Given finitely many $Q_i$-valued functions $f_i$, 
the map $f_1+f_2+ \ldots + f_N$ defines a $Q$-valued function
$f$, where $Q=Q_1+Q_2+\ldots+Q_N$. This will be called a 
{\em decomposition of $f$ into $N$ simpler functions}.
We speak of measurable (Lipschitz, H\"older, etc.) decompositions, 
when the $f_i$'s are measurable (Lipschitz, H\"older,
etc.). In order to avoid confusions with the summation of vectors in
$\R{n}$, we will write, with a slight abuse of notation, 
\begin{equation*}
f=\a{f_1}+\ldots+\a{f_N}.
\end{equation*}
If $Q_1=\ldots = Q_N =1$, the decomposition is called a {\em selection}.
\end{definition}

Proposition \ref{p:selection} ensures the existence of a measurable
selection for any measurable $Q$-valued function.
The only role of this proposition is to 
simplify our notation.

\subsection{Proof of Proposition \ref{p:selection}}
We prove the proposition by induction on $Q$. The case $Q=1$ is of course
trivial. For the general case, we will make use
of the following elementary observation:
\begin{itemize}
\item[(D)] if $\bigcup_{i\in\N} B_i$ is a covering of
$B$ by measurable sets, 
then it suffices to find a measurable selection of $f|_{B_i\cap B}$ 
for every $i$. 
\end{itemize}
 
Let first $\mathcal{A}_0\subset \Iq$ be the closed set of points of
type $Q\a{P}$ and set $B_0=f^{-1} (\mathcal{A}_0)$. Then,
$B_0$ is measurable
and $f|_{B_0}$ has trivially a measurable selection.

Next we fix a point $T\in\Iq\setminus \mathcal{A}_0$, $T=\sum_i \a{P_i}$.
We can subdivide the set of indexes $\{1,\ldots ,Q\} = I_L\cup I_K$ into two 
nonempty sets of cardinality $L$ and $K$, with
the property that
\begin{equation}\label{e:far}
|P_k-P_l|> 0 \quad  \mbox{for every $l\in I_L$ and $k\in I_K$}.
\end{equation}
For every $S= \sum_i \a{Q_i}$,
let $\pi_S\in \Pe_Q$ be a permutation such that
\[
\cG(S,T)^2 = \sum_i |P_i - Q_{\pi_S (i)}|^2.
\]
If $U$ is a sufficiently small neighborhood of $T$ in $\Iq$, by \eqref{e:far}, the maps
\[
\tau: U\ni S \mapsto \sum_{l\in I_L} \a{Q_{\pi_S (l)}}\in \I{L},
\quad
\sigma: U\ni S \mapsto \sum_{k\in I_K} \a{Q_{\pi_S (k)}}\in \I{K}
\]
are continuous. Therefore, $C=f^{-1} (U)$ is measurable and
$\a{\sigma\circ f|_C} + \a{\tau\circ f|_C}$
is a measurable decomposition of $f|_C$.
Then, by inductive hypothesis, $f|_C$ has a measurable selection.

According to this argument, it is possible
to cover $\Iq\setminus \mathcal{A}_0$ with 
open sets $U$'s such that, if $B=f^{-1} (U)$,
then $f|_{B}$ has a measurable selection. Since $\Iq\setminus
\mathcal{A}_0$ is an open subset of a separable metric
space, we can find a countable covering $\{U_i\}_{i\in\N}$ of this type.
Being $\{B_0\}\cup \{f^{-1} (U_i)\}_{1=1}^\infty$
a measurable covering of $B$, from (D) we conclude the proof.

\subsection{One dimensional $W^{1,p}$-decomposition}
\label{ss:dec1d}

A more serious problem is to find
selections which are as regular as $f$ itself. 
Essentially, this is always possible when the
domain of $f$ is $1$-dimensional. For our purposes
we just need the Sobolev case of this principle,
which we prove in the next two propositions.

In this subsection $I=[a,b]$ is a closed bounded interval of $\R{}$ 
and the space of absolutely continuous functions
$AC (I, \Iq)$ is defined as the space
of those continuous $f: I\to \Iq$ such that, for every $\eps>0$,
there exists $\delta>0$ with the following property:
for every $a\leq t_1<t_2<...<t_{2N}\leq b$,
\[ 
\sum_i (t_{2i} - t_{2i-1}) < \delta\quad\textrm{implies}\quad
\sum_i \cG \bigl(f (t_{2i}), f (t_{2i-1})\bigr) < \eps .
\]

\begin{propos}\label{p:Wselection-1} Let $f\in W^{1,p} (I, \Iq)$.
Then,
\begin{itemize}
\item[$(a)$] $f\in AC(I,\Iq)$ and, moreover,
$f\in C^{0,1-\frac{1}{p}}(I,\Iq)$ for $p>1$;
\item[$(b)$] there exists a selection $f_1,\ldots,f_Q\in W^{1,p}(I,\R{n})$
of $f$ such that $\abs{Df_i}\leq\abs{Df}$ almost everywhere.
\end{itemize}
\end{propos}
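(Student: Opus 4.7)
For part (a), fix a countable dense subset $\{T_i\}_{i\in\N}\subset \Iq$. By the Sobolev definition each $h_i:=\cG(f,T_i)$ lies in $W^{1,p}(I,\R{})$ with $|h_i'|\leq |Df|$ almost everywhere, so the one-dimensional Sobolev embedding yields $h_i\in AC(I)$ and, for $p>1$, $h_i\in C^{0,1-1/p}(I)$, with the uniform increment bound
\[
|h_i(s)-h_i(t)|\leq \int_s^t |Df|.
\]
The triangle inequality for $\cG$ gives $\sup_i |h_i(s)-h_i(t)|\leq \cG(f(s),f(t))$, while choosing a sequence $T_{i_k}\to f(t)$ yields $|h_{i_k}(s)-h_{i_k}(t)|\to \cG(f(s),f(t))$; hence $\cG(f(s),f(t))\leq \int_s^t |Df|$. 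Absolute continuity of $f$ follows from absolute continuity of the indefinite integral, and H\"older's inequality gives the stated H\"older modulus when $p>1$.

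For part (b), I proceed by induction on $Q$, with $Q=1$ trivial. The plan is to produce a continuous selection of $f$ and then read the Sobolev estimate off part (a). Set $\mathcal{A}_0:=\{Q\a{P}:P\in\R{n}\}\subset\Iq$ and $B:=f^{-1}(\mathcal{A}_0)$, which is closed by (a). On $B$ the unique $g:B\to\R{n}$ with $f=Q\a{g}$ satisfies $|g(s)-g(t)|=\cG(f(s),f(t))/\sqrt{Q}$, so $g$ is $W^{1,p}$ on $B$ with $|Dg|\leq |Df|/\sqrt{Q}$. On each connected component $J$ of $I\setminus B$, I would run a continuation argument based on the local splitting used in the proof of Proposition~\ref{p:selection}: starting from $x_0\in J$, write $f=\a{\alpha}+\a{\beta}$ with $\alpha\in\I{L}$, $\beta\in\I{K}$, $L+K=Q$, $L,K\geq 1$, and $\supp(\alpha(x))\cap\supp(\beta(x))=\emptyset$ on a neighborhood of $x_0$; extend as far as the supports remain disjoint. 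If an interior collision point $y\in J$ is reached, the continuous extension of $(\alpha,\beta)$ to $y$ still exists (the colliding labels share a common limit), and one restarts the continuation at $y$ with a partition adapted to the new coincidence structure of $f(y)$. This decomposes $J$ into at most countably many subintervals on each of which $f=\a{\alpha}+\a{\beta}$ has disjoint supports, so the optimal permutation in the definition of $\cG$ locally respects the partition and
\[
\cG(f(s),f(t))^2=\cG(\alpha(s),\alpha(t))^2+\cG(\beta(s),\beta(t))^2;
\]
hence $\alpha,\beta\in W^{1,p}$ with lower multiplicities and with energies bounded by $|Df|^2$. The inductive hypothesis then produces $W^{1,p}$ selections on each piece, and they concatenate into continuous $f_1,\ldots,f_Q:I\to\R{n}$ by matching labels at collision points and at $\partial J\subset B$, where all labels merge into $g$.

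For the Sobolev bound on this global selection, I use the following observation: since the $f_i$ are continuous, for $|s-t|$ small (depending on the minimal gap between distinct values of $f(t)$) the optimal permutation $\sigma$ realizing $\cG(f(s),f(t))^2=\sum_i|f_i(s)-f_{\sigma(i)}(t)|^2$ must lie in the stabilizer of $f(t)$---otherwise a single term of size $\gtrsim d^2$ (with $d$ the minimal gap) would exceed the identity sum. Hence $f_{\sigma(i)}(t)=f_i(t)$ and
\[
\cG(f(s),f(t))^2=\sum_i |f_i(s)-f_i(t)|^2,
\]
which combined with part (a) gives $|f_i(s)-f_i(t)|\leq \int_s^t |Df|$, so $f_i\in W^{1,p}(I,\R{n})$ with $|Df_i|\leq |Df|$ almost everywhere.

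The delicate point is the continuation-and-repartitioning procedure on $J$: one must check that the countably many label switches assemble into a globally continuous selection and that the split bound $|D\a{\alpha}|^2+|D\a{\beta}|^2\leq |Df|^2$ survives at each repartitioning step. This hinges on the fact that relabelings happen only at points where the involved labels coincide, so any admissible choice of permutation produces the same continuous functions---a purely one-dimensional phenomenon with no monodromy obstruction.
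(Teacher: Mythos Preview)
Your proof of (a) is essentially the paper's argument.

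For (b) you take a genuinely different route from the paper. The paper does not attempt to build a continuous selection directly; instead it discretizes $I$ into $k$ equal subintervals with nodes $t_l$, chooses orderings $\{P_i^l\}$ of $f(t_l)$ so that $\cG(f(t_{l-1}),f(t_l))^2=\sum_i|P_i^{l-1}-P_i^l|^2$, and lets $f_i^k$ be the piecewise linear interpolant through the $P_i^l$. A direct computation gives $|Df_i^k|\leq h^k:=\sum_l \bigl(\mint_{t_{l-1}}^{t_l}|Df|\bigr)\mathbf{1}_{(t_{l-1},t_l)}$, and since $h^k\to|Df|$ in $L^p$ one passes to a uniform limit via Ascoli--Arzel\`a. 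This bypasses entirely the combinatorics of coincidence sets and requires no induction on $Q$.

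Your inductive continuation scheme, by contrast, has a genuine gap. The sentence ``decomposes $J$ into at most countably many subintervals'' presupposes that the restart procedure exhausts $J$, but the collision points where a given split $\a{\alpha}+\a{\beta}$ breaks down need not be isolated: the set $\{x:\supp(\alpha(x))\cap\supp(\beta(x))\neq\emptyset\}$ can be any closed subset of $J$ (e.g.\ a Cantor set, or a sequence accumulating at an interior point of $J$). Your greedy procedure ``extend maximally, restart at $y$'' then produces a transfinite or non-terminating sequence of restarts, and it is not clear how the inductively obtained selections on the resulting pieces --- which come from \emph{different} splits $\a{\alpha}+\a{\beta}$ with possibly different $(L,K)$ --- can be coherently relabeled across accumulation points. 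Your final remark about ``no monodromy obstruction'' is correct in spirit but does not address this accumulation issue; it only handles a single matching at an isolated collision point.

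Your closing observation --- that once a \emph{continuous} selection $f_1,\dots,f_Q$ exists, the optimal permutation near any $t$ lies in the stabilizer of $f(t)$, whence $\cG(f(s),f(t))^2=\sum_i|f_i(s)-f_i(t)|^2$ locally and $|Df_i|\leq|Df|$ follows --- is correct and elegant. It reduces (b) to the bare existence of a continuous selection, which is indeed a theorem (the paper cites it in the Remark following the proposition), but your argument does not prove it. The paper's piecewise linear approach is precisely designed to avoid having to establish this existence separately.
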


\begin{remark}
A similar selection theorem holds for continuous 
$Q$-functions. This result needs a subtler combinatorial argument 
and is proved in Almgren's Big regularity paper \cite{Alm}
(Proposition $1.10$, p. $85$). The proof of Almgren 
uses the Euclidean structure, whereas a more general argument
has been proposed in \cite{dLGT}. 
\end{remark}

Proposition \ref{p:Wselection-1} cannot be extended to maps $f\in W^{1,p}
(\s^1, \Iq)$. 
For example, we identify $\R{2}$ with the complex
plane $\C$ and $\s^1$ with the set $\{z\in \C: |z|=1\}$ 
and we consider the map $f:\s^1\to \Iq (\R{2})$
given by $f(z) = \sum_{\zeta^2=z} \a{\zeta}$.
Then, $f$ is Lipschitz (and hence belongs to $W^{1,p}$ for every $p$)
but it does not have a continuous selection.  Nonetheless, 
we can use Proposition \ref{p:Wselection-1}
to decompose any $f\in W^{1,p}
(\s^1, \Iq)$ into ``irreducible pieces''. 

\begin{definition}\label{d:irreducible}
$f\in W^{1,p} (\s^1, \Iq)$ is called
{\em irreducible} if there is no decomposition of $f$
into $2$ simpler $W^{1,p}$ functions. 
\end{definition}

\begin{propos}\label{p:Wselection} 
For every $Q$-function $g\in W^{1,p} (\s^1, \Iq (\R{n}))$,
there exists a decomposition
$g=\sum_{j=1}^J\a{g_j}$, where each $g_j$ is
an irreducible $W^{1,p}$ map.
A function $g$ is irreducible if and only if
\begin{itemize}
\item[$(i)$] $\card\, (\supp (g (z))) = Q$ for every $z\in\s^1$
and 
\item[$(ii)$] there exists a $W^{1,p}$ map $h: \s^1 \to \R{n}$
with the property that $f(z) = \sum_{\zeta^Q =z} \a{h (\zeta)}.$
\end{itemize}
Moreover, for every irreducible $g$,
there are exactly $Q$ maps $h$ fulfilling $(ii)$.
\end{propos}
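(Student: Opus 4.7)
The overall plan is to reduce to the interval setting by cutting $\s^1$ at a well-chosen basepoint and invoking Proposition~\ref{p:Wselection-1}, to encode the failure to close up as a permutation $\pi\in\Pe_Q$, and to read off irreducibility from the cycle structure of $\pi$. The existence of a decomposition into irreducible pieces is then immediate by strong induction on $Q$: if $g$ is already irreducible we are done with $J=1$; otherwise by definition there is a non-trivial $W^{1,p}$ splitting $g=\a{u}+\a{v}$ with $u\in W^{1,p}(\s^1,\I{Q_1})$, $v\in W^{1,p}(\s^1,\I{Q_2})$ and $Q_1,Q_2<Q$, and the inductive hypothesis applies to each piece.

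For the direction \emph{irreducible $\Rightarrow$ (i) and (ii)}, I fix a basepoint $z_0$, cut $\s^1$ at $z_0$ to obtain an interval $I=[0,2\pi]$, and apply Proposition~\ref{p:Wselection-1}(b) to produce a $W^{1,p}$ selection $g_1,\dots,g_Q:I\to\R{n}$ of $g$ along $I$; continuity of $g$ at $z_0$ gives a unique $\pi\in\Pe_Q$ with $g_i(2\pi)=g_{\pi(i)}(0)$. If $\pi$ had more than one cycle, the cycle-by-cycle sums would be $W^{1,p}$ maps on $\s^1$ summing to $g$, contradicting irreducibility; hence $\pi$ is a single $Q$-cycle. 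Relabelling $\pi=(1\,2\,\cdots\,Q)$, concatenating $g_1,\dots,g_Q$ along $[0,2\pi Q]$ yields a $W^{1,p}$ loop which, reparametrized by $s\mapsto Qs$, defines $h\in W^{1,p}(\s^1,\R{n})$ with $g(z)=\sum_{\zeta^Q=z}\a{h(\zeta)}$; this is (ii). For (i), if $\card(\supp g(z_1))<Q$ at some $z_1$, one chooses $z_0\neq z_1$, obtains some $i\neq j$ and $t_1\in(0,2\pi)$ with $g_i(t_1)=g_j(t_1)$, and swaps the labels of $g_i$ and $g_j$ on $[t_1,2\pi]$; the new $W^{1,p}$ selection has gluing permutation $\pi\cdot(i\,j)$, which by a parity check cannot be a single $Q$-cycle, so the previous reduction yields a non-trivial decomposition, contradicting irreducibility.

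For the converse and the uniqueness I exploit the $Q$-fold cover $\pi_Q(\zeta)=\zeta^Q$. Assuming (i) and (ii) together with a hypothetical splitting $g=\a{u}+\a{v}$, lifting gives $(g\circ\pi_Q)(\zeta)=\sum_{k=0}^{Q-1}\a{h(\omega^k\zeta)}$ for $\omega$ a primitive $Q$-th root of unity, and (i) makes these $Q$ values pairwise distinct at every $\zeta$. So $u\circ\pi_Q$ picks out, at each $\zeta$, a subset $S(\zeta)\subset\{0,\dots,Q-1\}$ of cardinality $Q_1$; continuity of $u\circ\pi_Q$ combined with distinctness forces $S$ to be locally constant, hence constant on the connected $\s^1$. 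But $u\circ\pi_Q$ is manifestly invariant under $\zeta\mapsto\omega\zeta$, which sheet-by-sheet translates to $S(\omega\zeta)=S(\zeta)-1\bmod Q$; together with constancy this forces $S\in\{\emptyset,\{0,\dots,Q-1\}\}$, contradicting $1\leq Q_1\leq Q-1$. The same type of connectedness argument yields uniqueness: if $h'$ also satisfies (ii), the prescription $h(\tau(\zeta))=h'(\zeta)$ with $\tau(\zeta)\in\{\omega^k\zeta\}_{k=0}^{Q-1}$ is well-defined by (i) and continuous, and $\tau(\zeta)/\zeta$ is a continuous map from $\s^1$ into the discrete set of $Q$-th roots of unity, hence constant; so $h'=h(\omega^j\,\cdot)$ for exactly one $j\in\{0,\dots,Q-1\}$.

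The most delicate step is the sheet-swap in the proof of (i): one must verify that exchanging two branches of the $W^{1,p}$ selection at an interior coincidence point $t_1$ yields a genuine $W^{1,p}$ selection with the advertised modification of the gluing permutation. Here the pointwise bound $|Dg_i|\leq|Dg|$ delivered by Proposition~\ref{p:Wselection-1}(b) is essential: the two new branches agree at $t_1$ and hence remain absolutely continuous across it, their weak derivatives stay dominated by $|Dg|$, and the effect on the permutation is precisely composition with $(i\,j)$. Once this manipulation is in hand, everything else reduces to combinatorics on cycle structures together with the clean $\Z/Q$-equivariance argument that does the work in the converse and uniqueness parts.
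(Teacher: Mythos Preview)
Your proof is correct and follows the same overall architecture as the paper's (cut $\s^1$ to an interval, invoke Proposition~\ref{p:Wselection-1}, and read off irreducibility from the cycle structure of the gluing permutation), but differs in two places worth noting. For (i), the paper places the basepoint at a point of multiplicity; the gluing permutation $\sigma$ is then not uniquely determined, and replacing it by $(i\,j)\circ\sigma$ gives an admissible permutation of opposite parity, so one of the two fails to be a $Q$-cycle. Your interior sheet-swap reaches the same parity contradiction but keeps the cut away from the bad point---slightly more work for the same payoff (and the bound $|Dg_i|\leq|Dg|$ is not really needed here; plain $W^{1,p}$ membership of the $g_i$ suffices for the glued branches to remain $W^{1,p}$). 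For the converse $(i)\,\&\,(ii)\Rightarrow$ irreducible, the paper simply says this is ``easily seen''; your $\Z/Q$-equivariance argument via the $Q$-fold cover $\zeta\mapsto\zeta^Q$ is a clean, complete justification of that step, and the same deck-transformation viewpoint recovers the paper's continuation argument for the uniqueness of $h$.
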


The existence of an irreducible decomposition
in the sense above is an obvious consequence
of the definition of irreducible maps. 
The interesting part of the proposition is 
the characterization of the irreducible pieces,
a direct corollary of Proposition \ref{p:Wselection-1}.

\begin{proof}[Proof of Proposition \ref{p:Wselection-1}]
We start with (a). Fix a dense set
$\{T_i\}_{i\in\N}\subset \mathcal{A}_Q$.
Then, for every $i\in\N$,
there is a negligible set $E_i\subset I$ such that, for every $x<y\in I\setminus E_i$,
\begin{equation*}
\big|\cG (f(x), T_i)-\cG (f(y), T_i)\big|
\leq\left|\int_x^y \cG (f, T_i)^\prime\right|\leq \int^y_x |Df|.
\end{equation*}
Fix $x<y\in I\setminus \cup_i E_i$ and choose a 
sequence $\{T_{i_l}\}$ converging to $f(x)$. Then,
\begin{equation}\label{e:difference}
\cG(f(x),f(y)) = \lim_{l\to\infty} 
\big|\cG (f(x), T_{i_l})-\cG (f(y), T_{i_l})\big|
\leq \int_x^y \abs{Df} .
\end{equation}
Clearly, \eqref{e:difference} gives the absolute continuity of $f$
outside $\cup_i E_i$. Moreover, $f$ can be redefined in a unique way
on the exceptional
set so that the estimate \eqref{e:difference}
holds for every pair $x,y$.
In the case $p>1$, we improve \eqref{e:difference} 
to $\cG(f(x),f(y))\leq \|\abs{Df}\|_{L^p}\abs{x-y}^{(p-1)/p}$,
thus concluding the H\"older continuity.

\medskip

For $(b)$, the strategy is to find $f_1,\ldots,f_Q$ 
as limit of approximating piecewise linear functions. 
To this aim, fix $k\in\N$ and set
\[
\Delta_k :=\frac{b-a}{k}\quad\textrm{and}\quad t_l:=a+l
\,\Delta_k,\quad\textrm{with}\quad l=0,\ldots,k.
\]
By (a), without loss of generality, we assume that $f$ is
continuous and we consider the points
$f(t_l)=\sum_i\a{P_i^l}$. Moreover, after possibly reordering
each $\{P_i^l\}_{i\in \{1, \ldots, Q\}}$, we can assume that
\begin{equation}\label{e:permut}
\cG(f(t_{l-1}),f(t_{l}))^2 = \sum_i\abs{P_i^{l-1}
-P_i^l}^2.
\end{equation}
Hence, we define the functions $f_i^k$ as the linear
interpolations between the points $(t_{l},P_i^{l})$, that is,
for every $l=1,\ldots,k$ and every $t\in[t_{l-1},t_l]$, we set
\begin{equation*}
f_i^k(t)=\frac{t_{l}-t}{\Delta_k}\,P^{l-1}_i+\frac{t-t_{l-1}}{\Delta_k}\,P^{l}_i.
\end{equation*}
It is immediate to see that the $f_i^k$'s are $W^{1,1}$
functions; moreover, for every $t\in(t_{l-1},t_l)$,
thanks to \eqref{e:permut},
the following estimate holds,
\begin{equation}\label{e:gradient}
\abs{Df_i^k(t)}=\frac{\abs{P^{l-1}_i
-P^{l}_i}}{\Delta_k}\leq
\frac{\cG(f(t_{l-1}),f(t_l))}{\Delta_k}
\leq\mint_{t_{l-1}}^{t_{l}}\abs{Df}(\tau)\, d\tau=:h^k(t).
\end{equation}
Since the functions $h^k$ converge in $L^p$ to $\abs{Df}$ for
$k\ra+\infty$, we conclude that the $f_i^k$'s are
equi-continuous and equi-bounded. Hence, up to passing to a subsequence,
which we do not relabel, there exist functions
$f_1,\ldots,f_Q$ such that $f_i^k\ra f_i$ uniformly.
Passing to the limit, \eqref{e:gradient} implies that
$\abs{Df_i}\leq \abs{Df}$ and it is a very simple
task to verify that $\sum_{i}\a{f_i}=f$.
\end{proof}

\begin{proof}[Proof of Proposition \ref{p:Wselection}]
The decomposition of $g$ into irreducible maps is a trivial
corollary of the definition of irreducibility. Moreover,
it is easily seen that a map satisfying $(i)$ and $(ii)$ is
necessarily irreducible.

Let now $g$ be an irreducible $W^{1,p}$ $Q$-function.
Consider $g$ as a function on $[0,2\pi]$ with
the property that $g(0)=g (2\pi)$ and let  $h_1, \ldots, h_Q$ in 
$W^{1,p} ([0,2\pi], \R{n})$ be a selection as
in Proposition \ref{p:Wselection-1}.
Since we have $g (0)=g (2\pi)$,  
there exists a permutation $\sigma$ such that
$h_i (2\pi) = h_{\sigma (i)} (0)$.
We claim that any such $\sigma$ is necessarily a $Q$-cycle.
If not, there is a partition
of $\{1, \ldots, Q\}$ into two disjoint nonempty subsets 
$I_L$ and $I_K$, with cardinality $L$ and $K$ respectively,
such that $\sigma (I_L) = I_L$ and $\sigma (I_K)= I_K$.
Then, the functions
\[
g_L= \sum_{i\in I_L} \a{h_i}
\quad \mbox{and} \quad
g_K= \sum_{i\in I_K} \a{h_i}
\]
would provide a decomposition of $f$ into two simpler
$W^{1,p}$ functions.

The claim concludes the proof. Indeed,
for what concerns $(i)$, we note that, if the support of
$g (0)$ does not consist of
$Q$ distinct points, there is always a permutation $\sigma$
such that $h_i (2\pi) = h_{\sigma (i)} (0)$ and which is not
a $Q$-cycle.
For $(ii)$, without loss of generality, we can order
the $h_i$ in such a way that $\sigma (Q)=1$ and $\sigma (i)= i+1$
for $i\leq Q-1$.
Then, the map $h: [0,2\pi] \to \R{n}$ defined by
\[
h (\theta) = h_i (Q \theta -2(i-1)\pi ),
\quad \mbox{for }\; \theta\in [2(i-1)\pi/Q, 2i\pi/Q],
\]
fulfils $(ii)$.
Finally, if a map
$\tilde{h}\in W^{1,p} (\s^1, \R{n})$ satisfies
\begin{equation}\label{e:unrolls-1}
g (\theta)= \sum_i \a{\tilde{h} ((\theta+2i\pi)/Q)}
\quad \mbox{for every $\theta$},
\end{equation}
then there is $j\in \{1,\ldots, Q\}$ such that $\tilde{h} (0) =
h (2j\pi/Q)$. By $(i)$ and the continuity of $h$ and $\tilde{h}$,
the identity $\tilde{h} (\theta) = h (\theta+2j\pi/Q)$ holds for
$\theta$ in a neighborhood of $0$. Therefore, since $\s^1$ is connected,
a simple continuation argument shows that $\tilde{h} (\theta) 
= h (\theta+2j\pi/Q)$ for every $\theta$.
On the other hand, all the $\tilde{h}$ of this form are different (due to $(i)$) and enjoy
\eqref{e:unrolls-1}: hence, there are exactly $Q$ distinct $W^{1,p}$ functions with this
property.
\end{proof}

\subsection{Lipschitz decomposition}

For general domains of dimension $m\geq 2$,
there are well-known obstructions to the
existence of regular selections. However, 
it is clear that, when $f$ is continuous
and the support of $f(x)$ does not consist of a single point,
in a neighborhood $U$ of $x$, there is a decomposition of $f$ 
into two continuous simpler functions. When $f$ is Lipschitz,
this decomposition holds 
in a sufficiently large ball, whose radius can be estimated
from below with a simple
combinatorial argument. This fact will play a key role
in many subsequent arguments.

\begin{propos}\label{p.10}
Let $f:B\subset\R{m}\ra\Iq$ be a Lipschitz function,
$f=\sum_{i=1}^Q\a{f_i}$. Suppose that there
exist $x_0\in B$ and
$i,j\in\{1,\ldots,Q\}$ such that
\begin{equation}\label{e.15}
\abs{f_i(x_0)-f_j(x_0)}>3\, (Q-1)\,\Lip(f)\,\diam(B).
\end{equation} 
Then, there is a decomposition of $f$ into
two simpler Lipschitz functions $f_K$ and $f_L$ with
$\Lip(f_K),\Lip(f_L)\leq \Lip(f)$ and $\supp (f_K(x))\cap \supp (f_L(x))=\emptyset$
for every $x$.
\end{propos}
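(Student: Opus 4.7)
The plan is to split the $Q$ branches of $f$ according to distance from the basepoint $P := f_i(x_0)$, using a threshold radius $\rho$ chosen so that the split is valid uniformly on $B$. Write $L := \Lip(f)$ and $d := \diam(B)$. First I would locate $\rho$: the $Q$ distances $d_k := |f_k(x_0) - P|$ contain both $0$ (at $k=i$) and a value exceeding $3(Q-1)Ld$ (at $k=j$), so sorting them produces $Q-1$ consecutive gaps summing to more than $3(Q-1)Ld$; by pigeonhole some gap is strictly larger than $3Ld$, and I pick $\rho$ at its midpoint. This yields a partition $\{1,\ldots,Q\} = I_L \sqcup I_K$, nonempty on both sides (containing $i$ and $j$ respectively), with $d_k \leq \rho - \tfrac{3Ld}{2}$ for $k \in I_L$ and $d_k \geq \rho + \tfrac{3Ld}{2}$ for $k \in I_K$.

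Next I would transfer this partition to every $x \in B$ by choosing an optimal permutation $\pi_x \in \Pe_Q$ for $\cG(f(x), f(x_0))^2$. Since a single summand is bounded by the total, $|f_k(x) - f_{\pi_x(k)}(x_0)| \leq Ld$, and the triangle inequality upgrades the $x_0$-separation to $|f_k(x) - P| \leq \rho - \tfrac{Ld}{2}$ when $\pi_x(k) \in I_L$ and $|f_k(x) - P| \geq \rho + \tfrac{Ld}{2}$ when $\pi_x(k) \in I_K$. Hence no point of $\supp(f(x))$ lies in the spherical shell about $P$ of radii $\rho \pm Ld/2$, and exactly $|I_L|$ of the $Q$ points fall inside. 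I set $f_L(x)$ and $f_K(x)$ to be the sums of Dirac masses on the inside and outside points respectively, so that $f = f_L + f_K$ pointwise and $\supp(f_L(x)) \cap \supp(f_K(x)) = \emptyset$ at every $x \in B$.

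It remains to establish the Lipschitz bound. Fix $x, y \in B$ and let $\sigma$ be an optimal permutation realizing $\cG(f(x), f(y))^2$; the key claim is that $\sigma$ must send inside-at-$x$ to inside-at-$y$. If it did not, then by equality of cardinalities there would exist two crossed indices $k, l$ (inside at $x$ but outside at $y$, and vice versa), and the reverse triangle inequality applied to distances from $P$ would force each crossed pair to satisfy $|f_k(x) - f_{\sigma(k)}(y)| \geq Ld$, producing $\cG(f(x), f(y))^2 \geq 2 L^2 d^2 > L^2 d^2 \geq L^2 |x-y|^2 \geq \cG(f(x), f(y))^2$, a contradiction. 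So $\sigma$ respects the partition, and splitting the optimal sum along the inside and outside indices yields $\cG(f_L(x), f_L(y))^2 + \cG(f_K(x), f_K(y))^2 \leq \cG(f(x), f(y))^2 \leq L^2 |x-y|^2$, giving $\Lip(f_L), \Lip(f_K) \leq L$. The hard part is really the buffer bookkeeping: the factor $3$ in the hypothesis is there exactly so that the $3Ld$ gap at $x_0$ survives as a positive $Ld$ gap after transfer to a generic point, which is precisely what makes the crossed-pair lower bound work.
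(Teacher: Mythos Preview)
Your proof is correct and follows the same skeleton as the paper's --- partition the indices at $x_0$, transport the partition to every $x$ via an optimal matching with $f(x_0)$, then show the optimal matching between $f(x)$ and $f(y)$ cannot cross --- but the two tactical steps are carried out differently. For the partition at $x_0$, the paper takes $I_L$ to be a \emph{maximal squad} (a set with pairwise separations at most $3(|I_L|-1)Ld$), so that every cross pair $(l,k)\in I_L\times I_K$ satisfies $|f_l(x_0)-f_k(x_0)|>3Ld$; you instead fix the single basepoint $P=f_i(x_0)$, sort the distances $|f_k(x_0)-P|$, and find by pigeonhole a gap exceeding $3Ld$, yielding an explicit separating sphere about $P$. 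For the no-crossing step, the paper uses \emph{one} crossed pair and routes a triangle inequality through $x_0$ (three legs, each $\leq Ld$, contradicting the $>3Ld$ cross separation); you use that equal cardinalities force \emph{two} crossed pairs, each contributing at least $(Ld)^2$ to $\cG(f(x),f(y))^2$, contradicting $\cG(f(x),f(y))^2\leq L^2|x-y|^2\leq L^2d^2$. Your version is a touch more geometric (it actually exhibits a sphere separating the two pieces uniformly over $B$), while the paper's squad/triangle argument avoids the implicit use of $Ld>0$ in your final strict inequality --- a harmless point, since the cases $L=0$ or $\diam(B)=0$ are trivial.
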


\begin{proof} Call a ``squad'' any subset of indices
$I\subset\left\{1,\ldots,Q\right\}$ such that
\begin{equation*}\label{e:squad}
\abs{f_l(x_0)-f_r(x_0)}\leq 3\, (|I|-1)\, \Lip(f)\,\diam(B)
\quad\textrm{for all $l,r\in I$},
\end{equation*}
where $|I|$ denotes the cardinality of $I$.
Let $I_L$ be a maximal squad containing $1$, where $L$ stands for its
cardinality. By (\ref{e.15}), $L<Q$. Set
$I_K=\left\{1,\dots,Q\right\}\setminus I_L$. Note that,
whenever $l\in I_L$ and $k\in I_K$, 
\begin{equation}\label{e:distanti}
|f_l (x_0)-f_k (x_0)|>
3 \, \Lip(f)\,\diam(B),
\end{equation}
otherwise $I_L$ would not be maximal. For every $x$,
$y\in B$, we let $\pi_x$, $\pi_{x,y}\in\Pe_Q$ be
permutations such that
\begin{align*}
\cG(f(x_0),f(x))^2&=\sum_i\abs{f_i(x_0)-f_{\pi_x(i)}(x)}^2,\\
\cG(f(x),f(y))^2&=\sum_i\abs{f_i(x)-f_{\pi_{x,y}(i)}(y)}^2.
\end{align*}
We define the functions $f_L$ and $f_K$ as
\begin{equation*}
f_L(x)=\sum_{i\in I_L}\a{f_{\pi_x(i)}(x)}\quad
\textrm{and}\quad f_K(x)=\sum_{i\in I_K}\a{f_{\pi_x(i)}(x)}.
\end{equation*}
Observe that $f=\a{f_L}+\a{f_K}$: it remains to show the Lipschitz estimate.
For this aim, we claim that $\pi_{x,y} (\pi_x (I_L))= \pi_y (I_L)$
for every $x$ and $y$. Assuming the claim, we conclude that, for every $x,y\in B$,
\begin{equation*}
\cG(f(x),f(y))^2=\cG(f_L(x),f_L(y))^2+\cG(f_K(x),f_K(y))^2,
\end{equation*}
and hence $\Lip (f_L), \Lip (f_K)\leq\Lip (f)$.

To prove the claim, we argue by contradiction: 
if it is false, choose $x$,
$y\in B$, $l\in I_L$ and $k\in I_K$ with $\pi_{x,y} (\pi_x (l))
= \pi_y (k)$. Then,
$\abs{f_{\pi_x(l)}(x)-f_{\pi_y(k)}(y)}\leq\cG(f(x),f(y))$,
which in turn implies 
\begin{align*}
3\,\Lip(f)\,\diam(B) &\;\stackrel{\mathclap{\eqref{e:distanti}}}{<}\;\abs{f_l(x_0)-f_k(x_0)}\\
&\;\leq\abs{f_l(x_0)-f_{\pi_x(l)}(x)}+\abs{f_{\pi_x(l)}(x)-f_{\pi_y(k)}(y)}+
\abs{f_{\pi_y(k)}(y)-f_k(x_0)}\\
&\;\leq  \cG(f(x_0),f(x))+\cG(f(x),f(y))+\cG(f(y),f(x_0))\\
&\;\leq  \Lip(f)\left(\abs{x_0-x}+\abs{x-y}+\abs{y-x_0}\right)\\
&\;\leq  3\,\Lip(f)\,\diam(B).
\end{align*}
This is a contradiction and, hence, the proof is complete.
\end{proof}

\section{Extension of Lipschitz $Q$-valued functions}
\label{s:extend}
This section is devoted to prove the following extension
theorem.

\begin{theorem}[Lipschitz Extension]
\label{thm.ext}
Let $B\subset\R{m}$ and 
$f:B\ra\Iqs$ be Lipschitz. Then, there
exists an extension $\bar{f}:\R{m}\ra\Iqs$ of $f$,
with $\Lip(\bar{f})\leq C(m,Q)\,\Lip(f)$. Moreover, if $f$
is bounded, then, for every $P\in \R{n}$,
\begin{equation}\label{e:LinftyBound}
\sup_{x\in\R{m}}\, \cG (\bar{f}(x), Q\a{P})
\leq C(m, Q)\, \sup_{x\in B}\, \cG (f(x), Q\a{P}).
\end{equation}
\end{theorem}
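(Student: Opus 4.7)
I will proceed by induction on $Q$. The base case $Q=1$ is the classical Kirszbraun (or coordinatewise McShane) extension theorem for single-valued Lipschitz maps into $\R{n}$, which gives both the Lipschitz extension and the $L^\infty$ bound with dimensional constants. Assume the result with constant $C(m,Q')$ for all $1\le Q'<Q$.

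For the inductive step, I first reduce to the case in which $B$ is closed, by extending $f$ uniformly continuously to $\bar B$. I then take a Whitney decomposition $\{C_j\}$ of $\R{m}\setminus B$ into closed dyadic cubes with $\diam(C_j)\le \dist(C_j,B)\le 4\diam(C_j)$, and for each $j$ I select a base point $y_j\in B$ with $\dist(y_j,C_j)\le 4\diam(C_j)$. I set $B_j:=B\cap B(y_j,K\diam(C_j))$, where $K=K(m)$ is a large dimensional constant chosen so that $K$-enlargements of adjacent cubes have controlled overlap and every $C_j$ is comfortably contained in the concentric ball on $y_j$.

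On each $B_j$ I apply the dichotomy furnished by Proposition \ref{p.10}: either there exist $x_0\in B_j$ and indices $i,k$ with $|f_i(x_0)-f_k(x_0)|>3(Q-1)\Lip(f)\diam(B_j)$, in which case Proposition \ref{p.10} produces a Lipschitz decomposition $f|_{B_j}=\a{f_L^j}+\a{f_K^j}$ into two simpler functions with pairwise disjoint supports; or no such $x_0$ exists, in which case $\diam\supp f(x)\le 3(Q-1)\Lip(f)\diam(B_j)\lesssim \Lip(f)\diam(C_j)$ for every $x\in B_j$. In the first ``splittable'' case, the inductive hypothesis extends $f_L^j$ and $f_K^j$ to maps on all of $\R{m}$ with Lipschitz constant $\le C(m,Q-1)\Lip(f)$, and summing these extensions produces a local model $\tilde f_j:\R{m}\to\Iq$ agreeing with $f$ on $B_j$; in the second ``clumped'' case, I take $\tilde f_j\equiv f(y_j)$, the constant $Q$-valued point, which agrees with $f|_{B_j}$ up to $\cG$-error $\lesssim \Lip(f)\diam(C_j)$.

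The global extension is then defined by $\bar f|_B:=f$ and $\bar f|_{C_j}:=\tilde f_j|_{C_j}$. Routine triangle-inequality arguments using the Whitney inequalities $|y_j-x|\sim\diam(C_j)\sim\dist(x,B)$ immediately give the Lipschitz estimate for pairs $(x,y)$ with $x\in C_j,\ y\in B$, and the intra-cube estimate inside each $C_j$ follows from the Lipschitz property of $\tilde f_j$. The main obstacle, in my view, is controlling the Lipschitz constant across two neighboring cubes $C_j,C_{j'}$, particularly when one falls in the splittable case and the other in the clumped case: adjacent Whitney cubes have comparable diameters and their base points satisfy $|y_j-y_{j'}|\lesssim\diam(C_j)$, and one shows that the two candidate values $\tilde f_j(x),\tilde f_{j'}(x)$ at a point $x\in C_j\cap C_{j'}$ satisfy $\cG(\tilde f_j(x),\tilde f_{j'}(x))\lesssim \Lip(f)\diam(C_j)$; to absorb this discrepancy into a continuous global map one interpolates along a collar of the shared face by a $\cG$-geodesic in $\Iq$ (which exists by the Wasserstein structure recalled in Remark \ref{r:metric}), losing at most a dimensional factor in the Lipschitz constant, and making these interpolation choices coherently over the Whitney grid is the combinatorial heart of the argument.

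Finally, the $L^\infty$ bound \eqref{e:LinftyBound} follows from the Lipschitz bound combined with the triangle inequality $\cG(\bar f(x),Q\a{P})\le \Lip(\bar f)\,|x-y|+\cG(f(y),Q\a{P})$ applied with $y\in B$ minimizing $\cG(f(\cdot),Q\a{P})$, together with a standard Whitney-scale comparison between $|x-y|$ and $\diam B$.
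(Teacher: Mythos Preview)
Your plan has a genuine gap precisely where you locate the ``combinatorial heart'': you define $\bar f$ cube-by-cube as $\bar f|_{C_j}=\tilde f_j|_{C_j}$ with independently constructed local models $\tilde f_j$, and then hope to repair the resulting discontinuities by geodesic interpolation on collars of shared faces. This does not close. Two adjacent splittable cubes may yield \emph{incompatible} decompositions $f=\a{f_L^j}+\a{f_K^j}$ versus $f=\a{f_L^{j'}}+\a{f_K^{j'}}$ on the overlap, so the inductively obtained extensions have no reason to be close beyond the crude $\cG$-bound you state. More seriously, a collar interpolation on a codimension-$1$ face does nothing at the codimension-$\geq 2$ strata where three or more cubes meet; you would need a full hierarchy of compatible interpolations down through all skeleta, which you have not set up and which your cube-by-cube framework makes awkward.

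The paper reorganizes the two inductions. Rather than inducting on $Q$ at the level of the global extension, it first proves a \emph{Homotopy Lemma} (Lemma~\ref{l:hom}) by induction on $Q$: given $h:\partial C\to\Iq$ Lipschitz on the boundary of a cube, extend to $C$ with controlled constant, using the dichotomy of Proposition~\ref{p.10} on $\partial C$ to split when possible, and the cone $f(x)=\sum_i\a{\,|x|\,h_i(x/|x|)+(1-|x|)P\,}$ when the values are clumped. The global extension is then built on the Whitney decomposition \emph{skeleton-by-skeleton}: define $\bar f$ on the $0$-skeleton by nearest-point projection to $B$, then apply the Homotopy Lemma to fill each $k$-cell from its already-defined boundary. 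Compatibility across adjacent cubes is automatic because every face is filled exactly once from its own boundary.

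Separately, your argument for \eqref{e:LinftyBound} is incorrect: for $x$ far from a bounded $B$ the Whitney scale $\diam(C_j)\sim\dist(x,B)$ is unbounded, so $\Lip(\bar f)\,|x-y|$ gives no control whatsoever. In the paper the bound follows from the cone construction in the Homotopy Lemma, which satisfies $\max_{C}\cG(f,Q\a{P})\le 2Q\max_{\partial C}\cG(h,Q\a{P})$; since the $0$-skeleton values are actual values of $f$ on $B$, iterating this over the skeleta yields \eqref{e:LinftyBound}.
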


Note that, in his Big regularity paper, Almgren deduces
Theorem \ref{thm.ext} from the existence of the maps
$\xii$ and $\ro$ of Section \ref{s:biLipschitz}. We instead
follow a sort of reverse path and conclude the existence of $\ro$
from that of $\xii$ and from Theorem \ref{thm.ext}.

It has already been observed by Goblet in \cite{Gob2}
that the Homotopy Lemma \ref{l:hom} below can be combined
with a Whitney-type argument to yield an easy direct proof
of the Lipschitz extension Theorem, avoiding Almgren's
maps $\xii$ and $\ro$. In \cite{Gob2} the author
refers to the general theory built in \cite{LanSch}
to conclude Theorem \ref{thm.ext} from Lemma \ref{l:hom}.
For the sake of completeness, we give here the complete argument.

\subsection{Homotopy Lemma}
Let $C$ be a cube with sides
parallel to the coordinate axes.
As a first step, we show the existence of extensions to $C$
of Lipschitz $Q$-valued functions defined on $\partial C$. 
This will be the key point  
in the Whitney type argument used in the proof of
Theorem \ref{thm.ext}.

\begin{lemma}[Homotopy lemma]\label{l:hom}
There is a constant $c(Q)$ with the following property.
For any closed cube with sides parallel to the coordinate axes
and any Lipschitz $Q$-function $h:\partial C\ra \Iqs$, there exists 
an extension $f:C\ra \Iqs$ of $h$ which is Lipschitz
with $\Lip(f)\leq c(Q) \Lip(h)$. Moreover, for every $P\in \R{n}$,
\begin{equation}\label{e:LinftyBound2}
\max_{x\in{C}}\, \cG (f(x), Q\a{P})
\leq 2\,Q\, \max_{x\in \partial C}\, \cG (h(x), Q\a{P}).
\end{equation}
\end{lemma}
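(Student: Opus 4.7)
My plan is to proceed by induction on $Q$. The base case $Q=1$ is the classical Lipschitz extension of a single-valued $\R{n}$-valued map on $\partial C$: componentwise McShane yields a Lipschitz extension to $C$ with constant $\sqrt{n}\,\Lip(h)$, and the $L^\infty$ bound \eqref{e:LinftyBound2} follows by post-composing with the radial retraction onto the closed ball of radius $\max_{\partial C}|h-P|$ centered at $P$. For the inductive step, set $L=\Lip(h)$ and $D=\diam(C)$ and run the following dichotomy, suggested by Proposition \ref{p.10}: either there exist $x_0\in\partial C$ and $i\neq j$ with $|h_i(x_0)-h_j(x_0)|>3(Q-1)LD$ (\emph{separation}), or no such pair exists anywhere on $\partial C$, in which case $\diam\supp h(y)\leq 3(Q-1)LD$ for every $y\in\partial C$ (\emph{concentration}).

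In the separation case, Proposition \ref{p.10} decomposes $h=\a{h_K}+\a{h_L}$ on $\partial C$ with $Q_K+Q_L=Q$, $Q_K,Q_L<Q$, and $\Lip(h_K),\Lip(h_L)\leq L$. The inductive hypothesis produces Lipschitz extensions $f_K,f_L$; set $f=\a{f_K}+\a{f_L}$. The Lipschitz bound for $f$ follows from $\cG(f(x),f(y))^2\leq\cG(f_K(x),f_K(y))^2+\cG(f_L(x),f_L(y))^2$, and \eqref{e:LinftyBound2} follows from the identity $\cG(f(x),Q\a{P})^2=\cG(f_K(x),Q_K\a{P})^2+\cG(f_L(x),Q_L\a{P})^2$ together with the inductive estimates applied to each piece, the analogous identity for $h$ on $\partial C$, and the elementary inequality $(2Q_K)^2+(2Q_L)^2\leq(2Q)^2$. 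In the concentration case, fix any $P\in\supp h(y_0)$ for some $y_0\in\partial C$, let $x^*$ denote the center of $C$, and for $x\in C\setminus\{x^*\}$ define the radial projection $\pi(x)\in\partial C$ and the parameter $r(x)=|x-x^*|/|\pi(x)-x^*|\in(0,1]$. Define the cone extension
\begin{equation*}
 f(x)=\sum_i\a{(1-r(x))P+r(x)P_i(\pi(x))},\qquad f(x^*)=Q\a{P},
\end{equation*}
where $h(y)=\sum_i\a{P_i(y)}$ is any measurable selection; the value of $f$ is intrinsic, being the dilation of $\supp h(\pi(x))$ about $P$ with factor $r(x)$.

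The Lipschitz estimate for $f$ in the concentration case is the main technical step. Expanding an optimal matching gives the pointwise bound
\begin{equation*}
 \cG(f(x),f(y))^2\leq 2\min(r(x),r(y))^2L^2|\pi(x)-\pi(y)|^2+2(r(x)-r(y))^2\cG(h(\pi(y)),Q\a{P})^2.
\end{equation*}
The first summand is controlled by $C(m)^2L^2|x-y|^2$ via the cube-geometric bound $\min(r(x),r(y))|\pi(x)-\pi(y)|\leq C(m)|x-y|$, which follows (after writing $x=x^*+\rho_x\omega_x$, $y=x^*+\rho_y\omega_y$) from the identity $|x-y|^2=(\rho_x-\rho_y)^2+\rho_x\rho_y|\omega_x-\omega_y|^2$ and the Lipschitz continuity of $\rho_{\max}$ on $S^{m-1}$. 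The second summand is controlled by the concentration bound $\cG(h(y),Q\a{P})\leq C(Q)LD$ (obtained from $\diam\supp h(y_0)\leq 3(Q-1)LD$ via the Lipschitz estimate of $h$ along $\partial C$) combined with $|r(x)-r(y)|\leq C(m)|x-y|/D$; the powers of $D$ cancel and give a bound by $C(Q,m)^2L^2|x-y|^2$.

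For \eqref{e:LinftyBound2} in the concentration case with an arbitrary reference $P_0$, use the triangle inequality $\cG(f(x),Q\a{P_0})\leq\cG(f(x),Q\a{P})+\sqrt{Q}|P-P_0|$ together with $\cG(f(x),Q\a{P})\leq\cG(h(\pi(x)),Q\a{P})\leq\cG(h(\pi(x)),Q\a{P_0})+\sqrt{Q}|P-P_0|$, and the observation that $P\in\supp h(y_0)$ gives $|P-P_0|\leq\cG(h(y_0),Q\a{P_0})$. Combining yields $\cG(f(x),Q\a{P_0})\leq(1+2\sqrt{Q})\max_{\partial C}\cG(h,Q\a{P_0})$, from which the desired constant $2Q$ follows since $1+2\sqrt{Q}\leq 2Q$ for $Q\geq 2$. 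I expect the main obstacle to be the geometric control of $\min(r(x),r(y))|\pi(x)-\pi(y)|$ near the center $x^*$: both $\pi$ and the angular-to-Euclidean distortion degenerate there, but the vanishing of $r$ compensates exactly, and making this precise for a cube requires the Lipschitz continuity of $\rho_{\max}$ and a case split on which of $\rho_x$, $\rho_y$ is smaller.
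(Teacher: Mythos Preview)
Your proof is correct and follows the same overall strategy as the paper: induction on $Q$, the separation/concentration dichotomy via Proposition~\ref{p.10}, and a cone construction in the concentration case. The one noteworthy difference is purely technical. The paper first observes that the cube is biLipschitz equivalent to the closed unit ball and proves the lemma on $\overline{B_1}$; there the cone $f(x)=\sum_i\a{|x|\,h_i(x/|x|)+(1-|x|)P}$ has trivial geometry, so the Lipschitz bound reduces to two one-line estimates (tangential on each sphere $\partial B_r$, radial on each segment $\sigma_x$). You instead work directly on the cube, which forces you to control the radial projection $\pi$ and the parameter $r$ explicitly via the Lipschitz continuity of $\rho_{\max}$ on $S^{m-1}$. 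Your estimates $\min(r(x),r(y))|\pi(x)-\pi(y)|\leq C(m)|x-y|$ and $|r(x)-r(y)|\leq C(m)|x-y|/D$ are correct and yield the same conclusion; the paper's reduction simply hides this work inside the biLipschitz map. Your treatment of \eqref{e:LinftyBound2} in both cases (the Pythagorean splitting in the separation case, and the chain $\cG(f,Q\a{P_0})\leq(1+2\sqrt{Q})\max_{\partial C}\cG(h,Q\a{P_0})$ in the concentration case) is more explicit than the paper's, which just asserts that the bound ``follows easily'' from the cone formula.
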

\begin{proof} By rescaling and translating, it suffices to prove 
the lemma when $C=[0,1]^m$. Since $C$ is biLipschitz equivalent
to the closed unit ball $\overline{B_1}$ centered at $0$, it
suffices to prove the lemma with $\overline{B_1}$ in place of 
$C$. In order to prove this case,
we proceed by induction on $Q$. For $Q=1$,
the statement is a well-known fact (it is 
very easy to find an extension $\bar{f}$ with
$\Lip\, (\bar{f})\leq \sqrt{n}\, \Lip (f)$; the existence 
of an extension with the same Lipschitz constant is a classical,
but subtle, result of Kirszbraun, see 2.10.43 in \cite{Fed}).
We now assume that the lemma is true for every $Q<Q^*$, and prove it
for $Q^*$. 

Fix any $x_0\in\partial B_1$. We distinguish two cases:
either (\ref{e.15}) of Proposition \ref{p.10} is satisfied with
$B=\de B_1$, or it is not. In the first case we can decompose $h$ as
$\a{h_L}+\a{h_K}$, where $h_L$ and $h_K$ are Lipschitz
functions taking values in $\I{L}$ and $\I{K}$, and $K$ and
$L$ are positive integers.
By the induction hypothesis, we can find extensions of $h_L$
and $h_K$ satisfying the requirements of the
lemma, and it is not
difficult to verify that $f=\a{f_L}+\a{f_K}$ is the desired extension
of $h$ to $\overline{B_1}$.

In the second case, for any pair of indices $i,j$ we have that
\[
\abs{h_i(x_0)-h_j(x_0)}\leq 6\,Q^*\,\Lip(h).
\]
We use the following cone-like construction:
set $P:=h_1(x_0)$ and define
\begin{equation}\label{e:cone}
f(x)=\sum_i\a{\abs{x} h_i\left(\frac{x}{|x|}\right)+\big(1-\abs{x}\big)\,P}.
\end{equation}
Clearly $f$ is an extension of $h$. For the Lipschitz regularity,
note first that
\begin{equation*}
\Lip(f\vert_{\de B_r})=\Lip(h),\;\textrm{for every }0<r\leq1. 
\end{equation*} 
Next, for any $x\in\partial B$, on the segment $\sigma_x = [0,x]$ 
we have
\begin{equation*}
\Lip{f\vert_{\sigma_x}}\leq 
Q^*\max_{i}\abs{h_i(x)-P}\leq 6\,(Q^*)^2\, \Lip(h).
\end{equation*}
So, we infer that
$\Lip(f)\leq12\,(Q^*)^2\,\Lip(h)$.
Moreover, \eqref{e:LinftyBound2} follows easily from
\eqref{e:cone}.
\end{proof}

\subsection{Proof of Theorem \ref{thm.ext}}\label{ss:ext}
Without loss of generality, we can assume that $B$ is closed.
Consider a Whitney decomposition 
 $\left\{C_k\right\}_{k\in\N}$ 
of $\R{m}\setminus B$ (see Figure \ref{f:whitney}). More precisely
(cp. with Theorem 3, page 16 of \cite{St1}):
\begin{itemize}
\item[(W1)] each $C_k$ is a closed dyadic cube, i.e.
the length $l_k$ of the side is $2^{k}$ for
some $k\in \Z$ and the coordinates of the vertices
are integer multiples of $l_k$;
\item[(W2)] distinct cubes have disjoint interiors and
\begin{equation}\label{e:comparable}
c(m)^{-1}\dist(C_k,B)\leq l_k\leq c(m)\, \dist(C_k,B).
\end{equation}
\end{itemize}
As usual, we call $j$-skeleton the union of
the $j$-dimensional faces of 
$C_k$. We now construct the extension $\bar{f}$ by defining
it recursively on the skeletons.

\begin{figure}[htbp]
\begin{center}
    \input{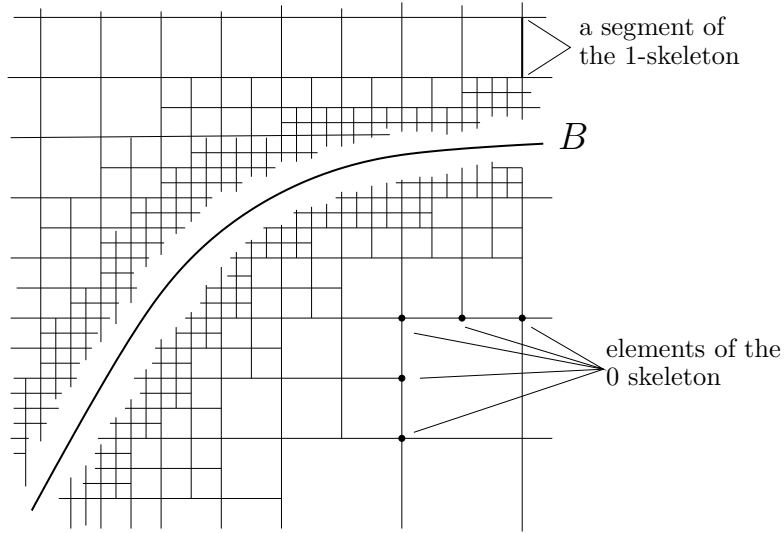}
    \caption{The Whitney decomposition of $\R{2}\setminus B$.}
    \label{f:whitney}
\end{center}
\end{figure}

Consider the $0$-skeleton, i.e. the set of the
vertices of the cubes. For each vertex $x$,
we choose $\tilde{x}\in B$
such that $\abs{x-\tilde{x}}=\dist(x,B)$ and set
$\bar{f}(x)=f(\tilde{x})$.
If $x$ and $y$ are two adjacent vertices of the same cube
$C_k$, then
\begin{equation*}
\max\big\{\abs{x-\tilde{x}}, \abs{y-\tilde{y}}\big\}
\leq \dist (C_k, B) \leq c\, l_k = c\, \abs{x-y}.
\end{equation*} 
Hence, we have
\begin{align*}
\cG \left(\bar{f}(x), \bar{f}(y)\right)&=
\cG \left(f(\tilde{x}),f(\tilde{y})\right)
\leq\Lip(f)\abs{\tilde{x}-\tilde{y}}
\leq\Lip(f)\big(\abs{\tilde{x}-x}+\abs{x-y}+\abs{y-\tilde{y}}\big)\nonumber\\
&\leq c\,\Lip(f)\abs{x-y}\label{e:est1}.
\end{align*}
Using the Homotopy Lemma \ref{l:hom},
we extend $f$ to $\bar{f}$ on each side of the $1$-skeleton .
On the boundary of any $2$-face 
$\bar{f}$ has Lipschitz constant smaller than
$9\, C(m,Q)\,\Lip(f)$. Applying Lemma \ref{l:hom}
recursively we find an extension of
$\bar f$ to all $\R{m}$ such that \eqref{e:LinftyBound} holds
and which is Lipschitz in
each cube of the decomposition,
with constant smaller than $C(m,Q)\, \Lip(f)$.

It remains to show that $\bar f$ is Lipschitz on the whole
$\R{m}$. Consider $x,\,y\in\R{m}$, not lying in the
same cube of the decomposition. Our aim is to 
show the inequality 
\begin{equation}\label{e:goalexte}
\cG \left(\bar{f} (x), \bar{f} (y)\right)\leq C\,\Lip(f)\, |x-y|,
\end{equation} with some $C$ depending only on $m$ and $Q$. 
Without loss of generality, we can assume that
$x\not\in B$. 
We distinguish then two possibilities:
\begin{itemize}
\item[(a)] $\left[x,y\right]\cap B\neq\emptyset$;
\item[(b)] $\left[x,y\right]\cap B = \emptyset$.
\end{itemize}
In order to deal with (a), assume first that $y\in B$.
Let $C_k$ be a cube of the decomposition containing $x$ and 
let $v$ be one of the nearest vertices
of $C_k$ to $x$. Recall, moreover, that $\bar{f} (v)
= f (\tilde{v})$ for some $\tilde{v}$ with $|\tilde{v}-v|=
\dist (v, B)$. We have then
\begin{align*}
\cG \left(\bar{f}(x),\bar f(y)\right)&\leq
\cG\left(\bar{f}(x),\bar{f}(v)\right)+\cG\left(\bar f(v),f(y)\right)
=\cG\left(\bar{f}(x),\bar{f}(v)\right)+\cG \left(f(\tilde v), f(y)\right)\\
&\leq C\,\Lip(f)\abs{x-v}+\Lip(f)\abs{\tilde v-y}\\
&\leq C\,\Lip(f)\big(\abs{x-v}+\abs{\tilde
    v-v}+\abs{v-x}+\abs{x-y}\big)\\
&\leq C\,\Lip(f)\big(l_k+\dist(C_k,B)+{\rm diam}\, (C_k)
+\abs{x-y}\big)\\
&\stackrel{\mathclap{\eqref{e:comparable}}}{\leq}
C\,\Lip(f)\abs{x-y}.
\end{align*}
If (a) holds but $y\not \in B$, then let $z\in ]a,b[\cap B$.
From the previous argument we know $\cG (\bar{f} (x), \bar{f} (z))
\leq C |x-z|$ and $\cG \left(\bar{f} (y), \bar{f} (z)\right)\leq C|y-z|$,
from which \eqref{e:goalexte} follows easily.

If (b) holds, then $\left[x,y\right]=\left[x,P_1\right]
\cup\left[P_1,P_2\right]\cup\ldots\cup\left[P_s,y\right]$
where each interval belongs to a cube of the decomposition.
Therefore \eqref{e:goalexte}
follows trivially from the Lipschitz estimate for
$\bar{f}$ in each cube of the decomposition.

\section{Differentiability and Rademacher's Theorem}
\label{s:rademacher}

In this section we introduce the notion of differentiability for
$Q$-valued functions and prove two related theorems. 
The first one gives chain-rule formulas for $Q$-valued
functions and the
second is the extension to the $Q$-valued setting of the
classical result of Rademacher. 

\begin{definition}\label{d:diff}
Let $f:\Om\ra\Iq$ and $x_0\in\Om$. We say that $f$ is differentiable
at $x_0$ if there exist $Q$ matrices $L_i$ satisfying:
\begin{itemize}
\item[(i)]$\cG(f(x),T_{x_0} f)=o(\abs{x-x_0})$, where
\begin{equation}\label{e:taylor1st}
T_{x_0} f(x):=\sum_i\a{L_i\cdot(x-x_0)+f_i(x_0)};
\end{equation}
\item[(ii)] $L_i=L_j$ if $f_i(x_0)=f_j(x_0)$.
\end{itemize}
The $Q$-valued map $T_{x_0} f$
will be called the {\em first-order approximation} of $f$ at $x_0$.
The point $\sum_i \a{L_i} \in \Iq (\R{n\times m})$
will be called the differential of $f$ at $x_0$ and is
denoted by $Df (x_0)$. 
\end{definition}

\begin{remark}\label{r:compdiff}
What we call ``differentiable'' is called
``strongly affine approximable'' by Almgren.
\end{remark}

\begin{remark}\label{r:convention}
The differential $Df(x_0)$ of a $Q$-function $f$
does not determine univocally
its first-order approximation $T_{x_0}f$.
To overcome this ambiguity, we write $Df_i$ for $L_i$ 
in Definition \ref{d:diff},
thus making evident which matrix has to be associated 
to $f_i(x_0)$ in (i). Note that (ii) implies that this notation
is consistent: namely, if $g_1, \ldots, g_Q$ is a different 
selection for $f$, $x_0$ a point of differentiability
and $\pi$ a permutation such that $g_i (x_0) = f_{\pi (i)} (x_0)$
for all $i\in \{1, \ldots, Q\}$,
then $Dg_i (x_0) = Df_{\pi (i)} (x_0)$. Even though
the $f_i$'s are not, in general, differentiable,
observe that, when they are differentiable and $f$ is differentiable,
the $Df_i$'s coincide with the classical differentials.
\end{remark}

If $D$ is the set of points of differentiability
of $f$, the map $x\mapsto Df (x)$ is a $Q$-valued map, which we denote
by $Df$.
In a similar fashion, we define the directional
derivatives
$\partial_\nu f (x) = \sum_i \a{Df_i (x)\cdot\nu}$
and establish the notation $\partial_\nu f = \sum_i \a{\partial_\nu f_i}$.

\subsection{Chain rules}\label{ss:chain}
In what follows, we will deal with several
natural operations defined on $Q$-valued functions.
Consider a function $f:\Om \to \Iqs$.
For every $\Phi: \tilde{\Omega}\to \Omega$,
the right composition $f\circ \Phi$ defines
a $Q$-valued function on $\tilde{\Omega}$. 
On the other hand, given a map $\Psi: \Omega \times \R{n} \to \R{k}$, we
can consider the left composition,
$x \mapsto \sum_i \a{\Psi (x,f_i (x))}$,
which defines a $Q$-valued function denoted, with
a slight abuse of notation, by $\Psi (x,f)$. 

The third operation involves
maps $F: (\R{n})^Q\to \R{k}$
such that, for every $Q$ points $(y_1,\ldots,y_Q)\in\left(\R{n}\right)^Q$ and
$\pi\in\Pe_Q$,
\begin{equation}\label{e:symmetry}
F(y_1, \ldots, y_Q)
= F\left(y_{\pi (1)}, \ldots, y_{\pi (Q)}\right).
\end{equation}
Then, $x\mapsto F (f_1 (x), \ldots, f_Q (x))$ is
a well defined map, denoted by $F\circ f$.

\begin{propos}[Chain rules]\label{p:chain}
Let $f:\Omega\to \Iqs$ be differentiable at $x_0$.
\begin{itemize}
\item[$(i)$] Consider $\Phi: \tilde{\Omega}\to \Omega$
such that $\Phi (y_0)=x_0$ and assume that
$\Phi$ is differentiable at $y_0$. 
Then, $f\circ \Phi$ is differentiable at $y_0$ and
\begin{equation}\label{e:interna}
D (f\circ \Phi) (y_0) = \sum_i 
\a{D f_i (x_0)\cdot D\Phi (y_0)}.
\end{equation}
\item[$(ii)$] Consider $\Psi: \Omega_x\times \R{n}_u\to \R{k}$
such that $\Psi$ is differentiable at $(x_0, f_i (x_0))$
for every $i$. Then, $\Psi (x,f)$ is differentiable at $x_0$ and
\begin{equation}\label{e:esterna}
D\Psi (x,f)) (x_0)=\sum_i 
\a{D_u\Psi (x_0, f_i (x_0))\cdot Df_i (x_0)
+ D_x \Psi (x_0, f_i (x_0))}.
\end{equation}
\item[$(iii)$] Consider $F:(\R{n})^Q\to
\R{k}$ as in \eqref{e:symmetry} and differentiable at $(f_1 (x_0), \ldots, f_Q (x_0))$.
Then, $F \circ f$ is differentiable at $x_0$ and
\begin{equation}\label{e:reale}
D (F\circ f) (x_0)=
\sum_i D_{y_i} F (f_1 (x_0), \ldots, f_Q (x_0))
\cdot Df_i (x_0).
\end{equation}
\end{itemize}
\end{propos}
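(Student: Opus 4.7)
The plan is to reduce each of the three assertions to the classical chain rule, exploiting the first-order approximation $T_{x_0}f(x)=\sum_i\a{Df_i(x_0)(x-x_0)+f_i(x_0)}$ together with the matching condition (ii) of Definition \ref{d:diff}. That condition is precisely what guarantees that the proposed differential in each formula does not depend on the (nonunique) measurable selection, so it must be checked at the end of every item.

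For (i), I would plug the classical expansion $\Phi(y)=x_0+D\Phi(y_0)(y-y_0)+o(|y-y_0|)$ into $T_{x_0}f$ and split by the triangle inequality
\[
\cG\bigl(f(\Phi(y)),\,T_{y_0}(f\circ\Phi)(y)\bigr)\leq \cG\bigl(f(\Phi(y)),\,T_{x_0}f(\Phi(y))\bigr)+\cG\bigl(T_{x_0}f(\Phi(y)),\,T_{y_0}(f\circ\Phi)(y)\bigr).
\]
The first summand is $o(|\Phi(y)-x_0|)=o(|y-y_0|)$ since $\Phi$ is Lipschitz at $y_0$, while the second is dominated by $\bigl(\sum_i\|Df_i(x_0)\|^2\bigr)^{1/2}\cdot|\Phi(y)-x_0-D\Phi(y_0)(y-y_0)|=o(|y-y_0|)$. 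The matching condition descends automatically: $f_i(x_0)=f_j(x_0)$ forces $Df_i(x_0)=Df_j(x_0)$ and hence $Df_i(x_0)D\Phi(y_0)=Df_j(x_0)D\Phi(y_0)$.

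For (ii), I would use Proposition \ref{p:selection} together with the $Q$-valued differentiability of $f$ to produce, for each $x$ close to $x_0$, a permutation $\pi_x\in\Pe_Q$ with
\[
\sum_i\bigl|f_{\pi_x(i)}(x)-f_i(x_0)-Df_i(x_0)(x-x_0)\bigr|^2=o(|x-x_0|^2).
\]
Bounding $\cG\bigl(\Psi(x,f(x)),T_{x_0}(\Psi(x,f))(x)\bigr)^2$ by the sum in $i$ of the squared remainders of the classical first-order expansion of $\Psi$ at $(x_0,f_i(x_0))$, and using that each remainder is $o(|x-x_0|+|f_{\pi_x(i)}(x)-f_i(x_0)|)=o(|x-x_0|)$, yields the claimed approximation. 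The matching condition again propagates from $f$, since a coincidence $f_i(x_0)=f_j(x_0)$ makes both the base point $(x_0,f_i(x_0))$ and the $Q$-valued differential $Df_i(x_0)$ agree for the two indices.

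Part (iii) is the most delicate step, and where I expect the main obstacle: the output of $F\circ f$ is a single $\R{k}$-valued map, so I must first verify that the right-hand side of \eqref{e:reale} is an unambiguous $k\times m$ matrix. Differentiating the symmetry relation \eqref{e:symmetry} with respect to $y_i$ yields $D_{y_i}F(P_1,\ldots,P_Q)=D_{y_{\pi(i)}}F(P_{\pi(1)},\ldots,P_{\pi(Q)})$ for every permutation $\pi$; together with condition (ii) of Definition \ref{d:diff} this shows that the sum $\sum_iD_{y_i}F(f_1(x_0),\ldots,f_Q(x_0))\cdot Df_i(x_0)$ is invariant under any relabeling of the selection. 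To establish the differentiability, I would use the symmetry of $F$ to rewrite
\[
F\circ f(x)=F\bigl(f_{\pi_x(1)}(x),\ldots,f_{\pi_x(Q)}(x)\bigr),
\]
with $\pi_x$ the same permutation as in (ii), and then apply the classical differentiability of $F$ at $(f_1(x_0),\ldots,f_Q(x_0))$: the linear part collapses to $\sum_iD_{y_i}F\cdot Df_i(x_0)(x-x_0)$ and the remainder, controlled by $\sum_i|f_{\pi_x(i)}(x)-f_i(x_0)|=o(|x-x_0|)$, gives the required $o(|x-x_0|)$ bound for $F\circ f$.
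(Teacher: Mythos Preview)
Your argument is correct and, for parts (i) and (iii), follows the paper's proof almost verbatim: the same triangle-inequality splitting in (i), and in (iii) the same use of a permutation $\pi_x$ together with the symmetry of $F$ (the paper records the additional observation \eqref{e:selection is good} that the optimal permutation for $\cG(f(x),f(x_0))$ eventually coincides with the one for $\cG(f(x),T_{x_0}f(x))$, but your direct choice of $\pi_x$ coming from $T_{x_0}f$ works just as well). One cosmetic remark: the identity obtained by differentiating \eqref{e:symmetry} reads $D_{y_i}F(P_1,\ldots,P_Q)=D_{y_{\pi^{-1}(i)}}F(P_{\pi(1)},\ldots,P_{\pi(Q)})$, with $\pi^{-1}$ rather than $\pi$; the invariance of the sum follows all the same.

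The only genuine difference is in (ii). The paper first reduces to the case $f(x_0)=Q\a{u_0}$, $Df(x_0)=Q\a{L}$, by decomposing $f$ near $x_0$ into continuous pieces with $\card(g_k(x_0))=1$; it then compares $\Psi(x,f)$ with the intermediate $Q$-tuple $\sum_i\a{A_i(x)}$ built from $D_u\Psi(x_0,u_0)\cdot(f_i(x)-u_0)$ and passes to $T_{x_0}(\Psi(x,f))$ using the single bound $\|D_u\Psi(x_0,u_0)\|$. Your route bypasses this reduction and works directly with the permutation $\pi_x$ realizing $\cG(f(x),T_{x_0}f(x))$, expanding $\Psi$ at the $Q$ distinct base points $(x_0,f_i(x_0))$. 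Both are equally elementary; the paper's reduction isolates a single linearization constant, while your version is a bit shorter since it avoids the preliminary decomposition and fits the same template as your proof of (iii).
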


\begin{proof} All the formulas are just routine modifications
of the classical chain-rule.
The proof of $(i)$ follows easily from Definition \ref{d:diff}.
Since $f$ is differentiable at $x_0$, we have
\begin{align}\label{e:espansione1}
\cG \left( f \circ \Phi (y), 
\sum_i \a{Df_i (x_0)\cdot (\Phi (y)-\Phi (y_0)) +f_i (\Phi
(y_0))}\right)&= o\, (|\Phi (y)-\Phi (y_0)|)\notag\\
&= o\, (|y-y_0|),
\end{align}
where the last equality follows from the differentiability
of $\Phi$ at $y_0$. Moreover, again due to
the differentiability of $\Phi$,
we infer that
\begin{equation}\label{e:espansione2}
Df_i (x_0)\cdot (\Phi (y)-\Phi (y_0))
= Df_i (x_0)\cdot D\Phi (y_0)\cdot (y-y_0)
+ o (|y-y_0|).
\end{equation}
Therefore, \eqref{e:espansione1} and \eqref{e:espansione2}
imply \eqref{e:interna}.

For what concerns $(ii)$, we note that
we can reduce to the case of $\card(f(x_0))=1$,
i.e.
\begin{equation}\label{e:mult1}
f(x_0)=Q\a{u_0}\quad\textrm{and}\quad Df(x_0)=Q\a{L}.
\end{equation}
Indeed, since $f$ is differentiable (hence, continuous) in $x_0$, 
in a neighborhood of $x_0$ we can decompose $f$ as the sum of 
differentiable multi-valued functions $g_k$,
$f=\sum_k\a{g_k}$, such that $\card(g_k(x_0))=1$.
Then, $\Psi(x,f)=\sum_k\a{\Psi(x,g_k)}$ in a neighborhood of $x_0$,
and the differentiability of $\Psi(x,f)$ follows from the differentiability of the $\Psi(x,g_k)$'s.
So, assuming \eqref{e:mult1}, without loss of generality,
we have to show that
\begin{equation*}
h(x) 
=Q\a{D_u\,\Psi (x_0,u_0)\cdot L
\cdot (x-x_0)+D_x\,\Psi(x_0,u_0)\cdot (x-x_0)+\Psi(x_0,u_0)}
\end{equation*}
is the first-order approximation of $\Psi(x,f)$ in $x_0$.
Set
\[
A_i(x)=D_u\,\Psi (x_0,u_0)\cdot (f_i(x)-u_0)
+D_x\,\Psi(x_0,u_0)\cdot (x-x_0)+\Psi(x_0,u_0).
\]
From the differentiability of $\Psi$, we deduce that
\begin{equation}\label{e:diff psi}
\cG\left(\Psi(x,f),\sum_i\a{A_i(x)}\right)
=o\big(|x-x_0|+\cG(f(x),f(x_0))\big)=o\left(|x-x_0|\right),
\end{equation}
where we used the differentiability of $f$ in the last step.
Hence, we can conclude \eqref{e:esterna}, i.e.
\begin{align*}
\cG\left(\Psi(x,f),h(x)\right)&\leq 
\cG\left(\Psi(x,f),\sum_i\a{A_i(x)}\right)
+\cG\left(\sum_i\a{A_i(x)},h(x)\right)\\
&\leq o\left(|x-x_0|\right)+\|D_u\,\Psi(x_0,u_0)\| 
\cG\left(\sum_i\a{f_i(x)},Q\a{L\cdot(x-x_0)+u_0}\right)\\
&= o\left(|x-x_0|\right).
\end{align*}
where $\|D_u\psi(x_0,u_0)\|$ denotes the Hilbert--Schmidt
norm of the matrix $D_u\,\Psi(x_0,u_0)$.

Finally, to prove $(iii)$, fix $x$ and let $\pi$ be such that
\begin{equation*}
\cG\big(f(x),f(x_0)\big)^2=\sum_i|f_{\pi (i)} (x)-f_i(x_0)|^2.
\end{equation*}
By the continuity of $f$ and (ii) of Definition \ref{d:diff},
for $|x-x_0|$ small enough we have
\begin{equation}\label{e:selection is good}
\cG\big(f(x), T_{x_0} f (x)\big)^2
= \sum_i |f_{\pi (i)} (x) - Df_i (x_0)\cdot (x-x_0) - z_i|^2.
\end{equation}
Set $f_i(x_0)=z_i$ and $z=(z_1,\ldots,z_Q)\in\left(\R{n}\right)^Q$.
The differentiability of $F$ implies
\begin{equation}\label{e:diff F}
\abs{F\circ f(x)-F\circ f(x_0)-
\sum_i D_{y_i}F(z)\cdot\left(f_{\pi (i)} (x)-z_i\right)}=
o\left(\cG(f(x),f(x_0)\right)=o(\abs{x-x_0}).
\end{equation}
Therefore, for $|x-x_0|$ small enough, we conclude
\begin{multline}\label{e:diff f}
\abs{\sum_i D_{y_i}F(z)\cdot\big(f_{\pi (i)} (x)-z_i-
Df_i(x_0)\cdot(x-x_0)\big)}\leq\\
\leq C\sum_i|f_{\pi (i)} (x)-Df_i(x_0)\cdot(x-x_0)-z_i|
\stackrel{\eqref{e:selection is good}}{=} o (|x-x_0|),
\end{multline}
with $C=\sup_i\left\|D_{y_i}F(z)\right\|$.
Therefore, using \eqref{e:diff F} and \eqref{e:diff f},
we conclude \eqref{e:reale}.
\end{proof}

\subsection{Rademacher's Theorem} In this
subsection we extend
the classical theorem of Rademacher on the differentiability
of Lipschitz functions to the $Q$-valued setting. 
Our proof is direct and elementary,
whereas in Almgren's work the theorem is a corollary
of the existence of the biLipschitz embedding $\xii$
(see Section \ref{s:biLipschitz}). An intrinsic
proof has been already proposed in \cite{Gob1}. However
our approach is considerably simpler.

\begin{theorem}[Rademacher]\label{t:rademacher}
Let $f:\Om\ra\Iq$ be a Lipschitz function. Then, $f$ is differentiable
almost everywhere in $\Om$.
\end{theorem}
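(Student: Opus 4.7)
The plan is to argue by induction on $Q$, the base case $Q=1$ being the classical Rademacher theorem for single--valued Lipschitz maps. For the inductive step, assume the result for all $Q'<Q$ and partition $\Om$ into the open set
\[
U := \bigl\{x \in \Om : f(x) \neq Q\a{P}\text{ for every }P\in \R{n}\bigr\}
\]
(open by continuity of $f$) and its closed complement $D$. On $U$ the argument is immediate: for $x_0\in U$ some pair of indices satisfies $\abs{f_i(x_0)-f_j(x_0)}>0$, so Proposition \ref{p.10} applies on every sufficiently small ball $B\ni x_0$ and yields a Lipschitz decomposition $f|_B = \a{f_K} + \a{f_L}$ into simpler pieces with $K,L<Q$ and $\supp f_K(x)\cap\supp f_L(x)=\emptyset$ for all $x$; by the inductive hypothesis both pieces are differentiable a.e.\ on $B$, and the disjointness of their supports at a common differentiability point ensures condition (ii) of Definition \ref{d:diff} for the reassembled $f$. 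A countable covering of $U$ completes this case.

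On $D$, the relation $f(x) = Q\a{g(x)}$ defines a map $g:D\to \R{n}$ which, in view of $\cG(Q\a{p},Q\a{q}) = \sqrt{Q}\,\abs{p-q}$, is Lipschitz. I would extend $g$ to a Lipschitz $\tilde g:\Om\to\R{n}$ (Kirszbraun, or Theorem \ref{thm.ext} with $Q=1$) and invoke the classical Rademacher theorem together with the Lebesgue density theorem to obtain a subset $D^{\ast}\subset D$ of full measure consisting of density points of $D$ at which $\tilde g$ is differentiable. The natural candidate for the first--order approximation at $x_0 \in D^{\ast}$ is
\[
T_{x_0}f(x) := Q\a{g(x_0) + D\tilde g(x_0)\cdot(x-x_0)},
\]
which satisfies (ii) trivially. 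For $x \in D$ near $x_0$ one has $\cG(f(x),T_{x_0}f(x)) = \sqrt{Q}\,\abs{\tilde g(x)-\tilde g(x_0)-D\tilde g(x_0)\cdot(x-x_0)} = o(\abs{x-x_0})$ immediately from the differentiability of $\tilde g$.

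The main obstacle is controlling the same remainder for $x\in U$ near $x_0$, where the identity $f(x)=Q\a{g(x)}$ breaks down. I would argue by contradiction. Setting $L := D\tilde g(x_0)$ and $h(x) := \cG(f(x), Q\a{g(x_0)+L\cdot(x-x_0)})$, the triangle inequality shows $h$ is nonnegative, $h(x_0)=0$, and Lipschitz with constant $C := \Lip(f) + \sqrt{Q}\,\|L\|$. Suppose some sequence $x_k\to x_0$ violated $h(x_k)=o(r_k)$ with $r_k := \abs{x_k-x_0}$, say $h(x_k)\geq \delta r_k$. Then the Lipschitz bound forces $h\geq \delta r_k/2$ on $B_k := B(x_k, \delta r_k/(2C))$, while the differentiability of $\tilde g$ at $x_0$ gives $h(y)=o(r_k)$ uniformly on $D\cap B(x_0,2r_k)$; hence $B_k\cap D = \emptyset$ for $k$ large. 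Since $B_k\subset B(x_0,2r_k)$ and the ratio $\abs{B_k}/\abs{B(x_0,2r_k)} = (\delta/(4C))^m$ is a fixed positive constant, this contradicts the Lebesgue density of $D$ at $x_0$, completing the proof.
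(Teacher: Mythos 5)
Your proof is correct and follows essentially the same route as the paper's: induction on $Q$, Proposition \ref{p.10} off the set where $f$ collapses to a single point of multiplicity $Q$, and on that set a Lipschitz extension of the single-valued piece combined with the classical Rademacher theorem and Lebesgue density. The only (cosmetic) difference is in the final estimate, where the paper directly bounds the distance from a point $y$ to the nearest point of the collapsed set using the density hypothesis, whereas you package the same two ingredients --- Lipschitz propagation and density --- into a proof by contradiction.
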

\begin{proof}
We proceed by induction on the number of values $Q$.
The case $Q=1$ is the classical Rademacher's theorem
(see, for instance, $3.1.2$ of \cite{EG}). We next assume that the
theorem is true for every $Q<Q^*$ and we show its validity for $Q^*$.

We write $f=\sum_{i=1}^{Q^*}\a{f_i}$, where the $f_i$'s are
a measurable selection. We let $\tilde{\Omega}$ be the set of points
where $f$ takes a single value with multiplicity $Q$:
\[
\tilde\Om=\big\{x\in\Om\,:\,f_1(x)=f_i(x)\;\forall i\big\}.
\]
Note that $\tilde\Om$ is closed. In $\Om\setminus\tilde\Om$, $f$ is 
differentiable almost everywhere by
inductive hypothesis. Indeed, by Proposition \ref{p.10}, 
in a neighborhood of any point $x\in \Om\setminus\tilde\Om$,
we can decompose $f$ in the sum of two Lipschitz
simpler multi-valued functions, $f=\a{f_L}+\a{f_K}$, with
the property that $\supp (f_L (x))\cap \supp (f_K (x))=\emptyset$.
By inductive hypothesis, $f_L$ and $f_K$ are
differentiable, hence, also $f$ is.

It remains to prove that $f$ is differentiable a.e. in
$\tilde\Om$. Note that $f_1|_{\tilde\Om}$ is a Lipschitz
vector valued function and consider a Lipschitz extension of it to all $\Om$, denoted by $g$.
We claim that $f$ is differentiable in all the points $x$ where 
\begin{itemize}
\item[(a)] $\tilde\Om$ has density $1$;
\item[(b)] $g$ is differentiable.
\end{itemize} 
Our claim would conclude the proof. In order to show it,
let $x_0\in\tilde\Om$ be any given point fulfilling (a) and (b)
and let $T_{x_0}g (y)=L\cdot (y-x_0)+f_1(x_0)$ be the first order Taylor
expansion of $g$ at $x_0$, that is
\begin{equation}\label{e.25}
\abs{g(y)-L\cdot(y-x_0)-f_1(x_0)}=o(|y-x_0|).
\end{equation}
We will show that $T_{x_0}f (y) := Q\a{L\cdot(y-x_0)+f_1(x_0)}$
is the first order expansion of $f$ at $x_0$.
Indeed, for every
$y\in\R{m}$, let $r= |y-x_0|$ and choose 
$y^*\in\tilde\Om\cap\overline{B_{2r}(x_0)}$
such that
\[
\abs{y-y^*}=\dist\left(y,\tilde\Om\cap\overline{B_{2r}(x_0)}\right).
\]
Being $f$, $g$ and $Tg$ Lipschitz with constant at
most $\Lip (f)$,
using (\ref{e.25}), we infer that 
\begin{align}
\cG\big(f(y),T_{x_0}f(y)\big)\leq{}&
\cG\big(f(y),f(y^*)\big)+\cG\big(T_{x_0}f (y^*), T_{x_0}f(y)\big)
+\cG\big(f(y^*),T_{x_0}f(y^*)\big)\nonumber\\
\leq{}&\Lip(f)\abs{y-y^*}+Q\,\Lip(f)\abs{y-y^*}+\nonumber\\
&+\cG\big(Q\a{g(y^*)},Q\a{L\cdot(y^*-x_0)+f_1(x_0)}\big)
\nonumber\\
\leq{}&(Q+1)\,\Lip(f)\abs{y-y^*}+o\big(\abs{y^*-x_0}\big)\label{e:3.20}.
\end{align}
Since $|y^*-x_0|\leq 2\,r=2\,|y-x_0|$, it remains to
estimate $\rho:= |y-y^*|$. Note that the ball
$B_\rho (y)$ is contained in $B_r (x_0)$ and does not intersect
$\tilde{\Omega}$. Therefore
\begin{equation}
\abs{y-y^*}=\rho\leq C\,\left|B_{2r} (x_0)
\setminus \tilde{\Omega}\right|^{1/m}\leq
C(m)\, r 
\left(\frac{|B_{2r} (x_0)\setminus\tilde\Om|}
{|B_{2r} (x_0)|}\right)^\frac{1}{m}.
\label{e:3.21}
\end{equation}
Since $x_0$ is a point of density $1$, we can conclude
from \eqref{e:3.21} that $|y-y^*|= |y-x_0|\, o (1)$.
Inserting this inequality in \eqref{e:3.20}, we conclude 
that $\cG (f(y), T_{x_0}f (y))= o (|y-x_0|)$,
which shows that $T_{x_0} f$ is the first order expansion
of $f$ at $x_0$.
\end{proof}

\chapter{Almgren's extrinsic theory}

Two ``extrinsic maps'' play a pivotal role in the theory of 
$Q$-functions developed in \cite{Alm}. The first one is 
a biLipschitz embedding $\xii$ of $\Iqs$ into $\R{N(Q,n)}$,
where $N(Q,n)$ is a sufficiently large integer. Almgren 
uses this map to define Sobolev $Q$-functions as classical
$\R{N}$-valued Sobolev maps taking values 
in $\cQ:=\xii (\Iqs)$. Using $\xii$, many
standard facts of Sobolev maps can be extended
to the $Q$-valued setting with little effort.
The second map
$\ro$ is a Lipschitz retraction of $\R{N(Q,n)}$ onto
$\cQ$, which is used in various approximation
arguments. 

The existence
of the maps $\xii$ and $\ro$ is proved in
Section \ref{s:biLipschitz}. In Section
\ref{s:Qsobolev} we show that Sobolev $Q$-valued functions 
in the sense of Almgren coincide with
those of Definition \ref{d:W1p} and we use $\xii$
to derive their basic properties. Finally,
Section \ref{s:existence} shows that our definition
of Dirichlet's energy coincides with Almgren's one and
proves the Existence Theorem \ref{t:existence}.
Except for Section \ref{s:Qsobolev}, no other
portion of this paper makes direct use 
of $\xii$ or of $\ro$: the regularity theory
of Chapters 3 and 5 needs only 
the propositions stated in Section \ref{s:Qsobolev},
which we are going to prove again in Chapter 4 within the frame of an ``intrinsic''
approach, that is independent of $\xii$ and $\ro$.

\section{The biLipschitz embedding $\xii$ and
the retraction $\ro$}
\label{s:biLipschitz}

\begin{theorem}\label{t:xi}
There exist $N=N(Q,n)$ and an injective map
$\xii:\Iqs\ra\R{N}$ such that:
\begin{itemize}
\item[$(i)$] $\Lip(\xii)\leq1$;
\item[$(ii)$] if $\cQ= \xii(\Iq)$, then
$\Lip(\xii^{-1}|_\cQ)\leq C(n,Q)$.
\end{itemize}
Moreover, there exists a Lipschitz map $\ro:\R{N}\to\cQ$
which is the identity on $\cQ$.
\end{theorem}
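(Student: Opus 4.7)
The plan is to construct $\xii$ concretely via sorted one-dimensional projections onto a carefully chosen finite family of unit vectors in $\R{n}$, and then to deduce the retraction $\ro$ from $\xii$ by applying the Lipschitz extension theorem just established (Theorem \ref{thm.ext}) to the inverse $\xii^{-1}:\cQ\to\Iqs$. The fact that $\ro$ is a by-product of $\xii$ and Theorem \ref{thm.ext} is the reverse path already announced in the introduction.

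For every unit vector $v\in\R{n}$ and every $T=\sum_i\a{P_i}\in\Iqs$, let $\pi_v(T)=(t_1,\ldots,t_Q)$ denote the nondecreasing rearrangement of the real numbers $\langle P_1,v\rangle,\ldots,\langle P_Q,v\rangle$. The classical rearrangement (Hardy--Littlewood--P\'olya) inequality gives $|\pi_v(T_1)-\pi_v(T_2)|\leq\cG(T_1,T_2)$, because if $\sigma$ realizes the minimum in Definition \ref{d:FeG} then the ordered tuples of $\langle P_i,v\rangle$ and $\langle S_{\sigma(i)},v\rangle$ can only become closer in $\ell^2$ upon monotone rearrangement. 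Fixing a sufficiently large finite family of unit vectors $v_1,\ldots,v_h$ in $\R{n}$ (to be specified below), the map
\[
\xii(T):=\tfrac{1}{\sqrt{h}}\bigl(\pi_{v_1}(T),\ldots,\pi_{v_h}(T)\bigr)\in\R{Qh}
\]
therefore satisfies $\Lip(\xii)\leq 1$, settling $(i)$.

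The hard part is $(ii)$, i.e.\ the biLipschitz lower bound $\cG(T_1,T_2)\leq C(n,Q)\,|\xii(T_1)-\xii(T_2)|$. Injectivity alone is not difficult: an element of $\Iqs$ is a Radon measure on $\R{n}$ of mass $Q$, and two distinct such measures must differ on the one-dimensional marginals along some direction, so choosing $\{v_k\}$ dense enough in the sphere guarantees $\xii(T_1)=\xii(T_2)\Rightarrow T_1=T_2$. To upgrade this to a quantitative estimate I would argue by induction on $Q$ together with a compactness/contradiction scheme: assuming the biLipschitz bound for every $Q'<Q$, suppose that a sequence $T_1^m,T_2^m\in\Iqs$ satisfies $\cG(T_1^m,T_2^m)=1$ and $|\xii(T_1^m)-\xii(T_2^m)|\to 0$; after modding out translations one extracts a limit pair $(T_1,T_2)$ with $\xii(T_1)=\xii(T_2)$ and $\cG(T_1,T_2)=1$, contradicting injectivity. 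In the degenerate regime where several points in $T_1^m$ or $T_2^m$ drift apart (so the normalized sequence escapes to infinity), Proposition \ref{p.10} lets us split each $T_j^m$ into two simpler pieces lying in $\I{L}$ and $\I{K}$ with $L,K<Q$, whose sorted projections are still controlled by those of $T_j^m$; this reduces the case to the inductive hypothesis applied to the two pieces separately. Choosing $\{v_k\}$ in sufficiently general position so as to separate all the (finitely many) possible $(L,K)$-splittings of $Q$ closes the induction. The main obstacle is exactly this bookkeeping: one has to show that the biLipschitz constant does not blow up as different points of the supports collide, and this is precisely where the general-position choice of the $v_k$ and the inductive structure must interact cleanly.

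Once $\xii$ is biLipschitz the inverse $\xii^{-1}:\cQ\to\Iqs$ is Lipschitz with constant $C(n,Q)$, and $\cQ$ is a subset of $\R{N}$. Theorem \ref{thm.ext} then furnishes a Lipschitz extension $\Psi:\R{N}\to\Iqs$ of $\xii^{-1}$, and the composition
\[
\ro:=\xii\circ\Psi:\R{N}\to\cQ
\]
is Lipschitz and satisfies $\ro(x)=\xii(\xii^{-1}(x))=x$ for every $x\in\cQ$, so it is the desired retraction. This completes the reverse deduction of $\ro$ from $\xii$ and the Lipschitz extension theorem, as foreshadowed in the introduction.
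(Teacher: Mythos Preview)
Your construction of $\xii$ as the vector of sorted one-dimensional projections, the bound $\Lip(\xii)\leq 1$ via the rearrangement inequality, and the derivation of $\ro=\xii\circ\Psi$ from Theorem~\ref{thm.ext} applied to $\xii^{-1}$ are all correct and coincide with the paper's argument.

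The gap is in $(ii)$. Your compactness--plus--induction outline is not a proof, and the difficulties you label ``bookkeeping'' are the entire content. Two concrete obstacles. First, injectivity of $\xii$ for a \emph{finite} family $\{v_k\}$ is itself nontrivial: for instance $\a{(0,0)}+\a{(1,1)}$ and $\a{(1,0)}+\a{(0,1)}$ have identical sorted projections onto both coordinate axes, so ``dense enough'' has no meaning for a finite set and the injectivity step already requires a quantitative input you have not supplied. Second, after normalizing $\cG(T_1^m,T_2^m)=1$ and translating, the diameters of $T_1^m,T_2^m$ remain unbounded, so no limit pair can be extracted; your fix via splitting (Proposition~\ref{p.10} is about Lipschitz \emph{maps}, incidentally, not single $Q$-points) would then require showing that a decomposition $T_j=T_j^L+T_j^K$ induces a compatible decomposition of the \emph{sorted} projection vectors $\pi_{v_k}(T_j)$, and this fails exactly for directions $v_k$ nearly orthogonal to the separation---which is where the argument has to do real work, not bookkeeping.

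The paper replaces all of this with a single direct lemma (Lemma~\ref{l:comb}): by a covering argument on $\s^{n-1}$ one produces $\alpha=\alpha(Q,n)>0$ and finitely many directions $e_1,\ldots,e_h$ such that for \emph{any} collection of $Q^2$ vectors $\{w_k\}\subset\R{n}$ there is some $e_l$ with $|w_k\cdot e_l|\geq\alpha|w_k|$ for every $k$. Given $T_1=\sum_i\a{P_i}$ and $T_2=\sum_j\a{S_j}$, apply this to the $Q^2$ differences $P_i-S_j$; along the resulting $e_l$ one reads off $\cG(T_1,T_2)\leq\alpha^{-1}|\pi_{e_l}(T_1)-\pi_{e_l}(T_2)|\leq\alpha^{-1}\sqrt{h}\,|\xii(T_1)-\xii(T_2)|$ in two lines, with no compactness and no induction. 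This lemma is what your ``general position'' choice of $\{v_k\}$ is groping towards, but it must be stated and proved, and once it is, the contradiction scheme becomes superfluous.
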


The existence of $\ro$ is a trivial consequence of 
the Lipschitz regularity of $\xii^{-1}|_\cQ$ 
and of the Extension Theorem \ref{thm.ext}.

\begin{proof}[Proof of the existence of $\ro$ given $\xii$]
Consider $\xii^{-1}: \cQ\to \Iq$. Since
this map is Lipschitz, by Theorem \ref{thm.ext} there exists
a Lipschitz extension $f$ of $\xii^{-1}$ to the entire space.
Therefore, $\ro = \xii\circ f$ is the desired retraction.
\end{proof}

For the proof of the first part
of Theorem \ref{t:xi}, we follow instead the
ideas of Almgren.
A slight modification of these ideas,
moreover, leads to the construction of a special biLipschitz
embedding: this observation, due to B. White,
was noticed in \cite{Ch}.

\begin{corol}\label{c:BW}
There exist $M=M(Q,n)$ and an injective map
$\xii_{BW}:\Iqs\to\R{M}$ with the following properties:
$\xii_{BW}$ satisfies (i) and (ii) of Theorem \ref{t:xi} 
and, for every $T\in\Iqs$, there exists $\delta>0$ such that
\begin{equation}\label{e:BW}
|\xii_{BW}(T)-\xii_{BW}(S)|=\cG(T,S)\quad\forall\;S\in B_{\delta}(T)\subset\Iqs.
\end{equation}
\end{corol}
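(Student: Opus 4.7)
The plan is to follow Almgren's construction of $\xii$, which represents $\xii(T)$ as a weighted concatenation of one--dimensional sorted projections $\Pi_v(T):=\mathrm{sort}(\langle v,P_1\rangle,\ldots,\langle v,P_Q\rangle)\in\R{Q}$ along a finite family of vectors $v\in\R{n}$, and to choose the family so as to form a \emph{tight frame}: unit vectors $v_1,\ldots,v_k\in\R{n}$ and positive weights $\lambda_j$ satisfying
\[
\sum_{j=1}^k \lambda_j\,v_j\otimes v_j = \Id_n,
\]
and so large that the concatenation $\xii_{BW}(T):=\bigl(\sqrt{\lambda_j}\,\Pi_{v_j}(T)\bigr)_{j=1,\ldots,k}\in\R{Qk}$ is still biLipschitz in the sense of Theorem \ref{t:xi}. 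Starting from Almgren's family, both demands can be met by appending an orthonormal basis with suitably chosen weights. By the one--dimensional optimal--transport inequality
\[
\bigl|\Pi_{v_j}(T)-\Pi_{v_j}(S)\bigr|^2\leq \sum_i\langle v_j,P_{\tau(i)}-S_i\rangle^2\qquad \forall\,\tau\in\Pe_Q,
\]
weighting by $\lambda_j$, summing over $j$ and using the tight--frame identity, one obtains $|\xii_{BW}(T)-\xii_{BW}(S)|^2\leq\sum_i|P_{\tau(i)}-S_i|^2$ for every $\tau$, hence $|\xii_{BW}(T)-\xii_{BW}(S)|\leq\cG(T,S)$. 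The lower biLipschitz bound is inherited from the Almgren portion of the embedding.

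For the local isometry \eqref{e:BW}, fix $T=\sum_i\a{P_i}$ and pick $\delta>0$ smaller than $\tfrac{1}{2}|\langle v_j,P_i-P_{i'}\rangle|$ for every $j$ and every pair $(i,i')$ with $\langle v_j,P_i\rangle\neq\langle v_j,P_{i'}\rangle$ (a finite positive minimum). Given $S\in B_\delta(T)\subset\Iqs$ and an optimal $\sigma\in\Pe_Q$ realizing $\cG(T,S)$, the inequality $\cG(T,S)<\delta$ forces $|P_i-S_{\sigma(i)}|<\delta$ for every $i$. Consequently, in every direction $v_j$, whenever $\langle v_j,P_i\rangle<\langle v_j,P_{i'}\rangle$ strictly, the bounds $|\langle v_j,P_i-S_{\sigma(i)}\rangle|<\delta$ and the analogous one for $i'$ ensure $\langle v_j,S_{\sigma(i)}\rangle<\langle v_j,S_{\sigma(i')}\rangle$; on groups of indices where $\langle v_j,P_i\rangle$ is constant every ordering is 1D--optimal, since the sum $\sum_i\langle v_j,P_i-S_{\sigma(i)}\rangle^2$ is invariant under permutations of $\sigma$ internal to such a group. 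Thus $\sigma$ realizes the sorted matching for each $j$, giving $|\Pi_{v_j}(T)-\Pi_{v_j}(S)|^2=\sum_i\langle v_j,P_i-S_{\sigma(i)}\rangle^2$. Weighting by $\lambda_j$, summing and using the frame identity yields $|\xii_{BW}(T)-\xii_{BW}(S)|^2=\sum_i|P_i-S_{\sigma(i)}|^2=\cG(T,S)^2$.

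The main obstacle is arranging that one and the same matching $\sigma$ is simultaneously 1D--optimal in every direction $v_j$, uniformly for $S$ in a single neighborhood of $T$. The threshold $\delta$ does precisely this: strict gaps in the $v_j$--projections of $T$ persist under $\delta$--perturbations, while projection--ties impose no constraint and can be freely absorbed into the ambiguity of $\sigma$. A minor secondary point is ensuring the tight--frame identity compatibly with the lower biLipschitz estimate of Theorem \ref{t:xi}; this is achieved by appending an orthonormal frame with appropriate weights to Almgren's family, which adds a multiple of the identity and can be balanced by rescaling.
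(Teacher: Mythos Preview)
Your approach is correct and is essentially the paper's: build $\xii_{BW}$ from sorted one-dimensional projections along a finite family of unit vectors forming a tight frame, then use the local preservation of order in each projection to upgrade the global Lipschitz inequality to local equality.

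The only loose end is the construction of the tight frame. Appending a single orthonormal basis with equal weights adds $\nu\,\Id_n$, but Almgren's contribution $\sum_j \mu_j\, e_j\otimes e_j$ is generically \emph{not} a multiple of the identity, so ``balancing by rescaling'' will not yield $\Id_n$. You can repair this either by choosing the appended basis to diagonalize Almgren's matrix and taking compensating (unequal) weights, or---more cleanly, and this is what the paper does---by extending \emph{each} Almgren vector $e_\alpha$ to a full orthonormal basis $\{e_\alpha^1,\ldots,e_\alpha^n\}$ with $e_\alpha^1=e_\alpha$ and using the uniform weight $h^{-1}$. Then $\sum_{\alpha=1}^h\sum_{\beta=1}^n h^{-1}\, e_\alpha^\beta\otimes e_\alpha^\beta=\Id_n$ automatically, and the lower biLipschitz bound of Theorem~\ref{t:xi} is inherited because the original $e_\alpha$'s appear among the frame vectors with positive weight. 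Your local-isometry argument (choice of $\delta$, preservation of strict order, absorption of ties) matches the paper's verbatim.
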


We point out that we will not make any use in the following of such special embedding
$\xii_{BW}$, since all the properties of $Q$-valued functions are independent of the
embedding we choose. Nevertheless, we give a proof of Corollary \ref{c:BW} 
because it provides a better intuition on 
$Q$-valued functions (see Proposition
\ref{p:dir BW}) and can be used to give shorter proofs of 
several technical lemmas (see \cite{DLSp}).

\subsection{A combinatorial Lemma} The key of
the proof of Theorem \ref{t:xi} 
is the following combinatorial statement.

\begin{lemma}[Almgren's combinatorial Lemma]\label{l:comb}
There exist $\alpha=\alpha(Q,n)>0$ and a set of
$h=h(Q,n)$ unit vectors $\Lambda=\{e_1, \ldots e_h\}\subset
\s^{n-1}$ with the following property: given any set of
$Q^2$ vectors, $\left\{v_1,\ldots,v_{Q^2}\right\}\subset\R{n}$, 
there exists $e_l\in\Lambda$ such that
\begin{equation}\label{e:combineq}
\abs{v_k\cdot\e_l}\geq\alpha\abs{v_k}\quad\textrm{for all }k\in\big\{1,\ldots,Q^2\big\}.
\end{equation}
\end{lemma}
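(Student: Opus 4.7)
The plan is to combine an elementary measure estimate on the sphere with a compactness argument over the space of $Q^2$-tuples of lines through the origin. First, for any $u \in \s^{n-1}$ and $\alpha \in (0,1)$, consider the ``forbidden belt''
\[
B_u^\alpha := \{e \in \s^{n-1} : |e\cdot u| < \alpha\},
\]
namely the intersection of $\s^{n-1}$ with a slab of half-width $\alpha$ orthogonal to $u$. An integration in spherical coordinates with pole $u$ (or Fubini applied to the slab) yields a bound $\sigma(B_u^\alpha)\leq c(n)\alpha$ on the normalized surface measure, with a dimensional constant $c(n)$.

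Next, set $\alpha_0 := (2 c(n) Q^2)^{-1}$. Given any $v_1,\dots,v_{Q^2}\in \R{n}$, discard the zero vectors (for which \eqref{e:combineq} is automatic) and put $u_k := v_k/|v_k|$. By subadditivity,
\[
\sigma\!\left(\bigcup_{k} B_{u_k}^{\alpha_0}\right) \leq Q^2\,c(n)\,\alpha_0 = \tfrac{1}{2},
\]
so the complement is non-empty and yields some $e = e(v_1,\dots,v_{Q^2})\in \s^{n-1}$ with $|e\cdot v_k|\geq \alpha_0\,|v_k|$ for every $k$. This already proves the lemma with the \emph{infinite} set $\Lambda = \s^{n-1}$.

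To reduce to a finite $\Lambda$, I would exploit the compactness of the configuration space $X := (\mathbb{R}\mathrm{P}^{n-1})^{Q^2}$ of ordered $Q^2$-tuples of lines through the origin of $\R{n}$. The functional
\[
F(e, \overline L) := \min_{k=1,\dots,Q^2}|e\cdot u_k|, \qquad \overline L = (L_1,\dots,L_{Q^2})\in X,
\]
where $u_k$ denotes either unit vector spanning $L_k$, is well-defined (the sign ambiguity is killed by the absolute value) and continuous on $\s^{n-1}\times X$. By the previous step, for every $\overline L\in X$ there is a direction $e(\overline L)\in \s^{n-1}$ with $F(e(\overline L), \overline L)\geq \alpha_0$, and continuity in $\overline L$ produces an open neighborhood $U(\overline L)\subset X$ on which $F(e(\overline L), \cdot)\geq \alpha_0/2$. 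Extracting from $\{U(\overline L)\}_{\overline L\in X}$ a finite subcover $U(\overline L^{(1)}),\dots,U(\overline L^{(h)})$ of the compact $X$, one takes $\Lambda := \{e(\overline L^{(j)})\}_{j=1}^h$ and $\alpha := \alpha_0/2$.

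The only delicate point lies in this last step: one must pass from the sphere of directions to the projective space of lines (since $v_k$ and $-v_k$ impose the same constraint in \eqref{e:combineq}), and verify that the absolute value renders $F$ a well-defined continuous function on the resulting compact quotient. Once the right configuration space is chosen, the measure estimate in Step~1 and the continuity-plus-compactness argument in Step~3 are both routine; no deeper combinatorial or geometric input is needed.
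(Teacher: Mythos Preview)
Your proof is correct but takes a genuinely different route from the paper's. Both arguments begin with the same belt-measure estimate $\sigma(B_u^\alpha)\leq c(n)\alpha$, and both use subadditivity to see that the union of $Q^2$ such belts misses a large portion of $\s^{n-1}$. The divergence is in how finiteness of $\Lambda$ is obtained. The paper fixes $\Lambda$ \emph{a priori} as the centers of a Vitali cover of $\s^{n-1}$ by balls of radii smaller than $\alpha$; it then argues that, since each belt has small measure, the $Q^2$ belts together cannot contain all the Vitali centers, so some $e_l\in\Lambda$ survives. You instead first establish the lemma with $\Lambda=\s^{n-1}$ and then extract a finite $\Lambda$ by compactness of the configuration space $(\mathbb{R}\mathrm{P}^{n-1})^{Q^2}$ together with continuity of $F$. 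The paper's approach is more elementary (no quotient space, no topological compactness argument) and in principle yields a constructive bound on $h$ via the covering number of the sphere at scale $\alpha$; your approach is perhaps cleaner conceptually and makes the passage to finite $\Lambda$ a transparent instance of a general continuity-plus-compactness principle, at the cost of losing any explicit control on $h$.
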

\begin{proof} Choose a unit vector $e_1$ and
let $\alpha(Q,n)$ be small enough in order to ensure that the set
$E := \left\{x\in\s^{n-1}\,:\,\abs{x\cdot e_1}<\alpha\right\}$
has sufficiently small measure, that is
\begin{equation}\label{e:alpha}
\cH^{n-1}(E)\leq\frac{\cH^{n-1}(\s^{n-1})}{8\cdot 5^{n-1}\,Q^2}.
\end{equation}
Note that $E$ is just the $\alpha$-neighborhood of an equatorial 
$(n-2)$-sphere
of $\s^{n-1}$. Next, we use Vitali's covering Lemma
(see $1.5.1$ of \cite{EG}) to find 
a finite set $\Lambda=\{e_1, \ldots, e_h\}\subset \s^{n-1}$
and a finite number of radii $0< r_i <\alpha$ such that
\begin{itemize}
\item[$(a)$] the balls $B_{r_i} (e_i)$ are disjoint; 
\item[$(b)$] the balls $B_{5\,r_i}(e_i)$ cover the whole sphere.
\end{itemize}
We claim that $\Lambda$ satisfies the requirements of the lemma.
Let, indeed, $V=\left\{v_1,\ldots,v_{Q^2}\right\}$ be a set of
vectors. We want to show the existence of $e_l\in \Lambda$ which
satisfies \eqref{e:combineq}. Without loss of generality, 
we assume that each $v_i$ is nonzero.
Moreover, we consider the
sets $C_k=\big\{x\in\s^{n-1}:\abs{x\cdot v_k}<\alpha\abs{v_k}\big\}$
and we let $C_V$ be the union of the $C_k$'s.
Each $C_k$ is the $\alpha$-neighborhood of the equatorial sphere
given by the intersection of $\s^{n-1}$ with the hyperplane orthogonal to
$v_i$. Thus, by (\ref{e:alpha}), 
\begin{equation}\label{e:less}
\cH^{n-1}\left(C_V\right)\leq
\frac{\cH^{n-1}(\s^{n-1})}{8\cdot 5^{n-1}}.
\end{equation}
Note that, due to the bound $r_i<\alpha$, 
\begin{equation}\label{e:more}
e_i\in C_V \quad\Rightarrow\quad
\cH^{n-1}\left(C_V\cap B_{r_i} (e_i)\right)\geq\frac{\cH^{n-1}(B_{r_i}(e_i)
\cap \s^{n-1})}{2}.
\end{equation}
By our choices, there must be one $e_l$ which does not belong to
$C_V$, otherwise
\begin{align*}
\frac{\cH^{n-1} (\s^{n-1})}{2\cdot 5^{n-1}}&\quad\stackrel{\mathclap{(a)\,\&\,(b)}}{\leq}\quad
\sum_i \cH^{n-1} \left(B_{r_i} (e_i)\cap\s^{n-1}\right)
\stackrel{\eqref{e:more}}{\leq}2\sum_i \cH^{n-1}\left(C_V\cap B_{r_i} (e_i)\right)\\
&\quad\stackrel{\mathclap{(a)}}{\leq}\quad 2 \,\cH^{n-1}\left(C_V\right)
\stackrel{\eqref{e:less}}{\leq}
\frac{\cH^{n-1}(\s^{n-1})}{4\cdot 5^{n-1}},
\end{align*}
which is a contradiction (here we used the fact that, though the sphere 
is curved, for $\alpha$ sufficiently small the 
$(n-1)$-volume of $B_{r_i} (e_i)\cap \s^{n-1}$
is at least $2^{-1}5^{-n+1}$ times the volume of $B_{5\,r_i} (e_i)
\cap \s^{n-1}$).
Having chosen $e_l\not\in C_V$, we have
$e_l\not \in C_k$ for every $k$, which in turn
implies \eqref{e:combineq}.
\end{proof}

\subsection{Proof of the existence of $\xii$}
Let $\Lambda =\{e_1, \ldots e_h\}$ be a
set satisfying the conclusion of Lemma \ref{l:comb} and set $N=Q\,h$.
Fix $T\in\Iqs$, $T=\sum_i \a{P_i}$. For any $e_l\in\Lambda$,
we consider the $Q$ projections of the points $P_i$ on the
$e_l$ direction, that is $P_i\cdot e_l$. This gives an array
of $Q$ numbers, which we rearrange in increasing order,
getting a $Q$-dimensional vector $\pi_l (T)$.
The map $\xii: \Iq\to \R{N}$ is, then, defined by
$\xii (T) = h^{-1/2} (\pi_1 (T), \ldots, \pi_h (T))$.

The Lipschitz regularity of $\xii$ is a trivial corollary
of the following rearrangement inequality:
\begin{itemize}
\item[(Re)] if $a_1\leq\ldots\leq a_n$ and $b_1\leq\ldots\leq b_n$, then,
for every permutation $\sigma$ of the indices,
\[
(a_1-b_1)^2+\cdots+(a_n-b_n)^2\leq 
(a_1 - b_{\sigma (1)})^2+\cdots+(a_n - b_{\sigma (n)})^2.
\] 
\end{itemize}
Indeed, fix two points $T=\sum_i\a{P_i}$ and $S=\sum_i\a{R_i}$
and assume, without loss of generality, that
\begin{equation}\label{e.54}
\cG(T,S)^2=\sum_i\abs{P_i-R_{i}}^2.
\end{equation}
Fix an $l$. Then, by (Re), 
$\abs{\pi_l(T)-\pi_l(S)}^2\leq \sum ((P_i-R_{i})\cdot\e_l)^2$.
Hence, we get
\begin{align*}
\abs{\xii(T)-\xii(S)}^2
&\leq \frac{1}{h}
\sum_{l=1}^h \sum_{i=1}^Q((P_i-R_{i})\cdot\e_l)^2
\leq\frac{1}{h}\sum_{l=1}^h \sum_{i=1}^Q\abs{P_i-R_{i}}^2\\
&\stackrel{\mathclap{\eqref{e.54}}}{=} \frac{1}{h}\sum_{l=1}^h \cG (T,S)^2 = \cG (T,S)^2.
\end{align*}
Next, we conclude the proof by showing, for $T=\sum_i\a{P_i}$ and $S=\sum_i\a{R_i}$,
the inequality
\begin{equation}\label{e:to show}
\cG(T,S)\leq\frac{\sqrt{h}}{\alpha}\, \abs{\xii(T)-\xii(S)},
\end{equation}
where $\alpha$ is the constant in Lemma \ref{l:comb}.
Consider, indeed, the $Q^2$ vectors
$P_i-R_j$, for $i,j\in\{1,\ldots, Q\}$.
By Lemma \ref{l:comb}, we can select a unit vector $e_l\in\Lambda$ such that
\begin{equation}\label{e:all}
\abs{\left(P_i-R_j\right)\cdot e_l}\geq\alpha\abs{P_i-R_j},
\quad\text{for all }\,i,j\in\{1,\ldots,Q\}.
\end{equation}
Let $\tau$ and $\lambda$ be permutations such that
\begin{equation*}
\pi_l(T) = (P_{\tau (1)}\cdot e_l, \ldots, P_{\tau (Q)}\cdot e_l)
\quad \mbox{and} \quad\pi_l (S) = (R_{\lambda (1)}\cdot e_l, \ldots, 
R_{\lambda (Q)} \cdot e_l).
\end{equation*}
Then, we conclude \eqref{e:to show},
\begin{align*}
\cG(T,S)^2&\leq\sum_{i=1}^Q\abs{P_{\tau(i)}-R_{\lambda(i)}}^2
\stackrel{\eqref{e:all}}{\leq}
\alpha^{-2}\sum_{i=1}^Q\left((P_{\tau(i)} - R_{\lambda(i)})\cdot e_l\right)^2=\alpha^{-2}\,\abs{\pi_l(T)-\pi_l(S)}^2\\
&\leq 
\alpha^{-2}\,h\,\abs{\xii(T)-\xii(S)}^2.
\end{align*}

\subsection{Proof of Corollary \ref{c:BW}} Let
$\Lambda=\{e_1, \ldots e_h\}$ be the set of unit vectors
in the proof of Theorem \ref{t:xi}.
We consider the enlarged set
$\Gamma$ of $n\,h$ vectors containing an orthonormal frame for
each $e_l\in\Lambda$,
\[
\Gamma=\big\{e_1^1,\ldots, e_1^n,\ldots,e_h^1,\ldots, e_h^n\big\},
\]
where, for every $\alpha\in\{1,\ldots, h\}$, $e_\alpha^1=e_\alpha$ and
$\{e_\alpha^1,\ldots, e_\alpha^n\}$ is an orthonormal basis of $\R{n}$.
Note that, in principle, the vectors $e_\alpha^\beta$ may not be all distinct: this can happen, for example,
if there exist two vectors $e_j$ and $e_l$ which are orthogonal.
Nevertheless,
we can assume, without loss of generality,
that $\Gamma$ is made of $n\,h$ distinct vectors
(in passing, this is can always be reached by perturbing slightly $\Lambda$).

Then, we define the map $\xii_{BW}$ in the same way as $\xii$, with $\Gamma$ replacing
$\Lambda$: for $T=\sum_i\a{P_i}$,
\[
\xii_{BW} (T) = h^{-1/2} \big(\pi_1^1 (T),\ldots,\pi_1^n(T),\ldots,\pi_h^1(T) \ldots, \pi_h^n (T)\big),
\]
where $\pi_\alpha^\beta(T)$ is the array
of $Q$ scalar products $P_i\cdot e_\alpha^\beta$ rearranged in increasing order.

Clearly, $\xii_{BW}$ satisfies the conclusion of Theorem \ref{t:xi}. We need only
to show \eqref{e:BW}.

To this aim, we start noticing that, given $T=\sum_i\a{P_i}\in\Iq$,
there exists $\delta>0$ with the following property:
for every $S=\sum_i\a{R_i}\in B_\delta(T)$
and every $\pi_\alpha^\beta$, assuming that
$\cG(T,S)^2=\sum_i|P_i-R_i|^2$,
there exists a permutation $\sigma_\alpha^\beta\in\Pe_Q$ such that
the arrays $(P_i\cdot e_\alpha^\beta)$ and $(R_i\cdot e_\alpha^\beta)$
are ordered increasingly by the same permutation
$\sigma_\alpha^\beta$, i.e.
\[
\pi_\alpha^\beta(T)=\left(P_{\sigma_\alpha^\beta(1)}\cdot e_\alpha^\beta,
\ldots,P_{\sigma_\alpha^\beta(Q)}\cdot e_\alpha^\beta\right)
\quad\text{and}\quad
\pi_\alpha^\beta(S)=\left(R_{\sigma_\alpha^\beta(1)}\cdot e_\alpha^\beta,
\ldots,R_{\sigma_\alpha^\beta(Q)}\cdot e_\alpha^\beta\right).
\]
It is enough to choose
$
4\,\delta=\min_{\alpha,\beta}\left\{|P_i\cdot e_\alpha^\beta-P_j\cdot e_\alpha^\beta|\,:\,
P_i\cdot e_\alpha^\beta\neq P_j\cdot e_\alpha^\beta\right\}$.
Indeed, let us assume that $R_{i}\cdot e_\alpha^\beta\leq R_{j}\cdot e_\alpha^\beta$.
Then, two cases occur:
\begin{itemize}
\item[(a)] $R_{j}\cdot e_\alpha^\beta- R_{i}\cdot e_\alpha^\beta\geq 2\delta$,
\item[(b)] $R_{j}\cdot e_\alpha^\beta- R_{i}\cdot e_\alpha^\beta< 2\delta$.
\end{itemize}
In case (a), since $S\in B_\delta(T)$,
we deduce that
$P_{i}\cdot e_\alpha^\beta\leq
R_{i}\cdot e_\alpha^\beta+\delta\leq R_{j}\cdot e_\alpha^\beta-\delta
\leq P_{j}\cdot e_\alpha^\beta$.
In case (b), instead, we infer that
$|P_{j}\cdot e_\alpha^\beta- P_{i}\cdot e_\alpha^\beta|\leq
R_{j}\cdot e_\alpha^\beta+\delta- R_{i}\cdot e_\alpha^\beta-\delta<4\,\delta$,
which, in turn, by the choice of $\delta$,
leads to $P_{j}\cdot e_\alpha^\beta=P_{i}\cdot e_\alpha^\beta$.
Hence, in both cases we have $P_{i}\cdot e_\alpha^\beta\leq P_{j}\cdot e_\alpha^\beta$,
which means that $P_i\cdot e_\alpha^\beta$ can be ordered in increasing way
by the same permutation $\sigma_\alpha^\beta$.

Therefore, exploiting the fact that the vectors $\pi_\alpha^\beta(T)$
and $\pi_\alpha^\beta(S)$ are ordered by the same permutation $\sigma_\alpha^\beta$,
we have that, for $T$ and $S$ as above, it holds
\begin{align*}
|\xii_{BW}(T)-\xii_{BW}(S)|^2&=
h^{-1}\sum_{\alpha=1}^h\sum_{\beta=1}^n|\pi_\alpha^\beta(T)-\pi_\alpha^\beta(S)|^2\\
&=h^{-1}\sum_{\alpha=1}^h\sum_{\beta=1}^n\sum_{i=1}^Q
|P_{\sigma_\alpha^\beta(i)}\cdot e_\alpha^\beta-R_{\sigma_\alpha^\beta(i)}\cdot e_\alpha^\beta|^2
=h^{-1}\sum_{\alpha=1}^h\sum_{i=1}^Q|P_i-R_i|^2\\
&=h^{-1}\sum_{\alpha=1}^h\cG(T,S)^2=\cG(T,S)^2.
\end{align*}
This concludes the proof of the corollary.

\section{Properties of $Q$-valued Sobolev functions}\label{s:Qsobolev}

In this section we prove some of the basic properties of
Sobolev $Q$-functions which will be used in the proofs of the regularity theorems.
It is clear that, using $\xii$,
one can identify measurable, Lipschitz and H\"older
$Q$-valued functions $f$ with the corresponding maps
$\xii\circ f$ into $\R{N}$, which are, respectively,
measurable, Lipschitz, H\"older functions
taking values in $\cQ$ a.e.
We now show 
that the same holds for the Sobolev classes
of Definition \ref{d:W1p}.

\begin{theorem}\label{t:Sob=Sob}
Let $\xii$ be the map of Theorem \ref{t:xi}. Then, a $Q$-valued 
function $f$ belongs to the Sobolev space $W^{1,p}(\Om,\Iq)$
according to Definition \ref{d:W1p} if and only if 
$\xii\circ f$ belongs to $W^{1,p}(\Om, \R{N})$.
Moreover, there exists a constant $C=C(n,Q)$ such that
\[
|D(\xii\circ f)|\leq |Df|\leq C\,|D(\xii\circ f)|. 
\]
\end{theorem}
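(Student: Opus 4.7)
The plan is to exploit the biLipschitz property of $\xii$ (Theorem \ref{t:xi}) together with two general principles: the composition of a Lipschitz real-valued function with a Euclidean Sobolev map is Sobolev with the expected chain rule, and a countable supremum of $W^{1,p}$ functions sharing a common $L^p$ bound on their gradients is itself $W^{1,p}$ with the same bound. The McShane extension formula will provide the representation of an arbitrary Lipschitz scalar function as a sup of ``distance minus constant'' functions.

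\emph{Forward direction.} Suppose $f\in W^{1,p}(\Om,\Iq)$ in the sense of Definition \ref{d:W1p} and let $\varphi_j$ be any admissible dominating function. I first claim that, for every $1$-Lipschitz map $\varphi:\Iq\to\R{}$, the composition $\varphi\circ f$ belongs to $W^{1,p}(\Omega)$ with $|\de_j(\varphi\circ f)|\leq \varphi_j$ a.e. Fixing a countable dense set $\{T_i\}\subset\Iq$, McShane's formula gives $\varphi(T)=\sup_i[\varphi(T_i)-\cG(T,T_i)]$, hence
\[
\varphi\circ f \;=\; \sup_i\,[\varphi(T_i)-\cG(f,T_i)].
\]
The finite maxima $\psi_n=\max_{i\leq n}[\varphi(T_i)-\cG(f,T_i)]$ lie in $W^{1,p}(\Omega)$, and the classical lattice estimate for $W^{1,p}$ gives $|\de_j\psi_n|\leq \max_{i\leq n}|\de_j\cG(f,T_i)|\leq \varphi_j$; since $\psi_n$ converges monotonically to $\varphi\circ f\in L^p$ (integrability from $|\varphi(T)|\leq |\varphi(Q\a{0})|+\cG(T,Q\a{0})$ and the $L^p$ bound on $\cG(f,Q\a{0})$), standard lower semicontinuity yields the claim. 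Applying this with $\varphi=\nu\cdot\xii$ for $\nu$ in a countable dense set of unit vectors in $\R{N}$ (each such $\nu\cdot\xii$ is $1$-Lipschitz because $\Lip(\xii)\leq 1$), and combining with the pointwise identity $|\de_j u(x)|=\sup_\nu |\nu\cdot\de_j u(x)|$ for $\R{N}$-valued maps, I conclude $\xii\circ f\in W^{1,p}(\Omega,\R{N})$ with $|\de_j(\xii\circ f)|\leq \varphi_j$ a.e. Taking the infimum over admissible $\varphi_j$ gives $|\de_j(\xii\circ f)|\leq |\de_j f|$ a.e., and summing in $j$ produces $|D(\xii\circ f)|\leq |Df|$.

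\emph{Backward direction.} Suppose $\xii\circ f\in W^{1,p}(\Omega,\R{N})$. For each $T\in\Iq$, the scalar function $F_T:\cQ\to\R{}$, $F_T(y):=\cG(\xii^{-1}(y),T)$, is $C(n,Q)$-Lipschitz by Theorem \ref{t:xi}(ii). Extend it by McShane to a $C$-Lipschitz $\bar F_T:\R{N}\to\R{}$. Then
\[
\cG(f(x),T)\;=\;\bar F_T\bigl((\xii\circ f)(x)\bigr),
\]
and the classical chain rule for Lipschitz--Sobolev compositions ensures $\cG(f,T)\in W^{1,p}(\Omega)$ with $|\de_j\cG(f,T)|\leq C\,|\de_j(\xii\circ f)|$ a.e. In particular $\varphi_j:=C\,|\de_j(\xii\circ f)|\in L^p$ verifies condition (ii) of Definition \ref{d:W1p}, so $f\in W^{1,p}(\Omega,\Iq)$. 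Minimality of $|\de_j f|$ then gives $|\de_j f|\leq C\,|\de_j(\xii\circ f)|$ a.e., hence $|Df|\leq C\,|D(\xii\circ f)|$.

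The main obstacle is the passage from finite maxima $\psi_n$ to the full countable supremum in the forward step, while preserving the pointwise bound $|\de_j\psi_n|\leq\varphi_j$; this relies on the standard but non-trivial fact that a monotone pointwise limit of $W^{1,p}$ functions with uniformly $L^p$-controlled gradients is itself $W^{1,p}$ and inherits the pointwise gradient bound. Once this is granted, everything else follows from the biLipschitz estimates of Theorem \ref{t:xi} and the ordinary chain rule for Lipschitz compositions.
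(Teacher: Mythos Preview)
Your proof is correct and follows essentially the same route as the paper: both directions rely on the McShane-type representation $\Phi(\cdot)=\sup_i\{\Phi(T_i)-\Lip(\Phi)\,\cG(\cdot,T_i)\}$ for the forward implication and on the Lipschitz map $\Upsilon_T(y)=\cG(\xii^{-1}(y),T)$ for the backward one. The paper's argument is more compressed (it absorbs your monotone-limit and unit-vector steps into a one-line appeal to the sup formula and the componentwise Lipschitz bound $\Lip(\xii)\leq 1$), but the underlying ideas are identical.
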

\begin{proof} Let $f$ be a $Q$-valued function such that
$g=\xii\circ f\in W^{1,p}(\Om, \R{N})$. Note that 
the map $\Upsilon_T:\cQ\ni y \mapsto \cG (\xii^{-1} (y), T)$
is Lipschitz, with a Lipschitz constant $C$ that can be
bounded independently of $T\in\Iq$.
Therefore, $\cG (f, T) =\Upsilon_T\circ g$
is a Sobolev function and 
$\left|\de_j \left(\Upsilon_T \circ g\right)\right|
\leq C |\de_j g|$ for every $T\in\Iq$. So, $f$ fulfils
the requirements (i) and (ii) of
Definition \ref{d:W1p}, with $\varphi_j 
= C\, \left|\de_j g\right|$, from which, in particular, $|Df|\leq C\,|D(\xii\circ f)|$.

Vice versa, assume that $f$ is
in $W^{1,p}(\Om,\Iq)$ and let $\varphi_j$ be as in Definition
\ref{d:W1p}. Choose a countable dense subset
$\{T_i\}_{i\in N}$ of $\Iq$, and recall
that any Lipschitz real-valued 
function $\Phi$ on $\Iq$ can be written as
\[
\Phi (\cdot)= \sup_{i\in\N} \big\{\Phi(T_i) - \Lip \left(\Phi\right)\, \cG (\cdot, T_i)\big\}.
\]
This implies that $\de_j \left(\Phi\circ f\right)\in L^p$ with 
$\left|\de_j \left(\Phi \circ f\right)\right|\leq \Lip (\Phi)\, \varphi_j$. Therefore, since
$\Omega$ is bounded,
$\Phi\circ f\in W^{1,p}(\Om)$.
Being $\xii$ a Lipschitz map with $\Lip(\xii)\leq 1$,
we conclude that $\xii\circ f\in W^{1,p}(\Om,\R{N})$ with $|D(\xii\circ f)|\leq |Df|$.
\end{proof}

We now use the theorem above to transfer in a straightforward way
several classical properties
of Sobolev spaces to the framework of $Q$-valued mappings.
In particular, in the subsequent subsections we
deal with Lusin type approximations,
trace theorems, Sobolev and Poincar\'e inequalities,
and Campanato--Morrey estimates. Finally Subsection \ref{ss:tech}
contains a useful technical lemma estimating the energy
of interpolating functions on spherical shells.

\subsection{Lipschitz approximation and approximate
differentiability}\label{ss:app}
We start with the Lipschitz approximation property for
$Q$-valued Sobolev functions.

\begin{propos}[Lipschitz approximation]\label{p:lipapprox}
Let $f$ be in $W^{1,p}(\Om,\Iq)$.
For every $\lambda>0$, there exists a Lipschitz $Q$-function
$f_\lambda$ such that ${\rm Lip}\, (f_\lambda) \leq \lambda$
and
\begin{equation}\label{e:approx}
\abs{\big\{x\in\Om\,:\,f(x)\neq
f_\lambda(x)\big\}}\leq \frac{C}{\lambda^p}\int_{\Omega} 
\big(|Df|^p+\cG(f,Q\a{0})^p\big),
\end{equation}
where the constant $C$ depends only on $Q$, $m$ and $\Omega$.
\end{propos}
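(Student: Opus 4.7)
The plan is to reduce to the classical Lipschitz approximation theorem for $\R{N}$-valued Sobolev functions, exploiting the extrinsic maps $\xii$ and $\ro$ from Theorem \ref{t:xi} together with the identification in Theorem \ref{t:Sob=Sob}. The three ingredients fit together cleanly because $\ro$ is a Lipschitz retraction onto $\cQ = \xii(\Iq)$, so projecting a ``classical'' Lipschitz truncation back through $\xii^{-1} \circ \ro$ produces a $\cQ$-valued Lipschitz map without enlarging the ``bad set'' where the approximation fails.

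First, I would set $g := \xii \circ f$. By Theorem \ref{t:Sob=Sob}, $g \in W^{1,p}(\Om, \R{N})$ with $|Dg| \leq |Df|$. Translating by the constant $c := \xii(Q\a{0})$, the map $\tilde g := g - c \in W^{1,p}(\Om, \R{N})$ satisfies, by the biLipschitz property (i) of $\xii$,
$$|\tilde g(x)| = |\xii(f(x)) - \xii(Q\a{0})| \leq \cG(f(x), Q\a{0}) \quad \text{a.e.~in } \Om,$$
while of course $|D\tilde g| \leq |Df|$. This converts the $L^p$ norm of $\tilde g$ into $\cG(f, Q\a{0})$, exactly matching the right-hand side of \eqref{e:approx}.

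Second, I would apply the classical Lipschitz truncation theorem for $\R{N}$-valued Sobolev functions (e.g.\ 6.6.3 of \cite{EG}, applied coordinatewise): for every $\lambda > 0$ there exists a Lipschitz $\tilde g_\lambda: \Om \to \R{N}$ with $\Lip(\tilde g_\lambda) \leq C_0 \lambda$ and
$$|\{\tilde g \neq \tilde g_\lambda\}| \leq \frac{C}{\lambda^p}\int_\Om \bigl(|D\tilde g|^p + |\tilde g|^p\bigr) \leq \frac{C}{\lambda^p}\int_\Om \bigl(|Df|^p + \cG(f,Q\a{0})^p\bigr).$$
Setting $g_\lambda := \tilde g_\lambda + c$ preserves both the Lipschitz constant and the disagreement set with $g$.

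Third, I would define $f_\lambda := \xii^{-1} \circ \ro \circ g_\lambda$, which is Lipschitz with constant at most $\Lip(\xii^{-1}|_\cQ)\,\Lip(\ro)\, C_0\,\lambda$; after replacing the initial truncation parameter by a suitable constant multiple of $\lambda$, we obtain $\Lip(f_\lambda) \leq \lambda$. The crucial observation is that $\ro$ acts as the identity on $\cQ$: whenever $g(x) = g_\lambda(x)$ the common value lies in $\cQ$, so $\ro(g_\lambda(x)) = g(x)$ and $f_\lambda(x) = \xii^{-1}(g(x)) = f(x)$. Therefore $\{f \neq f_\lambda\} \subset \{\tilde g \neq \tilde g_\lambda\}$, and \eqref{e:approx} follows from the measure bound above. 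The only genuinely nontrivial ingredient is the classical Lipschitz truncation theorem itself; the main ``obstacle'' is really just careful bookkeeping of the multiplicative constants coming from $\Lip(\xii^{-1}|_\cQ)$ and $\Lip(\ro)$, together with the translation by $c$ needed to match the $L^p$ term to $\cG(f,Q\a{0})^p$.
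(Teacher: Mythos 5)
Your proposal is correct and follows essentially the same route as the paper: compose with $\xii$, apply the classical Lusin-type Lipschitz truncation to $\xii\circ f$, and map back via $\xii^{-1}\circ\ro$, using that $\ro$ is the identity on $\cQ$ to control the disagreement set. The translation by $c=\xii(Q\a{0})$ is a harmless extra precaution (in fact $\xii(Q\a{0})=0$ by the construction of $\xii$), and otherwise the argument matches the paper's.
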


\begin{proof}
Consider $\xii\circ f$: by the Lusin-type approximation theorem for 
classical Sobolev functions (see, for instance, \cite{AceFus} or $6.6.3$ of \cite{EG}), 
there exists a Lipschitz function $h_\lambda:\Om\ra \R{N}$
such that $\abs{\left\{x\in\Om\,:\,\xii\circ f(x)\neq
h_\lambda(x)\right\}}\leq \left(C/\lambda^p\right) \|\xii\circ f\|_{W^{1,p}}^p$. 
Clearly, the function
$f_\lambda=\xii^{-1}\circ\ro\circ h_\lambda$ has the desired
property.
\end{proof}

A direct corollary of the Lipschitz approximation and of
Theorem \ref{t:rademacher}
is that any Sobolev $Q$-valued map is approximately differentiable almost everywhere.

\begin{definition}[Approximate Differentiability]\label{d:appdiff}
A $Q$-valued function $f$ is approximately differentiable in
$x_0$ if there exists a measurable subset $\tilde{\Omega}
\subset \Omega$ containing $x_0$
such that $\tilde{\Omega}$ has
density $1$ at $x_0$ and $f|_{\tilde{\Omega}}$ is differentiable
at $x_0$. 
\end{definition}

\begin{corol}\label{c:appdiffae}
Any $f\in W^{1,p} (\Omega ,\Iq)$ is approximately differentiable a.e.
\end{corol}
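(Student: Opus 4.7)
The plan is to combine the Lipschitz approximation result (Proposition~\ref{p:lipapprox}) with the $Q$-valued Rademacher theorem (Theorem~\ref{t:rademacher}) and the Lebesgue density theorem, in exactly the same way one deduces approximate differentiability of classical Sobolev functions from their Lusin-type Lipschitz approximation.

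First, for each $k\in\N$ I would apply Proposition~\ref{p:lipapprox} with $\lambda=k$ to produce a Lipschitz $Q$-valued function $f_k:\Omega\to\Iq$ with $\Lip(f_k)\leq k$ and such that the bad set
\[
E_k:=\{x\in\Omega: f(x)\neq f_k(x)\}
\]
satisfies $|E_k|\leq C\,k^{-p}\|f\|_{W^{1,p}}^p$. Let $\Omega_k:=\Omega\setminus E_k$, so that $f\equiv f_k$ on $\Omega_k$. By Theorem~\ref{t:rademacher}, each $f_k$ is differentiable at almost every point of $\Omega$, in particular at almost every point of $\Omega_k$. Let
\[
D_k:=\{x\in \Omega_k : x \text{ is a Lebesgue density point of } \Omega_k \text{ and } f_k \text{ is differentiable at } x\}.
\]
By the Lebesgue density theorem together with Rademacher, $|\Omega_k\setminus D_k|=0$.

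Next I would verify that any $x_0\in D_k$ is a point of approximate differentiability of $f$ in the sense of Definition~\ref{d:appdiff}. Take $\tilde\Omega:=\Omega_k$: by construction $\tilde\Omega$ contains $x_0$ and has density $1$ at $x_0$, and $f|_{\tilde\Omega}=f_k|_{\tilde\Omega}$. Since $f_k$ is differentiable (as a $Q$-valued map on all of $\Omega$) at $x_0$, the restriction $f|_{\tilde\Omega}$ is differentiable at $x_0$, with first-order approximation $T_{x_0}f_k$. Hence the set $A$ of approximate differentiability points of $f$ contains $\bigcup_k D_k$.

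Finally, to conclude $|\Omega\setminus A|=0$, I would estimate
\[
\Omega\setminus A \;\subseteq\; \Omega\setminus \bigcup_k D_k \;\subseteq\; \bigcap_k\bigl(E_k\cup N_k\bigr),
\]
where $N_k:=\Omega_k\setminus D_k$ is null. Since $|E_k\cup N_k|=|E_k|\leq C\,k^{-p}\to 0$, taking the infimum over $k$ gives $|\Omega\setminus A|=0$, which is the statement. I do not expect any serious obstacle: the only subtle point is to check that the notion of approximate differentiability of Definition~\ref{d:appdiff} (differentiability of the restriction $f|_{\tilde\Omega}$) is compatible with the differentiability of $f_k$ on the whole of $\Omega$, but this is immediate from the identity $f=f_k$ on $\tilde\Omega$ and the pointwise nature of Definition~\ref{d:diff}.
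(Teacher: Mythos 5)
Your proposal is correct and follows essentially the same route as the paper: Lipschitz (Lusin-type) approximation via Proposition~\ref{p:lipapprox}, Rademacher's Theorem~\ref{t:rademacher} applied to each approximant, and the observation that the good sets exhaust $\Omega$ up to a null set. The paper's proof is just a terser version of yours; your explicit invocation of Lebesgue density points merely spells out what the paper leaves implicit in Definition~\ref{d:appdiff}.
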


The approximate differential of $f$ at $x_0$ can
then be defined as $D(f|_{\tilde{\Omega}})$ because it is
independent of the set $\tilde{\Omega}$. With 
a slight abuse of notation, we will denote it by $Df$,
as the classical differential. Similarly,
we can define the approximate directional derivatives.
Moreover, for these quantities we use the notation
of Section \ref{s:rademacher}, that is
\begin{equation*}
Df = \sum_i \a{Df_i}
\quad\text{and}\quad \partial_\nu f = \sum_i \a{\partial_\nu f_i},
\end{equation*}
with the same convention as in Remark \ref{r:convention}, 
i.e. the first-order
approximation is given by 
$T_{x_0}f= \sum_i\a{f_i(x_0)+Df_i(x_0)\cdot(x-x_0)}$.

\begin{proof}[Proof of Corollary \ref{c:appdiffae}]
For every $k\in\N$, choose a Lipschitz function $f_k$ such that
$\Omega\setminus \Omega_k :=\{f\neq f_k\}$ has measure
smaller than $k^{-p}$. By Rademacher's Theorem
\ref{t:rademacher}, $f_{k}$ is differentiable a.e. on
$\Om$. Thus, $f$ is approximately differentiable
at a.e. point of $\Omega_{k}$. Since 
$|\Omega\setminus \cup_k \Omega_{k}|=0$, this completes the proof.
\end{proof}

Finally, observe that the chain-rule
formulas of Proposition \ref{p:chain} have an obvious
extension to approximate differentiable functions.

\begin{propos}\label{p:chainapp}
Let $f:\Omega\to \Iqs$ be approximate
differentiable at $x_0$. If $\Psi$ and $F$ are as
in Proposition \ref{p:chain}, then \eqref{e:esterna}
and \eqref{e:reale} holds. Moreover,
\eqref{e:interna} holds when $\Phi$ is a diffeomorphism.
\end{propos}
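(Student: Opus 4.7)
The plan is to reduce each case to the corresponding statement in Proposition \ref{p:chain} by restricting to a measurable set $\tilde{\Omega}$ of density $1$ at $x_0$ on which $f$ is classically differentiable. By Definition \ref{d:appdiff}, such a $\tilde{\Omega}$ exists with $x_0\in\tilde{\Omega}$, and $Df(x_0)$ agrees with the classical differential of $f|_{\tilde{\Omega}}$ at $x_0$. Approximate differentiability is moreover preserved under modifications of $\tilde{\Omega}$ by sets of density $0$ at $x_0$; this structural fact is what lets the classical proofs transfer verbatim, up to controlling how compositions interact with $\tilde{\Omega}$.

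For case (ii), set $g(x)=\Psi(x,f(x))$ and observe $g|_{\tilde{\Omega}}(x)=\Psi(x,f|_{\tilde{\Omega}}(x))$. Proposition \ref{p:chain}(ii) applied to the classically differentiable $f|_{\tilde{\Omega}}$ and to $\Psi$ gives that $g|_{\tilde{\Omega}}$ is classically differentiable at $x_0$ with differential exactly \eqref{e:esterna}. Since $\tilde{\Omega}$ has density $1$ at $x_0$, this is by definition the approximate differentiability of $g$, with the same formula. Case (iii) is identical: on $\tilde{\Omega}$ one has $F\circ f=F\circ (f|_{\tilde{\Omega}})$, and Proposition \ref{p:chain}(iii) yields \eqref{e:reale}.

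The delicate point is case (i). For a general $\Phi$ only differentiable at $y_0$, the preimage $\Phi^{-1}(\tilde{\Omega})$ need not have density $1$ at $y_0$ (think of $D\Phi(y_0)$ degenerate, which can concentrate preimages in any prescribed direction); this is precisely what forces the diffeomorphism hypothesis. Near $y_0$, a diffeomorphism is biLipschitz with some constant $L$, so that $B_{r/L}(y_0)\subset \Phi^{-1}(B_r(x_0))\subset B_{Lr}(y_0)$ and $|\Phi^{-1}(E)|\leq L^m|E|$ for every measurable $E$. Combining these two estimates gives
\[
\frac{|\Phi^{-1}(\tilde{\Omega}^c)\cap B_r(y_0)|}{|B_r(y_0)|}\leq L^{2m}\,\frac{|\tilde{\Omega}^c\cap B_{Lr}(x_0)|}{|B_{Lr}(x_0)|}\xrightarrow{r\to 0}0,
\]
so $\Phi^{-1}(\tilde{\Omega})$ has density $1$ at $y_0$.

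With this density preservation in hand, the identity $(f\circ\Phi)|_{\Phi^{-1}(\tilde{\Omega})}=(f|_{\tilde{\Omega}})\circ\Phi$ allows us to apply Proposition \ref{p:chain}(i) to the classically differentiable maps $f|_{\tilde{\Omega}}$ at $x_0=\Phi(y_0)$ and $\Phi$ at $y_0$, yielding \eqref{e:interna} as the approximate differential of $f\circ\Phi$ at $y_0$. The main obstacle in the whole argument is the density-preservation step in case (i); the other two cases are purely formal transfers of Proposition \ref{p:chain} through the set $\tilde{\Omega}$.
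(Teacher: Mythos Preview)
Your proof is correct and follows the same approach as the paper: reduce to Proposition \ref{p:chain} via the density-$1$ set $\tilde{\Omega}$ from Definition \ref{d:appdiff}. The paper dismisses the whole argument in one line (``follows trivially from Proposition \ref{p:chain} and Definition \ref{d:appdiff}''), whereas you correctly identify and work out the only non-formal step---showing that $\Phi^{-1}(\tilde{\Omega})$ has density $1$ at $y_0$ when $\Phi$ is a diffeomorphism---which is exactly why the diffeomorphism hypothesis is needed in case (i).
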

\begin{proof} The proof follows trivially from
Proposition \ref{p:chain} and Definition \ref{d:appdiff}.
\end{proof}

\subsection{Trace properties}\label{ss:trace}
Next, we show that the trace of a Sobolev $Q$-function
as defined in Definition \ref{d:Dirichlet problem}
corresponds to the classical trace for $\xii\circ f$.
First we introduce the definition of weak convergence for $Q$-valued functions.

\begin{definition}[Weak convergence]\label{d:weak convergence}
Let $f_k, f\in W^{1,p}(\Om,\Iq)$. We say 
that $f_k$ converges weakly 
to $f$ for $k \to \infty$, 
(and we write $f_k\rightharpoonup f$) in $W^{1,p}(\Om,\Iq)$,
if
\begin{itemize}
\item[(i)] $\int\cG(f_k,f)^p\ra0$, for $k\ra\infty$;
\item[(ii)] there exists a constant
$C$ such that $\int |Df_k|^p\leq C<\infty$ for every $k$.
\end{itemize}
\end{definition}

\begin{propos}[Trace of Sobolev $Q$-functions]\label{p:trace}
Let $f\in W^{1,p}(\Om,\Iq)$. Then, there is a unique function
$g\in L^p(\de \Om,\Iq)$ such that $f\vert_{\de\Om}=g$ 
in the sense of Definition 
\ref{d:Dirichlet problem}.
Moreover, $f\vert_{\de\Om}=g$ if and only if
$\xii\circ f\vert_{\de\Om}=\xii\circ g$
in the usual sense, and the set of mappings
\begin{equation}\label{e:trace g}
W^{1,2}_g(\Omega, \Iq) := 
\left\{ f\in W^{1,2} (\Omega, \Iq)\,:\,
f|_{\partial \Omega}
= g\right\}
\end{equation}
is sequentially weakly closed in $W^{1,p}$.
\end{propos}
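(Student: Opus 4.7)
The plan is to reduce everything to classical Sobolev trace theory by composing with the biLipschitz embedding $\xii$ provided by Theorem~\ref{t:xi}, using the Sobolev equivalence of Theorem~\ref{t:Sob=Sob} to transfer statements back and forth.

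First I would establish existence together with the $\xii$-characterization simultaneously. Since $\xii\circ f\in W^{1,p}(\Omega,\R{N})$, it has a classical $L^p$ trace $h$ on $\partial\Omega$. The key point is to show that $h(x)\in\cQ$ for $\cH^{m-1}$-almost every $x\in\partial\Omega$; for this I would invoke the Lipschitz retraction $\ro$ of Theorem~\ref{t:xi}. Since $\xii\circ f$ takes values in $\cQ$ almost everywhere and $\ro$ is the identity on $\cQ$, the equality $\ro\circ(\xii\circ f)=\xii\circ f$ holds in $W^{1,p}(\Omega,\R{N})$; taking classical traces on both sides and using that composition with a Lipschitz map commutes with the trace yields $\ro(h)=h$ a.e.\ on $\partial\Omega$, hence $h\in\cQ$ a.e. One then sets $g:=\xii^{-1}\circ h\in L^p(\partial\Omega,\Iq)$. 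To verify that this $g$ is the trace in the sense of Definition~\ref{d:Dirichlet problem}, I would introduce the functions $\Upsilon_T(y):=\cG(\xii^{-1}(y),T)$ on $\cQ$ (Lipschitz uniformly in $T\in\Iq$) and observe that $\cG(f,T)=\Upsilon_T\circ(\xii\circ f)$ has classical trace $\Upsilon_T\circ h=\cG(g,T)$.

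The same computation gives the $\xii$-characterization: the ``if'' direction is exactly what was just done, while for the converse, if $f|_{\partial\Omega}=g$ in the sense of Definition~\ref{d:Dirichlet problem}, the element $g':=\xii^{-1}\circ h$ produced above satisfies $\cG(g',T)=\cG(g,T)$ a.e.\ for every $T\in\Iq$; applying this identity to a countable dense set $\{T_i\}\subset\Iq$ forces $g'=g$ a.e., so $\xii\circ g=h$ is the classical trace. Uniqueness of the trace in Definition~\ref{d:Dirichlet problem} follows from the same density argument applied to two competing candidates.

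For the sequential weak closedness of $W^{1,2}_g(\Omega,\Iq)$, suppose $f_k\rightharpoonup f$ in $W^{1,2}$ with $f_k\vert_{\partial\Omega}=g$ for every $k$. Condition (i) of Definition~\ref{d:weak convergence} together with the pointwise bound $|\xii\circ f_k-\xii\circ f|\leq\cG(f_k,f)$ gives $\xii\circ f_k\to\xii\circ f$ in $L^2(\Omega,\R{N})$, while Theorem~\ref{t:Sob=Sob} and condition (ii) bound $\xii\circ f_k$ in $W^{1,2}(\Omega,\R{N})$; hence $\xii\circ f_k$ converges weakly to $\xii\circ f$ in $W^{1,2}$ in the classical vector-valued sense, and the compactness of the Sobolev trace operator yields strong $L^2(\partial\Omega)$ convergence of the traces. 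Since those traces are identically $\xii\circ g$, the limit trace equals $\xii\circ g$, and the $\xii$-characterization already proved gives $f|_{\partial\Omega}=g$. The one step that requires genuine care is the assertion $h\in\cQ$ a.e.; the retraction $\ro$ reduces it to the identity $\ro\circ h=h$, and this is the only place where the full strength of Theorem~\ref{t:xi}, not merely the biLipschitz embedding $\xii$, is really used.
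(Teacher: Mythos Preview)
Your proof is correct and follows essentially the same route as the paper: reduce to classical trace theory via $\xii$, verify Definition~\ref{d:Dirichlet problem} through the Lipschitz functions $\Upsilon_T$, and deduce uniqueness from a countable dense subset $\{T_i\}$ of $\Iq$. You are in fact more thorough than the paper on two points: the paper simply writes $g=\xii^{-1}(\xii\circ f|_{\partial\Omega})$ without justifying that the classical trace lands in $\cQ$, whereas your use of $\ro$ makes this step explicit; and the paper dismisses the weak closedness of $W^{1,2}_g$ with ``the proof is left to the reader,'' while you supply a complete argument via compactness of the trace operator.
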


\begin{proof}
For what concerns the existence,
let $g=\xii^{-1}(\xii\circ f\vert_{\de\Om})$.
Since $\xii\circ f\vert_{\de\Om}=\xii\circ g$,
for every Lipschitz real-valued map $\Phi$
on $\cQ$, we clearly have 
$\Phi\circ \xii\circ f\vert_{\de\Om}=\Phi\circ
\xii\circ g$.
Since
$\Upsilon_T(\cdot):= \cG (\xi^{-1}(\cdot), T)$ is
a Lipschitz map on $\cQ$ for every $T\in \Iq$, we conclude that
$f\vert_{\de\Om}=g$ in the sense
of Definition \ref{d:Dirichlet problem}.

The uniqueness is an easy consequence of the following observation:
if $h$ and $g$ are maps in $L^p (\de\Om, \Iq)$
such that $\cG (h(x), T)=\cG (g(x), T)$ for $\cH^{n-1}$-a.e. $x$
and for every $T\in \Iq$, then $h=g$.
Indeed, fixed a countable dense subset $\{T_i\}_{i\in\N}$ of
$\Iq$, we have
\[
\cG\big(h(x),g(x)\big)=\sup_i\big|\cG (h(x), T_i)-\cG (g(x), T_i)\big|=0
\quad \cH^{n-1}\textrm{-a.e.}
\]
The last statement of the proposition follows easily and the proof is left to the reader.
\end{proof}

\subsection{Sobolev and Poincar\'e inequalities}\label{ss:sob}
As usual, for $p<m$ we set
$\frac{1}{p^*}=\frac{1}{p}-\frac{1}{m}$.

\begin{propos}[Sobolev Embeddings]\label{p:Sembeddings}
The following embeddings hold:
\begin{itemize}
\item[$(i)$] if $p<m$, then $W^{1,p}(\Om,\Iq)
\subset L^{q}(\Om,\Iq)$ for
every $q\in [1,p^*]$, and the inclusion is compact
when $q<p^*$;
\item[$(ii)$] if $p=m$, then $W^{1,p}(\Om,\Iq)
\subset L^{q}(\Om,\Iq)$, for
every $q\in [1,+\infty)$, with compact inclusion; 
\item[$(iii)$] if $p>m$, then $W^{1,p}(\Om,\Iq)
\subset C^{0,\alpha}(\Om,\Iq)$, for
$\alpha=1-\frac{m}{p}$, with compact inclusion.
\end{itemize}
\end{propos}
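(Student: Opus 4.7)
The strategy is to bootstrap from the classical Sobolev embedding theorems for $\R{N}$-valued mappings via the biLipschitz embedding $\xii$. By Theorem \ref{t:Sob=Sob}, $f\in W^{1,p}(\Omega,\Iq)$ iff $\xii\circ f\in W^{1,p}(\Omega,\R{N})$, with comparable norms (the $L^p$ norms being comparable because $\cG(f,Q\a{0})$ and $|\xii\circ f - \xii(Q\a{0})|$ are comparable, since $\xii$ is biLipschitz onto its image). Consequently, the embeddings $(i)$, $(ii)$, $(iii)$ at the level of norms follow immediately from the classical vector-valued Sobolev embeddings applied to $\xii\circ f$, combined with the Lipschitz bound for $\xii^{-1}|_{\cQ}$ which yields
\[
\cG(f(x),f(y)) \leq C(n,Q)\, |\xii\circ f(x) - \xii\circ f(y)|.
\]
In particular, in case $(iii)$ the uniform Hölder bound for $\xii\circ f$ transfers to one for $f$ itself.

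The only nontrivial point is the \emph{compactness} of the inclusions. First I would observe that $\cQ=\xii(\Iq)$ is closed in $\R{N}$: since $\xii^{-1}|_{\cQ}$ is Lipschitz and $(\Iq,\cG)$ is complete, any Cauchy sequence in $\cQ$ (with the Euclidean distance) is sent by $\xii^{-1}$ to a Cauchy sequence in $\Iq$, whose limit $T$ satisfies $\xii(T)\in\cQ$ as the limit in $\R{N}$. Now, given a sequence $\{f_k\}$ bounded in $W^{1,p}(\Omega,\Iq)$, the maps $\xii\circ f_k$ are bounded in $W^{1,p}(\Omega,\R{N})$ and hence, for the ranges of exponents in $(i)$ and $(ii)$, there is a subsequence converging in $L^q(\Omega,\R{N})$ to some $g$. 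A further subsequence converges pointwise a.e., and since $\cQ$ is closed, $g(x)\in\cQ$ for a.e.~$x\in\Omega$. Setting $f:=\xii^{-1}\circ g$, the inequality
\[
\cG(f_k(x),f(x)) \leq C\, |\xii\circ f_k(x) - g(x)|
\]
yields $\int \cG(f_k,f)^q\to 0$, giving compactness in $L^q(\Omega,\Iq)$.

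For case $(iii)$, the classical Morrey embedding produces a subsequence $\xii\circ f_k$ converging uniformly on $\Omega$ to some continuous $g:\Omega\to\R{N}$; by closedness of $\cQ$, $g$ takes values in $\cQ$, and $f:=\xii^{-1}\circ g$ is then the uniform limit of $f_k$ in the metric $\cG$, with the Hölder bound inherited from the preceding paragraph. The main obstacle I anticipate is precisely this last transfer step — one needs the closedness of $\cQ$ to guarantee the limit is a genuine $\Iq$-valued map, and the Lipschitz continuity of $\xii^{-1}|_{\cQ}$ (rather than of an extension) to control $\cG(f_k,f)$ by the Euclidean distance of the images. Both are supplied by Theorem \ref{t:xi}, so the argument reduces entirely to an application of the classical theorems.
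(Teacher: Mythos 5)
Your proposal is correct and follows essentially the same route as the paper: the paper's proof likewise reduces everything to the classical Sobolev embeddings for $\xii\circ f$ via Theorem \ref{t:Sob=Sob} and the biLipschitz correspondence between $f$ and $\xii\circ f$. You simply spell out in more detail the compactness step (closedness of $\cQ$ and the transfer of the limit back through $\xii^{-1}$), which the paper leaves implicit.
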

\begin{proof}
Since $f$ is a $L^q$ (resp. H\"older) $Q$-function if and
only if $\xii\circ f$ is $L^q$ (resp. H\"older), 
the proposition follows trivially from Theorem
\ref{t:Sob=Sob} and the Sobolev embeddings for $\xii\circ f$
(see, for example, \cite{Adams} or \cite{Ziemer}).
\end{proof}

\begin{propos}[Poincar\'e inequality]\label{p:poincare'}
Let $M$ be a connected
bounded Lipschitz open set of an $m$-dimensional
Riemannian manifold and let $p<m$.
There exists a constant $C=C(p,m,n,Q,M)$ with the following
property:
for every $f\in W^{1,p}(M,\Iq)$,
there exists a point $\overline f\in \Iq$ such that
\begin{equation}\label{e:poincare'}
\left(\int_{M} \cG \big(f, \overline f\big)^{p^*}\right)^{\frac{1}{p^*}}
\leq C \left(\int_{M}|Df|^{p}\right)^{\frac{1}{p}}.
\end{equation}
\end{propos}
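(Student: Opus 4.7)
The plan is to reduce to the classical vector-valued Poincar\'e inequality via the biLipschitz embedding $\xii$ and the retraction $\ro$ from Theorem \ref{t:xi}. Set $u := \xii \circ f$. By Theorem \ref{t:Sob=Sob}, $u \in W^{1,p}(M, \R{N})$ with $|Du| \leq |Df|$ pointwise a.e. Since $M$ is a connected bounded Lipschitz open set of a Riemannian manifold, the classical Sobolev--Poincar\'e inequality for vector-valued maps applies: there exists a constant $C = C(p,m,N,M)$ and a vector $\bar u \in \R{N}$ (e.g., the mean value of $u$ with respect to the Riemannian volume) such that
\begin{equation*}
\left(\int_M |u - \bar u|^{p^*}\right)^{1/p^*} \leq C \left(\int_M |Du|^{p}\right)^{1/p}.
\end{equation*}

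The point $\bar u$ is not a priori in the image $\cQ = \xii(\Iq)$, so to produce a candidate $\bar f \in \Iq$, I would project it: set $\bar f := \xii^{-1}(\ro(\bar u))$, which is well-defined since $\ro(\bar u) \in \cQ$. Since $u(x) \in \cQ$ for a.e.\ $x$ and $\ro$ is the identity on $\cQ$, we have $u(x) = \ro(u(x))$ almost everywhere. Hence, by the Lipschitz continuity of $\ro$,
\begin{equation*}
|u(x) - \ro(\bar u)| = |\ro(u(x)) - \ro(\bar u)| \leq \Lip(\ro)\, |u(x) - \bar u|.
\end{equation*}
Using that $\xii^{-1}|_{\cQ}$ is Lipschitz with constant depending only on $n,Q$, together with the fact that $\xii(\bar f) = \ro(\bar u)$ and $\xii(f(x)) = u(x)$, we obtain
\begin{equation*}
\cG(f(x), \bar f) \leq \Lip(\xii^{-1}|_\cQ)\,|u(x) - \ro(\bar u)| \leq C(n,Q)\,\Lip(\ro)\,|u(x) - \bar u|.
\end{equation*}
Integrating to the $p^*$-th power and applying the vector-valued Poincar\'e inequality yields
\begin{equation*}
\left(\int_M \cG(f,\bar f)^{p^*}\right)^{1/p^*} \leq C \left(\int_M |Du|^p\right)^{1/p} \leq C \left(\int_M |Df|^p\right)^{1/p},
\end{equation*}
which is the desired estimate.

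The only genuinely nontrivial ingredient is the classical vector-valued Sobolev--Poincar\'e inequality on a Lipschitz domain of a Riemannian manifold; since $\R{N}$-valued Sobolev functions are just $N$-tuples of scalar functions, this follows from the scalar case via a componentwise argument and, on a manifold, from a finite covering by Lipschitz charts combined with a standard chain/connectedness argument to relate the local means. I would not expect any real obstacle here, as the result is entirely standard; the main conceptual content of the proposition lies in converting the ``Euclidean mean'' $\bar u$ into a well-defined element of $\Iq$, which is handled cleanly by the retraction $\ro$.
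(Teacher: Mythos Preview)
Your proof is correct and follows essentially the same strategy as the paper: embed via $\xii$, apply the classical Sobolev--Poincar\'e inequality in $\R{N}$, and then project the Euclidean mean back to $\cQ$ to produce $\bar f$. The only cosmetic difference is that the paper projects $\bar u$ to $\cQ$ by choosing a nearest point $v$ (using $|\bar u - v|\le |\bar u - u(x)|$ for a.e.\ $x$ and then the triangle inequality), whereas you use the Lipschitz retraction $\ro$; both choices work equally well and yield the same conclusion.
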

\begin{remark}
Note that the point $\overline f$ in the Poincar\'e inequality
is not uniquely determined. Nevertheless, 
in analogy with the classical setting,
we call it a \textit{mean} for $f$.
\end{remark}

\begin{proof}
Set $h:=\xii\circ f:M\ra\cQ\subset\R{N}$.
By Theorem \ref{t:Sob=Sob}, $h\in W^{1,p}(M,\R{N})$.
Recalling the classical Poincar\'e inequality
(see, for instance, \cite{Adams} or \cite{Ziemer}), 
there exists a constant $C=C(p,m,M)$ such that,
if $\overline h=\mint_{M} h$, then
\begin{equation}\label{e:classic poincare'}
\left(\int_{M}\left|h(x)-\overline h\right|^{p^*} dx\right)^{\frac{1}{p^*}}
\leq C \left(\int_{M}|Dh|^{p}\right)^{\frac{1}{p}}.
\end{equation}
Let now $v\in\cQ$ be such that $\left|\overline h - v\right|=\dist\left(\overline h,\cQ\right)$
($v$ exists because $\cQ$ is closed). Then, since $h$ takes values 
in $\cQ$ almost
everywhere, by \eqref{e:classic poincare'} we infer
\begin{equation}\label{e:media}
\left(\int_{M}\left|\overline h -v\right|^{p^*} dx\right)^{\frac{1}{p^*}}
\leq \left(\int_{M}\left|\overline h -h(x)\right|^{p^*} dx\right)^{\frac{1}{p^*}}
\leq C\left(\int_{M}|Dh|^{p}\right)^{\frac{1}{p}}.
\end{equation}
Therefore, using \eqref{e:classic poincare'} and \eqref{e:media},
we end up with
\begin{equation*}
\norm{h-v}{L^{p^*}}\leq \norm{h-\overline h}{L^{p^*}}+\norm{\overline h-v}{L^{p^*}}
\leq 2\,C \norm{Dh}{L^p}.
\end{equation*}
Hence, it is immediate to verify, using the biLipschitz continuity of $\xii$,
that \eqref{e:poincare'} is satisfied with $\overline f=\xii^{-1}(v)$ and a constant $C(p,m,n,Q,M)$.
\end{proof}

\subsection{Campanato--Morrey estimates}\label{ss:camp}
We prove next the Campanato--Morrey estimates
for $Q$-functions, a crucial tool in the
proof of Theorem \ref{t:hoelder}.

\begin{propos}\label{p:Campanato Morrey}
Let $f\in W^{1,2}(B_1,\Iq)$ and $\alpha\in(0,1]$ be such that
\begin{equation*}
\int_{B_r (y)}|Df|^2
\leq A\; r^{m-2+2\alpha}\quad
\mbox{for every $y\in B_1$ and a.e. $r\in ]0,1-|y|[$.}
\end{equation*}
Then, for every $0<\delta<1$,
there is a constant $C=C(m,n,Q,\delta)$ with
\begin{equation}\label{e:Campanato Morrey}
\sup_{x,y\in
\overline{B_{\delta}}}\frac{\cG(f(x),f(y))}{\abs{x-y}^\alpha}
=: \left[f\right]_{C^{0,\alpha}(\overline{B_{\delta}})}
\leq C\,\sqrt{A}.
\end{equation}
\end{propos}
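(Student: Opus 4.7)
The plan is to deduce this from the classical Campanato--Morrey estimate for $\R{N}$-valued Sobolev functions via the biLipschitz embedding $\xii$ of Theorem \ref{t:xi}, exactly in the spirit of the preceding propositions in this section (Sobolev embeddings, Poincar\'e inequality). Since this proposition appears in the chapter devoted to Almgren's extrinsic theory, it is natural to reduce to the classical vector-valued statement rather than to give an intrinsic argument; an intrinsic proof via a telescoping-means argument built on Proposition \ref{p:poincare'} is possible but longer, and will have to wait for Chapter 4.

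Concretely, first I set $h := \xii \circ f : B_1 \to \R{N}$. By Theorem \ref{t:Sob=Sob}, $h \in W^{1,2}(B_1, \R{N})$ and
\[
|Dh|(x) \;\le\; |Df|(x) \qquad\text{for a.e.\ } x\in B_1.
\]
Consequently the hypothesis on $f$ transfers verbatim to $h$:
\[
\int_{B_r(y)}|Dh|^2 \;\le\; \int_{B_r(y)}|Df|^2 \;\le\; A\, r^{m-2+2\alpha},
\]
for every $y\in B_1$ and a.e.\ $r\in(0,1-|y|)$.

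Second, I invoke the classical Campanato--Morrey embedding (see, e.g., \cite{Adams}) for vector-valued Sobolev functions: the above Morrey-type growth on the gradient implies that, for every $\delta\in(0,1)$, the restriction $h|_{\overline{B_\delta}}$ is H\"older continuous with
\[
[h]_{C^{0,\alpha}(\overline{B_\delta})} \;\le\; C(m,N,\delta)\,\sqrt{A}\;=\;C(m,n,Q,\delta)\,\sqrt{A},
\]
absorbing the dependence on $N=N(Q,n)$ into the constant. Finally, by Theorem \ref{t:xi} the map $\xii^{-1}|_\cQ$ is Lipschitz with constant $C(n,Q)$, and since $h$ takes values in $\cQ$ almost everywhere (and after redefining $h$ on a negligible set, everywhere on $\overline{B_\delta}$ by continuity), for all $x,y\in\overline{B_\delta}$ we have
\[
\cG\bigl(f(x),f(y)\bigr) \;=\; \bigl|\xii^{-1}(h(x))-\xii^{-1}(h(y))\bigr| \;\le\; C(n,Q)\,|h(x)-h(y)|,
\]
which combined with the previous display yields \eqref{e:Campanato Morrey}.

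There is no real obstacle: the only point requiring a little care is that the Morrey growth on $|Dh|^2$ is given on balls centered at arbitrary $y\in B_1$, which is exactly what the classical Campanato--Morrey theorem requires in order to produce interior H\"older regularity on the compactly contained ball $\overline{B_\delta}$; this is why the estimate is stated only on $\overline{B_\delta}$ with a constant depending on $\delta$, the loss coming from the need to keep $1-|y|$ bounded away from $0$.
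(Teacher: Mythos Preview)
Your proof is correct and follows essentially the same approach as the paper: compose with the biLipschitz embedding $\xii$, transfer the Morrey growth condition to $\xii\circ f$ via Theorem \ref{t:Sob=Sob}, apply the classical Campanato--Morrey estimate, and pull back using the Lipschitz bound on $\xii^{-1}|_{\cQ}$. One minor notational slip: in your last display you write $|\xii^{-1}(h(x))-\xii^{-1}(h(y))|$, but $\xii^{-1}$ takes values in $\Iq$, so the correct expression is $\cG\bigl(\xii^{-1}(h(x)),\xii^{-1}(h(y))\bigr)$; the bound by $C(n,Q)\,|h(x)-h(y)|$ then follows directly from $\Lip(\xii^{-1}|_{\cQ})\leq C(n,Q)$.
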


\begin{proof}
Consider $\xii\circ f$: as shown in Theorem \ref{t:Sob=Sob},
there exists a constant $C$ depending on $\Lip(\xii)$ and $\Lip(\xii^{-1})$ such that 
\begin{equation*}
\int_{B_r (y)}|D(\xii\circ f)(x)|^2 dx\leq C\, A\,
r^{m-2+2\alpha}
\end{equation*}
Hence, the usual Campanato--Morrey estimates (see, for example, $3.2$ in
\cite{HanLin}) provide
the existence of
a constant $C=C(m,\alpha,\delta)$ such that
\begin{equation*}
\abs{\xii\circ f(x)-\xii\circ f(y)}\leq C\,\sqrt{A}\,
\abs{x-y}^\alpha\quad\mbox{for every $x,y\in
\overline{B_{\delta}}$.}
\end{equation*}
Thus, composing with $\xii^{-1}$, we
conclude the desired estimate \eqref{e:Campanato Morrey}.
\end{proof}

\subsection{A technical Lemma}\label{ss:tech}
This last subsection contains a technical 
lemma which estimates the Dirichlet energy of an interpolation between 
two functions defined on concentric spheres.
The lemma is particularly useful to construct competitors
for $\D$-minimizing maps.

\begin{lemma}[Interpolation Lemma]\label{l:technical}
There is a constant $C=C(m,n,Q)$ with the following property.
Let $r>0$, $g\in W^{1,2}(\de B_r,\Iq)$ and 
$f\in W^{1,2}(\partial B_{r(1-\eps)},\Iq)$.
Then, there exists $h\in W^{1,2}(B_r\setminus B_{r(1-\eps)},\Iq)$ 
such that $h|_{\de B_r} = g$, $h|_{\de B_{r(1-\eps)}}=f$ and 
\begin{multline}
\D(h,B_r\setminus B_{r(1-\eps)})\leq
 C\,\eps\,r\, 
\big[\D(g,\de B_r)+\D(f,\de B_{r(1-\eps)})\big]+\\
+\frac{C}{\eps\,r}\int_{\de B_r}
\cG\left(g(x),f\left((1-\eps)\,x\right)\right)^2 dx.
\label{e:joining}
\end{multline}
\end{lemma}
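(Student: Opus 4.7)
\textbf{Proof proposal for Lemma \ref{l:technical}.} The plan is to carry out a radial linear interpolation in the ambient Euclidean space $\R{N}$ via the extrinsic embedding $\xii$ of Theorem \ref{t:xi}, and then project back to $\Iq$ using the Lipschitz retraction $\ro$. More precisely, set $G := \xii\circ g : \partial B_r\to \cQ\subset\R{N}$ and $F := \xii\circ f : \partial B_{r(1-\eps)}\to \cQ\subset \R{N}$. Using spherical coordinates $x=\rho\omega$ with $\omega\in \s^{m-1}$ and $\rho\in [r(1-\eps),r]$, define
\begin{equation*}
H(\rho\omega) \;:=\; \lambda(\rho)\, G(r\omega)\,+\,\bigl(1-\lambda(\rho)\bigr)\,F\bigl(r(1-\eps)\omega\bigr),\qquad \lambda(\rho):=\tfrac{\rho-r(1-\eps)}{\eps\,r},
\end{equation*}
and put $h:=\xii^{-1}\circ\ro\circ H$. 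Since $G$ and $F$ take values in $\cQ$ and $\ro$ is the identity on $\cQ$, one has $h|_{\partial B_r}=g$ and $h|_{\partial B_{r(1-\eps)}}=f$ in the trace sense.

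Next I would estimate $\int_{B_r\setminus B_{r(1-\eps)}}|DH|^2$ by splitting $DH$ into its radial and tangential components on each sphere of radius $\rho$. The radial derivative is
\begin{equation*}
\partial_\rho H(\rho\omega)\;=\;\tfrac{1}{\eps r}\bigl(G(r\omega)-F(r(1-\eps)\omega)\bigr),
\end{equation*}
so, since $\rho^{m-1}\leq r^{m-1}$ on the annulus and $|G(r\omega)-F(r(1-\eps)\omega)|\leq \cG(g(r\omega),f(r(1-\eps)\omega))$ by $\Lip(\xii)\leq 1$, integrating $|\partial_\rho H|^2$ against $\rho^{m-1}d\rho\,d\omega$ and changing variables $x=r\omega$ yields a contribution bounded by $\frac{C}{\eps r}\int_{\partial B_r}\cG(g(x),f((1-\eps)x))^2\,dx$, which is exactly the second term of \eqref{e:joining}. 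For the tangential part, one notes that the angular derivative on the unit sphere $\s^{m-1}$ of $\omega\mapsto G(r\omega)$ equals $r$ times the intrinsic tangential Euclidean gradient of $G$ at $r\omega$ on $\partial B_r$, so that
\begin{equation*}
|\nabla^{\mathrm{tang}}_{\partial B_\rho} H(\rho\omega)|^2\;\leq\;\frac{2}{\rho^2}\Bigl(\lambda(\rho)^2\,r^2|DG|^2(r\omega)+(1-\lambda(\rho))^2\,r^2(1-\eps)^2|DF|^2(r(1-\eps)\omega)\Bigr),
\end{equation*}
where the gradients $|DG|,|DF|$ are understood as Euclidean tangential gradients on the corresponding spheres. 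Using $r/\rho\leq 1/(1-\eps)\leq 2$ and integrating $\rho^{m-1}d\rho$ over an interval of length $\eps r$, one converts $\int_{\s^{m-1}}|DG|^2(r\omega)\,d\omega = r^{-(m-1)}\D(\xii\circ g,\partial B_r)$ (and similarly for $F$), giving a contribution bounded by $C\eps r\,(\D(\xii\circ g,\partial B_r)+\D(\xii\circ f,\partial B_{r(1-\eps)}))$, which, via Theorem \ref{t:Sob=Sob}, is controlled by $C\eps r\,(\D(g,\partial B_r)+\D(f,\partial B_{r(1-\eps)}))$, the first term of \eqref{e:joining}.

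Finally, since $h=\xii^{-1}\circ\ro\circ H$ with $\ro$ globally Lipschitz and $\xii^{-1}|_\cQ$ Lipschitz (Theorem \ref{t:xi}), one more application of Theorem \ref{t:Sob=Sob} gives $|Dh|\leq C(n,Q)|DH|$ a.e., and the two estimates above combine to yield \eqref{e:joining}. The most delicate step will be the tangential computation: one must keep track carefully of the two distinct scalings (the factor $r$ between angular and Euclidean tangential derivatives on $\partial B_r$, versus the factor $\rho$ on the interpolating sphere) so that the surplus factors cancel cleanly and only the single $\eps r$ from the radial integration remains. Everything else reduces to routine spherical-coordinate bookkeeping and to the Lipschitz bounds already established for $\xii$ and $\ro$.
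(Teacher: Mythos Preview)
Your proposal is correct and essentially identical to the paper's own proof: the paper also composes with $\xii$, linearly interpolates in the radial variable between the two embedded boundary data, retracts onto $\cQ$ via $\ro$, and pulls back with $\xii^{-1}$, then splits the gradient into radial and tangential parts to obtain \eqref{e:joining}. The only cosmetic difference is that the paper first scales to $r=1$, whereas you carry the factors of $r$ and $\rho$ through explicitly; your more careful bookkeeping of the $r/\rho$ ratio (with the harmless assumption $\eps\leq 1/2$) is exactly what the paper suppresses into its constant $C$.
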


\begin{proof}
By a scaling argument, it is enough to prove the lemma for $r=1$.
As usual, we consider $\psi=\xii\circ g$ and
$\varphi=\xii\circ f$. For $x\in \de B_1$ and
$t\in [1-\eps,1]$, we define
\begin{equation*}
\Phi (t\, x)=\frac{(t-1+\eps)\,\psi (x)+
(1-t)\,\varphi\left((1-\eps)\,x\right)}{\eps},
\end{equation*}
and $\overline\Phi=\ro\circ \Phi$.
It is straightforward to verify that $\overline\Phi$ 
belongs to $W^{1,2}(B_1\setminus B_{1-\eps},\cQ)$.
Moreover, the Lipschitz continuity of $\ro$ and an easy 
computation yield the following
estimate,
\begin{align*}
\int_{B_1\setminus B_{1-\eps}}\left|D\,\overline\Phi\right|^2
\leq{}& C\,\int_{B_1\setminus B_{1-\eps}}\left|D\,\Phi\right|^2 
\nonumber\\
\leq {}&  C\,\int_{1-\eps}^1\int_{\de B_1}
\left(\left|\de_\tau \varphi(x)\right|^2
+\left|\de_\tau\psi (x)\right|^2
+\left|\frac{\psi (x)-
\varphi \left((1-\eps)x\right)}{\eps}\right|^2\right) dx \,dt\\
={}&  C\,\eps\left\{\D(\psi,\de B_1)+\D(\varphi,\de B_{1-\eps})\right\}+\\
&+C\,\eps^{-1}\int_{\de B_1}\left|\psi (x)-
\varphi\left((1-\eps)x\right)\right|^2 dx,
\end{align*}
where $\de_\tau$ denotes the tangential derivative.
Consider, finally, $h=\xii^{-1}\circ \overline\Phi$:
\eqref{e:joining} follows easily from the
biLipschitz continuity of $\xii$.
\end{proof}

The following is a straightforward corollary.

\begin{corol}\label{c:fill}
There exists a constant $C=C(m,n,Q)$ with the following property.
For every $g\in W^{1,2}(\de B_1,\Iq)$, there is
$h\in W^{1,2} (B_1,\Iq)$ with $h|_{\de B_1} = g$ and
\begin{equation*}
\D (h, B_1) \leq C\, \D (g, \de B_1) +C\int_{\de B_1} \cG (g, Q\a{0})^2.
\end{equation*}
\end{corol}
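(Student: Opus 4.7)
The plan is to apply the Interpolation Lemma \ref{l:technical} with the inner boundary datum taken to be the constant $Q$-point $Q\a{0}$, and then extend by that same constant inside the smaller ball. Concretely, I take $r=1$, $\eps=1/2$, and let $f\equiv Q\a{0}$ on $\partial B_{1/2}$ (so that $\D(f,\partial B_{1/2})=0$). Lemma \ref{l:technical} then produces a map $\tilde h\in W^{1,2}(B_1\setminus B_{1/2},\Iq)$ with $\tilde h|_{\partial B_1}=g$ and $\tilde h|_{\partial B_{1/2}}=Q\a{0}$.

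Next I glue: define $h$ to coincide with $\tilde h$ on the annulus $B_1\setminus B_{1/2}$ and with $Q\a{0}$ on $B_{1/2}$. Since both pieces have trace $Q\a{0}$ on $\partial B_{1/2}$ (in the sense of Definition \ref{d:Dirichlet problem}, equivalently by passing to $\xii\circ h$ via Proposition \ref{p:trace}), the resulting map $h$ lies in $W^{1,2}(B_1,\Iq)$ with $h|_{\partial B_1}=g$, and its Dirichlet energy equals $\D(\tilde h, B_1\setminus B_{1/2})$.

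Finally, inserting the chosen parameters into \eqref{e:joining} yields
\begin{equation*}
\D(h,B_1)=\D(\tilde h, B_1\setminus B_{1/2})\leq \tfrac{C}{2}\,\D(g,\partial B_1)+2C\int_{\partial B_1}\cG\bigl(g(x),Q\a{0}\bigr)^2 dx,
\end{equation*}
which is precisely the claimed bound after relabeling the constant. The one point requiring a bit of attention is the verification that the two pieces of $h$ glue into a genuine Sobolev $Q$-map; but this is immediate from the matching traces on $\partial B_{1/2}$, and if desired can be reduced to the classical single-valued gluing statement by composing with the biLipschitz embedding $\xii$ of Theorem \ref{t:xi} and then retracting with $\ro$. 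No real obstacle is anticipated — the corollary is essentially a direct specialization of the Interpolation Lemma.
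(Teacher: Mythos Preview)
Your proof is correct and is precisely the natural argument the paper has in mind: the paper gives no explicit proof, stating only that the corollary is ``a straightforward corollary'' of the Interpolation Lemma~\ref{l:technical}. Your specialization $r=1$, $\eps=1/2$, $f\equiv Q\a{0}$ together with the trivial extension by $Q\a{0}$ on $B_{1/2}$ is exactly the intended derivation, and the gluing across $\partial B_{1/2}$ is indeed immediate via matching traces (either through $\xii$ as you note, or directly by observing that $\cG(h,T)$ glues as a classical Sobolev function for each $T\in\Iq$).
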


\section{Existence of $\D$-minimizing $Q$-valued functions}
\label{s:existence}

In this section we prove Theorem \ref{t:existence}.
We first remark that Almgren's definition of Dirichlet
energy differs from ours. More precisely,
using our notations, Almgren's definition 
of the Dirichlet energy is simply
\begin{equation}\label{e:aldir}
\int_{\Omega} \sum_{\stackrel{i=1,\ldots, Q}{j=1,\ldots, m}}|\partial_j f_i (x)|^2\, dx,
\end{equation}
where $\partial_j f_i$ are the approximate partial derivatives
of Definition \ref{d:appdiff}, which exist almost everywhere thanks
to Corollary \ref{c:appdiffae}.
Moreover, \eqref{e:aldir} makes sense because the integrand
does not depend upon the particular selection chosen
for $f$. Before proving Theorem \ref{t:existence} we will
show that our Dirichlet energy coincides with Almgren's.

\begin{propos}[Equivalence of the definitions] 
\label{p:equivalence def Dir}
For every $f\in W^{1,2}(\Om,\Iq)$ and every $j=1,\ldots,m$, we have
\begin{equation}\label{e:goal}
|\de_jf|^2 = \sum_i |\partial_j f_i|^2
\quad\mbox{a.e.}
\end{equation}
Therefore the Dirichlet energy
$\D(f,\Om)$ of
Definition \ref{d:dirichlet} coincides with \eqref{e:aldir}.
\end{propos}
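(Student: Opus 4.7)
The plan is to establish $|\partial_j f|^2 = \sum_i |\partial_j f_i|^2$ a.e.\ by proving the two opposite inequalities separately; the identification of the two Dirichlet energies then follows by summation over $j$ and integration. Throughout I work at points $x_0$ of a full-measure subset of $\Omega$ where $f$ is approximately differentiable (cf.\ Corollary \ref{c:appdiffae}) and at which each of the real Sobolev functions $\cG(f, T_k)$, $\{T_k\}_{k\in \N}$ being a fixed countable dense subset of $\Iq$, admits its Sobolev partial derivative as a classical one.

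For $|\partial_j f|^2 \leq \sum_i |\partial_j f_i|^2$ it suffices, by minimality of $|\partial_j f|$ in (ii) of Definition \ref{d:W1p}, to show that $\tilde \varphi_j := (\sum_i |\partial_j f_i|^2)^{1/2}$ satisfies $|\partial_j \cG(f, T)| \leq \tilde\varphi_j$ a.e.\ for every $T \in \Iq$. Fixing $T = \sum_i \a{P_i}$ and a good point $x_0$, pick $\sigma \in \Pe_Q$ achieving $\cG(f(x_0), T)^2 = \sum_i |f_i(x_0) - P_{\sigma(i)}|^2$. The function $h(x) := \sum_i |f_i(x) - P_{\sigma(i)}|^2$ satisfies $h \geq \cG(f, T)^2$ everywhere with equality at $x_0$; both being approximately differentiable at $x_0$, their approximate $j$-derivatives must agree there, so
\[
\partial_j \cG(f, T)^2 (x_0) = \partial_j h(x_0) = 2\sum_i \partial_j f_i(x_0) \cdot (f_i(x_0) - P_{\sigma(i)}),
\]
and a Cauchy--Schwarz estimate yields the claim (the degenerate case $f(x_0) = T$ being trivial).

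For the reverse inequality $\sum_i |\partial_j f_i|^2 \leq |\partial_j f|^2$, I would use the characterization \eqref{e:def|D_jf|}, $|\partial_j f| = \sup_k |\partial_j \cG(f, T_k)|$ a.e., and exhibit at each good $x_0$ a tailored test point
\[
T^*(\epsilon) := \sum_i \a{f_i(x_0) - \epsilon\, \partial_j f_i(x_0)}, \qquad \epsilon > 0 \text{ small},
\]
designed to saturate the Cauchy--Schwarz inequality of the previous paragraph. Condition (ii) of Definition \ref{d:diff} forces $\partial_j f_i(x_0) = \partial_j f_l(x_0)$ whenever $f_i(x_0) = f_l(x_0)$, so the index-to-index matching remains optimal for $x$ near $x_0$ and $\epsilon$ small; then the approximate Taylor expansion $f_i(x_0 + t e_j) = f_i(x_0) + t\, \partial_j f_i(x_0) + o(t)$ gives, by direct computation, $|\partial_j \cG(f, T^*(\epsilon))(x_0)| = \tilde\varphi_j(x_0)$.

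The main obstacle is the quantifier-interchange: $T^*(\epsilon)$ depends on $x_0$, whereas the characterization \eqref{e:def|D_jf|} is formulated for a \emph{fixed} countable family. I would bridge this by combinatorial stability of the optimal matching: in a $\cG$-ball around $T^*(\epsilon)$ small enough that the optimal permutation $\sigma$ between $f(x)$ and $T$ is the same for all $x$ in a Lebesgue-density-$1$ neighborhood of $x_0$, the map $T \mapsto \partial_j \cG(f, T)(x_0)$ (computed by the formula in the second paragraph) is continuous. Choosing $T_{k_n} \in \{T_k\}$ with $\cG(T_{k_n}, T^*(\epsilon)) \to 0$ then forces $|\partial_j \cG(f, T_{k_n})(x_0)| \to \tilde\varphi_j(x_0)$, so $\sup_k |\partial_j \cG(f, T_k)(x_0)| \geq \tilde\varphi_j(x_0)$. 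The bookkeeping is that for each $T_k$ the identification of Sobolev and classical/approximate derivatives holds only off a $k$-dependent null set, but the countable intersection of these null sets remains negligible, so the argument applies a.e.\ and completes the proof.
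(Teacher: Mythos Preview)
Your strategy is the paper's: two opposite inequalities, the upper bound via a touching argument plus Cauchy--Schwarz, the lower bound by testing against $\sum_i\a{f_i(x_0)+\lambda\,\partial_j f_i(x_0)}$, approximated through the dense family with the identity permutation kept optimal by condition (ii) of Definition~\ref{d:diff}.

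There is one technical slip. Your function $h(x)=\sum_i|f_i(x)-P_{\sigma(i)}|^2$ is built from a bare measurable selection $(f_i)$, and such sheets carry no pointwise regularity: take $f\equiv\a{1}+\a{-1}$ and let $f_1$ jump between $\pm1$ on a set of density $1/2$ at $x_0$; then $h$ is not even approximately continuous there, so the touching conclusion $\partial_j h(x_0)=\partial_j\cG(f,T)^2(x_0)$ fails as written. The paper circumvents this in two moves: it first reduces to Lipschitz $f$ via Proposition~\ref{p:lipapprox} (so that $f$ and each $\cG(f,T_l)$ are genuinely differentiable a.e.), and then it compares not $\cG(f,T)$ with your $h$, but rather $g(y):=\cG\big(T_{x_0}f(y),T\big)$ with the smooth competitor
\[
h(y)=\Big(\sum_i\big|f_i(x_0)+Df_i(x_0)\cdot(y-x_0)-P_{\sigma(i)}\big|^2\Big)^{1/2}.
\]
Since $\cG(\cdot,T)$ is $1$-Lipschitz and $\cG(f(y),T_{x_0}f(y))=o(|y-x_0|)$, the functions $g$ and $\cG(f,T)$ share the same derivative at $x_0$; and now $g\le h$ with equality at $x_0$ is a comparison between a Lipschitz function and a smooth one, so the touching argument goes through cleanly and delivers formula~\eqref{e:computation3}. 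With this replacement (and the same formula then fed into your lower-bound step), your proof and the paper's coincide.
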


\begin{remark}\label{r:independent}
Fix a point $x_0$ of approximate differentiability for $f$
and consider $T_{x_0} (x) = \sum \a{f_i (x_0)+Df_i(x_0)\cdot (x-x_0) }$
its first order approximation at $x_0$. Note that
the integrand in \eqref{e:aldir} coincides with
$\sum_i |Df_i(x_0)|^2$ (where $|L|$ denotes the Hilbert-Schmidt 
norm of the matrix $L$) 
and it is independent of the orthonormal coordinate system
chosen for $\R{m}$. Thus, Proposition \ref{p:equivalence def Dir}
(and its obvious counterpart when the domain is a Riemannian manifold)
implies that $\D (f, \Om)$ is as well independent of this choice.
\end{remark}

\begin{remark}
In the sequel, we will often use the following notation: given a $Q$-point $T\in\Iqs$,
$T=\sum_i\a{P_i}$, we set
\begin{equation*}
|T|^2:=\cG(T,Q\a{0})^2= \sum_i|P_i|^2.
\end{equation*}
In the same fashion, for $f:\Om\ra\Iq$, we define the function $|f|:\Om\ra\R{}$
by setting $|f|(x)=|f(x)|$. Then,
Proposition \ref{p:equivalence def Dir} asserts that, since we understand $Df$ and $\de_j f$ 
as maps into, respectively, $\Iq (\R{n\times m})$ and
$\Iq (\R{n})$, this notation is consistent with the
definitions of $|Df|$ and $|\de_j f|$ given in 
\eqref{e:def|Df|} and \eqref{e:def|D_jf|}.
\end{remark}

The Dirichlet energy of a function $f\in W^{1,2}$ can be
recovered, moreover, as the energy of the composition $\xii_{BW}\circ f$,
where $\xii_{BW}$ is the biLipschitz embedding in Corollary
\ref{c:BW} (compare with Theorem \ref{t:Sob=Sob}).

\begin{propos}\label{p:dir BW}
For every $f\in W^{1,2}(\Om,\Iq)$, it holds
$|Df| = |D(\xii_{BW}\circ f)|$ a.e.
In particular, $\D(f,\Om)=\int_{\Omega}|D(\xii_{BW}\circ f)|^2$.
\end{propos}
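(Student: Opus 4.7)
The plan is to prove the pointwise identity $|Df(x_0)|=|D(\xii_{BW}\circ f)(x_0)|$ at a.e.\ $x_0\in\Omega$; integrating will then give the second assertion. The key ingredient is the local isometry property \eqref{e:BW} of $\xii_{BW}$ from Corollary \ref{c:BW}. By Theorem \ref{t:Sob=Sob}, $g:=\xii_{BW}\circ f\in W^{1,2}(\Omega,\R{M})$, so $g$ is classically approximately differentiable almost everywhere; meanwhile $f$ is approximately differentiable a.e.\ by Corollary \ref{c:appdiffae}. Fix $x_0$ at which all these properties hold together with the identity $|Df(x_0)|^2=\sum_i|Df_i(x_0)|^2$ of Proposition \ref{p:equivalence def Dir}, and let $\tilde\Omega$ be a density-$1$ set at $x_0$ on which $f$ is differentiable at $x_0$. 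Denote $L_0=Dg(x_0)\in\R{M\times m}$ and $T_{x_0}f(x)=\sum_i\a{f_i(x_0)+Df_i(x_0)\cdot(x-x_0)}$.

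Corollary \ref{c:BW} produces $\delta>0$ such that $\xii_{BW}$ is an isometry on the $\cG$-ball $B_\delta(f(x_0))\subset\Iq$; by continuity of $f|_{\tilde\Omega}$ at $x_0$, we can find $r>0$ with $f(x)\in B_\delta(f(x_0))$ for every $x\in\tilde\Omega\cap B_r(x_0)$. For such $x$ the isometry yields
\[
|g(x)-g(x_0)|=\cG(f(x),f(x_0)).
\]
The left-hand side expands as $|g(x)-g(x_0)|^2=|L_0\cdot(x-x_0)|^2+o(|x-x_0|^2)$ by differentiability of $g$. For the right-hand side, differentiability of $f$ at $x_0$ gives $\cG(f(x),T_{x_0}f(x))=o(|x-x_0|)$; moreover, for $|x-x_0|$ small, condition (ii) of Definition \ref{d:diff} (equal values of $f(x_0)$ must have equal $Df_i(x_0)$) together with the positive separation of the distinct points in $\supp f(x_0)$ forces the optimal matching defining $\cG(T_{x_0}f(x),f(x_0))$ to be the identity, giving $\cG(T_{x_0}f(x),f(x_0))^2=\sum_i|Df_i(x_0)\cdot(x-x_0)|^2$. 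Combining these expansions,
\[
|L_0\cdot(x-x_0)|^2=\sum_i|Df_i(x_0)\cdot(x-x_0)|^2+o(|x-x_0|^2)\quad\text{for }x\in\tilde\Omega\cap B_r(x_0).
\]

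Since $\tilde\Omega$ has density $1$ at $x_0$, a standard measure-theoretic argument supplies, for every $v\in\s^{m-1}$, a sequence $x_k\to x_0$ in $\tilde\Omega$ with $(x_k-x_0)/|x_k-x_0|\to v$. Dividing the previous display by $|x_k-x_0|^2$ and passing to the limit yields $|L_0\cdot v|^2=\sum_i|Df_i(x_0)\cdot v|^2$ for every $v\in\R{m}$, by continuity of these quadratic forms. Summing over an orthonormal basis we obtain $|L_0|^2=\sum_i|Df_i(x_0)|^2$, which by Proposition \ref{p:equivalence def Dir} equals $|Df(x_0)|^2$. The only delicate step is the justification that the identity permutation realizes $\cG(T_{x_0}f(x),f(x_0))$ for $x$ close to $x_0$: this is precisely where clause (ii) of Definition \ref{d:diff} is essential, since otherwise coinciding values with different tangent planes could support competing permutations of lower cost; everything else is a routine reduction to the classical $\R{M}$-valued Sobolev setting.
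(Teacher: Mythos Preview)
Your proof is correct and follows essentially the same line as the paper's: exploit the local isometry \eqref{e:BW} of $\xii_{BW}$ at $f(x_0)$ to equate $|g(x)-g(x_0)|$ with $\cG(f(x),f(x_0))$, expand both sides via the first-order approximation, and invoke Proposition~\ref{p:equivalence def Dir}. The one technical difference is that the paper first reduces to Lipschitz $f$ (via Proposition~\ref{p:lipapprox}), so that $T_{x_0}f$ is globally defined and one can simply plug in $x=x_0+te_j$ and let $t\to 0$; you instead work directly with approximate differentiability and recover every direction $v$ through a density-$1$ argument. Both are fine; the Lipschitz reduction buys a slightly cleaner endgame since it sidesteps the need to intersect the density-$1$ sets for $f$ and for $g$ (a point you should make explicit: your $\tilde\Omega$ must witness the approximate differentiability of $g$ as well, not only of $f$).
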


Although this proposition gives a great intuition about the energy of
$Q$-valued functions, as already pointed out, we will not use it
in the rest of the paper, the reason being that, the theory is in fact
independent of the biLipschitz embedding.

\subsection{Proof the equivalence of the definitions}
\begin{proof}[Proof of Proposition \ref{p:equivalence def Dir}.]
We recall the definition of $|\de_jf|$ and $|Df|$
given in \eqref{e:def|D_jf|} and \eqref{e:def|Df|}: chosen a countable
dense set $\{T_l\}_{l\in\N}\subset \Iq$, we define
\begin{equation*}
\abs{\de_jf}=\sup_{l\in\N}\abs{\de_j\cG(f,T_l)}
\quad\mbox{and}\quad |Df|^2 := 
\sum_{j=1}^m\abs{\de_jf}^2.
\end{equation*}
By Proposition \ref{p:lipapprox}, we can consider
a sequence $g^k= \sum_{i=1}^Q \a{g^k_i}$ of 
Lipschitz functions with the
property that $|\{g^k\neq f\}|\leq 1/k$. Note that
$|\partial_j f| = |\partial_j g^k|$ and $\sum_i |\partial_j g^k_i|^2
= \sum_i |\partial_j f_i|^2$ almost everywhere on
$\{g^k=f\}$. Thus, it suffices to prove the
proposition for each Lipschitz function $g^k$.

Therefore, we assume from now on that $f$ is Lipschitz. 
Note next that on the set $E_l= \{x\in\Om:f(x)=T_l\}$ both
$|\de_j f|$ and $\sum_i |\partial_j f_i|^2$ vanish
a.e. Hence, it suffices to show \eqref{e:goal}
on any point $x_0$ where $f$ and all
$\cG (f, T_l)$ are differentiable
and $f(x_0)\not\in \{T_l\}_{l\in\N}$.

Fix such a point, which, without loss of generality, 
we can assume to be the origin, $x_0=0$.
Let $T_0 f$ be the first oder approximation of $f$ at $0$.
Since $\cG (\cdot, T_l)$ is a Lipschitz function, we have
$\cG (f (y), T_l) = \cG (T_0 f (y), T_l) + o (|y|)$. Therefore,
$g (y) := \cG (T_0 f (y), T_l)$ is differentiable
at $0$ and $\partial_j g (0) = \partial_j \cG (f, T_l) (0)$. 

We assume, without loss of generality, that $\cG(f(0),T_l)^2=\sum_i|f_i (0)-P_i|^2$,
where $T_l=\sum_i\a{P_i}$. Next, we consider the function
\begin{equation*}
h(y):= \sqrt{\sum_i|f_i (0) +Df_i (0) \cdot y-P_i|^2}.
\end{equation*}
Then, $g\leq h$. Since $h(0)=g(0)$, we conclude that
$h-g$ has a minimum at $0$. Recall that both $h$ and $g$ 
are differentiable at $0$ and $h(0)=g(0)$. Thus, we conclude
$\nabla h (0)=\nabla g (0)$, which in turn
yields the identity
\begin{equation}\label{e:computation3}
\partial_j\, \cG (f, T_l) (0)=\de_jg(0)
=\de_jh(0)
= \sum_i\frac{(f_i (0)-P_i)\cdot\de_jf_i(0)}
{\sqrt{\sum_i |f_i (0) - P_i|^2}}.
\end{equation}
Using the Cauchy-Schwartz inequality and 
\eqref{e:computation3}, we deduce that
\begin{equation}\label{e:quasi2}
|\de_jf|(0)^2=\sup_{l\in\N}\abs{\de_j\cG(f,T_l)(0)}^2
\leq\sum_i\abs{\de_jf_i(0)}^2.
\end{equation}

If the right hand side of \eqref{e:quasi2}
vanishes, then we clearly have equality.
Otherwise, let $Q_i = f_i (0) + \lambda\,\de_jf_i(0)$, where $\lambda$ is a small constant
to be chosen later, 
and consider $T=\sum_i \a{Q_i}$.
Since $\{T_l\}$ is a dense subset of $\Iq$, for every $\eps>0$
we can find a point $T_l=\sum_i\a{P_i}$ such that
\begin{equation*}
P_i=f_i(0)+\lambda\,\de_j f_i(0)+\lambda\,R_i,\quad
\text{with } |R_i|\leq\eps \;\text{for every }i.
\end{equation*}
Now we choose $\lambda$ and $\eps$ small enough to ensure that $\cG(f(0),T_l)^2=\sum_i|f_i (0)-P_i|^2$ (indeed, recall that, if
$f_i(0)=f_k(0)$, then $\de_jf_i(0)=\de_jf_k(0)$).
So, we can repeat the computation
above and deduce that
\begin{equation*}
\partial_j\, \cG (f, T_l) (0)= \sum_i\frac{(f_i (0)-P_i)\cdot\de_jf_i(0)}
{\sqrt{\sum_i |f_i (0) - P_i|^2}}
=\sum_i\frac{(\de_j f_i(0)+R_i)\cdot\de_jf_i(0)}
{\sqrt{\sum_i |\de_j f_i(0)+R_i|^2}}.
\end{equation*}
Hence,
\begin{equation*}
|\de_jf|(0) \geq \sum_i\frac{(\de_j f_i(0))^2 + 
\eps|\partial_j f_i (0)|}
{\sqrt{\sum_i (|\de_j f_i(0)|+\eps)^2}}.
\end{equation*}
Letting $\eps\to0$, we obtain the inequality
$|\de_j f| (0) \geq \sum_j (\de_j f_i (0))^2$.
\end{proof}

\begin{proof}[Proof of Proposition \ref{p:dir BW}]
As for Proposition \ref{p:equivalence def Dir}, it is enough
to show the proposition for a Lipschitz function $f$.
We prove that the functions
$|Df|$ and $|D(\xii_{BW}\circ f)|$
coincide on each point of differentiability of $f$.

Let $x_0$ be such a point and let
$T_{x_0}f(x)=\sum_i\a{f_i(x_0)+Df_i(x_0)\cdot(x-x_0)}$ be the first order
expansion of $f$ in $x_0$.
Since $\cG(f(x),T_{x_0}f(x))=o(|x-x_0|)$ and $\Lip(\xii_{BW})=1$,
it is enough to prove that
$|Df|(x_0)=|D(\xii_{BW}\circ T_{x_0}f)(x_0)|$.

Using the fact that $Df_i(x_0)=Df_j(x_0)$ when $f_i(x_0)=f_j(x_0)$,
it follows easily that, for every $x$ with $|x-x_0|$ small enough,
\[
\cG(T_{x_0}f(x),f(x_0))^2=\sum_i|Df_i(x_0)\cdot(x-x_0)|^2.
\]
Hence, since $\xii_{BW}$ is an isometry in a neighborhood of each point,
for $|x-x_0|$ small enough, we infer that
\begin{equation}\label{e:diff quad}
|\xii_{BW}(T_{x_0}f(x))-\xii_{BW}(f(x_0))|^2=\sum_i|Df_i(x_0)\cdot(x-x_0)|^2.
\end{equation}
For $x=t\,\e_j+x_0$ in \eqref{e:diff quad}, where the $e_j$'s
are the canonical basis in $\R{m}$, taking the limit as $t$ goes to zero,
we obtain that
\[
|\de_j(\xii_{BW}\circ T_{x_0}f)(x_0)|^2=\sum_i|\de_jf_i|^2(x_0).
\]
Summing in $j$ and using Proposition \ref{p:equivalence def Dir}, we
conclude that $|Df|(x_0)=|D(\xii_{BW}\circ T_{x_0}f)(x_0)|$,
which concludes the proof.
\end{proof}

\subsection{Proof of Theorem \ref{t:existence}}
Let $g\in W^{1,2}(\Om,\Iq)$ be given.
Thanks to Propositions \ref{p:trace} and
\ref{p:Sembeddings}, 
it suffices to verify
the sequential weak lower semicontinuity
of the Dirichlet energy.
To this aim, let $f_k\rightharpoonup f$ in $W^{1,2}(\Om,\Iq)$:
we want to show that
\begin{equation}\label{e:semi}
\D (f, \Omega) \leq \liminf_{k\to \infty}
 \D (f_k, \Omega).
\end{equation}
Let $\{T_l\}_{l\in\N}$ be a dense subset of
$\Iq$ and recall that $|\partial_j f|^2 = \sup_l \bigl(\partial_j
\cG (f, T_l)\bigr)^2$.
Thus, if we set
\[
h_{j,N}= \max_{l\in\{1, \ldots, N\}}
\bigl(\partial_j \cG (f, T_l)\bigr)^2,
\]
we conclude that $h_{j,N}\uparrow |\partial_j f|^2$.
Next, for every $N$, denote by $\mathcal{P}_N$
the collections $P=\{E_l\}_{l=1}^N$ of $N$ disjoint measurable subsets 
of $\Omega$. Clearly, it holds
\[
h_{j,N} = \sup_{P\in\mathcal{P}} \sum_{E_l\in P} \bigl(\partial_j 
\cG (f, T_l)\bigr)^2\, {\bf 1}_{E_l}.
\]
By the Monotone Convergence Theorem, 
we conclude
\begin{equation*}
\D (f, \Omega) =
\sum_{j=1}^m \sup_N \int h_{j,N}^2
=\sum_{j=1}^m \sup_N \sup_{P\in \mathcal{P}_N}
\sum_{E_l\in P}\int_{E_l}
\bigl(\partial_j \cG (f, T_l)\bigr)^2.
\end{equation*}
Fix now a partition $\{F_1, \ldots, F_N\}$ such that,
for a given $\eps>0$, 
$$
\sum_l \int_{F_l} \bigl(\partial_j \cG (f, T_l)\bigr)^2
\geq \sup_{P\in \mathcal{P}_N}
\sum_{E_l\in P}\int_{E_l}
\bigl(\partial_j \cG (f, T_l)\bigr)^2 - \eps .
$$
Then, we can find compact sets $\{K_1, \ldots, K_N\}$
with $K_l\subset F_l$ and 
$$
\sum_l \int_{K_l} \bigl(\partial_j \cG (f, T_l)\bigr)^2
\geq \sup_{P\in \mathcal{P}_N}
\sum_{E_l\in P}\int_{E_l}
\bigl(\partial_j \cG (f, T_l)\bigr)^2 - 2 \eps .
$$
Since the $K_l$'s are disjoint compact sets, we can find
disjoint open sets $U_l\supset K_l$. So, denote by
$\mathcal{O}_N$ the collections of $N$ pairwise disjoint
open sets of $\Omega$. We conclude 
\begin{equation}\label{e:rappresenta1000}
\D (f, \Omega) =
\sum_{j=1}^m \sup_N \int h_{j,N}^2
=\sum_{j=1}^m \sup_N \sup_{P\in \mathcal{O}_N}
\sum_{U_l\in P}\int_{U_l}
\bigl(\partial_j \cG (f, T_l)\bigr)^2.
\end{equation}
Note that, since $\cG(f_k,T_l)\to \cG(f,T_l)$
strongly in $L^2 (\Omega)$, then
$\partial_j\cG(f_k,T_l)
\rightharpoonup \partial_j \cG(f,T_l)$ in $L^2 (U)$ for every open
$U\subset \Omega$. Hence,
for every $N$ and every $P\in \mathcal{O}_N$,
we have
\begin{equation*}
\sum_{U_l\in P}\int_{U_l} \bigl(\partial_j
\cG (f, T_l)\bigr)^2
\leq \liminf_{k\to+\infty}
\sum_{U_l\in P} \int_{U_l} \bigl(\partial_j
\cG (f_k, T_l)\bigr)^2
\leq \liminf_{k
\to\infty} \int_\Om |\partial_j f_k|^2.
\end{equation*}
Taking the supremum in $\mathcal{O}_N$ and in $N$, and then summing in
$j$, in view of  \eqref{e:rappresenta1000}, we achieve \eqref{e:semi}.
\chapter{Regularity theory}

This chapter is devoted to the proofs of the two Regularity Theorems
\ref{t:hoelder} and \ref{t:structure}.
In Section \ref{s:first} we derive some Euler-Lagrange conditions for $\D$-minimizers, whereas in
Section \ref{s:maximum principle} we prove a maximum principle
for $Q$-valued functions.
Using these two results, we prove
Theorem \ref{t:hoelder} in Section \ref{s:hoelder}.
Then, in Section \ref{s:frequency} we introduce Almgren's frequency
function and prove his fundamental estimate. 
The frequency function is the main tool
for the blow-up analysis of Section \ref{s:blowup},
which gives useful information on the rescalings of
$\D$-minimizing $Q$-functions.
Finally, in Section \ref{s:structure} we combine 
this analysis with a version of Federer's reduction argument
to prove Theorem \ref{t:structure}.

\section{First variations}\label{s:first}
There are two natural types of variations that can
be used to perturb $\D$-minimizing $Q$-valued
functions. The first ones, which we
call inner variations, are generated by right compositions
with diffeomorphisms of the domain.
The second, which we call
outer variations, correspond to ``left compositions''
as defined in Subsection \ref{ss:chain}.
More precisely, let $f$ be a $\D$-minimizing
$Q$-valued map. 
\begin{itemize}
\item[(IV)] Given $\varphi\in C^\infty_c (\Omega,
\R{m})$, for $\eps$ sufficiently small,
$x\mapsto \Phi_\eps (x)=x +\eps \varphi (x)$ is
a diffeomorphism of $\Omega$ which leaves $\partial
\Om$ fixed. Therefore,
\begin{equation}\label{e:IV}
0= \left.\frac{d}{d\eps}\right|_{\eps=0}
\int_\Omega |D (f \circ \Phi_\eps)|^2.
\end{equation}
\item[(OV)] Given $\psi\in C^\infty (\Omega\times
\R{n}, \R{n})$ such that $\supp(\psi)\subset \Om'\times \R{n}$ for
some $\Om'\subset\subset\Om$, we set $\Psi_\eps (x)
=\sum_i \a{f_i (x) + \eps \psi (x, f_i (x))}$ and
derive
\begin{equation}\label{e:OV}
0= \left.\frac{d}{d\eps}\right|_{\eps=0}
\int_\Omega |D\, \Psi_\eps|^2.
\end{equation}
\end{itemize}
The identities \eqref{e:IV} and \eqref{e:OV}
lead to the following proposition.

\begin{propos}[First variations]\label{p:first}
For every $\varphi\in C^\infty_c (\Omega, \R{m})$, we have
\begin{equation}\label{e:IVbis}
2\int \sum_i \big\langle Df_i  : 
Df_i \cdot D\varphi \big\rangle\,
- \int |Df|^2 \,{\rm div}\, \varphi = 0.
\end{equation}
For every $\psi\in C^\infty (\Omega_x\times \R{n}_u,
\R{n})$ such that
$$
\supp(\psi)\subset \Om'\times \R{n}\quad
\mbox{for some $\Om'\subset\subset\Om$,}
$$
and
\begin{equation}\label{e:hp}
|D_u\psi|\leq C<\infty\quad\textrm{and}\quad
|\psi|+|D_x\psi|\leq C\left(1+|u|\right)\,,
\end{equation}
we have
\begin{equation}\label{e:OVbis}
\int \sum_i \big\langle D f_i (x) : 
D_x \psi (x, f_i (x))\big\rangle\, dx
+ \int \sum_i \big\langle Df_i (x) : 
D_u \psi (x, f_i (x))\cdot
Df_i (x)\rangle\, d x= 0. 
\end{equation}
\end{propos}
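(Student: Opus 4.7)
The plan is to evaluate each derivative in \eqref{e:IV} and \eqref{e:OV} by differentiating under the integral sign, using the chain rules of Proposition \ref{p:chainapp} together with the identity $|Dg|^2=\sum_i|Dg_i|^2$ from Proposition \ref{p:equivalence def Dir}. Since $\D$-minimality forces the minimum property against competitors $f\circ\Phi_\eps$ and $\Psi_\eps$ (which coincide with $f$ on $\partial\Omega$ because $\varphi$, $\psi$ have compact support in $\Omega$), the $\eps$-derivative of the energies at $\eps=0$ must vanish.

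For the inner variation, $\Phi_\eps=\id+\eps\varphi$ is a $C^\infty$-diffeomorphism of $\Omega$ for $\eps$ small. Applying the chain rule (i) of Proposition \ref{p:chainapp} a.e. and Proposition \ref{p:equivalence def Dir}, I obtain
\[
|D(f\circ\Phi_\eps)|^2(y)=\sum_i |Df_i(\Phi_\eps(y))\cdot D\Phi_\eps(y)|^2 .
\]
The change of variables $x=\Phi_\eps(y)$ converts the energy into
\[
\int \sum_i \bigl|Df_i(x)\cdot A_\eps(x)\bigr|^2\,J_\eps(x)\,dx,
\qquad A_\eps:=D\Phi_\eps\circ\Phi_\eps^{-1},\ J_\eps:=(\det D\Phi_\eps)^{-1}\!\circ\Phi_\eps^{-1}.
\]
Using $D\Phi_\eps=I+\eps D\varphi$ and $\det D\Phi_\eps = 1+\eps\,\dv\varphi+O(\eps^2)$, expanding at $\eps=0$, and invoking $\sum_i|Df_i|^2=|Df|^2$, one obtains
\[
\left.\frac{d}{d\eps}\right|_{\eps=0} \int|D(f\circ\Phi_\eps)|^2
=2\int\sum_i\langle Df_i:Df_i\cdot D\varphi\rangle-\int|Df|^2\,\dv\varphi,
\]
which is \eqref{e:IVbis}.

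For the outer variation, the bound $|D_u\psi|\le C$ ensures that for $\eps$ small the map $u\mapsto u+\eps\psi(x,u)$ is injective on $\R{n}$, so $\Psi_\eps$ is a well defined $Q$-valued function agreeing with $f$ outside $\Omega'$. At every point $x$ of approximate differentiability of $f$, Proposition \ref{p:chainapp}(ii) yields
\[
D\Psi_\eps(x)=\sum_i\a{Df_i(x)+\eps\,M_i(x)},\qquad M_i:=D_x\psi(x,f_i)+D_u\psi(x,f_i)\cdot Df_i .
\]
By Proposition \ref{p:equivalence def Dir},
\[
|D\Psi_\eps|^2=\sum_i\bigl(|Df_i|^2+2\eps\langle Df_i:M_i\rangle+\eps^2|M_i|^2\bigr),
\]
and differentiation in $\eps$ at $0$ gives exactly the integrand in \eqref{e:OVbis}.

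The main technical point is justifying the interchange of derivative and integral. For the inner variation this is immediate: $\varphi\in C^\infty_c$, so $|A_\eps|$ is bounded and $J_\eps$ is bounded away from $0$ uniformly in a neighborhood of $\eps=0$, so the integrand and its $\eps$-derivative are dominated pointwise by $C(\|\varphi\|_{C^1})\,|Df|^2\in L^1$. For the outer variation this is where the growth hypotheses \eqref{e:hp} enter: $|D_u\psi|\le C$ gives $|D_u\psi(x,f_i)\cdot Df_i|\le C|Df_i|\in L^2$, while $|D_x\psi|\le C(1+|u|)$ combined with $f\in L^2(\Omega,\Iq)$ gives $|D_x\psi(x,f_i)|\in L^2$. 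Hence $|M_i|\in L^2$, so both $|D\Psi_\eps|^2$ and its $\eps$-derivative are dominated by an $L^1$ function uniformly for $|\eps|\le\eps_0$, and the dominated convergence theorem applies.
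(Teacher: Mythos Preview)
Your proof is correct and follows essentially the same route as the paper's: chain rule for $Q$-valued maps (Proposition~\ref{p:chainapp}), the identity $|Dg|^2=\sum_i|Dg_i|^2$ (Proposition~\ref{p:equivalence def Dir}), a change of variables for the inner variation, and differentiation under the integral sign justified by \eqref{e:hp}. One minor remark: the injectivity of $u\mapsto u+\eps\psi(x,u)$ is not needed for $\Psi_\eps$ to be a well-defined $Q$-valued function, since $\sum_i\a{f_i(x)+\eps\psi(x,f_i(x))}$ is automatically independent of the chosen selection (equal $f_i$'s are sent to equal points).
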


Testing \eqref{e:IVbis} and \eqref{e:OVbis} with
suitable $\varphi$ and $\psi$, we get
two key identities.
In what follows, $\nu$ will always denote the outer unit normal
on the boundary $\partial B$ of a given ball. 

\begin{propos}\label{p:import}
Let $x\in \Omega$.
Then, for a.e. $0< r<
\dist(x, \partial\Om)$, we have
\begin{equation}\label{e:cono}
(m-2) \int_{B_r (x)} |Df|^2 =
r \int_{\partial B_r (x)} |Df|^2 - 2\,
r \int_{\partial B_r (x)} \sum_i |\partial_\nu f_i|^2,
\end{equation}
\begin{equation}\label{e:perparti}
\int_{B_r (x)} |Df|^2 = \int_{\partial B_r (x)}
\sum_i \langle \partial_\nu f_i, f_i\rangle.
\end{equation}
\end{propos}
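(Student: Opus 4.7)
The plan is to derive both identities by testing the first-variation formulas \eqref{e:IVbis} and \eqref{e:OVbis} with suitable radial test fields, then letting a radial cutoff approach the characteristic function of $[0,r]$.

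For \eqref{e:cono}, I would plug into the inner-variation formula \eqref{e:IVbis} the vector field
\[
\varphi(y) = \phi(|y-x|)\,(y-x),
\]
where $\phi\in C^\infty_c([0,\dist(x,\partial\Omega)))$ is a smooth cutoff to be specialized later. Writing $\rho=|y-x|$ and $\nu=(y-x)/\rho$, a direct computation gives
\[
D\varphi = \phi(\rho)\,\Id + \frac{\phi'(\rho)}{\rho}(y-x)\otimes(y-x),\qquad
\dv\,\varphi = m\,\phi(\rho)+\rho\,\phi'(\rho).
\]
Since $Df_i\cdot(y-x) = \rho\,\partial_\nu f_i$, one obtains
\[
\langle Df_i : Df_i\cdot D\varphi\rangle = \phi(\rho)\,|Df_i|^2 + \rho\,\phi'(\rho)\,|\partial_\nu f_i|^2,
\]
and using Proposition \ref{p:equivalence def Dir} to identify $\sum_i |Df_i|^2 = |Df|^2$, \eqref{e:IVbis} becomes
\[
(2-m)\int \phi(\rho)\,|Df|^2 - \int \rho\,\phi'(\rho)\,|Df|^2 + 2\int \rho\,\phi'(\rho)\,\sum_i|\partial_\nu f_i|^2 = 0.
\]
Now I would specialize $\phi=\phi_\eps$ to be $1$ on $[0,r-\eps]$, $0$ on $[r,\infty)$, and affine on $[r-\eps,r]$. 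Letting $\eps\downarrow 0$, the terms involving $\phi_\eps$ converge to the volume integral over $B_r(x)$, while the terms with $\rho\,\phi_\eps'$ converge, by Lebesgue's differentiation theorem, to $-r$ times the surface integral on $\partial B_r(x)$ for a.e. $r$. This yields \eqref{e:cono}.

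For \eqref{e:perparti}, I would plug into the outer-variation formula \eqref{e:OVbis} the field
\[
\psi(y,u) = \phi(|y-x|)\,u,
\]
which satisfies the growth assumptions \eqref{e:hp}. Then $D_u\psi = \phi(\rho)\Id$ and $D_x\psi(y,u) = \phi'(\rho)\,u\otimes\nu$, so
\[
\langle Df_i : D_x\psi(y,f_i)\rangle = \phi'(\rho)\,\langle \partial_\nu f_i, f_i\rangle,\qquad
\langle Df_i : D_u\psi\cdot Df_i\rangle = \phi(\rho)\,|Df_i|^2,
\]
and \eqref{e:OVbis} reduces to
\[
\int \phi(\rho)\,|Df|^2 + \int \phi'(\rho)\,\sum_i\langle \partial_\nu f_i, f_i\rangle = 0.
\]
Taking the same cutoff $\phi_\eps$ and letting $\eps\downarrow 0$ delivers \eqref{e:perparti} for a.e. $r<\dist(x,\partial\Omega)$.

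The only real subtlety is the passage to the limit $\eps\to 0$ on the boundary-type terms: the integrability of $|Df|^2$ and of $\sum_i\langle \partial_\nu f_i,f_i\rangle$ on $\Omega$ (the latter bounded by $|Df|\,|f|\in L^1$) implies, via the Lebesgue differentiation theorem applied to $r\mapsto \int_{\partial B_r(x)}(\cdot)$ in polar coordinates, that the limits I claimed hold for a.e. $r$. This is what accounts for the ``a.e. $r$'' in the statement. Once these limits are justified, both identities follow immediately from the two specializations above.
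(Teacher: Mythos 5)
Your proposal is correct and follows essentially the same route as the paper: the paper also tests \eqref{e:IVbis} with $\varphi(y)=\phi(|y|)\,y$ and \eqref{e:OVbis} with $\psi(y,u)=\phi(|y|)\,u$, then passes to the limit with the same piecewise-affine cutoff collapsing onto $[0,r]$, obtaining the boundary terms for a.e.\ $r$. The computations, the verification of \eqref{e:hp}, and the justification of the limit via the a.e.\ differentiability of $r\mapsto\int_{B_r}(\cdot)$ all match the paper's argument.
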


\begin{remark}
The identities \eqref{e:cono} and \eqref{e:perparti}
are classical facts for $\R{n}$-valued harmonic maps $f$, which
can be derived from the Laplace equation $\Delta f = 0$. 
\end{remark}

\subsection{Proof of Proposition \ref{p:first}}
We apply formula \eqref{e:interna} of Proposition \ref{p:chainapp}
to compute
\begin{equation}\label{e:chain10}
D (f \circ \Phi_\eps) (x)=
\sum_i \a{D f_i (x+\eps \varphi (x))
+ \eps [Df_i (x+\eps \varphi (x))]\cdot D \varphi (x)}.
\end{equation}
For $\eps$ sufficiently small, $\Phi_\eps$
is a diffeomorphism. We denote by $\Phi_\eps^{-1}$ its inverse. Then,
inserting \eqref{e:chain10} in \eqref{e:IVbis}, changing variables
in the integral ($x=\Phi_\eps^{-1} (y)$) 
and differentiating in $\eps$, we get
\begin{align*}
0&= \left.\frac{d}{d\eps}\right|_{\eps=0}
\int_\Omega \sum_i |Df_i (y)+
\eps Df_i\cdot D\varphi (\Phi_\eps^{-1} (y))|^2
\,\det\, (D\Phi_\eps^{-1} (y))\, dy\nonumber\\
&= 2\int \sum_i \big\langle Df_i (y) : 
Df_i (y)\cdot D\varphi (y)\big\rangle\, d y
- \int \sum_i |Df_i (y)|^2 
{\rm div}\, \varphi (y)\, d y.
\end{align*}
This shows \eqref{e:IVbis}. As for \eqref{e:OVbis},
using \eqref{e:esterna} and then differentiating in
$\eps$, the proof is straightforward
(the hypotheses in \eqref{e:hp}
ensure the summability of the various 
integrands involved in the computation).

\subsection{Proof of Proposition \ref{p:import}}
Without loss of generality,
we assume $x=0$.
We test \eqref{e:IVbis} with a function $\varphi$
of the form $\varphi (x) = \phi (|x|)\,x$, where $\phi$
is a function in $C^\infty([0,\infty))$, with $\phi\equiv 0$ on 
$[r, \infty)$,
$r< \dist (0, \partial \Omega)$, and $\phi\equiv 1$ in a neighborhood
of $0$. Then,
\begin{equation}\label{e:radial}
D\varphi (x) = \phi (|x|) \,\Id + \phi' (|x|)\, x\otimes
\frac{x}{|x|}\, \quad\textrm{and}\quad {\rm div}\, \varphi (x) =
m\, \phi (|x|) + |x|\, \phi' (|x|),
\end{equation}
where $\Id$ denotes the $m\times m$ identity
matrix. Note that
\begin{equation*}
\partial_\nu f_i (x)= D f_i(x) \cdot \frac{x}{|x|}.
\end{equation*}
Then, inserting \eqref{e:radial} into
\eqref{e:IVbis}, we get
\begin{align*}
0 ={}& 2 \int |Df (x)|^2\, \phi (|x|)\, dx +
2 \int \sum_{i=1}^Q |\partial_\nu f_i (x)|^2\, \phi' (|x|) \,|x|\, dx\nonumber\\
&- m \int |Df (x)|^2 \,\phi (|x|)\, dx - \int |Df (x)|^2\, \phi' (|x|) \,|x|\,
dx. 
\end{align*}
By a standard approximation procedure, it is
easy to see that we can test with 
\begin{equation}\label{e:test}
\phi (t)= \phi_n (t):=
\left\{\begin{array}{ll}
1 & \mbox{for $t\leq r-1/n$,}\\
n\, (r-t) &\mbox{for $r-1/n\leq t\leq r$.}
\end{array}\right.
\end{equation}
With this choice we get
\begin{align*}
0 ={}& (2-m) \int |Df (x)|^2\,\phi_n (|x|)\, dx  -
\frac{2}{n} \int_{B_r\setminus B_{r-1/n}} 
\sum_{i=1}^Q |\partial_\nu f_i (x)|^2 \,|x|\,dx\notag\\
&+ \frac{1}{n} \int_{B_r \setminus B_{r-1/n}} 
|Df (x)|^2\,|x|\,dx.
\end{align*}
Let $n\uparrow\infty$. Then, the first integral converges 
towards $(2-m)\int_{B_r} |Df|^2$. As for the
second and third integral, for a.e. $r$, they converge,
respectively, to
$$
- r \int_{\partial B_r} \sum_{i=1}^Q |\partial_\nu f_i|^2
\quad\mbox{and}\quad
r \int_{\partial B_r} |Df|^2.
$$
Thus, we conclude \eqref{e:cono}.

\medskip

Similarly, test \eqref{e:OVbis} with
$\psi (x, u) = \phi (|x|)\, u$. Then,
\begin{equation}\label{e:radial2}
D_u \psi (x, u) = \phi (|x|) \,\Id
\quad\mbox{and}\quad 
D_x \psi (x,u) = \phi' (|x|) \,u\otimes \frac{x}{|x|}\,.
\end{equation}
Inserting \eqref{e:radial2} into \eqref{e:OVbis}
and differentiating in $\eps$, we get
\begin{equation*}
0= \int |Df (x)|^2\,\phi (|x|)\, dx
+\int \sum_{i=1}^Q \langle f_i (x), \partial_\nu f_i (x)\rangle\,
\phi' (|x|)\, dx. 
\end{equation*}
Therefore, choosing $\phi$ as in \eqref{e:test}, we can argue as
above and, for $n\uparrow\infty$, we conclude \eqref{e:perparti}.

\section{A maximum principle for $Q$-valued functions}
\label{s:maximum principle}

The two propositions of this section play a key role
in the proof of the H\"older regularity for $\D$-minimizing
$Q$-functions when the domain has dimension strictly larger
than two. Before stating them, we introduce 
two important functions on $\Iqs$. 

\begin{definition}[Diameter and separation]\label{d:diamsep}
Let $T = \sum_i \a{P_i} \in \Iq$. The {\em diameter}
and the {\em separation} of $T$ are defined, respectively,
as
\begin{equation*}
d(T):= \max_{i,j} |P_i-P_j| 
\quad \mbox{and}\quad
s (T):= \min \big\{ |P_i - P_j| : P_i\neq P_j\big\},
\end{equation*}
with the convention that $s(T)=+\infty$ if $T = Q\a{P}$.
\end{definition}

The following proposition is an elementary extension
of the usual maximum principle for harmonic functions.

\begin{propos}[Maximum Principle]\label{p:maximum}
Let $f: \Omega \to \Iq$ be $\D$-minimizing, $T\in \Iq$ and
$r< s(T)/4$. Then,
$\cG (f(x), T)\leq r$ for $\mathcal{H}^{m-1}$-a.e. $x\in\partial \Omega$ implies that $\cG (f, T)\leq r$ almost everywhere on $\Omega$.
\end{propos}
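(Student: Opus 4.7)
The plan is to construct a competitor $g \in W^{1,2}(\Omega, \Iq)$ with the same trace as $f$ on $\partial \Omega$, satisfying $\cG(g, T) \leq r$ everywhere and $\D(g, \Omega) \leq \D(f, \Omega)$, with strict inequality whenever the set $E := \{\cG(f, T) > r\}$ has positive measure. Minimality of $f$ then forces $|E| = 0$.

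Writing $T = \sum_{\ell=1}^L Q_\ell \a{P_\ell}$ with the $P_\ell$ pairwise distinct, the hypothesis $r < s(T)/4$ yields $|P_\ell - P_{\ell'}| > 4r$, so the closed Euclidean balls $\overline{B_r(P_\ell)}$ are at mutual distance $> 2r$. I would build a Lipschitz retraction $\pi : \Iq \to \overline{B_r(T)}$ as a ``radial retraction'' in $\Iq$: for $S = \sum_i \a{R_i}$ with any optimal matching $\sigma$ realizing $\cG(S, T)^2 = \sum_i |R_i - P_{\sigma(i)}|^2$, set
\begin{equation*}
  \pi(S) := \sum_i \a{P_{\sigma(i)} + \mu(S)(R_i - P_{\sigma(i)})}, \qquad \mu(S) := \min\bigl(1,\ r/\cG(S, T)\bigr).
\end{equation*}
Then $\cG(\pi(S), T) \leq \mu(S)\,\cG(S, T) \leq r$, so $\pi$ maps into $\overline{B_r(T)}$, and $\pi$ is the identity on $\overline{B_r(T)}$ (where $\mu = 1$). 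The separation implies that distinct optimal matchings $\sigma$ differ only by cluster-internal permutations, which leave $\pi(S)$ unchanged; hence $\pi$ is well-defined and (by direct computation, exploiting the reordering symmetry of $\Iq$) 1-Lipschitz. Setting $g := \pi \circ f$, the chain rule (Proposition \ref{p:chainapp}) together with the 1-Lipschitz property gives $|Dg| \leq |Df|$ a.e., while on $E$ the radial scaling $\mu < 1$ produces a strict decrease of the integrand wherever the gradient of $\cG(f, T)$ is nonzero. Since $\cG(f, T) \leq r$ on $\partial \Omega$, we have $\pi \circ f = f$ there (in the trace sense); and the Sobolev function $\cG(f, T)$ cannot exceed $r$ on a positive-measure set without its gradient being nonzero on a positive-measure subset thereof, so $\D(g) < \D(f)$, contradicting minimality.

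The main obstacle is the rigorous verification that $\pi$ is 1-Lipschitz, especially across configurations where the optimal matching $\sigma$ changes discontinuously. Within the open neighborhood $V := \{S : \cG(S, T) < s(T)/2\}$, each point of $\supp(S)$ lies in a unique cluster $B_{s(T)/2}(P_\ell)$ and the cluster-respecting matching is locally canonical, which reduces the Lipschitz check to a cluster-by-cluster Euclidean computation involving the radial scaling $\mu$. Handling $f$-values outside $V$ requires either a preliminary projection onto $\overline{B_{s(T)/2}(T)}$ in $\Iq$, or extension of the locally-defined $\pi$ from $V$ to all of $\Iq$ via the Lipschitz extension Theorem \ref{thm.ext}, followed by a nearest-point correction to enforce the image condition $\pi(S) \in \overline{B_r(T)}$.
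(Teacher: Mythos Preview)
Your approach is the same as the paper's: build a $1$-Lipschitz retraction $\vartheta:\Iq\to\overline{B_r(T)}$ that is the identity on $\overline{B_r(T)}$, then compose with $f$ to obtain a competitor with no larger energy. The difference is only in the specific retraction.

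You use the nearest-point formula $\pi(S)=\sum_i\a{P_{\sigma(i)}+\mu(S)(R_i-P_{\sigma(i)})}$ with $\mu(S)=\min\bigl(1,\,r/\cG(S,T)\bigr)$, which requires a choice of optimal matching $\sigma$. For $S$ far from $T$ this matching is genuinely non-unique in a way that changes $\pi(S)$: with $T=\a{(0,0)}+\a{(2,0)}$ and $S=\a{(1,1)}+\a{(1,-1)}$ the two optimal matchings produce different $Q$-points. Your two proposed cures are both inadequate: a ``preliminary projection'' onto a larger ball is precisely the construction you are trying to carry out, and the Lipschitz extension of Theorem~\ref{thm.ext} inflates the Lipschitz constant to $C(m,Q)>1$, destroying the bound $|D(\pi\circ f)|\leq|Df|$ that the whole argument rests on.

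The paper sidesteps this with a device you are missing: it simply declares $\vartheta(S)=T$ for every $S$ with $\cG(S,T)\geq 2r$, and on the shell $r\leq\cG(S,T)<2r$ (where $2r<s(T)/2$, so the cluster decomposition \emph{is} canonical and your own local analysis applies) it uses the scaling factor $(2r-\cG(S,T))/\cG(S,T)$ rather than your $r/\cG(S,T)$. That factor tends to $0$ as $\cG(S,T)\to 2r$, so $\vartheta$ glues continuously to the constant $T$ outside, and well-definedness is automatic. Note that your factor stays bounded away from $0$ at $\cG(S,T)=2r$, so you cannot simply append $\vartheta\equiv T$ to your formula without creating a discontinuity; you would need to switch to the paper's interpolation on the shell.
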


The next proposition allows to decompose $\D$-minimizing
functions and, hence, to argue inductively on the number
of values.
Its proof is based on Proposition
\ref{p:maximum} and a simple combinatorial lemma.

\begin{propos}[Decomposition for $\D$-minimizers]\label{p:split}
There exists a positive constant $\alpha (Q)>0$ with
the following property.
If $f: \Omega\to \Iq$ is $\D$-minimizing and there exists $T\in\Iq$
such that
$\cG (f(x), T)\leq \alpha (Q)\, d(T)$  for $\mathcal{H}^{m-1}$-a.e. $x\in\partial\Omega$,
then there exists a decomposition of $f=\a{g}+ \a{h}$
into two simpler $\D$-minimizing
functions.
\end{propos}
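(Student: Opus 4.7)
The plan is to combine three ingredients: a combinatorial splitting of $T$, the maximum principle (Proposition \ref{p:maximum}), and a competitor argument. First, given $T=\sum_i\a{P_i}$ with $d(T)>0$, I would extract a nontrivial partition $\{1,\ldots,Q\}=I_L\sqcup I_K$ whose corresponding clusters $\supp T_L$ and $\supp T_K$ are well separated. The cleanest realisation is via the minimum spanning tree of $\{P_i\}$: it has $Q-1$ edges, and since two of its vertices realise $d(T)$ its longest edge $e^*$ satisfies $e^*\geq d(T)/(Q-1)$; removing $e^*$ yields two subtrees which, by the MST property, satisfy $\dist(\supp T_L,\supp T_K)\geq e^*\geq d(T)/(Q-1)=:2\rho$.

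Next, I would pick $\alpha(Q)>0$ so small that $\alpha(Q)d(T)<\rho/2$ (with a further safety factor so that the hypothesis of Proposition \ref{p:maximum} can be met). The pointwise bound $\cG(f(x),T)\leq \alpha(Q)d(T)<\rho$ on $\partial\Omega$ forces the optimal matching in the definition of $\cG$ to canonically group $L$ of the values $f_i(x)$ within distance $\alpha(Q)d(T)$ of $\supp T_L$ and the remaining $K$ within the same distance of $\supp T_K$, the two neighbourhoods being disjoint. This produces a boundary decomposition $f|_{\partial\Omega}=\a{g_\partial}+\a{h_\partial}$ with $\cG(g_\partial,T_L),\cG(h_\partial,T_K)\leq\alpha(Q)d(T)$. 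Applying Proposition \ref{p:maximum} to $f$ (after, if necessary, replacing $T$ by a cluster-collapsed target $\tilde T$ whose support is $c(Q)d(T)$-separated, so that $\alpha(Q)d(T)<s(\tilde T)/4$), I would propagate the estimate to the interior and obtain $\cG(f,T)\leq\alpha(Q)d(T)$ a.e.\ in $\Omega$. The canonical pointwise splitting then extends, yielding $f=\a{g}+\a{h}$ with $g\in W^{1,2}(\Omega,\I{L})$, $h\in W^{1,2}(\Omega,\I{K})$, and supports uniformly separated by at least $\rho$.

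Finally, I would check $\D$-minimality of $g$ and $h$ by a standard comparison: for any $g'\in W^{1,2}(\Omega,\I{L})$ sharing the boundary trace of $g$, the map $F'=\a{g'}+\a{h}$ lies in $W^{1,2}(\Omega,\Iq)$ with $F'|_{\partial\Omega}=f|_{\partial\Omega}$, so $\D(f)\leq\D(F')=\D(g')+\D(h)$; combining with the identity $\D(f)=\D(g)+\D(h)$ forces $\D(g)\leq\D(g')$, and symmetrically for $h$. The main obstacle is the maximum-principle step: Proposition \ref{p:maximum} requires $r<s(T)/4$, but $s(T)$ may be arbitrarily small relative to $d(T)$, so the direct application with target $T$ is not available. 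This is exactly why the combinatorial split is useful — it selects a target in which the two relevant clusters are separated by a controlled fraction of $d(T)$ — but one still has to either reduce to a well-separated auxiliary target $\tilde T$ (using $\cG(T,\tilde T)$ estimates bounded in terms of $\alpha(Q)$) or, equivalently, argue inductively on $Q$ through the auxiliary Dirichlet minimisers of $g_\partial,h_\partial$ supplied by Theorem \ref{t:existence}.
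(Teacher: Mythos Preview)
Your overall architecture is correct and the minimum-spanning-tree idea is an elegant way to obtain two clusters of $\supp T$ at mutual distance $\geq d(T)/(Q-1)$. The final competitor argument showing that $g$ and $h$ are individually $\D$-minimizing is also fine.

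The genuine gap is precisely the step you yourself flag. Your suggested fix, collapsing the two MST clusters to a two-point target $\tilde T$, does give $s(\tilde T)\geq d(T)/(Q-1)$, but it does \emph{not} control $\cG(T,\tilde T)$: each cluster may have diameter comparable to $d(T)$, so $\cG(T,\tilde T)$ can be of order $\sqrt{Q}\,d(T)$. Then $\cG(f(x),\tilde T)\leq \alpha(Q)d(T)+\cG(T,\tilde T)$ is far larger than $s(\tilde T)/4$, and Proposition~\ref{p:maximum} is still out of reach. Your alternative ``inductive'' route via the auxiliary minimizers of $g_\partial,h_\partial$ produces a competitor $\a{g_0}+\a{h_0}$, but gives no mechanism to conclude that $f$ itself splits; minimizers are not unique, so equality of energies does not force $f=\a{g_0}+\a{h_0}$.

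What the paper does is exactly the missing lemma: Lemma~\ref{l:separation} constructs, by induction on $Q$, a point $S\in\Iq$ satisfying \emph{simultaneously}
\[
s(S)\;\geq\;\beta(\eps,Q)\,d(T)\qquad\text{and}\qquad \cG(S,T)\;\leq\;\eps\,s(S).
\]
The second inequality is the crucial one you are missing. With $\eps=1/8$ and $\alpha(Q)=\eps\,\beta(\eps,Q)$ one gets $\cG(f(x),S)\leq \alpha(Q)d(T)+\cG(T,S)<s(S)/4$ on $\partial\Omega$, so Proposition~\ref{p:maximum} applies with target $S$ and yields the interior bound directly; the decomposition of $f$ then follows from the structure of $\overline{B_{s(S)/4}(S)}$. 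The inductive construction of $S$ proceeds by repeatedly merging the two closest points of $T$ (essentially Kruskal's algorithm, so there is a kinship with your MST idea), but the point is that one must iterate until \emph{both} a lower bound on the separation \emph{and} an upper bound on the displacement from $T$ are achieved; a single MST cut gives only the former.
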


\subsection{Proof of Proposition \ref{p:maximum}}
The proposition follows from the next lemma.

\begin{lemma}\label{l:retraction}
Let $T$ and $r$ be as in Proposition
\ref{p:maximum}. Then,
there exists a retraction 
$\vartheta:\Iq\ra \overline{B_{r}(T)}$ such that
\begin{itemize}
\item[$(i)$] $\cG(\vartheta (S_1), \vartheta (S_2))<\cG(S_1,S_2)$ 
if $S_1\notin \overline{B_r (T)}$,
\item[$(ii)$] $\vartheta (S)= S$ for every $S\in \overline{B_r (T)}$.
\end{itemize}
\end{lemma}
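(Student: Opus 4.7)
The strategy is to build $\vartheta$ as a radial contraction toward $T$ using a canonical Euclidean lift. Writing $T=\sum_i\a{P_i}$ and $\tilde P=(P_1,\dots,P_Q)\in\R{nQ}$, the hypothesis $r<s(T)/4$ forces distinct elements of the orbit $\Pe_Q\cdot\tilde P$ to sit at Euclidean distance at least $s(T)>4r$, so the closed Euclidean balls $\overline{B_{s(T)/2}(\tilde P')}$ for $\tilde P'\in\Pe_Q\cdot\tilde P$ are pairwise disjoint. Consequently the quotient map $\R{nQ}\to\Iq$ restricts to an isometric embedding of $\overline{B_{s(T)/2}(\tilde P)}$ onto $\overline{B_{s(T)/2}(T)}\subset\Iq$, and every $S$ in the latter ball has a unique optimal-matching lift $\tilde Q$ with $|\tilde Q-\tilde P|=\cG(S,T)$.

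On $\overline{B_{s(T)/2}(T)}$ the plan is to define $\vartheta$ explicitly by applying the standard Euclidean ball retraction to the canonical lift,
\[
\vartheta(S):=\bigl[\tilde P+\min\{1,r/|\tilde Q-\tilde P|\}(\tilde Q-\tilde P)\bigr].
\]
Then $\vartheta$ maps $\overline{B_{s(T)/2}(T)}$ into $\overline{B_r(T)}$; property (ii) is immediate since the minimum equals $1$ precisely when $\cG(S,T)\le r$, in which case $\vartheta(S)=S$; and both the $1$-Lipschitz character and the strict contraction of the Euclidean ball retraction transfer verbatim through the local isometry.

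To extend $\vartheta$ to all of $\Iq$, I would pre-compose with a Lipschitz retraction $\Iq\to\overline{B_{s(T)/2}(T)}$. Such a retraction can be built by transferring through the biLipschitz embedding $\xii$ of Theorem \ref{t:xi}: nearest-point project in $\R{N}$ onto the compact set $\xii(\overline{B_{s(T)/2}(T)})$, retaining Lipschitz control via Kirszbraun, and then compose with $\xii^{-1}\circ\ro$ to return to $\Iq$. Checking (i) across this extension reduces to a short case analysis: for $S_1,S_2$ both in the near regime the Euclidean fact suffices; for mixed or far pairs one combines the uniform bound $\cG(\vartheta(S_i),T)\le r$ with triangle-inequality estimates against $T$ and the separation $r<s(T)/4$ to obtain the strict inequality.

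The main obstacle is the far regime $\cG(S,T)>s(T)/2$, where the optimal matching between $S$ and $T$ may fail to be unique and a naive radial formula produces genuinely different elements of $\Iq$ for different choices (as one already sees with three symmetrically placed points in the plane). Circumventing this ambiguity via the matching-free extension sketched above, and then verifying that the resulting map is still a strict contraction outside $\overline{B_r(T)}$, is the technical heart of the proof.
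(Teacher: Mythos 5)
Your near-regime construction is essentially the paper's, but two points need repair even there. First, when $T$ has repeated points the quotient map does \emph{not} restrict to an isometric embedding of a Euclidean ball onto a ball around $T$, and the optimal-matching lift of $S$ is not unique: it is unique only up to the stabilizer of $\tilde P$ in $\Pe_Q$. Your radial formula survives because it is equivariant under that stabilizer, but this must be said; the paper handles it by writing $T=\sum_j k_j\a{Q_j}$ with $|Q_i-Q_j|>4r$ and decomposing any $S$ with $\cG(S,T)<2r$ canonically into clusters $S_j\in\I{k_j}$ near each $Q_j$. Second, the radius $s(T)/2$ is too large for transferring the contraction estimate: to guarantee that the optimal matching between two points $S_1,S_2$ of the chart is realized inside the chart (rather than by a permutation moving mass between different orbit representatives) one needs both points within distance $<s(T)/4$ of $T$, which is why the paper works on $B_{2r}(T)$ with $2r<s(T)/2$ for the $S$-to-$T$ matching and keeps all images inside $\overline{B_r(T)}$.

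The genuine gap is the far regime. Pre-composing with a Lipschitz retraction $R:\Iq\to\overline{B_{s(T)/2}(T)}$ built from $\xii$, a nearest-point projection in $\R{N}$ (which onto a non-convex set is not even $1$-Lipschitz, nor continuous in general) and $\ro$ produces a map with Lipschitz constant $L=C(n,Q)$ possibly much larger than $1$, and this destroys property $(i)$: take $S_1,S_2$ both far from $T$ but with $\cG(S_1,S_2)$ tiny; then $\cG(\vartheta(S_1),\vartheta(S_2))$ can be as large as $L\,\cG(S_1,S_2)>\cG(S_1,S_2)$, and your proposed triangle-inequality argument against $T$ says nothing about such pairs, since it only bounds $\cG(\vartheta(S_i),T)$ by $r$ while $\cG(S_1,S_2)$ may be far smaller than $r$. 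The missing idea is to choose the contraction factor on the annulus $r\leq\cG(S,T)\leq 2r$ to be $\frac{2r-\cG(T,S)}{\cG(T,S)}$ rather than $\min\{1,r/\cG(T,S)\}$: this interpolates between the identity at $\cG=r$ and the \emph{constant map} $T$ at $\cG=2r$, so one can extend by $\vartheta\equiv T$ outside $B_{2r}(T)$. Constant maps trivially satisfy the non-expansion and strict-contraction requirements, the extension is continuous, and no global retraction onto a neighborhood of $T$ (hence no $\xii$, $\ro$, or Kirszbraun) is needed. This is exactly how the paper's proof proceeds.
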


We assume the lemma for the moment and argue by contradiction
for Proposition \ref{p:maximum}. We assume, therefore, the
existence of a $\D$-minimizing $f$ with the following
properties:
\begin{itemize}
\item[$(a)$] $f(x) \in \overline{B_r (T)}$ for a.e. $x\in \partial \Omega$;
\item[$(b)$] $f (x)\not\in \overline{B_r (T)}$ for every $x\in E\subset
\Omega$, where $E$ is a set of positive measure.
\end{itemize}
Therefore, there exist $\eps>0$ and a set $E'$ with positive measure
such that $f (x)\not \in B_{r+\eps} (T)$ for every $x\in E'$.
By $(ii)$ of Lemma \ref{l:retraction} and $(a)$, $\vartheta\circ f$ has the same
trace as $f$. Moreover, by $(i)$ of Lemma \ref{l:retraction}, $|D (\vartheta\circ f)|\leq
|Df|$ a.e. and, by $(i)$ and $(b)$, $|D (\vartheta\circ f)|
< |Df|$ a.e. on $E'$. This implies
$\D (\vartheta\circ f, \Omega)<\D (f, \Omega)$,
contradicting the minimizing property of $f$.

\begin{proof}[Proof of Lemma \ref{l:retraction}]
First of all, we write 
\[
T=\sum_{j=1}^J k_j \a{Q_j},
\]
where $|Q_j-Q_i|> 4\,r$ for every $i\neq j$.

If $\cG (S,T)< 2r$, then 
$S=\sum_{j=1}^J\a{S_j}$
with $S_j\in B_{2r} (k_j\a{Q_j})\subset \I{k_j}$.
If, in addition, $\cG (S,T)\geq r$, then we set
\[
S_j = \sum_{l=1}^{k_j} \a{S_{l,j}},
\]
and we define
\[
\vartheta (S) = \sum_{j=1}^J
\sum_{l=1}^{k_j} \a{\frac{2r- \cG(T,S)}{\cG(T,S)} 
(S_{l,j} - Q_j) + Q_j}.
\]
We then extend $\vartheta$ 
to $\Iq$ by setting
\begin{equation*}\label{e:def theta_S}
\vartheta (S)=
\begin{cases}
T & \textrm{if }\, S\notin B_{2r}(T),\\
S & \textrm{if }\, S\in B_{r} (T).
\end{cases}
\end{equation*}
It is immediate to verify that $\vartheta$ is continuous and
has all the required properties.
\end{proof}

\subsection{Proof of Proposition \ref{p:split}}
The key idea is simple. If the
separation of $T$ were not too small, we could
apply directly Proposition \ref{p:maximum}.
When the separation of $T$ is small, we can find
a point $S$ which is not too far from $T$ and
whose separation is sufficiently large.
Roughly speaking, it suffices to ``collapse'' the
points of the support of $T$ which are too close.

\begin{lemma}\label{l:separation}
For every $0<\eps<1$, we set $\beta(\eps,Q) = (\eps/3)^{3^Q}$. 
Then, for every $T\in\Iq$ with $s(T)<+\infty$, there exists a point  $S\in\Iq$ such that
\begin{align}
\beta (\eps, Q)\,d(T) &\leq s(S)<+\infty,\label{e:lontananza} \\
\cG(S,T) &\leq \eps\,s(S).\label{e:vicinanza}
\end{align}
\end{lemma}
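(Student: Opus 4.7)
My plan is a pigeonhole on ``scales'' with parameter $\rho:=3Q/\eps$. Let $Q_1,\dots,Q_k$ be the distinct support points of $T$ (with $2\le k\le Q$ since $s(T)<\infty$) and write $T=\sum_l m_l\a{Q_l}$. Let $d_1\le d_2\le\dots\le d_M$, $M\le\binom{Q}{2}$, be their ordered pairwise distances, so $d_1=s(T)$ and $d_M=d(T)$. I split into two cases, according to whether the sequence $(d_m)$ exhibits a multiplicative gap of factor $\rho$.

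\emph{No gap:} if $d_M\le\rho^M d_1$, I take $S:=T$. Then $\cG(S,T)=0\le\eps\,s(S)$ and $s(S)=s(T)\ge\rho^{-M}d(T)\ge(\eps/(3Q))^{\binom{Q}{2}}d(T)\ge\beta(\eps,Q)d(T)$, the last inequality being the routine numerical comparison $(\eps/(3Q))^{\binom{Q}{2}}\ge(\eps/3)^{3^Q}$, which holds for $\eps<1$ and $Q\ge 2$ because $3^Q\log 3$ dominates $\binom{Q}{2}\log(3Q)$.

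\emph{Gap:} otherwise the telescoping product $\prod_{m<M}d_{m+1}/d_m=d_M/d_1>\rho^M$ forces some ratio to exceed $\rho$; let $\bar m$ be the \emph{largest} such index and set $r:=d_{\bar m+1}/\rho$. By construction every pairwise distance is either $\le d_{\bar m}<r$ or $\ge\rho r$, and maximality of $\bar m$ gives the lower bound $d_{\bar m+1}\ge d(T)\rho^{-(M-\bar m-1)}\ge d(T)\rho^{-(\binom{Q}{2}-1)}$. I form the equivalence classes $C_1,\dots,C_J$ generated by $Q_i\sim Q_j\iff|Q_i-Q_j|\le r$. The key observation is that each class has diameter $\le r$: chaining at most $Q-1$ direct edges of length $\le r$ bounds intra-cluster distances by $(Q-1)r$, but the gap leaves only $\le r$ or $\ge\rho r$ as options for any direct distance, and the choice $\rho=3Q/\eps>Q-1$ rules out the second. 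Consequently inter-cluster distances are $>r$, hence $\ge\rho r$, and since $d(T)>r$ there are at least two classes.

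I then choose one representative $R_j\in C_j$ in each class and set $S:=\sum_{j=1}^J\big(\sum_{l:Q_l\in C_j}m_l\big)\a{R_j}\in\Iq$. Transporting each copy of $Q_l$ to $R_{j(l)}$ gives a valid matching of cost $\le\sum_l m_l r^2=Qr^2$, so $\cG(T,S)\le\sqrt{Q}\,r$, while $s(S)\ge\rho r$ by the above. Hence $\cG(S,T)/s(S)\le\sqrt{Q}/\rho=\eps/(3\sqrt{Q})\le\eps$ and $s(S)\ge\rho r=d_{\bar m+1}\ge(\eps/(3Q))^{\binom{Q}{2}-1}d(T)\ge\beta(\eps,Q)d(T)$, by the same numerical comparison as before. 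The main technical point is the cluster-diameter estimate: it is where the scale-gap argument beats the naive triangle inequality and it forces simultaneously $\rho>Q-1$ (for the diameter bound) and $\rho\ge\sqrt{Q}/\eps$ (for the Wasserstein bound), motivating the specific value $\rho=3Q/\eps$.
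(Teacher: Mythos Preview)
Your proof is correct and takes a genuinely different route from the paper's. The paper argues by induction on $Q$: if $s(T)$ is already large enough relative to $d(T)$ it sets $S=T$; otherwise it removes one of the two closest points $P_1,P_2$ to form a $(Q-1)$-point $\tilde T$ with the same diameter, applies the inductive hypothesis with $\eps/3$ to obtain $\tilde S$, and restores the $Q$-th value by duplicating the point of $\tilde S$ matched to $P_2$. The cascade $\eps\mapsto\eps/3$ over (at most) $Q$ inductive steps is what produces the doubly-exponential constant $(\eps/3)^{3^Q}$. Your argument is instead a one-shot scale selection: among the $\binom{k}{2}\le\binom{Q}{2}$ pairwise distances you locate a multiplicative gap of factor $\rho=3Q/\eps$, collapse the resulting single-linkage clusters onto representatives, and read off both \eqref{e:lontananza} and \eqref{e:vicinanza} directly from the gap. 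This is more geometric and in fact yields the sharper constant $(\eps/(3Q))^{\binom{Q}{2}}$, only singly exponential in $Q$, which you then bound below by the stated $\beta(\eps,Q)$. The paper's induction sidesteps your cluster-diameter step (where a chain of at most $Q-1$ edges must fall strictly below the gap, forcing $\rho>Q-1$), but your approach makes transparent why a constant of the shape $\rho^{-O(Q^2)}$ suffices.
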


Assuming Lemma \ref{l:separation}, we conclude the proof
of Proposition \ref{p:split}.
Set $\eps=1/8$ and $\alpha (Q) =\eps\,\beta (\eps, Q) = 24^{-3^Q}/8$.
From Lemma \ref{l:separation},
we deduce the existence of an $S$ satisfying \eqref{e:lontananza} and
\eqref{e:vicinanza}.
Then, there exists $\delta>0$ such that, for almost every $x\in\de\Om$,
\begin{equation*}
\cG(f(x),S)\leq \cG(f(x),T)+\cG(T,S)\stackrel{\eqref{e:vicinanza}}{\leq}
\alpha(Q)\,d(T)+\frac{s(S)}{8}-\delta\stackrel{\eqref{e:lontananza}}{\leq}
\frac{s(S)}{4}-\delta.
\end{equation*}
So, we may apply Proposition \ref{p:maximum} and infer that
$\cG(f(x),S)\leq \frac{s(S)}{4}-\delta$ for almost every $x$ in $\Om$.
The decomposition of $f$ in simpler $\D$-minimizing functions
is now a simple consequence of the definitions. More precisely,
if $S=\sum_{j=1}^Jk_j\a{Q_j}\in\Iq$, with the $Q_j$'s all different, then
$f(x)=\sum_{j=1}^J\a{f_j(x)}$, where the $f_j$'s are $\D$-minimizing
$k_j$-valued functions
with values in the balls
$B_{\frac{s(S)}{4}-\delta}(k_j\a{Q_j})$.

\begin{proof}[Proof of Lemma \ref{l:separation}]
For $Q\leq 2$, we have $d(T)\leq s(T)$
and it suffices to choose $S=T$.
We now prove the general case by induction. Let $Q\geq 3$
and assume the lemma holds for $Q-1$.
Let $T=\sum_i\a{P_i}\in\Iq$. Two cases can occur:
\begin{itemize}
\item[$(a)$] either $s(T)\geq(\eps/3)^{3^Q}\,d(T)$;
\item[$(b)$] or $s(T)< (\eps/3)^{3^Q}\,d(T)$.
\end{itemize}
In case $(a)$, since the separation of $T$ is sufficiently large,
the point $T$ itself, i.e. $S=T$, fulfils 
\eqref{e:vicinanza} and \eqref{e:lontananza}.
In the other case, since the points
$P_i$ are not all equal ($s(T)<\infty$),
we can take $P_1$ and $P_2$
realizing the separation of $T$, i.e.
\begin{equation}\label{e:scelta}
|P_1-P_2| = s(T) \leq \left(\frac{\eps}{3}\right)^{3^Q}d(T).
\end{equation}
Moreover, since $Q\geq 3$, we may also assume
that, suppressing $P_1$, we do not reduce the diameter, i.e. that
\begin{equation}\label{e:scelta3}
d(T)=d\big(\tilde T\big),\quad\textrm{where}
\quad \tilde T=\sum_{i=2}^Q\a{P_i}.
\end{equation}
For $\tilde T$, we are now in the position to use the inductive hypothesis (with $\eps/3$ in place of $\eps$).
Hence, there
exists $\tilde S=\sum_{j=1}^{Q-1}\a{Q_j}$ such that
\begin{equation}\label{e:inductive step}
\left(\frac{\eps}{9}\right)^{3^{Q-1}} d \left(\tilde{T}\right) 
\leq s\left(\tilde S\right)\quad\textrm{and}\quad
\cG\left(\tilde S,\tilde T\right)\leq \frac{\eps}{3}s\left(\tilde S\right).
\end{equation}
Without loss of generality, we can assume that
\begin{equation}\label{e:scelta2}
|Q_1-P_2|\leq\cG\big(\tilde S,\tilde T\big) .
\end{equation}
Therefore, $S=\a{Q_1}+\llbracket\tilde S\rrbracket\in\Iq$ 
satisfies \eqref{e:lontananza} and \eqref{e:vicinanza}.
Indeed, since $s(S)=s(\tilde S)$, we infer
\begin{equation}\label{e:lontananza3}
\left(\frac{\eps}{3}\right)^{3^Q} d(T)
\stackrel{\eqref{e:scelta3}}{\leq}
\frac{\eps}{3} \left(\frac{\eps}{9}\right)^{3^{Q-1}}
\,d\left(\tilde T\right)\stackrel{\eqref{e:inductive step}}{\leq}
\frac{\eps}{3}\, s\left(\tilde S\right)= \frac{\eps}{3}\, s(S),
\end{equation}
and
\begin{align*}
\cG(S,T) &\leq \cG\big(\tilde S,\tilde T\big)+|Q_1-P_1|\leq
\cG\big(\tilde S,\tilde T\big)+|Q_1-P_2|+|P_2-P_1|\\
& \stackrel{\mathclap{\eqref{e:scelta},\,\eqref{e:scelta2}}}{\leq} \quad 
2\,\cG\big(\tilde S,\tilde T\big)+\left(\frac{\eps}{3}\right)^{3^Q}\,d(T)
\stackrel{\eqref{e:inductive step}, \,\eqref{e:lontananza3}}{\leq }\frac{2\,\eps}{3}\,s(S)+\frac{\eps}{3}\,s(S)=\eps\, s(S).
\end{align*}
\end{proof}

\section{H\"older regularity}\label{s:hoelder}

Now we pass to prove the H\"older continuity of
$\D$-minimizing $Q$-valued functions.
Theorem \ref{t:hoelder} is indeed a simple consequence of the 
following theorem.

\begin{theorem}\label{t:hoelder2}
There exist constants $\alpha=\alpha(m,Q)\in ]0,1[$
(with $\alpha=\frac{1}{Q}$ when $m=2$) and 
$C=C(m,n,Q,\delta)$ with the following property.
If $f:B_1\ra\Iq$ is $\D$-minimizing, then
\begin{equation*}
\left[f\right]_{C^{0,\alpha}\left(\overline{B_{\delta}}\right)}
=\sup_{x,y\in
\overline{B_{\delta}}}\frac{\cG(f(x),f(y))}{\abs{x-y}^\alpha}
\leq C\,\D(f,\Om)^{\frac{1}{2}} \quad \mbox{for every $0<\delta<1$.}
\end{equation*}
\end{theorem}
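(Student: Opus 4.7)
The plan is to reduce Theorem \ref{t:hoelder2} to the Dirichlet energy decay estimate
\[
\int_{B_r(y)} |Df|^2 \;\leq\; A\, r^{m-2+2\alpha}
\qquad\text{for every $B_r(y)\subset B_{(1+\delta)/2}$,}
\]
with $A = C(m,n,Q,\delta)\,\D(f,B_1)$, and then invoke the Campanato--Morrey estimate of Proposition \ref{p:Campanato Morrey} on $B_\delta$. All the work is therefore in producing such a decay.

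To establish the decay for $m\geq 3$ I proceed by induction on $Q$. The base case $Q=1$ is classical: the outer variation \eqref{e:OVbis} with $\psi(x,u)=\varphi(x)\,u$ forces $\Delta f = 0$ in the sense of distributions, and the $C^{0,\alpha}$ decay of the energy of harmonic functions for any $\alpha<1$ is standard. For the inductive step $Q\geq 2$, fix $B_r(y)$; two scenarios arise. In the \emph{splitting scenario}, on some subball of $B_r(y)$ the hypothesis of Proposition \ref{p:split} is satisfied, so $f = \a{g}+\a{h}$ with $g,h$ Dir-minimizing and each taking strictly fewer than $Q$ values; the inductive hypothesis applied to $g$ and $h$ gives the required energy decay for $f$ on that subball. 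In the \emph{non-splitting scenario}, no such decomposition is available at any scale; by negating the hypothesis of Proposition \ref{p:split} uniformly, $f$ must remain close to a single $Q$-fold point $Q\a{P}$, and Proposition \ref{p:maximum} confines $f$ to a small neighborhood of $Q\a{P}$. The first-variation identities \eqref{e:cono}--\eqref{e:perparti} then yield a differential inequality on $D(r) := \int_{B_r(y)}|Df|^2$ that integrates to polynomial decay with some exponent $\alpha(m,Q)>0$.

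The two-dimensional case, with the explicit rate $\alpha = 1/Q$, I would treat directly and independently of the induction. On almost every circle $\partial B_r(y)$, Proposition \ref{p:Wselection} decomposes $f|_{\partial B_r} = \sum_j \a{g_j^r}$ into irreducible pieces, each written as $g_j^r(\theta) = \sum_{\zeta^{Q_j}=e^{i\theta}}\a{h_j^r(\zeta)}$ for some $W^{1,2}$ map $h_j^r$ on the $Q_j$-sheeted cover. Expanding $h_j^r$ in Fourier series, the lowest nontrivial mode has frequency $1/Q_j \geq 1/Q$, giving a Wirtinger-type inequality
\[
\int_{\partial B_r}\cG\bigl(g_j^r,\overline{g_j^r}\bigr)^2 \;\leq\; Q_j^2\, r^2 \int_{\partial B_r}|\partial_\tau g_j^r|^2 .
\]
Using this, I would build a competitor by gluing (via the Interpolation Lemma \ref{l:technical}) the $1/Q_j$-homogeneous radial extension of each irreducible piece to $f$ itself. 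Dir-minimality combined with the identities \eqref{e:cono}--\eqref{e:perparti} produces, at a.e.\ $r$, a differential inequality of the form $D(r) \leq \tfrac{Q\,r}{2}\, D'(r)$, which integrates to $D(r) \leq C\,r^{2/Q}$, the desired decay with $\alpha = 1/Q$.

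The main obstacle is the non-splitting case in dimension $m\geq 3$: one must convert a purely qualitative failure-to-decompose into a quantitative decay rate, which requires combining the maximum principle (Proposition \ref{p:maximum}), the first-variation identities, and a Poincar\'e-type control of $f$ near its mean. In dimension two the explicit exponent $1/Q$ instead emerges transparently from Fourier analysis on the $Q_j$-sheeted cover, which is why no induction on $Q$ is needed there.
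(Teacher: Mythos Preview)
Your overall architecture matches the paper's: reduce to an energy decay estimate $\int_{B_r(y)}|Df|^2 \leq A\, r^{m-2+2\alpha}$ and finish with Campanato--Morrey. For $m=2$ you also correctly identify the irreducible decomposition on circles as the source of the exponent $1/Q$, and your target differential inequality $D(r)\le \tfrac{Qr}{2}\,D'(r)$ is exactly what the paper obtains (Proposition~\ref{p:basic} combined with the equipartition from \eqref{e:cono} at $m=2$). The paper's 2D competitor is, however, cleaner than your ``homogeneous radial extension plus interpolation'': it unrolls each irreducible $g_j$ to a single-valued $\gamma_j:\s^1\to\R{n}$, takes the \emph{harmonic} extension $\zeta_j$ to the disk, and rolls it back via Lemma~\ref{l:rolling}; conformality of $z\mapsto z^{Q_j}$ and the elementary planar bound $\D(\zeta_j,\disk)\le\int_{\s^1}|\partial_\tau\gamma_j|^2$ give $\D(f,\disk)\le Q\,\D(f|_{\s^1},\s^1)$ with no interpolation needed.

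The genuine gap is your non-splitting scenario for $m\ge 3$. First, negating the hypothesis of Proposition~\ref{p:split} does \emph{not} force $f$ close to a point $Q\a{P}$: it only says that for every $T$ there is a boundary set where $\cG(f,T)>\alpha(Q)\,d(T)$, which is almost the opposite statement. Second, and more importantly, even granting that $f$ is confined near $Q\a{P}$, the identities \eqref{e:cono}--\eqref{e:perparti} by themselves yield only $D'(r)\ge \frac{m-2}{r}D(r)$, i.e.\ $D(r)\lesssim r^{m-2}$, which is \emph{no} gain over scaling; you need $r^{m-2+\gamma}$ with $\gamma>0$. The paper obtains this extra $\gamma$ not from the variation identities but from an explicit competitor: translating so that the barycenter of the boundary mean $\bar g$ is $0$, the radial map $\hat f(x)=\varphi(|x|)\,h(x/|x|)$ has energy bounded by $I(\varphi)=\int_0^1\big(\varphi(r)^2 r^{m-3}+C_M\,\varphi'(r)^2 r^{m-1}\big)\,dr$, and since $\varphi\equiv 1$ fails the Euler--Lagrange equation for $I$, one has $\inf_\varphi I<(m-2)^{-1}$. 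This competitor is what beats the scale-invariant rate. The correct dichotomy (the paper's) is on the \emph{diameter of the boundary mean} $d(\bar g)$: if $d(\bar g)\le M$, use the radial competitor; if $d(\bar g)>M$, retract onto a neighborhood of a well-separated point (Lemma~\ref{l:retraction}), split via Proposition~\ref{p:split}, apply the inductive hypothesis inside, and patch with Lemma~\ref{l:technical}. Your proposal is missing both the competitor in the first branch and the retraction/interpolation mechanism in the second.
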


The proof of Theorem \ref{t:hoelder2} consists of two parts: 
the first is stated
in the following proposition which gives the crucial estimate;
the second is a standard application of the Campanato--Morrey
estimates (see Section 
\ref{s:Qsobolev}, Proposition \ref{p:Campanato Morrey}).

\begin{propos}\label{p:basic}
Let $f\in W^{1,2}(B_r,\Iq)$ be $\D$-minimizing and suppose that
\begin{equation*}
g=f\vert_{\de B_r}\in W^{1,2}(\de B_r,\Iq).
\end{equation*}
Then, we have that
\begin{equation}\label{e:basic}
\D(f,B_r)\leq C(m)\,r\,\D(g,\de B_r),
\end{equation}
where $C(2)=Q$ and $C(m)<(m-2)^{-1}$.
\end{propos}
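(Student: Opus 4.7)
The strategy is to construct explicit competitors for $f$ on $B_r$ with the prescribed trace $g$ on $\partial B_r$ and then invoke the minimality of $f$; the construction differs in dimension $2$ and in higher dimensions.

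For $m\geq 3$, the natural candidate is the cone extension $\tilde g(x):=g(rx/|x|)$ for $x\in B_r\setminus\{0\}$. Because $\tilde g$ depends only on $x/|x|$, a direct chain-rule computation (writing $x=\rho\omega$ with $\omega\in\s^{m-1}$) gives $|D\tilde g(x)|^2=(r/\rho)^2\,|Dg(r\omega)|^2$, where the right-hand side is the tangential squared gradient of $g$ on $\partial B_r$. Integrating in polar coordinates,
\[
\D(\tilde g,B_r)\;=\;r^{3-m}\Bigl(\int_0^r\rho^{m-3}\,d\rho\Bigr)\D(g,\partial B_r)\;=\;\frac{r}{m-2}\,\D(g,\partial B_r),
\]
the radial integral converging precisely because $m\geq 3$. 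One checks that $\tilde g\in W^{1,2}(B_r,\Iq)$ and that its trace on $\partial B_r$ is $g$ (e.g.\ via Definition \ref{d:Dirichlet problem} applied to each $\cG(\tilde g,T)$), so minimality of $f$ gives the bound with $C(m)=(m-2)^{-1}$.

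For $m=2$, the cone construction fails because $\int_0^r\rho^{-1}d\rho$ diverges. After rescaling to $r=1$ (both sides of \eqref{e:basic} scale identically in dimension two), I would apply Proposition \ref{p:Wselection} to decompose $g=\sum_{j=1}^J\a{g_j}$ into irreducible $W^{1,2}$ pieces, where each $g_j$ is $Q_j$-valued with $\sum_jQ_j=Q$ and $Q_j\leq Q$, and admits the representation $g_j(z)=\sum_{\zeta^{Q_j}=z}\a{h_j(\zeta)}$ for some single-valued $h_j\in W^{1,2}(\s^1,\R{n})$. Taking $\tilde h_j$ to be the classical harmonic extension of $h_j$ to the disc, I define the competitor piece $\tilde g_j(z):=\sum_{\zeta^{Q_j}=z}\a{\tilde h_j(\zeta)}$ and the competitor $\tilde g:=\sum_j\a{\tilde g_j}$.

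The key is a change-of-variables identity along the $Q_j$-fold branched cover $\pi_{Q_j}(w)=w^{Q_j}$: on $B_1$ the factor $|\pi_{Q_j}^{-1}{}'(z)|^2=|w|^{-2(Q_j-1)}/Q_j^2$ arising in $|D\tilde g_j(z)|^2=\sum_{w^{Q_j}=z}|D\tilde h_j(w)|^2\,|w|^{-2(Q_j-1)}/Q_j^2$ cancels the Jacobian $Q_j^2|w|^{2(Q_j-1)}$ when one integrates, yielding $\D(\tilde g_j,B_1)=\D(\tilde h_j,B_1)$; the analogous calculation on $\s^1$ gives $\D(g_j,\partial B_1)=Q_j^{-1}\D(h_j,\partial B_1)$. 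Combined with the standard Fourier-series bound $\D(\tilde h_j,B_1)\leq\D(h_j,\partial B_1)$ (from $\sum_n n|a_n|^2\leq\sum_n n^2|a_n|^2$), these identities yield
\[
\D(\tilde g_j,B_1)\;=\;\D(\tilde h_j,B_1)\;\leq\;\D(h_j,\partial B_1)\;=\;Q_j\,\D(g_j,\partial B_1)\;\leq\;Q\,\D(g_j,\partial B_1).
\]
Summing over $j$ gives $\D(\tilde g,B_1)\leq Q\,\D(g,\partial B_1)$, and minimality of $f$ together with rescaling yields \eqref{e:basic} with $C(2)=Q$. The main obstacle will be justifying carefully that $\tilde g_j\in W^{1,2}(B_1,\Iq)$ at the branch point $0$ and that the change-of-variables identities above hold in the Sobolev sense; both follow from the fact that $\tilde h_j$ is smooth on $B_1$ and the integrand $|D\tilde h_j(w)|^2|w|^{-2(Q_j-1)}$ becomes square-integrable after multiplication by the Jacobian $|w|^{2(Q_j-1)}$.
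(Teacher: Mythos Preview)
Your planar argument ($m=2$) is essentially the paper's: decompose $g$ into irreducible pieces via Proposition~\ref{p:Wselection}, unroll each piece to a single-valued function on $\s^1$, take its harmonic extension, and roll back. The energy identities you state are exactly Lemma~\ref{l:rolling}, and the Fourier bound $\D(\tilde h_j,B_1)\leq\D(h_j,\partial B_1)$ is the same inequality the paper invokes. So that half is fine.

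The $m\geq 3$ part has a genuine gap. Your cone competitor $\tilde g(x)=g(rx/|x|)$ yields $\D(\tilde g,B_r)=\frac{r}{m-2}\,\D(g,\partial B_r)$, hence only $C(m)=(m-2)^{-1}$. But the proposition asserts the \emph{strict} inequality $C(m)<(m-2)^{-1}$, and this is not cosmetic: in the proof of Theorem~\ref{t:hoelder2} one sets $\gamma(m)=C(m)^{-1}-(m-2)$ and needs $\gamma(m)>0$ to integrate the differential inequality \eqref{e:scaling2} and obtain a positive H\"older exponent. With your constant, $\gamma(m)=0$ and the argument collapses.

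The paper beats $(m-2)^{-1}$ by an induction on $Q$. For the base case (and whenever the diameter of a mean $\bar g$ is small) it replaces the pure cone by $\sum_i\a{v+\varphi(|x|)(g_i(x/|x|)-v)}$ and minimizes over the radial profile $\varphi$; the cone $\varphi\equiv 1$ does not satisfy the Euler--Lagrange equation for the resulting one-dimensional functional, so the infimum is strictly below $(m-2)^{-1}$. When the diameter of $\bar g$ is large, Lemma~\ref{l:separation} and Proposition~\ref{p:split} split $f$ into lower-$Q$ pieces (where the inductive hypothesis gives a strict gain), and Lemma~\ref{l:technical} interpolates near $\partial B_r$ to match the trace. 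Balancing the two regimes produces a constant strictly below $(m-2)^{-1}$. Your argument recovers only the trivial upper envelope of this construction.
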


The minimizing property of $f$ enters heavily in the proof of
this last proposition, where the estimate is achieved by
exhibiting a suitable competitor.
This is easier in dimension $2$ because 
we can use Proposition \ref{p:Wselection} for $g$.
In higher
dimension the argument is more complicated
and relies on Proposition \ref{p:split} to argue by induction on $Q$.
Now, assuming Proposition \ref{p:basic}, we proceed with the proof of Theorem \ref{t:hoelder2}.
\subsection{Proof of Theorem \ref{t:hoelder2}.}
Set 
\[
\gamma (m):=\left\{
\begin{array}{ll}
2\, Q^{-1} & \mbox{for $m=2$},\\
C(m)^{-1}-m+2& \mbox{for $m>2$,}
\end{array}\right.
\]
where $C(m)$ is the constant in \eqref{e:basic}. 
We want to prove that
\begin{equation}\label{e:scaling}
\int_{B_r}|Df|^2\leq
r^{m-2+\gamma}\int_{B_1}|Df|^2
\quad\mbox{for every $0<r\leq 1$.} 
\end{equation}
Define $h(r)=\int_{B_r}|Df|^2$. Note that $h$ is absolutely
continuous and that
\begin{equation}\label{e:g'}
h^\prime(r)=\int_{\de B_r}|Df|^2
\geq\D(f,\de B_{r}) \quad \mbox{for a.e. $r$,}
\end{equation}
where, according to Definitions \ref{d:W1p}
and \ref{d:dirichlet}, $\D(f,\de B_{r})$ is
given by
\[
\D(f,\de B_{r})=\int_{\de B_r}|\de_{\tau} f|^2,
\]
with $|\de_{\tau} f|^2=|D f|^2-\sum_{i=1}^Q|\de_{\nu} f_i|^2$.
Here $\de_\tau$ and $\de_\nu$ denote, respectively, 
the tangential and the normal derivatives.
We remark further that \eqref{e:g'} can be improved for $m=2$.
Indeed, in this case the outer variation formula \eqref{e:cono}, 
gives an equipartition of the Dirichlet energy in the 
radial and tangential parts, yielding
\begin{equation}\label{e:g'2}
h^\prime(r)=\int_{\de B_r}|Df|^2=\frac{\D(f,\de B_r)}{2}.
\end{equation}
Therefore, \eqref{e:g'} (resp. \eqref{e:g'2} when
$m=2$) and \eqref{e:basic} imply
\begin{equation}\label{e:scaling2}
(m-2+\gamma)\,h(r)\leq r\,h^\prime(r).
\end{equation}
Integrating this differential inequality, we obtain (\ref{e:scaling}):
\begin{equation*}
\int_{B_r}|Df|^2
= h(r)\leq r^{m-2+\gamma}\,h(1)
= r^{m-2+\gamma}\int_{B_1}|Df|^2.
\end{equation*}

Now we can use the Campanato--Morrey estimates for $Q$-valued
functions given in Proposition \ref{p:Campanato Morrey} in order to conclude the H\"older continuity of $f$ with exponent $\alpha=\frac{\gamma}{2}$.

\subsection{Proof of Proposition \ref{p:basic}: the planar case.}
It is enough to prove \eqref{e:basic} for $r=1$,
because the general case follows from an easy scaling argument.
We first prove the following simple lemma.

\begin{remark}\label{r:complex notation} 
In this subsection we introduce a complex notation
which will be also useful later. We identify the plane $\R{2}$
with $\C$ and therefore we regard the unit disk as 
\[
\disk=\{z\in\C\,:\,|z|<1\}=\{r\,e^{i\theta}\,:\,0\leq r<1,\,\theta\in\R{}\}
\] 
and the unit circle as 
\[
\s^1=\partial\, \disk=\{z\in\C\,:\, |z|=1\} =
\{e^{i\theta}\,:\, \theta\in \R{}\}.
\]
\end{remark}

\begin{lemma}\label{l:rolling}
Let $\zeta\in W^{1,2}(\disk,\R{n})$ and consider
the $Q$-valued function $f$ defined by
\begin{equation*}
f(x)=\sum_{z^{Q}=x}\a{\zeta(z)}.
\end{equation*}
Then, the function $f$ belongs to $W^{1,2}(\disk,\Iq)$ and 
\begin{equation}\label{e:invariance Dir}
\D(f,\disk)=\int_{\disk}\abs{D\zeta}^2.
\end{equation}
Moreover, if $\zeta\vert_{\s^1}\in W^{1,2}(\s^1,\R{n})$, then $f\vert_{\s^1}\in W^{1,2}(\s^1,\Iq)$ and
\begin{equation}\label{e:Dir al bordo}
\D(f\vert_{\s^1},\s^1)=\frac{1}{Q}\int_{\s^1}\abs{\de_\tau\zeta}^2.
\end{equation}
\end{lemma}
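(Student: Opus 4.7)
Let $\pi:\disk\to\disk$ be the holomorphic $Q$-fold covering $\pi(z)=z^{Q}$. The first observation, which underlies everything, is that $\pi$ is conformal away from the origin: as a real-linear map, $D\pi(z)$ equals the scalar $Q|z|^{Q-1}$ times a rotation, so its Jacobian determinant is $J_\pi(z)=Q^{2}|z|^{2(Q-1)}$ and it is locally invertible with $Q$ holomorphic inverses $z_{0},\ldots,z_{Q-1}$ on any simply connected subset of $\disk\setminus\{0\}$. Setting $f_i(x):=\zeta(z_i(x))$ yields, locally off the origin, a selection $f=\sum_i\a{f_i}$, and from this it is transparent that $f$ is measurable on all of $\disk$.

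The heart of the argument is the energy computation, which I would carry out first assuming $\zeta\in C^{\infty}(\ov{\disk},\R{n})$. Conformality of $D\pi$ gives, for each local inverse,
\[
|Df_i(x)|^{2}=\bigl|D\zeta(z_i(x))\cdot Dz_i(x)\bigr|^{2}=\frac{|D\zeta(z_i(x))|^{2}}{Q^{2}|z_i(x)|^{2(Q-1)}}.
\]
Applying the area formula to $\pi$ (which is $Q$-to-$1$ off a null set) one gets
\[
\int_{\disk}\sum_i|Df_i(x)|^{2}\,dx=\int_{\disk}\frac{|D\zeta(z)|^{2}}{Q^{2}|z|^{2(Q-1)}}\,J_\pi(z)\,dz=\int_{\disk}|D\zeta|^{2},
\]
so the blow-up of $|Dz_i|^{2}$ at $0$ is exactly cancelled by the Jacobian. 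Since the integrand of $\D(f,\disk)$ coincides with $\sum_i|Df_i|^{2}$ by Proposition \ref{p:equivalence def Dir}, this proves \eqref{e:invariance Dir} in the smooth case. To pass to a general $\zeta\in W^{1,2}$, I would mollify to obtain $\zeta_k\to\zeta$ in $W^{1,2}$, observe that the associated $Q$-functions $f_k$ converge to $f$ in $L^{2}(\disk,\Iq)$ (via the elementary pointwise bound $\cG(f_k,f)^{2}\leq\sum_{z^{Q}=x}|\zeta_k(z)-\zeta(z)|^{2}$ integrated against the area formula), and conclude by the smooth identity together with the lower semicontinuity of the Dirichlet energy proved in Section \ref{s:existence}.

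Before that, I need to verify that $f$ actually belongs to $W^{1,2}(\disk,\Iq)$ in the sense of Definition \ref{d:W1p}. For any $T=\sum_j\a{P_j}\in\Iq$, the function $x\mapsto\cG(f(x),T)^{2}$ is, by its minimum definition, bounded pointwise by $\sum_i|\zeta(z_i(x))-P_i|^{2}$; combined with the chain-rule bound $|\nabla\cG(f,T)|\leq|Df|$ valid for any Lipschitz test and the approximation by smooth $\zeta_k$, this produces the required $L^{2}$ upper gradient $\varphi$ of Definition \ref{d:W1p}. Alternatively, and perhaps more cleanly, one can appeal to Theorem \ref{t:Sob=Sob} and check instead that $\xii\circ f\in W^{1,2}(\disk,\R{N})$, which again reduces to the area formula applied to $\pi$.

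Finally, for the boundary identity \eqref{e:Dir al bordo}, I would parametrize $\s^{1}$ by arc length $s\in[0,2\pi)$, writing $x(s)=e^{is}$ and $z_i(s)=e^{i(s+2\pi i)/Q}$. Since $\pi|_{\s^{1}}$ has tangential derivative of constant magnitude $Q$, the chain rule gives $\partial_{\tau}f_i(x(s))=Q^{-1}\partial_{\tau}\zeta(z_i(s))$, hence $|\partial_{\tau}f|^{2}=Q^{-2}\sum_i|\partial_{\tau}\zeta(z_i)|^{2}$. Integrating over $s\in[0,2\pi)$ and performing the substitution $s=Qt$ yields
\[
\int_{\s^{1}}|\partial_{\tau}f|^{2}\,ds=\sum_{i=0}^{Q-1}\frac{1}{Q^{2}}\int_{0}^{2\pi}\!\bigl|\partial_{\tau}\zeta\bigl(e^{i(s+2\pi i)/Q}\bigr)\bigr|^{2}\,ds=\frac{1}{Q}\int_{\s^{1}}|\partial_{\tau}\zeta|^{2},
\]
which is \eqref{e:Dir al bordo}. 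The main subtle point in the whole argument is the passage from smooth $\zeta$ to $\zeta\in W^{1,2}$ near the origin, where $\pi$ is singular; however, because every estimate is framed at the level of integrals where the Jacobian of $\pi$ absorbs the singularity of $Dz_i$, a standard mollification plus lower semicontinuity suffices, and no delicate pointwise analysis near $0$ is required.
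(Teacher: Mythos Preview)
Your approach and the paper's are essentially the same idea: exploit the conformality of $z\mapsto z^{Q}$ so that the Dirichlet integrand is preserved under the change of variables. The paper's execution is more direct: it fixes branches $\varphi_j:\mathcal{C}\to\mathcal{D}_j$ of the $Q$-th root on the slit disk $\mathcal{C}=\disk\setminus\{re^{i0}\}$, writes $f|_{\mathcal{C}}=\sum_j\a{\zeta\circ\varphi_j}$, and invokes conformal invariance of the Dirichlet energy, which holds for arbitrary $W^{1,2}$ maps, to obtain $\D(f,\mathcal{C})=\sum_j\D(\zeta,\mathcal{D}_j)=\int_{\disk}|D\zeta|^2$ directly. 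No mollification is needed.

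Your detour through smooth approximation introduces a genuine, if small, gap: from $f_k\to f$ in $L^2$ and $\D(f_k,\disk)=\int|D\zeta_k|^2\to\int|D\zeta|^2$, lower semicontinuity of $\D$ yields only $\D(f,\disk)\leq\int|D\zeta|^2$, not the equality \eqref{e:invariance Dir}. You never supply the reverse inequality. The clean fix is to drop the mollification entirely and note that your area-formula computation is already valid for $\zeta\in W^{1,2}$: since each $\varphi_j$ is a smooth diffeomorphism of $\mathcal{C}$ onto a sector, the chain rule $D(\zeta\circ\varphi_j)=D\zeta(\varphi_j)\cdot D\varphi_j$ holds a.e.\ and the change of variables goes through verbatim. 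Your boundary computation for \eqref{e:Dir al bordo} is fine and is exactly the ``simple computation left to the reader'' in the paper.
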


\begin{proof} Define the following subsets of the unit disk,
\[
\mathcal{D}_j=\left\{r\,e^{i\theta}:0<r<1,\,
(j-1)\,2\pi/Q<\theta<j\,2\pi/Q\right\}\;
\textrm{and }\; \mathcal{C}=\left\{r\,e^{i\theta}:0<r<1,\,\theta\neq0\right\},
\]
and let $\varphi_j:\mathcal{C}\ra \mathcal{D}_j$ 
be determinations of the $Q^{th}$-root, i.e. 
\[
\varphi_j \left(r e^{i\theta}\right)=r^{\frac{1}{Q}}\,
e^{i\left(\frac{\theta}{Q}+(j-1)\,\frac{2\pi}{Q}\right)}.
\]
It is easily recognized that $f\vert_{\mathcal{C}}=\sum_{j}\a{\zeta\circ \varphi_j}$.
So, by the invariance of the Dirichlet energy under conformal mappings,
one deduces that
$f\in W^{1,2}(\mathcal{C},\Iq)$ and
\begin{equation}\label{e:invariance2}
\D(f,\mathcal{C})=\sum_{i=1}^Q\D(\zeta\circ \varphi_i,\mathcal{C})=
\int_{\disk}\abs{D\zeta}^2.
\end{equation}
From the above argument and from \eqref{e:invariance2}, it is straightforward to infer that $f$ belongs to $W^{1,2}(\disk,\Iq)$ and \eqref{e:invariance Dir}  holds. Finally, \eqref{e:Dir al bordo} is a 
simple computation left to the reader.
\end{proof}

We now prove Proposition \ref{p:basic}.
Let $g=\sum_{j=1}^J\a{g_j}$ be a decomposition
into irreducible $k_j$-functions 
as in Proposition \ref{p:Wselection}.
Consider, moreover, the $W^{1,2}$ 
functions $\gamma_j :\s^1\to \R{n}$ ``unrolling'' the $g_j$
as in Proposition \ref{p:Wselection} $(ii)$:
\begin{equation*}
g_j (x)=\sum_{z^{k_j} = x} \a{\gamma_j (z)}.
\end{equation*}
We take the harmonic extension $\zeta_l$ of $\gamma_l$ in $\disk$, and
consider the $k_l$-valued functions $f_l$ obtained
``rolling'' back the $\zeta_l$: $f_l (x)=\sum_{z^{k_l}=x} 
\a{\zeta_l (z)}$.
The $Q$-function $\tilde{f}=\sum_{l=1}^J\a{f_l}$ is
an admissible competitor for $f$, 
since $\tilde{f}\vert_{\s^1}= f|_{\s^1}$.
By a simple computation on planar harmonic functions, it is easy to see that
\begin{equation}\label{e:standard}
\int_{\disk}\abs{D\zeta_l}^2\leq \int_{\s^1}
\abs{\de_\tau\gamma_l}^2.
\end{equation}
Hence, from \eqref{e:invariance Dir},
\eqref{e:Dir al bordo} and \eqref{e:standard}, we easily conclude \eqref{e:basic}:
\begin{align*}
\D(f,\disk) & \leq \D\left(\tilde{f},\disk\right) =
\sum_{l=1}^J\D(f_l,\disk)\stackrel{\eqref{e:invariance Dir}}{=}\sum_{l=1}^J\int_{\disk}\abs{D\zeta_l}^2\\
& \stackrel{\mathclap{\eqref{e:standard}}}{\leq}\;
\sum_{l=1}^J\int_{\s^1}\abs{\de_\tau\gamma_l}^2
\stackrel{\eqref{e:Dir al bordo}}{=}
\sum_{l=1}^J k_l\, \D(g_l,\s^1) \leq Q\, \D(g,\s^1).
\end{align*}

\subsection{Proof of Proposition \ref{p:basic}: the case $m\geq3$.}
\label{ss:m>3}

To understand the strategy of the proof, fix a $\D$-minimizing
$f$ and consider the ``radial'' competitor
$h(x)=f (x/|x|)$.
An easy computation shows
the inequality $\D(h,B_1)\leq (m-2)^{-1}\D(f,\de B_1)$.
In order to find a better competitor, set
$\tilde f (x)=\sum_i \a{\varphi(\abs{x})f_i (x/|x|)}$.
With a slight abuse of notation, we will denote
this function by $\varphi (|x|) f (x/|x|)$.
We consider moreover functions $\varphi$ which are 
$1$ for $t=1$ and smaller than $1$
for $t<1$. These competitors are, however, good
only if $f|_{\de B_1}$ is not too far from
$Q\a{0}$. 

Of course, we can use competitors of the form
\begin{equation}\label{e:radialcomp}
\sum_i \a{v + \varphi (|x|) 
\left(f_i \left(\frac{x}{\abs{x}}\right)
-v\right)},
\end{equation}
which are still suitable if, roughly speaking,
\begin{itemize}
\item[(C)] on $\partial B_1$,
$f (x)$ is not too far from $Q\a{v}$, i.e. from a point of
multiplicity $Q$.
\end{itemize}
A rough strategy of the proof could then be
the following.
We approximate $f|_{\partial B_1}$
with a $\tilde{f}=\a{f_1}+\ldots
+ \a{f_J}$ decomposed into simpler
$W^{1,2}$ functions $f_j$ each of which satisfies (C). 
We interpolate on a corona $B_1\setminus B_{1-\delta}$
between $f$ and $\tilde{f}$, and we then use 
the competitors of the form \eqref{e:radialcomp} to
extend $\tilde{f}$ to $B_{1-\delta}$. 
In fact, we shall use a variant of this idea, 
arguing by induction on $Q$.

\medskip

Without loss of generality, we assume that
\begin{equation}\label{e:normalization}
\D(g,\de B_1)=1.
\end{equation}
Moreover, we recall the notation $|T|$ and $|f|$ introduced
in Remark \ref{r:convention} and fix the following one for 
the translations:
\begin{equation*}
\mbox{if $v\in \R{n}$},\; \textrm{then}
\; \tau_v (T) := \sum_i \a{T_i-v},\;\textrm{for every }\,
T=\sum_i \a{T_i}\in \Iq.
\end{equation*}

\medskip

\textit{Step 1. Radial competitors.}
Let $\overline g=\sum_i \a{P_i}\in \Iq$
be a mean for $g$, so that the Poincar\'e
inequality in Proposition \ref{p:poincare'} holds,
and assume that the diameter of $\bar g$ (see
Definition \ref{d:diamsep}) is smaller than a constant $M>0$,
\begin{equation}\label{e:vicini}
d(\overline g)\leq M.
\end{equation}

Let $P=Q^{-1} \sum_{i=1}^QP_i$ be the barycenter 
of $\overline g$ and consider
$\tilde{f}=\tau_P\circ f$ and $h =\tau_P\circ g$.
It is clear that $h =\tilde f\vert_{\de B_1}$
and that $\overline{h}=\tau_P(\overline g)$ is a
mean for $h$.
Moreover, by \eqref{e:vicini},
\[
\left|\overline{h}\right|^2=\sum_{i}|P_i-P|^2\leq Q\, M^2.
\]
So, using the Poincar\'e inequality, we get
\begin{equation}\label{e:poicare' vicini}
\int_{\de B_1}|h|^2 \leq
2\int_{\de B_1} \cG \left(h, \overline{h}\right)^2 
+ 2\int_{\de B_1}\left|\overline{h}\right|^2
\leq C \,\D (g, \partial B_1) +C\,M^2\stackrel{\eqref{e:normalization}}{\leq} C_M,
\end{equation}
where $C_M$ is a constant depending on $M$.

We consider the $Q$-function
$\hat{f} (x) := \varphi(\abs{x})\, h \left(\frac{x}{\abs{x}}\right)$,
where $\varphi$ is a $W^{1,2}([0,1])$ function with $\varphi(1)=1$.
From \eqref{e:poicare' vicini} and the chain-rule in Proposition \ref{p:chain},
one can infer the following estimate:
\begin{align*}
\int_{B_1}\left|D\hat{f}\right|^2 & = 
\left(\int_{\de B_1} |h|^2\right) \int_0^1\varphi^\prime(r)^2\,r^{m-1}dr
+\left(\int_{\de B_1}|D h|^2 \right)
\int_0^1\varphi(r)^2\,r^{m-3}dr\nonumber\\
& \leq 
\int_0^1\big(\varphi(r)^2\,r^{m-3}+C_M\varphi^\prime(r)^2\,r^{m-1}\big)dr
=:  I(\varphi).
\end{align*}
Since $\tau_{-P} \big(\hat{f}\big)$ is a suitable competitor for
$f$, one deduces that
\begin{equation*}
\D (f, B_1) \leq \inf_{\stackrel{\varphi
\in W^{1,2}([0,1])}{\varphi(1)=1}} I(\varphi).
\end{equation*}
We notice that $I(1)=\frac{1}{m-2}$, as pointed out 
at the beginning of the section.
On the other hand, $\varphi\equiv 1$ cannot be a minimum
for $I$ because it does not satisfy the corresponding 
Euler--Lagrange equation. So, 
there exists a constant $\gamma_M>0$ such that
\begin{equation}\label{e:basic case 1}
\D(f,B_1)\leq \inf_{\stackrel{\varphi
\in W^{1,2}([0,1])}{\varphi(1)=1}} I(\varphi)=
\frac{1}{m-2}-2\,\gamma_M.
\end{equation}
In passing, we note that, when $Q=1$, $d(T)=0$ and hence this argument
proves the first induction step of the proposition (which, however,
can be proved in several other ways).

\medskip

\textit{Step 2. Splitting procedure: the inductive step.}
Let $Q$ be fixed and assume that
the proposition holds for every $Q^*<Q$.
Assume, moreover, that the diameter of $\overline g$ is bigger
than a constant $M>0$, which will be chosen later:
\begin{equation*}
d(\overline g)>M
\end{equation*}
Under these hypotheses, 
we want to construct a suitable competitor for $f$.
As pointed out at the beginning of the proof, the strategy is
to decompose $f$ in suitable pieces in order to apply the inductive
hypothesis. To this aim:
\begin{itemize}
\item[$(a)$] let $S=\sum_{j=1}^Jk_j\a{Q_j}\in\Iq$ 
be given by Lemma \ref{l:separation}
applied to $\eps=\frac{1}{16}$ and $T=\overline g$, 
i.e. $S$ such that
\begin{align}
&\beta\, M\leq \beta \,d(\overline g)< s(S)
=\min_{i\neq j} |Q_i-Q_j|,\label{e:lontananza2}\\
&\cG(S,\overline g)<\frac{s(S)}{16},\label{e:vicinanza2}
\end{align}
where $\beta=\beta(1/16,Q)$ is the constant 
of Lemma \ref{l:separation};
\item[$(b)$] let $\vartheta:\Iq\ra B_{s(S)/8}(S)$ be
given by Lemma \ref{l:retraction}
applied to $T=S$ and $r=\frac{s(S)}{8}$.
\end{itemize}
We define $h \in W^{1,2}(\de B_{1-\eta})$  by $h
\left((1-\eta )x\right)=\vartheta \left(g(x)\right)$, where $\eta>0$ is
a parameter to be fixed later, and take 
$\hat{h}$ a $\D$-minimizing $Q$-function on $B_{1-\eta}$
with trace $h$. Then, we consider the following competitor,
\begin{equation*}
\tilde f=
\begin{cases}
\hat h \quad\textrm{on}\quad B_{1-\eta}\\
\textrm{interpolation between $\hat h$ and $g$
as in Lemma \ref{l:technical},}
\end{cases}
\end{equation*}
and we pass to estimate its Dirichlet energy.

By Proposition \ref{p:split}, since $\hat h$ has values in $\overline{B_{s(S)/8}(S)}$, $\hat{h}$ can be decomposed
into two $\D$-minimizing $K$ and $L$-valued functions,
with $K, L < Q$. So, by inductive hypothesis,
there exists a positive constant $\zeta$ such that
\begin{equation}\label{e:guadagno}
\D \left(\hat{h}, B_{1-\eta}\right) \leq \left(\frac{1}{m-2} - \zeta\right)
(1-\eta)\,\D (h, \de B_{1-\eta} )\leq \left(\frac{1}{m-2} - \zeta\right)
\D (g, \de B_1),
\end{equation}
where the last inequality follows from $\Lip(\vartheta)=1$. 

Therefore, combining \eqref{e:guadagno}
with Lemma \ref{l:technical}, we can estimate
\begin{equation}\label{e:energy estimate}
\D\left(\tilde f,B_1\right)\leq
  \left(\frac{1}{m-2} - \zeta + C \eta \right)
\D (g, \de B_1) +\frac{C}{\eta}\int_{\de B_1}
\cG\big(g,\vartheta (g)\big)^2,
\end{equation}
with $C=C(n,m,Q)$.
Note that
\begin{equation*}
\cG\left(\overline g, \vartheta (g(x))\right)
\leq \cG\left(g(x),\overline g\right)\quad
\textrm{for every } x\in \de B_1,
\end{equation*}
because, by \eqref{e:vicinanza2}, $\vartheta(\overline g)=\overline g$.
Hence, if we define 
\[
E:=\big\{x\in \de B_1: g(x)\neq \vartheta (g(x))\big\}=
\left\{x\in \de B_1: g(x)\notin\overline{B_{s(S)/8}(S)}\right\},
\]
the last term in \eqref{e:energy estimate} 
can be estimated as follows:
\begin{align}\label{e:troncamento}
\int_{\de B_1}\cG\big(g,\vartheta (g)\big)^2
& =  \int_E
\cG\big(g,\vartheta (g)\big)^2
\leq 2\int_E \left[\cG\big(g,\overline g\big)^2
+ \cG\big(\overline g,\vartheta (g)\big)^2\right]\nonumber\\
&\leq 4\,\int_E \cG\big(g,\overline g\big)^2 dx
\leq 4\, \|\cG \big(g,\overline{g}\big)^2\|_{L^q}\,
|E|^{(q-1)/q}\nonumber\\
& \leq C\, \D (g,\de B_1) \,|E|^{(q-1)/q}
= C \,|E|^{(q-1)/q},
\end{align}
where the exponent $q$ can be chosen to be $(m-1)/(m-3)$
if $m>3$, otherwise any $q<\infty$ if $m=3$.

We are left only with the estimate of $|E|$.
Note that, for every $x\in E$,
\[
\cG(g(x),\overline g)\geq\cG(g(x),S)-\cG(\overline g,S)
\stackrel{\eqref{e:vicinanza2}}{\geq}\frac{s(S)}{8}-\frac{s(S)}{16}=\frac{s(S)}{16}.
\]
So, we deduce that
\begin{equation}\label{e:bound1000}
|E| \leq
\left|\left\{\cG(g,\overline g)\geq\frac{s(S)}{16}\right\}\right|
\leq
\frac{C}{s(S)^2}\int_{\de B_1}\cG(g,\overline g)^2
\stackrel{\eqref{e:lontananza2}}{\leq}\frac{C}{M^2}
\D(g,\de B_1).
\end{equation}

Hence, collecting the bounds \eqref{e:guadagno},
\eqref{e:troncamento} and \eqref{e:bound1000}, we conclude that
\begin{equation}\label{e:basic case 2}
\D\big(\tilde f,B_1\big)\leq \left(\frac{1}{m-2} - \zeta + C\eta 
+ \frac{C}{\eta M^\nu}\right),
\end{equation}
where $C=C(n,m,Q)$ and $\nu=\nu(m)$.

\medskip

\textit{Step 3. Conclusion.}
We are now ready to conclude. First of all,
note that $\zeta$ is a fixed positive constant given
by the inductive assumption that the proposition
holds for $Q^*<Q$. We then choose $\eta$ so that 
$C\eta < \zeta/2$ and
$M$ so large that $C/(\eta M^{\nu})< \zeta/4$,
where $C$
is the constant in \eqref{e:basic case 2}.
Therefore, the constants $M$, $\gamma_M$ and $\eta$ depend only on
$n$,$m$ and $Q$.
With this choice, Step 2 shows that
\begin{equation*}
\D(f,B_1)\leq \D\big(\tilde f, B_1\big)
\stackrel{\eqref{e:basic case 2}}{\leq}
\left(\frac{1}{m-2} - \frac{\zeta}{4}\right)
\D (g, \partial B_1),
\quad\mbox{if $d (\overline{g})>M$};
\end{equation*}
whereas Step 1 implies
\begin{equation*}
\D(f,B_1)\stackrel{\eqref{e:basic case 1}}{\leq}
\left(\frac{1}{m-2}-2\,\gamma_M\right) \D (g, \partial B_1),
\quad\mbox{if $d (\overline{g})\leq M$}.
\end{equation*}
This concludes the proof.

\section{Frequency function}\label{s:frequency}

We next introduce Almgren's frequency function and
prove his celebrated estimate.

\begin{definition}[The frequency function]\label{d:frequency}
Let $f$ be a $\D$-minimizing function, $x\in \Omega$
and $0<r<\dist (x, \partial \Omega)$. We define the
functions
\begin{equation}\label{e:frequency}
D_{x,f} (r)=\int_{B_r (x)} |Df|^2, \quad
H_{x,f} (r)=\int_{\partial B_r} |f|^2\quad\textrm{and}\quad
I_{x,f} (r)=\frac{r D_{x,f} (r)}{H_{x,f} (r)}. 
\end{equation}
$I_{x,f}$ is called the {\em frequency function}.
\end{definition}

When $x$ and $f$ are clear from the context, we will often
use the shorthand notation $D(r)$, $H(r)$ and $I(r)$.

\begin{remark}\label{r:welldefined}
Note that, by Theorem \ref{t:hoelder2}, $|f|^2$ is a continuous 
function. Therefore, $H_{x,f} (r)$ is a well-defined
quantity for every $r$. Moreover, if $H_{x,f} (r)=0$,
then, by minimality, $f|_{B_r (x)} \equiv 0$. 
So, except for this case, $I_{x,f} (r)$ is always well defined. 
\end{remark}

\begin{theorem}\label{t:frequency}
Let $f$ be $\D$-minimizing and $x\in \Omega$.
Either there exists $\varrho$ such that $f|_{B_\varrho (x)}\equiv 0$
or $I_{x,f} (r)$ is an absolutely continuous nondecreasing positive function
on $]0, \dist (x, \partial \Omega)[$.
\end{theorem}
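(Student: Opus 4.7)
The plan is to compute $H_{x,f}'(r)$ and $D_{x,f}'(r)$ explicitly using the first variation identities of Proposition \ref{p:import}, and then to apply the Cauchy--Schwarz inequality to obtain $I_{x,f}'(r)\geq 0$. First I would resolve the dichotomy: since $f$ is H\"older continuous by Theorem \ref{t:hoelder}, $|f|^2$ is continuous and $H_{x,f}$ is a well-defined continuous function of $r$. If $H_{x,f}(r_0)=0$ for some $r_0>0$, then $f\equiv Q\a{0}$ on $\partial B_{r_0}(x)$, and the Maximum Principle (Proposition \ref{p:maximum}) applied with $T=Q\a{0}$, for which $s(T)=+\infty$ so every radius is admissible, forces $f\equiv Q\a{0}$ on $B_{r_0}(x)$. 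This is the first alternative, so in what follows I may assume $H_{x,f}(r)>0$ for every $r\in (0,\dist(x,\partial\Omega))$, in which case $I_{x,f}(r)$ is well defined and strictly positive.

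Next, I would establish the differentiation formulas. Writing $H(r)=r^{m-1}\int_{\s^{m-1}}|f(x+r\omega)|^2\,d\omega$ and using a standard Fubini-type argument applied to the measurable selection of $f$ given by Proposition \ref{p:selection}, one shows that $H$ is locally absolutely continuous with
\begin{equation*}
H'(r) \;=\; \frac{m-1}{r}\,H(r) \;+\; 2\int_{\partial B_r(x)}\sum_i \langle f_i,\partial_\nu f_i\rangle
\end{equation*}
for a.e.\ $r$. Combined with the outer variation identity \eqref{e:perparti}, this gives $H'(r)=\frac{m-1}{r}H(r)+2D(r)$. Similarly, $D(r)$ is absolutely continuous with $D'(r)=\int_{\partial B_r(x)}|Df|^2$ for a.e.\ $r$, and substituting this into the inner variation identity \eqref{e:cono} yields
\begin{equation*}
D'(r) \;=\; \frac{m-2}{r}\,D(r) \;+\; 2\int_{\partial B_r(x)}\sum_i|\partial_\nu f_i|^2.
\end{equation*}

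Taking the logarithmic derivative of $I(r)=rD(r)/H(r)$ and substituting these expressions, the explicit dimensional contributions $\frac{1}{r}+\frac{m-2}{r}-\frac{m-1}{r}$ cancel, leaving
\begin{equation*}
\frac{I'(r)}{I(r)} \;=\; \frac{2\int_{\partial B_r(x)}\sum_i|\partial_\nu f_i|^2}{D(r)} \;-\; \frac{2D(r)}{H(r)}.
\end{equation*}
Pointwise Cauchy--Schwarz together with \eqref{e:perparti} yields
\begin{equation*}
D(r)^2 \;=\; \left(\int_{\partial B_r(x)}\sum_i \langle f_i,\partial_\nu f_i\rangle\right)^{\!2} \;\leq\; H(r)\int_{\partial B_r(x)}\sum_i|\partial_\nu f_i|^2,
\end{equation*}
so $I'(r)\geq 0$, and since $I$ is a ratio of absolutely continuous functions with denominator bounded away from zero on compact subintervals, $I$ is itself locally absolutely continuous. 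The main obstacle is the last justification mentioned: making the formulas for $H'$ and $D'$ rigorous at the $W^{1,2}$ regularity level, since the first variation identities of Proposition \ref{p:import} are only established for a.e.\ $r$. This is ultimately a routine Fubini-style argument on concentric spheres, but it is the only point of the proof that requires genuine care; the remainder is the well-known computation for harmonic maps noted in the remark after Proposition \ref{p:import}.
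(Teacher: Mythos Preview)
Your proposal is correct and follows essentially the same approach as the paper: compute $H'$ and $D'$, plug in the first variation identities \eqref{e:cono} and \eqref{e:perparti}, and apply Cauchy--Schwarz on $\partial B_r$. The only cosmetic differences are that the paper computes $I'$ directly rather than via the logarithmic derivative (which sidesteps the need to know $I>0$ in advance), and handles the dichotomy by direct comparison with the zero competitor (Remark~\ref{r:welldefined}) rather than invoking the Maximum Principle.
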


A simple corollary of Theorem \ref{t:frequency} is the
existence of the limit
\begin{equation*}
I_{x,f}(0)=\lim_{r\to0}I_{x,f}(r),
\end{equation*}
when the frequency function is defined for every $r$.
The same computations as those in Theorem \ref{t:frequency}
yield the following two corollaries.

\begin{corol}\label{c:homogeneous}
Let $f$ be $\D$-minimizing in $B_\varrho$.
Then, $I_{0,f} (r) \equiv \alpha$ if and only if
$f$ is $\alpha$-homogeneous, i.e.
\begin{equation}\label{e:homogeneous}
f(y)=|y|^\alpha f\left(\frac{y\,\varrho}{|y|}\right).
\end{equation}
\end{corol}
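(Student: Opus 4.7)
The plan is to extract the corollary from the proof of Theorem \ref{t:frequency}, whose monotonicity argument is essentially a Cauchy--Schwarz estimate: constancy of $I$ corresponds exactly to the equality case, which rigidifies the radial behaviour of $f$.

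First I would establish the two ingredients used in Theorem \ref{t:frequency}. From $H_{x,f}(r) = r^{m-1}\int_{\partial B_1}|f(r\theta)|^2\,d\sigma(\theta)$ and the identity \eqref{e:perparti}, one gets, for a.e.\ $r$,
\begin{equation*}
H'(r) = \frac{m-1}{r}\,H(r) + 2\int_{\partial B_r}\sum_i\langle f_i,\partial_\nu f_i\rangle = \frac{m-1}{r}\,H(r) + 2D(r).
\end{equation*}
From \eqref{e:cono} we obtain $2r\int_{\partial B_r}\sum_i|\partial_\nu f_i|^2 = rD'(r) - (m-2)D(r)$. Pairing these with the Cauchy--Schwarz inequality applied to \eqref{e:perparti},
\begin{equation*}
D(r) = \int_{\partial B_r}\sum_i\langle f_i,\partial_\nu f_i\rangle \leq H(r)^{1/2}\Bigl(\int_{\partial B_r}\sum_i|\partial_\nu f_i|^2\Bigr)^{1/2},
\end{equation*}
produces $D'(r)/D(r) - H'(r)/H(r) \geq -1/r$, i.e.\ $(\log I)'\geq 0$, which is the content of Theorem \ref{t:frequency}.

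For the ``if'' direction of the corollary I would take $f$ to be $\alpha$-homogeneous and compute directly. Homogeneity means that on each ray, every measurable branch $f_i(r\theta)$ satisfies $f_i(r\theta)=r^\alpha\, c_i(\theta)$ (up to the usual rescaling constant). Then $\partial_\nu f_i = (\alpha/r)\,f_i$ a.e., so \eqref{e:perparti} gives $D(r) = (\alpha/r)\,H(r)$ and $I_{0,f}(r)=\alpha$ for every $r$.

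For the ``only if'' direction, assume $I(r)\equiv\alpha$. Then $(\log I)'\equiv 0$, which forces equality in the Cauchy--Schwarz step above for a.e.\ $r$. Equality in $L^2$ Cauchy--Schwarz yields a scalar $\lambda(r)$ such that
\begin{equation*}
\partial_\nu f_i(x) = \lambda(r)\,f_i(x)\qquad \text{for a.e. } x\in \partial B_r.
\end{equation*}
Taking inner product with $f_i$ and summing over $i$ gives $D(r) = \lambda(r)\,H(r)$, hence $\lambda(r) = I(r)/r = \alpha/r$. Along each radial line the approximate derivative $\partial_r f_i(r\theta)$ therefore equals $(\alpha/r)\,f_i(r\theta)$ in the approximate-differentiability sense of Corollary \ref{c:appdiffae}, whose unique solution is $f_i(r\theta) = r^\alpha\, c_i(\theta)$. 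Summing the branches yields the $Q$-valued homogeneity relation \eqref{e:homogeneous}.

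The main technical issue is justifying the pointwise ODE argument on radial lines, since there is no global single-valued selection for $f$. The way around it is to work with Lipschitz approximations (Proposition \ref{p:lipapprox}) and the selection on sets where the branches separate (Proposition \ref{p.10}), exactly as in the proof of Rademacher's Theorem \ref{t:rademacher}: on the radial directions along which $f$ splits into simple pieces the ODE is classical, while on the ``diagonal'' set where all branches coincide the relation $\partial_\nu f_i = (\alpha/r) f_i$ is a single-valued statement about a Lipschitz extension, so Fubini plus the one-dimensional theory (Proposition \ref{p:Wselection-1}) gives the desired homogeneous structure for a.e.\ ray, and continuity from Theorem \ref{t:hoelder2} upgrades this to every ray.
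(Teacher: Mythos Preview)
Your proof is correct and follows the same line as the paper: constancy of $I$ forces equality in the Cauchy--Schwarz step behind \eqref{e:I'}, yielding $\partial_\nu f_i = (\alpha/r)\,f_i$ a.e., and one then integrates the radial ODE. The paper handles the radial ODE more directly than your final paragraph suggests: there is no need for Lipschitz approximations or Proposition \ref{p.10}; it suffices to observe, via Fubini, that $f|_{\sigma_y}\in W^{1,2}$ for a.e.\ ray $\sigma_y$, apply the one-dimensional $W^{1,2}$ selection of Proposition \ref{p:Wselection-1} on that ray to obtain absolutely continuous branches, and solve $(f_i|_{\sigma_y})'(r)=(\alpha/r)\,f_i|_{\sigma_y}(r)$ classically.
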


\begin{remark}
In \eqref{e:homogeneous}, with a slight abuse of notation, we use
the following convention (already
adopted in Subsection \ref{ss:m>3}). 
If $\beta$ is a scalar function and
$f=\sum_i \a{f_i}$ a $Q$-valued function, we denote by
$\beta f$ the function $\sum_i \a{\beta\, f_i}$.
\end{remark}

\begin{corol}\label{c:estimates frequency}
Let $f$ be $\D$-minimizing in $B_\varrho$.
Let $0<r<t\leq\varrho$ and suppose that $I_{0,f} (r)=I(r)$ is defined
for every $r$ (i.e. $H(r)\neq 0$ for every $r$).
Then, the following estimates hold: 
\begin{itemize}
\item[$(i)$] for almost every $r\leq s\leq t$,
\begin{equation}\label{e:deriv}
\frac{d}{d\,\tau}\Big\vert_{\tau=s}
\left[\ln\left(\frac{H(\tau)}{\tau^{m-1}}\right)\right]
=\frac{2\,I(r)}{r}
\end{equation}
and
\begin{equation}\label{e:stima H}
\left(\frac{r}{t}\right)^{2I(t)}\frac{H(t)}{t^{m-1}}
\leq \frac{H(r)}{r^{m-1}}\leq
\left(\frac{r}{t}\right)^{2I(r)}\frac{H(t)}{t^{m-1}};
\end{equation}
\item[$(ii)$] if $I(t)>0$, then
\begin{equation}\label{e:stima D}
\frac{I(r)}{I(t)}\left(\frac{r}{t}\right)^{2I(t)}\frac{D(t)}{t^{m-2}}
\leq \frac{D(r)}{r^{m-2}}\leq
\left(\frac{r}{t}\right)^{2I(r)}\frac{D(t)}{t^{m-2}}.
\end{equation}
\end{itemize}
\end{corol}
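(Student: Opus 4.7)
The plan is to derive all three statements from a single differential identity for $H$, obtained by combining \eqref{e:perparti} with a direct differentiation under the integral.

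First, I would write $H(s) = s^{m-1}\int_{\partial B_1}|f(s\omega)|^2\,d\omega$. Under the hypothesis $H\neq 0$ on $[r,t]$, the map $f$ is continuous and Sobolev, so $H$ is locally absolutely continuous and strictly positive, and for a.e.\ $s$ one computes
\[
H'(s) \;=\; \frac{m-1}{s}H(s) \;+\; 2\int_{\partial B_s}\sum_i\langle f_i,\partial_\nu f_i\rangle \;\stackrel{\eqref{e:perparti}}{=}\; \frac{m-1}{s}H(s)+2D(s).
\]
Dividing by $H(s)$ and subtracting $(m-1)/s$ then gives
\[
\frac{d}{d\tau}\Big|_{\tau=s}\ln\!\left(\frac{H(\tau)}{\tau^{m-1}}\right)=\frac{2D(s)}{H(s)}=\frac{2I(s)}{s},
\]
which is \eqref{e:deriv}.

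Second, for \eqref{e:stima H} I would integrate the previous identity over $[r,t]$. By Theorem \ref{t:frequency} the function $I$ is monotone nondecreasing, so $I(r)\leq I(s)\leq I(t)$ for $r\leq s\leq t$, and therefore
\[
2I(r)\ln\!\left(\frac{t}{r}\right)\;\leq\;\int_r^t\frac{2I(s)}{s}\,ds\;=\;\ln\!\left(\frac{H(t)/t^{m-1}}{H(r)/r^{m-1}}\right)\;\leq\;2I(t)\ln\!\left(\frac{t}{r}\right).
\]
Exponentiating and inverting yields exactly the two-sided bound \eqref{e:stima H}.

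Third, for \eqref{e:stima D} I would simply use the algebraic identity $D(r)/r^{m-2}=I(r)\cdot H(r)/r^{m-1}$ to write
\[
\frac{D(r)/r^{m-2}}{D(t)/t^{m-2}}\;=\;\frac{I(r)}{I(t)}\cdot\frac{H(r)/r^{m-1}}{H(t)/t^{m-1}},
\]
and plug in \eqref{e:stima H}. This gives the lower bound $(I(r)/I(t))(r/t)^{2I(t)}$ immediately, and the upper bound $(I(r)/I(t))(r/t)^{2I(r)}$, which is $\leq (r/t)^{2I(r)}$ because $I(r)/I(t)\leq 1$ by monotonicity; this is \eqref{e:stima D}. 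There is no real obstacle: the only delicate point is justifying the pointwise identity for $H'(s)$, but this is routine once one knows $f$ is continuous and $W^{1,2}$, and the identity \eqref{e:perparti} from first variations does the rest.
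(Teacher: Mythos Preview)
Your proof is correct and follows essentially the same route as the paper: derive the logarithmic derivative identity from $H'(s)=\frac{m-1}{s}H(s)+2D(s)$ (which is \eqref{e:H'3}, itself obtained exactly as you describe via \eqref{e:perparti}), integrate using the monotonicity of $I$ from Theorem~\ref{t:frequency} to get \eqref{e:stima H}, and then pass to \eqref{e:stima D} via the algebraic relation $D(r)/r^{m-2}=I(r)\,H(r)/r^{m-1}$. Note that the right-hand side of \eqref{e:deriv} as stated should read $2I(s)/s$ rather than $2I(r)/r$; you have derived the correct version, which is what is actually needed for the integration step.
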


\subsection{Proof of Theorem \ref{t:frequency}}
We assume, without loss of generality, that
$x=0$. 
$D$ is an absolutely continuous function
and 
\begin{equation}\label{e:D'}
D^\prime(r)=\int_{\de B_r}|Df|^2\, \quad \mbox{for a.e. $r$}.
\end{equation}
As for $H(r)$, note that $|f|$ is the composition
of $f$ with a Lipschitz function, and therefore
belongs to $W^{1,2}$. It follows that $|f|^2\in W^{1,1}$
and hence that $H\in W^{1,1}$.

In order to compute $H'$, note that the distributional
derivative of $|f|^2$ coincides with the approximate differential 
a.e. Therefore, Proposition \ref{p:chainapp} justifies 
(for a.e. $r$) the following computation:
\begin{align*}
H^\prime(r)&=
\frac{d}{dr}\int_{\de B_1}r^{m-1}\,|f(ry)|^2dy
= (m-1)r^{m-2}\int_{\de B_1}|f(ry)|^2dy+
\int_{\de B_1}r^{m-1} \frac{\de}{\de r} |f(ry)|^2\, dy\nonumber\\
&=\frac{m-1}{r}\int_{\partial B_r} |f|^2
+ 2 \int_{\partial B_r} \sum_i 
\langle \partial_\nu f_i, f_i\rangle.
\end{align*}
Using \eqref{e:cono}, we then conclude
\begin{equation}\label{e:H'3}
H^\prime (r)= \frac{m-1}{r} H (r) + 2 D(r).
\end{equation}
Note, in passing, that,
since $H$ and $D$ are continuous, $H\in C^1$ and
\eqref{e:H'3} holds pointwise.

If $H(r)=0$ for some $r$, then,
as already remarked, $f|_{B_r}\equiv 0$. In the opposite
case, we conclude that $I\in C \cap W^{1,1}_{loc}$. 
To show that $I$ is nondecreasing, it suffices
to compute its derivative a.e. and prove that it is
nonnegative.  Using 
\eqref{e:D'} and \eqref{e:H'3}, we infer that
\begin{align}\label{e:I'0}
I' (r)&=\frac{D(r)}{H(r)}+
\frac{r\,D^\prime(r)}{H(r)}-r\,D(r)\frac{H^\prime(r)}{H(r)^2}\nonumber\\
&=\frac{D(r)}{H(r)}+\frac{r\,D^\prime(r)}{H(r)}-
(m-1)\frac{D(r)}{H(r)}-2r\frac{D(r)^2}{H(r)^2}\nonumber\\
&=\frac{(2-m)D(r)+r\,D^\prime(r)}{H(r)}
-2\,r\,\frac{D(r)^2}{H(r)^2}\quad \mbox{for a.e. $r$.}
\end{align}
Recalling \eqref{e:cono} and \eqref{e:perparti} and using
the Cauchy--Schwartz inequality, from \eqref{e:I'0} we conclude that, for almost every
$r$,
\begin{equation}\label{e:I'}
I^\prime(r) =
\frac{r}{H(r)^2}\left\{
\int_{\de B_r(x)}
|\de_\nu f|^2
\cdot
\int_{\de B_r(x)}
|f|^2
-\left(\int_{\partial B_r (x)}
\sum_i \langle \partial_\nu f_i, f_i\rangle\,\right)^2\right\}\geq0.
\end{equation}

\subsection{Proof of Corollary \ref{c:homogeneous}}

Let $f$ be a $\D$-minimizing $Q$-valued function.
Then, $I(r)\equiv\alpha$
if and only if equality occurs in \eqref{e:I'} for almost every $r$, i.e. if and
only if there exist constants
$\lambda_r$ such that 
\begin{equation}\label{e:I cost}
f_i(y)=\lambda_r\,\de_\nu f_i(y),\;\textrm{for almost every $r$ and a.e. $y$ with $\abs{y}=r$.}
\end{equation}

Recalling \eqref{e:perparti} and using \eqref{e:I cost}, we infer that, for such $r$,
\begin{equation*}\label{e: I const2}
\alpha = I(r) = \frac{r\,D(r)}{H(r)}=\frac{r\int_{\partial B_r }
\sum_i \langle \partial_\nu f_i, f_i\rangle}{\int_{\de B_r}\sum_i
|f_i|^2}
\stackrel{\eqref{e:I cost}}{=} \frac{r\lambda_r\int_{\de B_r}
\sum_i |f_i|^2}{\int_{\de B_r}\sum_i |f_i|^2}
=r\lambda_r.
\end{equation*}
So, summarizing, $I(r)\equiv\alpha$ if and only if
\begin{equation}\label{e:I const3}
f_i(y)=\frac{\alpha}{|y|}\,\de_\nu f_i(y)
\quad \textrm{for almost every $y$.}
\end{equation}

Let us assume that \eqref{e:homogeneous} holds.
Then, \eqref{e:I const3} is
clearly satisfied and, hence, $I(r)\equiv\alpha$.
On the other hand, assuming that the frequency is constant,
we now prove \eqref{e:homogeneous}.
To this aim, let $\sigma_y=
\{r\,y\,:\,0\leq r\leq \varrho\}$ be the radius passing through $y\in \de B_1$.
Note that, for almost every $y$,
$f\vert_{\sigma_y} \in W^{1,2}$; so, for those $y$, recalling the
$W^{1,2}$-selection in Proposition \ref{p:Wselection-1}, we can write
$f\vert_{\sigma_y}=\sum_i \a{f_i\vert_{\sigma_y}}$, where $f_i\vert_{\sigma_y}:[0,\varrho]\ra \R{n}$ are $W^{1,2}$ functions.
By \eqref{e:I const3}, we infer
that $f_i\vert_{\sigma_y}$ solves the ordinary differential equation
\begin{equation*}
(f_i\vert_{\sigma_y})^\prime(r)=\frac{\alpha}{r}\,f_i\vert_{\sigma_y}(r),\quad\textrm{for
a.e. }\,r.
\end{equation*}
Hence, for a.e. 
$y\in \de B_{1}$ and for every $r\in(0,\varrho]$, 
$f_i\vert_{\sigma_y}(r)=r^\alpha\,f\left(y\right)$, 
thus concluding \eqref{e:homogeneous}.

\subsection{Proof of Corollary \ref{c:estimates frequency}}

The proof is a straightforward consequence of equation \eqref{e:H'3}.
Indeed, \eqref{e:H'3} implies, for almost every $s$,
\[
\frac{d}{d\,\tau}\Big\vert_{\tau=s}\left(\frac{H(\tau)}{\tau^{m-1}}\right)=
\frac{H^\prime(s)}{s^{m-1}}-\frac{(m-1)\,H(s)}{s^{m}}
\stackrel{\eqref{e:H'3}}{=}\frac{2\,D(s)}{s^{m-1}},
\]
which, in turn, gives \eqref{e:deriv}.
Integrating \eqref{e:deriv} and using the monotonicity of $I$,
one obtains \eqref{e:stima H}. Finally, \eqref{e:stima D} 
follows from \eqref{e:stima H}, using
the identity $I(r)=\frac{r\,D(r)}{H(r)}$.

\section{Blow-up of $\D$-minimizing $Q$-valued functions}
\label{s:blowup}

Let $f$ be a $Q$-function and assume $f(y)=Q\a{0}$
and $\D(f,B_\varrho(y))>0$ for every $\varrho$.
We define the blow-ups of $f$ at $y$ in the following way,
\begin{equation}\label{e:blowup}
f_{y,\varrho}(x)=\frac{\varrho^{\frac{m-2}{2}}\,f(\varrho\,x+y)}{\sqrt{\D(f,B_\varrho(y))}}.
\end{equation}
The main result of this section is
the convergence of blow-ups of $\D$-minimizing functions
to homogeneous $\D$-minimizing functions, which we call
{\em tangent functions}.

To simplify the notation, we will not display the subscript 
$y$ in $f_{y,\rho}$
when $y$ is the origin.

\begin{theorem}\label{t:blowup}
Let $f\in W^{1,2}(B_1,\Iq)$ be $\D$-minimizing.
Assume $f(0)=Q\a{0}$ and $\D(f,B_\varrho)>0$ for every $\varrho\leq1$. Then,
for any sequence $\{f_{\varrho_k}\}$ with $\rho_k\downarrow 0$,
a subsequence, not relabelled, converges
locally uniformly to a function $g:\R{m}\to \Iqs$
with the following properties:
\begin{itemize}
\item[$(a)$] $\D(g, B_1) = 1$ and $g|_\Om$ is $\D$-minimizing
for any bounded $\Om$;
\item[$(b)$] $g(x)=|x|^\alpha\,g\left(\frac{x}{|x|}\right)$, where
$\alpha=I_{0,f}(0) > 0$ is the frequency of $f$ at $0$.
\end{itemize}
\end{theorem}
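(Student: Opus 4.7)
The plan is to carry out the standard blow-up analysis for minimizers, adapted to the $Q$-valued setting via Almgren's frequency. First I record the basic scaling identities by direct computation (using Propositions \ref{p:chain} and \ref{p:chainapp}):
\[
\D(f_\rho, B_R) = \frac{D_f(\rho R)}{D_f(\rho)}, \qquad
H_{f_\rho}(r) = \frac{H_f(\rho r)}{\rho\, D_f(\rho)}, \qquad
I_{f_\rho}(r) = I_f(\rho r),
\]
so in particular $\D(f_\rho, B_1)=1$ and $f_\rho(0)=Q\a{0}$. Applying \eqref{e:stima D} with $r=\rho$ and $t=\rho R$, together with the fact that $I_f$ is monotone and bounded on any interval $[\rho_0, 1]$ (Theorem \ref{t:frequency}), I obtain $\D(f_\rho, B_R)\leq C(R)$ uniformly for $\rho$ small. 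Theorem \ref{t:hoelder2} applied on $B_{2R}$ then yields a uniform Hölder seminorm bound on $f_\rho|_{B_R}$; combined with $f_\rho(0)=Q\a{0}$ this gives uniform $L^\infty$ bounds on compact sets. Since $(\Iq,\cG)$ is a complete metric space in which bounded sets are precompact, Arzelà--Ascoli produces a subsequence, still denoted $f_{\rho_k}$, converging locally uniformly on $\R{m}$ to a continuous $g\colon\R{m}\to\Iq$.

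Next I show that $g$ is $\D$-minimizing on every bounded open set and that $\D(f_{\rho_k},B_R)\to\D(g,B_R)$ for a.e.\ $R$. Given any competitor $v\in W^{1,2}(B_R,\Iq)$ with $v|_{\partial B_R}=g|_{\partial B_R}$, I fix $\eps>0$ and build a competitor $v_k$ for $f_{\rho_k}$ on $B_R$ by placing $v$ inside $B_{R(1-\eps)}$ and invoking the Interpolation Lemma \ref{l:technical} on the thin annulus $B_R\setminus B_{R(1-\eps)}$ to connect the inner trace $v|_{\partial B_{R(1-\eps)}}$ to the outer trace $f_{\rho_k}|_{\partial B_R}$. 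For a.e.\ choice of $R$ the boundary energies appearing in the estimate of Lemma \ref{l:technical} are finite, and the term $\eps^{-1}\int_{\partial B_R}\cG(f_{\rho_k}(x), v((1-\eps)x))^2\,dx$ goes to zero first as $k\to\infty$ (by uniform convergence of $f_{\rho_k}$ to $g$ on $\partial B_R$) and then as $\eps\to 0$ (since $v((1-\eps)\cdot)$ approaches $v|_{\partial B_R}=g|_{\partial B_R}$ in $L^2(\partial B_R)$). Minimality of $f_{\rho_k}$ yields $\D(f_{\rho_k},B_R)\leq\D(v_k,B_R)$, so passing first to $k\to\infty$ and then $\eps\to0$ gives $\limsup_k\D(f_{\rho_k},B_R)\leq\D(v,B_R)$. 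Combined with the weak lower semicontinuity established in the proof of Theorem \ref{t:existence}, the choice $v=g$ forces $\D(f_{\rho_k},B_R)\to\D(g,B_R)$, and a general $v$ produces the minimality of $g$. In particular $\D(g,B_1)=1$.

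For the homogeneity, I exploit the invariance $I_{f_{\rho_k}}(r)=I_f(\rho_k r)$: since $I_f$ is monotone nondecreasing with a finite limit $\alpha:=I_f(0^+)\in[0,\infty)$ (Theorem \ref{t:frequency}), one has $I_{f_{\rho_k}}(r)\to\alpha$ for every $r>0$. The energy convergence just proved gives $D_{f_{\rho_k}}(r)\to D_g(r)$ for a.e.\ $r$, while uniform convergence gives $H_{f_{\rho_k}}(r)\to H_g(r)$. Since $\D(g,B_1)=1$ rules out $g\equiv Q\a{0}$ on any open subset (by minimality and the identities of Proposition \ref{p:import}), $H_g(r)>0$ for every $r>0$, and therefore $I_g\equiv\alpha$. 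Corollary \ref{c:homogeneous} then yields $g(x)=|x|^\alpha g(x/|x|)$. Finally, if $\alpha$ were $0$, the homogeneity together with $g(0)=Q\a{0}$ and the continuity of $g$ would force $g\equiv Q\a{0}$, contradicting $\D(g,B_1)=1$; hence $\alpha>0$. The hardest step is the interpolation argument in the second paragraph, where one must carefully select a slicing radius $R$ on which both $\D(g,\partial B_R)$ and $\D(f_{\rho_k},\partial B_R)$ are finite and the traces can be controlled, and verify that the error term in Lemma \ref{l:technical} really does vanish in the prescribed order of limits.
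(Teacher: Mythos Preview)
Your argument follows the same route as the paper: uniform energy bounds from \eqref{e:stima D}, equi-H\"older continuity and Ascoli--Arzel\`a, an interpolation/comparison argument (which the paper isolates as Proposition \ref{p:energy conv}), and then $I_g\equiv\alpha$ together with Corollary \ref{c:homogeneous} for homogeneity and $\alpha>0$.

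One difference worth flagging: in the interpolation step the paper \emph{rescales} the competitor to fill $B_{R(1-\eps)}$, so that after the change of variables in Lemma \ref{l:technical} the inner trace coincides exactly with $g|_{\partial B_R}$ and the error term is simply $\tfrac{C}{\eps}\int_{\partial B_R}\cG(f_{\rho_k},g)^2$, which vanishes as $k\to\infty$ for each fixed $\eps$. You instead \emph{restrict} $v$ to $B_{R(1-\eps)}$, leaving after $k\to\infty$ the residual term $\eps^{-1}\int_{\partial B_R}\cG(g(x),v((1-\eps)x))^2\,dx$. For this to vanish as $\eps\to0$ you need a rate, not merely the $L^2$ convergence $v((1-\eps)\cdot)\to g|_{\partial B_R}$ that you cite; the rate does hold (integrate $|Dv|$ radially and use Cauchy--Schwarz to get a bound by $C\int_{B_R\setminus B_{R(1-\eps)}}|Dv|^2$), but your stated reason is not sufficient. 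The paper's rescaling trick sidesteps this entirely.
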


Theorem \ref{t:blowup} is a direct consequence of the estimate
on the frequency function and of the following
convergence result for $\D$-minimizing functions.

\begin{propos}\label{p:energy conv}
Let $f_k\in W^{1,2}(\Om,\Iq)$ be $\D$-minimizing
$Q$-functions weakly converging to $f$.
Then, for every open $\Om^\prime\subset\subset\Om$, 
$f|_{\Om^\prime}$ is $\D$-minimizing and it holds moreover that
$\D(f,\Om^\prime)=\lim_k \D(f_k,\Om^\prime)$.
\end{propos}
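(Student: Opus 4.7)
The plan is to split the statement into an easy lower bound and a comparison-based upper bound. First, the sequential weak lower semicontinuity of the Dirichlet energy, exactly as proved in the second half of Theorem \ref{t:existence}, gives $\D(f,\Om')\leq\liminf_k\D(f_k,\Om')$ for every $\Om'\subset\subset\Om$. It then suffices to establish the two claims on every ball $B_\rho\subset\subset\Om$: once I know $f|_{B_\rho}$ is $\D$-minimizing and $\D(f_k,B_\rho)\to\D(f,B_\rho)$, the general statement follows because, given any $\Om'\subset\subset\Om$ and any competitor $g$ for $f$ on $\Om'$, I enclose $\Om'$ in a ball $B_\rho\subset\subset\Om$ and extend $g$ by $f$ on $B_\rho\setminus\Om'$ (permissible since the traces match), deducing minimality on $\Om'$ from minimality on $B_\rho$. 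Energy convergence on $\Om'$ in turn follows by combining energy convergence on $B_\rho$ with lower semicontinuity on $\Om'$ and on $B_\rho\setminus\Om'$, exploiting that a sum of liminfs controlled by a total limit forces each piece to converge.

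The key construction is a competitor $\tilde g_k$ for $f_k$ on $B_\rho$ associated to any competitor $g$ for $f$, i.e.\ any $g\in W^{1,2}(B_\rho,\Iq)$ with $g|_{\de B_\rho}=f|_{\de B_\rho}$. For $\eps>0$, I would set $\tilde g_k\equiv g$ on $B_{\rho(1-\eps)}$ and use the Interpolation Lemma \ref{l:technical} on the shell $B_\rho\setminus B_{\rho(1-\eps)}$ to interpolate between the inner trace $g|_{\de B_{\rho(1-\eps)}}$ and the outer trace $f_k|_{\de B_\rho}$, thereby achieving $\tilde g_k|_{\de B_\rho}=f_k|_{\de B_\rho}$. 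The minimality of $f_k$ then yields
\[
\D(f_k,B_\rho)\leq \D(g,B_{\rho(1-\eps)})+\D(\tilde g_k,B_\rho\setminus B_{\rho(1-\eps)}),
\]
and sending $k\to\infty$ followed by $\eps\to0$ will produce the comparison $\limsup_k\D(f_k,B_\rho)\leq\D(g,B_\rho)$. Taking $g=f$ and combining with lower semicontinuity gives $\D(f_k,B_\rho)\to\D(f,B_\rho)$, while letting $g$ range freely yields the minimality of $f|_{B_\rho}$.

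The main obstacle is controlling the error produced by the interpolation on the thin shell. Lemma \ref{l:technical} bounds $\D(\tilde g_k,B_\rho\setminus B_{\rho(1-\eps)})$ by
\[
C\eps\rho\big[\D(g,\de B_{\rho(1-\eps)})+\D(f_k,\de B_\rho)\big]+\frac{C}{\eps\rho}\int_{\de B_\rho}\cG\big(f_k(x),g((1-\eps)x)\big)^2\,dx.
\]
A Fubini argument, together with a diagonal extraction using the uniform bound on $\int|Df_k|^2$ and the strong $L^2$-convergence $\cG(f_k,f)\to0$ from Definition \ref{d:weak convergence}(i), lets me select almost every $\rho$ (after passing to a subsequence) so that $\D(f_k,\de B_\rho)$ stays bounded and $\int_{\de B_\rho}\cG(f_k,f)^2\to0$; the coefficient $\eps\rho$ kills the first bracket in the limit. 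The triangle inequality reduces the last integral, up to a vanishing term in $k$, to $\frac{C}{\eps\rho}\int_{\de B_\rho}\cG(f(x),g((1-\eps)x))^2\,dx$, which I bound by extending $g$ by $f$ across $\de B_\rho$ (the traces agree) and using the one-dimensional $W^{1,2}$ estimate $\cG(g(x),g((1-\eps)x))^2\leq\eps\rho\int_{1-\eps}^1|\de_r g(rx)|^2\,dr$ along radial segments; integrating over $\de B_\rho$ cancels the prefactor $1/(\eps\rho)$ against the shell energy of $g$, which tends to zero as $\eps\to0$ by absolute continuity of the integral.
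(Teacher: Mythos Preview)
Your proof is correct and shares the core mechanism with the paper's argument: both build competitors for $f_k$ on a ball by placing the candidate $g$ on an inner ball and patching across a thin shell via the Interpolation Lemma~\ref{l:technical}, and both use a Fubini/Fatou selection of a good radius so that the tangential energies $\D(f_k,\partial B_\rho)$ stay bounded. The structural differences are: (i) the paper argues by contradiction (assuming a positive gap $\gamma_r=D_r-\D(g,B_r)$ and contradicting minimality of $f_k$ for large $k$), while you argue directly via the chain $\D(f,B_\rho)\leq\liminf_k\D(f_k,B_\rho)\leq\limsup_k\D(f_k,B_\rho)\leq\D(g,B_\rho)$; (ii) the paper rescales $g$ onto $B_{r-\delta}$, so the mismatch on $\partial B_r$ is exactly $\cG(f_k,f)$, whereas you restrict $g$ to $B_{\rho(1-\eps)}$, producing the extra term $\cG\big(f(x),g((1-\eps)x)\big)$ that you correctly absorb into the shell energy of $g$ via the radial $W^{1,2}$ estimate; (iii) most notably, to make $\int_{\partial B_\rho}\cG(f_k,f)^2\to0$, the paper invokes the H\"older regularity Theorem~\ref{t:hoelder2} (hence uniform convergence on compacta), while you extract it from the $L^2$ convergence in Definition~\ref{d:weak convergence}(i) by Fubini and a subsequence. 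Your route is therefore more elementary---it does not rely on the regularity theory---at the mild cost of a subsequence-of-subsequence bookkeeping to recover the full-sequence statement; the paper's route is shorter once H\"older continuity is available.
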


\begin{remark}
In fact, a suitable modification of our proof
shows that the property of being $\D$-minimizing holds
on $\Omega$. However, we never need this stronger property
in the sequel.
\end{remark}

Assuming Proposition \ref{p:energy conv}, we prove Theorem
\ref{t:blowup}.

\begin{proof}[Proof of Theorem \ref{t:blowup}]
We consider any ball $B_N$ of radius $N$ centered at $0$.
It follows from estimate \eqref{e:stima D} that
$\D(f_\varrho, B_N)$ is uniformly bounded in $\varrho$.
Hence, the functions $f_\varrho$ are all $\D$-minimizing
and Theorem \ref{t:hoelder2} implies that
the $f_{\varrho_k}$'s are locally 
equi-H\"older continuous.
Since $f_\varrho(0)=Q\a{0}$, the $f_\varrho$'s
are also locally uniformly bounded and the Ascoli--Arzel\`a
theorem yields a subsequence
(not relabelled) converging uniformly on compact subsets of $\R{m}$ to
a continuous $Q$-valued function $g$.
This implies easily the weak convergence
(as defined in Definition \ref{d:weak convergence}),
so we can apply Proposition \ref{p:energy conv} and
conclude $(a)$ (note that $\D(f_\varrho,B_1)=1$ for every $\varrho$).
Observe next that, for every $r>0$,
\begin{equation}\label{e:constant freq}
I_{0,g}(r)=\frac{r\,\D(g,B_r)}{\int_{\de B_r}
|g|^2}=\lim_{\varrho\ra0}\frac{r\,\D(f_\varrho,B_r)}{\int_{\de B_r}
|f_\varrho|^2}=\lim_{\varrho\ra0}\frac{\varrho\,r\,\D(f,B_{\varrho\,r})}
{\int_{\de B_{\varrho\,r}} |f|^2}=I_{0,f}(0).
\end{equation}
So, $(b)$ follows from Corollary \ref{c:homogeneous}, once we have
shown that $I_{0,f}(0)>0$. Assume, by contradiction, that
$I_{0,f}(0)=0$. Then, by what shown so far,
the blowups $f_\varrho$ converge to a continuous
$0$-homogeneous function $g$, with $g(0)=Q\a{0}$.
This implies that $g\equiv Q\a{0}$, against conclusion $(a)$,
namely $\D(g,B_1)=1$.
\end{proof}

\begin{proof}[Proof of Proposition \ref{p:energy conv}]
We consider
the case of $\Om=B_1$: the general case is a routine
modification of the arguments 
(and, besides, we never need it in the sequel).
Since the $f_k$'s are $\D$-minimizing and, hence,
locally H\"older equi-continuous,
and since the $f_k$'s converge strongly in
$L^2$ to $f$,
they actually converge to $f$ uniformly on compact sets.
Set $D_{r}=\liminf_{k}\D(f_k,B_r)$
and assume by contradiction that $f|_{B_r}$ is not $\D$-minimizing
or $\D(f,B_r)<D_r$ for some $r<1$. Under this assumption,
we can find $r_0>0$ such that, for every $r\geq r_0$,
there exist a $g\in W^{1,2}(B_{r},\Iq)$ with
\begin{equation}\label{e:non minimal}
g\vert_{\de B_{r}}=f\vert_{\de B_{r}}
\quad\textrm{and}\quad
\gamma_r:= D_r - \D(g,B_{r})>0.
\end{equation}
Fatou's Lemma implies that
$\liminf_{k}\D(f_k,\de B_r)$ is finite
for almost every $r$,
\[
\int_{0}^1\liminf_{k\ra+\infty}\D(f_k,\de B_r)\,dr\leq
\liminf_{k\ra+\infty}\int_{0}^1\D(f_k,\de B_r)\,dr\leq C<+\infty.
\]
Passing, if necessary, to a subsequence, we can fix a radius
$r\geq r_0$ such that
\begin{equation}\label{e:fix r}
\D(f,\de B_r)\leq\lim_{k\ra+\infty}\D(f_k,\de B_r)\leq M<+\infty.
\end{equation}
We now show that \eqref{e:non minimal} contradicts
the minimality of $f_k$ in $B_r$ for large $n$.
Let, indeed, $0<\delta<r/2$ to be fixed later and consider the functions $\tilde f_k$
on $B_r$ defined by
\begin{equation*}
\tilde f_k(x)=
\begin{cases}
g\left(\frac{r\,x}{r-\delta}\right) & \textrm{for }\; x\in B_{r-\delta},\\
h_k(x) & \textrm{for }\; x\in B_r\setminus B_{r-\delta},
\end{cases}
\end{equation*}
where the $h_k$'s are the interpolations provided by
Lemma \ref{l:technical}
between $f_k\in W^{1,2}(\de B_r,\Iq)$
and $g\left(\frac{r\,x}{r-\delta}\right) \in W^{1,2}(B_{r-\delta},\Iq)$.
We claim that, for large $k$, the functions $\tilde f_k$
have smaller Dirichlet energy than $f_k$, thus contrasting the
minimizing property of $f_k$, and concluding the proof.
Indeed, recalling the estimate in Lemma \ref{l:technical}, we have
\begin{align*}
\D\big(\tilde f_k,B_r\big)\leq&
\D\big(\tilde f_k,B_{r-\delta}\big)+
C\,\delta\left[\D\big(\tilde f_k,\de B_{r-\delta}\big)+
\D\big(f_k,\de B_{r}\big)\right]
+\frac{C}{\delta}\hspace{-0.1cm}\int_{\de B_{r}}\cG\big(f_k,\tilde f_k\big)^2\\
\leq{}& \D(g,B_{r})+C\,\delta\,\D(g,\de B_{r})+
C\,\delta\,\D(f_k,\de B_{r})
+\frac{C}{\delta}\int_{\de B_{r}}\cG(f_k, g)^2.
\end{align*}
Choose now $\delta$ such that $4\,C\,\delta\,(M+1)\leq\gamma_r$,
where $M$ and $\gamma_r$ are the constants
in \eqref{e:fix r} and \eqref{e:non minimal}.
Using the uniform convergence of $f_k$ to $f$,
we conclude, for $k$ large enough,
\begin{align*}
\D\big(\tilde f_k,B_r\big) &\quad\stackrel{\mathclap{\eqref{e:non minimal},\,\eqref{e:fix r}}}{\leq}\quad
D_r-\gamma_r+C\,\delta\,M+
C\,\delta\,(M+1)+\frac{C}{\delta}\int_{\de B_{r}}\cG(f_k, f)^2,\\
& \quad\leq D_r-\frac{\gamma_r}{2}+\frac{C}{\delta}
\int_{\de B_{r}}\cG(f_k, f)^2 <D_r-\frac{\gamma_r}{4}.
\end{align*}
This gives the contradiction.
\end{proof}

\section{Estimate of the singular set}\label{s:structure}

In this section we estimate the Hausdorff dimension
of the singular set of $\D$-minimizing $Q$-valued functions
as in Theorem \ref{t:structure}.
The main point of the proof is contained in
Proposition \ref{p:Qset}, estimating 
the size of the set of singular points with multiplicity $Q$.
Theorem \ref{t:structure} follows then by an easy induction argument on $Q$.

\begin{propos}\label{p:Qset}
Let $\Omega$ be connected and $f\in W^{1,2}(\Omega,\Iqs)$ be $\D$-minimizing.
Then, either $f=Q\a{\zeta}$ with
$\zeta:\Om\ra\R{n}$ harmonic in $\Om$,
or the set
\begin{equation*}
\Sigma_{Q,f}=\left\{x\in\Om\,:\, f(x)=Q\a{y},\;y\in\R{n}\right\}
\end{equation*}
(which is relatively closed in $\Om$)
has Hausdorff dimension at most $m-2$ 
and it is locally finite for $m=2$.
\end{propos}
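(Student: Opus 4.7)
The plan is to derive the proposition via a Federer-style dimension reduction, using the frequency monotonicity of Theorem \ref{t:frequency} and the blow-up analysis of Theorem \ref{t:blowup}, with the planar case serving as the inductive base.

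First I would observe that $\Sigma_{Q,f}$ is relatively closed, being the preimage under the continuous map $f$ (Theorem \ref{t:hoelder2}) of the closed subset $\{Q\a{y}:y\in\R{n}\}$ of $\Iqs$. If $\Sigma_{Q,f}=\Omega$, then $f(x)=Q\a{\zeta(x)}$ for a single-valued continuous $\zeta$, and testing the outer-variation identity \eqref{e:OVbis} with $\psi(x,u)=\varphi(x)\,v$ for arbitrary $\varphi\in C_c^\infty(\Omega)$ and $v\in\R{n}$ yields $Q\int\langle D\zeta,D\varphi\rangle\,v=0$, so $\zeta$ is weakly harmonic and hence harmonic. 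This is the first alternative.

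Next I would handle the planar base case $m=2$, proving local finiteness of $\Sigma_{Q,f}$. Pick $x_0\in\Sigma_{Q,f}$ and translate in the target so $f(x_0)=Q\a{0}$ (which preserves $\D$-minimality). Either $\D(f,B_\rho(x_0))=0$ for some $\rho>0$, so $f\equiv Q\a{0}$ on $B_\rho(x_0)$ and $\Sigma_{Q,f}$ has non-empty interior; a unique-continuation step based on the harmonicity established above on any open subset of $\Sigma_{Q,f}$, together with connectedness of $\Omega$, then forces $\Sigma_{Q,f}=\Omega$, returning us to the first alternative. Otherwise Theorem \ref{t:blowup} produces a nontrivial $\alpha$-homogeneous $\D$-minimizing tangent $h$ with $h(0)=Q\a{0}$. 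Decomposing $h|_{\s^1}$ into irreducible $k_j$-pieces of the form $\theta\mapsto\sum_{\zeta^{k_j}=e^{i\theta}}\a{\gamma_j(\zeta)}$ via Proposition \ref{p:Wselection}, the requirement $h(e^{i\theta})\in\{Q\a{y}:y\in\R{n}\}$ forces every piece to collapse simultaneously, which by continuity of the $\gamma_j$ happens at most at finitely many $\theta$. Hence $\Sigma_{Q,h}\cap\s^1$ is finite, so by homogeneity $\Sigma_{Q,h}\setminus\{0\}$ is a finite union of rays, precluding accumulation in $\Sigma_{Q,f}$ at $x_0$.

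For $m\ge 3$ I would run Federer's reduction by contradiction: if $\cH^\lambda(\Sigma_{Q,f})>0$ for some $\lambda>m-2$, pick an $\cH^\lambda$-density point $x_0$, translate in the target, and blow up. The uniform convergence of blow-ups (Theorem \ref{t:blowup}) transfers the density lower bound to give $\cH^\lambda(\Sigma_{Q,h})>0$ for the homogeneous tangent $h$; since $\Sigma_{Q,h}$ is a cone, it contains some $p\neq 0$. Blowing up $h$ at $p$ and using $h(tx)=t^\alpha h(x)$ produces a tangent invariant under translations along the ray through $p$, which descends to a homogeneous $\D$-minimizer on $\R{m-1}$ whose singular set still has dimension strictly greater than $(m-1)-2$. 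Iterating the reduction $m-2$ times reaches the planar base, contradicting its local finiteness.

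The main obstacle I anticipate is organizing the reduction cleanly in the $Q$-valued setting: one must verify (i) translation-invariance of the tangent at a nonzero cone point via a careful rescaling identity exploiting the positive homogeneity of $h$, (ii) that the reduced map on $\R{m-1}$ is itself $\D$-minimizing (by a product-structure argument on horizontal/vertical variations), and (iii) the unique-continuation step used in the planar case, which should come from a frequency-function argument together with unique continuation applied to the single-valued $W^{1,2}$ ``unrollings'' $\gamma_j$ of the irreducible pieces, ensuring that a $\D$-minimizer vanishing on an open subset of a connected domain vanishes throughout.
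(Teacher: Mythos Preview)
Your overall architecture (reduce via $\etaa\circ f$, run Federer's reduction for $m\ge 3$, treat $m=2$ as the base) matches the paper, and you have correctly identified the cylindrical blow-up ingredients (your obstacles (i) and (ii)) as the content of the paper's Lemma \ref{l:second blowup}. However, your planar argument has a genuine gap.

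In the planar case you blow up at $x_0$ to a homogeneous tangent $h$ and then argue, via the irreducible decomposition of $h|_{\s^1}$, that $\Sigma_{Q,h}\cap\s^1$ is finite, hence $\Sigma_{Q,h}\setminus\{0\}$ is a finite union of rays, and you conclude this ``precludes accumulation in $\Sigma_{Q,f}$ at $x_0$.'' This last inference is invalid. If $x_k\to x_0$ in $\Sigma_{Q,f}$, the blow-up analysis only produces \emph{one} point $w\in\s^1$ with $h(w)=Q\a{0}$ (a subsequential limit of $x_k/|x_k|$); this is perfectly compatible with $\Sigma_{Q,h}\cap\s^1$ being a nonempty finite set. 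What you actually need is $\Sigma_{Q,h}\cap\s^1=\emptyset$, and continuity of the unrollings $\gamma_j$ alone does not give this (nor even the finiteness you assert). Also, you never reduce to $\etaa\circ f\equiv 0$ globally, so ``multiplicity-$Q$'' points need not be $Q\a{0}$, which complicates your analysis of $\Sigma_{Q,h}$.

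The paper closes this gap by \emph{not} analyzing $h|_{\s^1}$ directly. Instead, having obtained a single $w\in\s^1$ with $h(w)=Q\a{0}$, it performs a \emph{second} blow-up of $h$ at $w/2$ and invokes the cylindrical blow-up Lemma \ref{l:second blowup} (the very tool you reserve for $m\ge 3$) to obtain a function depending on a single real variable. In dimension one, $\D$-minimizers are affine selections (Proposition \ref{p:Wselection-1} plus a comparison), and the constraints $\hat h(0)=Q\a{0}$, $\etaa\circ\hat h\equiv 0$, $\D(\hat h,I)>0$ are then immediately contradictory. So the true base of the reduction is $m=1$, not $m=2$; the planar case already uses one step of the same cylindrical reduction you invoke for higher dimensions. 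Your unique-continuation worry (iii) is also handled differently: the paper shows that each $x\in\Sigma_{Q,f}$ is either isolated or interior, and a straightforward connectedness argument on the closed set $\Sigma_{Q,f}$ then gives the dichotomy without any analytic unique-continuation statement.
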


We will make a frequent
use of the function $\sigma:\Om\ra\N$ given by the formula
\begin{equation}\label{e:sigma}
\sigma(x)=\card(\supp f(x)).
\end{equation}
Note that $\sigma$ is lower semicontinuous because $f$ is continuous. 
This implies, in turn, that $\Sigma_{Q,f}$ is closed.

\subsection{Preparatory Lemmas}
We first state and prove two lemmas which will be used in the proof of
Proposition \ref{p:Qset}.
The first reduces Proposition \ref{p:Qset} to the case 
where all points of multiplicity 
$Q$ are of the form $Q\a{0}$. In order to state it,
we introduce the map $\etaa:\Iqs\ra\R{n}$ which takes each
measure $T=\sum_i \a{P_i}$ to its center of mass,
\begin{equation*}
\etaa(T)=\frac{\sum_i P_i}{Q}.
\end{equation*}

\begin{lemma}\label{l:harm+harm}
Let $f:\Om\ra \Iqs$ be $\D$-minimizing.
Then,
\begin{itemize}
\item[$(a)$] the function $\etaa\circ f:\Om\ra\R{n}$
is harmonic;
\item[$(b)$] for every
$\zeta:\Om\ra\R{n}$ harmonic, $g:=\sum_i \a{f_i+\zeta}$
is as well $\D$-minimizing.
\end{itemize}
\end{lemma}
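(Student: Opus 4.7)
The plan is to obtain (a) from the outer-variation formula \eqref{e:OVbis} and then to deduce (b) by pairing admissible competitors of $g$ and $f$ via subtraction of $\zeta$, using the harmonicity of $\zeta$ through an integration by parts.

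For (a), I will plug into \eqref{e:OVbis} the test field $\psi(x,u)=\varphi(x)\,v$, with $\varphi\in C^\infty_c(\Omega)$ and $v\in\R{n}$ constant. Then $D_u\psi\equiv 0$ and $D_x\psi(x,u)=v\otimes\nabla\varphi(x)$, so \eqref{e:hp} is trivially satisfied and the second integrand in \eqref{e:OVbis} vanishes. Using that $\sum_i f_i=Q\,(\etaa\circ f)$ is a well-defined single-valued map (independent of the measurable selection), hence $\sum_i Df_i=Q\,D(\etaa\circ f)$ a.e., what remains is
\begin{equation*}
0=\int_\Omega \sum_i\bigl\langle Df_i,\,v\otimes\nabla\varphi\bigr\rangle\,dx
=Q\int_\Omega v\cdot D(\etaa\circ f)\,\nabla\varphi\,dx
\end{equation*}
for every $v\in\R{n}$ and $\varphi\in C^\infty_c(\Omega)$. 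This is the weak form of $\Delta(\etaa\circ f)=0$, so by Weyl's lemma $\etaa\circ f$ is classically harmonic.

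For (b), given any competitor $h\in W^{1,2}(\Omega,\Iq)$ of $g$ (i.e.\ $h|_{\partial\Omega}=g|_{\partial\Omega}$), I will set $\tilde h:=\sum_i\a{h_i-\zeta}$. Since $T\mapsto\sum_i\a{T_i-\zeta(x)}$ is Lipschitz on $\Iq$, Proposition \ref{p:trace} gives $\tilde h|_{\partial\Omega}=f|_{\partial\Omega}$, making $\tilde h$ an admissible competitor for $f$. Expanding $|Dh_i|^2=|D\tilde h_i+D\zeta|^2$, summing in $i$, and using $\sum_i D\tilde h_i=Q\,D(\etaa\circ\tilde h)$ yields, a.e.\ in $\Omega$,
\begin{equation*}
|Dh|^2=|D\tilde h|^2+2Q\,\langle D(\etaa\circ\tilde h),D\zeta\rangle+Q\,|D\zeta|^2,
\end{equation*}
and the parallel identity with $(g,f)$ in place of $(h,\tilde h)$, using $\etaa\circ g=\etaa\circ f+\zeta$. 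Subtracting, integrating over $\Omega$, and invoking the $\D$-minimality of $f$, the desired inequality $\D(h,\Omega)\geq\D(g,\Omega)$ reduces to the cross-term equality
\begin{equation*}
\int_\Omega\langle D(\etaa\circ\tilde h),D\zeta\rangle
=\int_\Omega\langle D(\etaa\circ f),D\zeta\rangle.
\end{equation*}
This is exactly where the harmonicity of $\zeta$ enters: $\etaa\circ\tilde h-\etaa\circ f$ lies in $H^1_0(\Omega,\R{n})$ (again by Proposition \ref{p:trace} and the Lipschitz continuity of $\etaa$), so a componentwise integration by parts against $\zeta$ kills the difference.

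The only delicate point is bookkeeping: one must confirm that the pointwise identities above integrate to give the Dirichlet energies $\D(h,\Omega)$ and $\D(\tilde h,\Omega)$, and that $\tilde h$ is well defined regardless of the chosen measurable selection of $h$. Both are immediate from Proposition \ref{p:equivalence def Dir}, which identifies $\D$ with $\int\sum_i|Df_i|^2$ and makes it manifest that the energy transforms correctly under the global sheet-by-sheet translation $h\mapsto\tilde h$. Thus I expect no substantial obstacle; the proof is essentially the harmonic-function identity $\Delta(u+\zeta)=\Delta u$ lifted to the $Q$-valued setting by way of the center-of-mass map $\etaa$.
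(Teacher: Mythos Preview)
Your proof is correct and follows essentially the same approach as the paper: part (a) via the outer variation \eqref{e:OVbis} with $\psi$ independent of $u$, and part (b) by pairing competitors of $g$ with competitors of $f$ through the sheetwise translation by $\pm\zeta$, expanding the energy via Proposition \ref{p:equivalence def Dir}, and killing the cross term using that $\etaa\circ\tilde h-\etaa\circ f\in H^1_0$ together with the harmonicity of $\zeta$. The only cosmetic difference is that the paper runs the bijection in the opposite direction (adding $\zeta$ to competitors of $f$ rather than subtracting it from competitors of $g$), which is immaterial.
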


\begin{proof}
The proof of $(a)$ follows from plugging $\psi(x,u)=\zeta (x)\in 
C_c^\infty(\Om,\R{n})$ in the
variations formula \eqref{e:OVbis} of Proposition \ref{p:first}.
Indeed, from the chain-rule \eqref{e:reale},
one infers easily that $Q\,D(\etaa\circ f)=\sum_iDf_i$ and hence, from
\eqref{e:OVbis} we get $\int \langle D(\etaa\circ f):D\zeta\rangle=0$.
The arbitrariness of $\zeta\in C^\infty_c (\Omega, \R{n})$ gives (a).

To show $(b)$, let $h$ be any $Q$-valued function
with $h\vert_{\de \Om}=f\vert_{\de \Om}$: we need to
verify that, if $\tilde{h}:= \sum_i \a{h_i+\zeta}$,
then $\D(g,\Om)\leq\D(\tilde{h},\Om)$.
From Almgren's form of the Dirichlet energy (see \eqref{e:aldir}),
we get
\begin{align}\label{e:harm+harm}
\D(g,\Om) & = \int_\Om \sum_{i,j}|\de_j g_i|^2
= \int_\Om \sum_{i,j}\left\{|\de_j f_i|^2+
|\de_j\zeta|^2+2\,\de_jf_i\,\de_j\zeta\right\}\nonumber\\
& \stackrel{\mathclap{\textrm{min. of }f}}{\leq}\; 
\int_\Om \sum_{i,j}\left\{|\de_j h_i|^2+
|\de_j\zeta|^2\right\}+2\int_\Om D(\etaa\circ f)\cdot D\zeta\nonumber\\
& = \D(\tilde{h},\Om)+2\int_\Om \left\{D(\etaa\circ f)-
D(\etaa\circ h)\right\}\cdot D\zeta.
\end{align}
Since $\etaa\circ f$ and $\etaa\circ h$ have
the same trace on $\de \Om$ and $\zeta$ is
harmonic, the last integral in \eqref{e:harm+harm} vanishes.
\end{proof}

The second lemma characterizes the blow-ups of
homogeneous functions and is the starting point of the 
reduction argument used in the proof of Proposition
\ref{p:Qset}. 

\begin{lemma}[Cylindrical blow-up]\label{l:second blowup}
Let $g:B_1\ra\Iqs$ be an $\alpha$-homogeneous and
$\D$-minimizing function with $\D(g,B_1)>0$
and set $\beta = I_{z,g} (0)$.
Suppose, moreover, that $g(z)=Q\a{0}$ for $z=e_1/2$.
Then, the tangent functions $h$ to $g$ at $z$
are $\beta$-homogeneous with $\D(h,B_1)=1$ and satisfy:
\begin{itemize}
\item[$(a)$] $h(s\,e_1)=Q\a{0}$ for every $s\in\R{}$;
\item[$(b)$] $h(x_1,x_2,\ldots,x_m)=\hat h (x_2,\ldots,x_m)$, where
$\hat h:\R{m-1}\ra\Iqs$ is $\D$-minimizing on any
bounded open subset of $\R{m-1}$.
\end{itemize}
\end{lemma}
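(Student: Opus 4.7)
The plan is to extract $(a)$ and $(b)$ from Theorem \ref{t:blowup} by exploiting the exact $\alpha$-homogeneity of $g$ in the blow-up computation. Since $\beta=I_{z,g}(0)$ is taken as given, $H_{z,g}(r)>0$ and hence $\D(g,B_\rho(z))>0$ for every small $\rho$, so Theorem \ref{t:blowup} applies and any tangent $h$ (the locally uniform limit of $f_{z,\rho_k}$ along some subsequence $\rho_k\downarrow 0$) is $\beta$-homogeneous, $\D$-minimizing on bounded open subsets of $\R{m}$, and satisfies $\D(h,B_1)=1$. For $(a)$, the $\alpha$-homogeneity together with $g(e_1/2)=Q\a{0}$ forces $g(t\,e_1)=(2t)^{\alpha}g(e_1/2)=Q\a{0}$ for every $t\in[0,1)$, using that $\lambda\cdot Q\a{0}=Q\a{0}$ for $\lambda\geq 0$. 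Hence, for any fixed $s\in\R{}$ and $k$ large enough so that $1/2+\rho_k s>0$, the point $z+\rho_k s\,e_1$ lies on this vanishing ray, giving $f_{z,\rho_k}(s\,e_1)=Q\a{0}$; in the limit, $h(s\,e_1)=Q\a{0}$.

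For the cylindrical structure $(b)$, the key algebraic observation is that, writing $x=(x_1,x')$ with $x'=(0,x_2,\ldots,x_m)$ and setting $\mu=1+2\rho\,x_1$, $\nu=\rho/\mu$ (both positive for small $\rho$ with $x$ fixed), one has the identity $z+\rho\,x=\mu\,(z+\nu\,x')$. Combined with $\alpha$-homogeneity of $g$ this gives the scaling identity
\begin{equation*}
f_{z,\rho}(x)\;=\;\mu^{\alpha+\frac{m-2}{2}}\,\sqrt{\frac{D_{z,g}(\nu)}{D_{z,g}(\rho)}}\;f_{z,\nu}(x').
\end{equation*}
Along the subsequence $\rho_k\downarrow 0$ one has $\mu_k\to 1$ and $\nu_k/\rho_k\to 1$. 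Combining $I_{z,g}(r)\to\beta$ with the estimate \eqref{e:stima D} of Corollary \ref{c:estimates frequency} forces $D_{z,g}(\nu_k)/D_{z,g}(\rho_k)\to 1$. Moreover, an analogous scaling identity shows $f_{z,\nu_k}(y)=(\nu_k/\rho_k)^{(m-2)/2}\sqrt{D_{z,g}(\rho_k)/D_{z,g}(\nu_k)}\,f_{z,\rho_k}(y\,\nu_k/\rho_k)$, so the local uniform convergence of $f_{z,\rho_k}$ to $h$ passes through to $f_{z,\nu_k}(x')\to h(x')$. Taking $k\to\infty$ in the main identity yields $h(x)=h(x')$: the limit is independent of $x_1$, and we may write $h(x_1,\ldots,x_m)=\hat h(x_2,\ldots,x_m)$ for some $\hat h:\R{m-1}\to\Iqs$.

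Finally, to see that $\hat h$ is $\D$-minimizing on every bounded open $V\subset\R{m-1}$, I will argue by contradiction. Given a competitor $\tilde h\in W^{1,2}(V,\Iqs)$ with $\tilde h|_{\partial V}=\hat h|_{\partial V}$ and $\gamma:=\D(\hat h,V)-\D(\tilde h,V)>0$, I would build on the cylinder $C_L:=(-L,L)\times V$ a competitor to $h$ equal to $\tilde h(x')$ on $|x_1|<L-\delta$ and interpolating back to $\hat h(x')$ on the cylindrical shells $L-\delta<|x_1|<L$, exactly as in Lemma \ref{l:technical} via $\xii$ and $\ro$. For fixed $\delta$ the interpolation cost stays bounded uniformly in $L$, while the bulk contribution saves $2(L-\delta)\gamma$ of Dirichlet energy; for $L$ large enough this contradicts the $\D$-minimality of $h$ on $C_L$ provided by Theorem \ref{t:blowup}$(a)$. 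The main obstacle in the whole plan is the careful limit passage in the scaling identity of $(b)$: one must verify both that the prefactor $\mu^{\alpha+(m-2)/2}\sqrt{D_{z,g}(\nu)/D_{z,g}(\rho)}$ tends to $1$ and that the blow-up sequence at the perturbed scales $\nu_k$ shares the same subsequential limit as the original sequence at $\rho_k$.
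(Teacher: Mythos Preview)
Your proof is correct and follows the same overall strategy as the paper: Theorem \ref{t:blowup} gives $\beta$-homogeneity and $\D(h,B_1)=1$, $(a)$ follows from $g\equiv Q\a{0}$ along the $e_1$-axis, the $x_1$-independence in $(b)$ comes from the $\alpha$-homogeneity of $g$ combined with the blow-up convergence, and the minimality of $\hat h$ is obtained by a cylinder competitor argument.

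The one noteworthy difference is in the computation for $(b)$. The paper exploits the $\beta$-homogeneity of $h$ (already in hand from Theorem \ref{t:blowup}) to write $h(s\,e_1+x')=\lambda_k^{-\beta}h\bigl(\lambda_k(s\,e_1+x')\bigr)$ with $\lambda_k=(1-2\rho_k s)^{-1}\to 1$, then passes through the blow-up and uses the $\alpha$-homogeneity of $g$; since both $\lambda_k^\alpha$ and $\lambda_k^\beta$ tend to $1$, the limit is $h(x')$ without any comparison of $D_{z,g}$ at two scales. Your route uses only the $\alpha$-homogeneity of $g$ and instead appeals to Corollary \ref{c:estimates frequency} to show $D_{z,g}(\nu_k)/D_{z,g}(\rho_k)\to 1$ and that $f_{z,\nu_k}$ and $f_{z,\rho_k}$ share the same limit. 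Both are valid; the paper's version is a bit slicker because the $\beta$-homogeneity of $h$ absorbs the scale change for free. For the minimality of $\hat h$, the paper fills the end caps of the cylinder via Corollary \ref{c:fill} (after a biLipschitz identification of each cap with a ball) rather than Lemma \ref{l:technical}, but the idea is identical to yours.
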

\begin{proof}
The first part of the proof follows
from Theorem \ref{t:blowup}, while $(a)$ is straightforward.
We need only to verify $(b)$.
To simplify notations, we pose $x^\prime=(0,x_2,\ldots,x_m)$:
we show that $h(x^\prime)=h(s\,e_1+x^\prime)$ for every $s$
and $x^\prime$.
This is an easy consequence of
the homogeneity of both $g$ and $h$.
Recall that $h$ is the local uniform
limit of $g_{z,\varrho_k}$ for some $\rho_k\downarrow 0$ and
set  $C_k:=\D(g,B_{\varrho_k}(z))^{-1/2}$, $\beta =
I_{z,g} (0)$ and
$\lambda_k:=\frac{1}{1-2\varrho_k\,s}$, where $z= e_1/2$.
Hence, we have
\begin{align}
h(s\,e_1+x^\prime)\quad & \stackrel{\mathclap{\textrm{hom. of }h}}{=}\quad 
\lim_{k\uparrow\infty}C_k\,\frac{g_{z, \varrho_k}
\left(s \lambda_k\,e_1+
\lambda_k x^\prime\right)}
{\lambda_k^{\beta}}
= \lim_{k\uparrow\infty}C_k\,\frac{g
\left(\lambda_k\, z+\lambda_k\,\varrho_k\, x^\prime\right)}
{\lambda_k^{\beta}}\nonumber\\
& \stackrel{\mathclap{\textrm{hom. of }g}}{=}\quad
\lim_{\varrho\ra0}C_k\,\frac{{\lambda_k}^\alpha\,
g_{z,\varrho_k}
\left(x^\prime\right)}
{\lambda_k^{\beta}}
= h(x^\prime),\nonumber
\end{align}
where we used
$\lambda_k z+\lambda_k\,\varrho_k\, x^\prime=
z+s\lambda_k\,\varrho_k\, e_1+\lambda_k\,\varrho_k\, x^\prime$
and $\lim_{k\uparrow\infty}\lambda_k=1$.

The minimizing property of $\hat h$ is a consequence
of the $\D$-minimality of $h$. It suffices to show it
on every ball $B\subset \R{m-1}$ for which $\hat{h}|_{\de B}
\in W^{1,2}$. To fix ideas, assume $B$ to be centered
at $0$ and to have radius $R$. Assume the existence
of a competitor $\tilde{h}\in W^{1,2} (B)$ such that
$\D (\tilde{h}, B)\leq D(\hat{h}, B) - \gamma$
and $\tilde{h}|_{\de B} = \hat{h}|_{\de B}$. 
We now construct a competitor $h'$ for $h$ on a cylinder
$C_L = [-L, L]\times B_R$. First of all we define
\[
h' (x_1, x_2, \ldots, x_n)= \tilde{h} (x_2, \ldots, x_n)
\quad \mbox{for $|x_1|\leq L-1$.}
\]
It remains to ``fill in'' the two cylinders
$C^1_L = ]L-1, L[\times B_R$ and $C^2_L = 
]-L, -(L-1)[\times B_R$.
Let us consider the first cylinder. We need to define
$h'$ in $C^1_L$ in such a way that $h'=h$ on
the lateral surface $]L-1, L[\times \partial B_R$
and on the upper face $\{L\}\times B_R$ and
$h'=\tilde{h}$ on the lower face $\{L-1\}\times B_R$.
Now, since the cylinder $C^1_L$ is biLipschitz
to a unit ball, recalling Corollary \ref{c:fill},
this can be done with a $W^{1,2}$ map. 

Denote by $u$ and $v$ the upper and lower ``filling'' maps
in the case $L=1$  
By the $x_1$-invariance of our construction, the maps
\[
u_L (x_1, \ldots, x_m) := u (x_1-L, \ldots, x_m)
\quad\mbox{and}\qquad 
v_L (x_1, \ldots, x_m) = u (x_1+L, \ldots, x_m)
\] 
can be taken as filling maps for any $L\geq 1$.
Therefore, we can estimate
\begin{align*}
\D (h', C_L) - D(h, C_L)&\leq
\left(\D \left(h', C^1_L\cup C^2_L\right) - \D \left(h, C^1_L\cup C^2_L\right)\right)
- 2\,(L-1)\, \gamma\\
&=: \Lambda - 2\,(L-1)\,\gamma,
\end{align*}
where $\Lambda$ is a constant independent of $L$. 
Therefore, for a sufficiently
large $L$,
we have $D (h', C_L)< D(h, C_L)$ contradicting
the minimality of $h$ in $C_L$.
\end{proof}

\subsection{Proof of Proposition \ref{p:Qset}}
With the help of these two lemmas we conclude the proof of Proposition
\ref{p:Qset}.
First of all we notice that, by Lemma \ref{l:harm+harm},
it suffices to consider $\D$-minimizing function
$f$ such that $\etaa\circ f\equiv 0$.
Under this assumption, it follows that
$\Sigma_{Q,f}=\left\{x\,:\,f(x)=Q\a{0}\right\}$.
Now we divide the proof into two parts, being the
case $m=2$ slightly different from the others.

\medskip

{\bf The planar case $m=2$.} 
We prove that, except for the case where all sheets
collapse, $\Sigma_{Q,f}$ consists of isolated points.
Without loss of generality,
let $0\in\Sigma_{Q,f}$ and assume the existence
of $r_0>0$ such that
$\D(f,B_r)>0$ for every $r\leq r_0$ 
(note that, when we are not in this case, then $f\equiv Q\a{0}$ in a neighborhood
of $0$).
Suppose by contradiction that $0$ is not an isolated point
in $\Sigma_{Q,f}$, i.e. there exist
$x_k\ra0$ such that $f(x_k)=Q\a{0}$.
By Theorem \ref{t:blowup}, the blow-ups $f_{|x_k|}$ converge uniformly,
up to a subsequence, to some homogeneous
$\D$-minimizing function $g$, with
$\D(g,B_1)=1$ and $\etaa\circ g\equiv0$.
Moreover, since $f(x_k)$
are $Q$-multiplicity points,
we deduce that there exists $w\in\s^1$ such that $g(w)=Q\a{0}$.
Up to rotations, we can assume that $w=e_1$.
Considering the blowup of $g$ in the point $e_1/2$,
by Lemma \ref{l:second blowup}, we find
a new tangent function $h$ with the property
that $h (0,x_2)= \hat{h} (x_2)$
for some function $\hat h: \R{}\ra\Iq$ which is
$\D$-minimizing on every interval. Moreover, since
$\D (h, B_1)=1$, clearly
$\D\big(\hat h, I\big)>0$, where $I=[-1,1]$.
Note also that $\etaa\circ\hat h\equiv0$ and $\hat h (0)=Q\a{0}$.
From the $1$-d selection criterion in Proposition \ref{p:Wselection},
this is clearly a contradiction. Indeed, by a simple comparison
argument, it is easily seen that every $\D$-minimizing $1$-d
function $\hat{h}$ is an affine function 
of the form $\hat{h} (x) =\sum_i\a{L_i (x)}$ with
the property that either $L_i (x)\neq L_j (x)$ for every $x$
or $L_i (x)= L_j (x)$ for every $x$. Since $\hat h(0)= Q\a{0}$,
we would conclude that $\hat{h} = Q\a{L}$ for some linear
$L$. On the other hand, by $\etaa\circ\hat h\equiv0$
we would conclude $L=0$, contradicting $\D (\hat{h}, I)>0$.

We conclude that, if $x\in \Sigma_{Q,f}$,
either $x$ is isolated, or $U\subset \Sigma_{Q,f}$
for some neighborhood of $x$. Since $\Om$ is connected, 
we conclude that, either $\Sigma_{Q,f}$ consists of isolated points,
or $\Sigma_{Q,f}=\Om$.

\medskip

{\bf The case $m\geq3$.}
In this case we use the so-called Federer's reduction argument
(following closely the exposition in Appendix A of \cite{Sim}).
We denote by $\cH^t$ the Hausdorff $t$-dimensional
measure and by $\cH^t_\infty$ the Hausdorff
pre-measure defined by
\begin{equation}\label{e:Hpre}
\cH^t_\infty(A)=\inf\left\{\sum_{k\in\N}
\diam(E_k)^t\,:\,A\subset\cup_{k\in\N}E_k\right\}.
\end{equation}
We use this simple property of the Hausdorff
pre-measures $\cH^t_\infty$: if $K_l$ are compact sets
converging to $K$ in the sense of Hausdorff,
then
\begin{equation}\label{e:semicont H}
\limsup_{l\ra+\infty}\cH^t_\infty(K_l)\leq \cH^t_\infty(K).
\end{equation}
To prove \eqref{e:semicont H}, note first
that the infimum on \eqref{e:Hpre} can be taken over open coverings.
Next, given an open covering of $K$, use its compactness
to find a finite subcovering and the convergence
of $K_l$ to conclude that it covers $K_l$ for $l$ large enough
(see the proof of Theorem A.4 in \cite{Sim} for more details).

\medskip

\textit{Step $1$. Let $t>0$. If
$\cH^t_\infty\left(\Sigma_{Q,f}\right)>0$,
then there exists a function $g\in W^{1,2}(B_1,\Iq)$
with the following properties:
\begin{itemize}
\item[$(a_1)$] $g$ is a homogeneous $\D$-minimizing
function with $\D(g,B_1)=1$;
\item[$(b_1)$] $\etaa\circ g\equiv 0$;
\item[$(c_1)$]
$\cH^t_\infty\left(\Sigma_{Q,g}\right)>0$.
\end{itemize}
}

We note that $\cH^t_\infty$-almost every point $x\in\Sigma_{Q,f}$
is a point of positive $t$ density (see Theorem 3.6 in \cite{Sim}), i.e.
\begin{equation*}
\limsup_{r\ra0}
\frac{\cH^t_\infty\left(\Sigma_{Q,f}\cap B_r(x)\right)}
{r^t}>0.
\end{equation*}
So, since $\cH^t_\infty\left(\Sigma_{Q,f}\right)>0$,
from Theorem \ref{t:blowup} we conclude the
existence of a point $x\in\Sigma_{Q,f}$
and a sequence of radii $\varrho_k\ra0$ such that
the blow-ups $f_{x,2\varrho_k}$ converge uniformly
to a function $g$ satisfying $(a_1)$ and $(b_1)$, and
\begin{equation}\label{e:density2}
\limsup_{k\ra+\infty}
\frac{\cH^t_\infty\left(\Sigma_{Q,f}
\cap B_{\varrho_k}(x)\right)}
{{\varrho_k}^t}>0.
\end{equation}
From the uniform convergence of $f_{x,2\varrho_k}$
to $g$, we deduce easily that, up to subsequence,
the compact sets
$K_k=\overline{B_{\frac{1}{2}}}
\cap\Sigma_{Q,f_{x,2\varrho_k}}$
converge in the sense of Hausdorff
to a compact set $K\subseteq\Sigma_{Q,g}$.
So, from the semicontinuity property \eqref{e:semicont H},
we infer $(c_1)$,
\begin{align*}
\cH^t_\infty(\Sigma_{Q,g}) & \geq 
\cH^t_\infty(K)\geq
\limsup_{k\ra+\infty}
\cH^t_\infty(K_k)
\geq \limsup_{k\ra+\infty}
\cH^t_\infty(B_{\frac{1}{2}}\cap\Sigma_{Q,f_{x,2\varrho_k}})\\
&=\limsup_{k\ra+\infty}
\frac{\cH^t_\infty\left(\Sigma_{Q,f}
\cap B_{\varrho_k}(x)\right)}
{{\varrho_k}^t}\stackrel{\eqref{e:density2}}{>}0.
\end{align*}

\medskip

\textit{Step $2$. Let $t>0$ and $g$ satisfying $(a_1)$-$(c_1)$ of Step $1$.
Suppose, moreover, that there exists $1\leq l\leq m-2$,
with $l-1<t$, such that
\begin{equation}\label{e:less variables}
g(x)=\hat g(x_l,\ldots,x_m).
\end{equation}
Then, there exists a function $h\in W^{1,2}(B_1,\Iq)$
with the following properties:
\begin{itemize}
\item[$(a_2)$] $h$ is a homogeneous $\D$-minimizing
function with $\D(h,B_1)=1$;
\item[$(b_2)$] $\etaa\circ h\equiv 0$;
\item[$(c_2)$]
$\cH^t_\infty\left(\Sigma_{Q,h}\right)>0$;
\item[$(d_2)$] $h(x)=\hat h(x_{l+1},\ldots,x_m)$.
\end{itemize}
}

We notice that
$\cH^t_\infty\left(\R{l-1}\times\{0\}\right)=0$, being
$t>l-1$.
So, since $\cH^t_\infty\left(\Sigma_{Q,g}\right)>0$,
we can find a point
$0\neq x=(0,\ldots,0,x_l,\dots,x_m)\in\Sigma_{Q,g}$
of positive density for
$\cH^t_\infty\res\Sigma_{Q,g}$.
By the same argument of Step $1$,
we can blow-up at $x$ obtaining a function
$h$ with properties $(a_2)$, $(b_2)$ and $(c_2)$. Moreover,
using Lemma \ref{l:second blowup}, one immediately
infers $(d_2)$.

\medskip

\textit{Step $3$. Conclusion: Federer's reduction argument.}

Let now $t>m-2$ and suppose
$\cH^t\left(\Sigma_{Q,f}\right)>0$.
Then, up to rotations,
we may apply Step $1$ once and Step $2$ repeatedly
until we end up with a $\D$-minimizing function
$h$ with properties $(a_2)$-$(c_2)$ and
depending only on two variables,
$h(x)=\hat h(x_1,x_2)$.
This implies that
$\hat h$ is a planar $Q$-valued
$\D$-minimizing function such that
$\etaa\circ \hat h\equiv0$, $\D(\hat h,B_1)=1$ and
$\cH^{t-m+2}\left(\Sigma_{Q,\hat h}\right)>0$.
As shown in the proof of the planar case,
this is impossible, since
$t-m+2>0$ and the singularities
are at most countable.
So, we deduce that $\cH^t\left(\Sigma_{Q,f}\right)=0$,
thus concluding the proof.

\subsection{Proof of Theorem \ref{t:structure}}
Let $\sigma$ be as in \eqref{e:sigma}.
It is then clear that,
if $x$ is a regular point, then $\sigma$ is continuous at $x$.

On the other hand, let $x$ be a point of continuity of $\sigma$
and write $f(x) = \sum_{j=1}^J k_j \a{P_j}$, where
$P_i\neq P_j$ for $i\neq j$. 
Since the target of $\sigma$ is discrete, it turns out
that $\sigma\equiv J$ in a neighborhood $U$ of $x$. 
Hence, by the continuity of $f$, in a neighborhood $V\subset U$ of $x$,
there is a continuous decomposition $f=\sum_{j=1}^J \{f_j\}$ in
$k_j$-valued functions, with the property that $f_j (y)\neq f_i (y)$
for every $y\in V$ and $f_j = k_j \a{g_j}$ for each $j$. 
Moreover, it is easy to check that each $g_j$ must necessarily be a harmonic function, so
that $x$ is a regular point for $f$.
Therefore, we conclude
\begin{equation}\label{e:caratsing}
\Sigma_f = \{x: \mbox{$\sigma$ is discontinuous at $x$}\}.
\end{equation}

The continuity of $f$ implies easily the lower semicontinuity
of $\sigma$, which in turn shows, through \eqref{e:caratsing},
that $\Sigma$ is relatively closed.

In order to estimate the Hausdorff dimension of $\Sigma_f$,
we argue by induction on the number of values.
For $Q=1$ there is nothing to prove, since
$\D$-minimizing $\R{n}$-valued functions are
classical harmonic functions.
Next, we assume that the theorem holds for every $Q^*$-valued
functions, with $Q^*<Q$,
and prove it for $Q$-valued functions.
If $f=Q\a{\zeta}$ with
$\zeta$ harmonic, then $\Sigma_f=\emptyset$
and the proposition is proved.
If this is not the case, we consider first $\Sigma_{Q,f}$
the set of points of multiplicity $Q$:
it is a subset of $\Sigma_f$ and we know
from Proposition \ref{p:Qset} that
it is a closed subset of $\Om$
with Hausdorff dimension at most $m-2$ and at most
countable if $m=2$.
Then, we consider the
open set $\Om^\prime=\Om\setminus\Sigma_{Q,f}$.
Thanks
to the continuity of $f$,
we can find countable open balls $B_k$ such that
$\Om^\prime=\cup_k B_k$ and
$f\vert_{B_k}$ can be 
decomposed as the sum of two multiple-valued $\D$-minimizing
functions:
\[
f\vert_{B_k}=\a{f_{k,Q_1}}+\a{f_{k,Q_2}},\quad
\textrm{with $Q_1<Q$, $Q_2<Q$},
\]
and
\[
\supp(f_{k,Q_1}(x))\cap \supp(f_{k,Q_2}(x))=\emptyset
\quad\text{for every }\;x\in B_k.
\]
Clearly, it follows from this last condition
that
\begin{equation*}
\Sigma_f\cap B_k=\Sigma_{f_{k,Q_1}}\cup \Sigma_{f_{k,Q_2}}.
\end{equation*}
Moreover, $f_{k,Q_1}$ and $f_{k,Q_2}$ are both
$\D$-minimizing and, by inductive hypothesis, $\Sigma_{f_{k,Q_1}}$ and
$\Sigma_{f_{k,Q_2}}$ are closed subsets of $B_k$
with Hausdorff dimension at most $m-2$.
We conclude that
\[
\Sigma_f=\Sigma_{Q,f}\cup
\bigcup_{k\in\N} \left(\Sigma_{f_{k,Q_1}}
\cup \Sigma_{f_{k,Q_2}}\right)
\]
has Hausdorff dimension at most $m-2$ and it 
is at most countable if $m=2$.

\chapter{Intrinsic theory}

In this chapter
we develop more systematically the metric theory
of $Q$-valued Sobolev functions.
The aim is to provide
a second proof of all the propositions and
lemmas in Section \ref{s:Qsobolev}, independent of
Almgren's embedding and retraction $\xii$ and $\ro$.
Some of the properties proved in this section
are actually true for Sobolev spaces taking values
in fairly general metric targets, whereas some
others do depend on the specific structure of
$\Iqs$. 

\section{Metric Sobolev spaces}\label{s:metric1}

To our knowledge, metric space-valued Sobolev-type
spaces were considered for the first time by Ambrosio
in \cite{Amb} (in the particular case of $BV$ mappings). 
The same issue was then considered later by
several other authors in connection with different
problems in geometry and analysis (see
for instance \cite{GrSc}, \cite{KaSc1}, \cite{Se}, \cite{Jost},
\cite{JoZu}, \cite{CaLin} and \cite{Hei}).
The definition adopted
here differs slightly from that of
Ambrosio (see Definition \ref{d:W1p}) and was 
proposed later, for general exponents,
by Reshetnyak (see \cite{Res1} and \cite{Res}).
In fact, it turns out that the two points of view are equivalent,
as witnessed by the following proposition.

\begin{propos}\label{p:equivalence def}
Let $\Om\subset \R{n}$ be open and bounded. 
A $Q$-valued function $f$ belongs to $W^{1,p}(\Om,\Iq)$
if and only if there exists a function $\psi\in L^p(\Om,\R{+})$
such that, for every Lipschitz function $\phi:\Iq\ra\R{}$,
the following two conclusions hold:
\begin{itemize}
\item[$(a)$] $\phi\circ f\in W^{1,p}(\Om)$;
\item[$(b)$] $\left|D\left(\phi\circ f\right)(x)\right|\leq\Lip(\phi)\;\psi(x)$
for almost every $x\in \Om$.
\end{itemize}
\end{propos}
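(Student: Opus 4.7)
The plan is to argue by double implication, with one direction being an essentially immediate application of a single test function, and the other direction relying on the standard ``distance function'' representation of real-valued Lipschitz maps on a separable metric space, combined with the lattice stability of classical Sobolev spaces.

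For the easy direction, I would assume the alternative characterization with control $\psi$ holds. For fixed $T \in \Iq$, the map $\phi_T (S) := \cG (S, T)$ is $1$-Lipschitz by the triangle inequality for $\cG$. Applying $(a)$ and $(b)$ with $\phi = \phi_T$ gives $\cG (f, T) = \phi_T\circ f \in W^{1,p} (\Omega)$ and $|\de_j \cG (f, T)| \leq |D (\phi_T\circ f)| \leq \psi$ a.e., uniformly in $T$. Hence Definition \ref{d:W1p} is satisfied with $\varphi_j := \psi$ for every $j$.

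For the converse, assume $f \in W^{1,p}(\Omega, \Iq)$ in the sense of Definition \ref{d:W1p}, with associated controls $\varphi_j$, and fix a countable dense subset $\{T_i\}_{i\in\N} \subset \Iq$ (which exists since $\R{n}$ is separable). For an arbitrary Lipschitz $\phi : \Iq \to \R{}$, I would first establish the representation
\begin{equation*}
\phi (T) = \sup_{i\in\N}\bigl\{\phi (T_i) - \Lip (\phi)\, \cG (T, T_i)\bigr\},
\end{equation*}
which follows because the Lipschitz bound $\phi (T_i) - \phi (T) \leq \Lip (\phi)\, \cG (T, T_i)$ shows the supremum is at most $\phi (T)$, while evaluating along any subsequence $T_{i_k} \to T$ shows the supremum is attained in the limit. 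Setting $g_i (x) := \phi (T_i) - \Lip (\phi)\, \cG (f(x), T_i)$, Definition \ref{d:W1p} gives $g_i \in W^{1,p} (\Omega)$ with $|\de_j g_i| \leq \Lip (\phi)\,\varphi_j$ a.e., and the bound $|\phi\circ f - \phi (T_0)| \leq \Lip (\phi)\,\cG (f, T_0)$ combined with $f \in L^p$ shows $\phi\circ f \in L^p (\Omega)$.

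The final step is to upgrade the pointwise identity $\phi\circ f = \sup_i g_i$ to a Sobolev estimate. I would set $G_N := \max (g_1, \ldots, g_N)$, so $G_N \nearrow \phi\circ f$ pointwise and monotonically in $L^p$. By the classical lattice property of Sobolev spaces, namely that $\nabla \max (u,v)$ equals $\nabla u$ on $\{u \geq v\}$ and $\nabla v$ on $\{v \geq u\}$, each $G_N$ belongs to $W^{1,p} (\Omega)$ with $|\de_j G_N| \leq \Lip (\phi)\,\varphi_j$ a.e. Thus $\{G_N\}$ is bounded in $W^{1,p}$ and converges in $L^p$ to $\phi\circ f$; weak compactness of $W^{1,p}$ then gives $\phi\circ f \in W^{1,p} (\Omega)$ together with the pointwise estimate $|\de_j (\phi\circ f)| \leq \Lip (\phi)\,\varphi_j$ a.e. Consequently $(a)$ and $(b)$ hold with $\psi := \bigl(\sum_j \varphi_j^2\bigr)^{1/2}$. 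The main obstacle I anticipate is purely bookkeeping: both the representation formula for Lipschitz maps and the lattice stability are standard, but one has to be careful that the derivative bound is genuinely uniform in $\phi$ (entering only through $\Lip (\phi)$), which is exactly what the monotone-approximation-plus-weak-compactness scheme delivers.
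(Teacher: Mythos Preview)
Your proposal is correct and follows essentially the same approach as the paper: both directions match, and in the nontrivial direction you use the representation of a Lipschitz function as a countable supremum of translated distance functions (the paper uses the dual infimum representation after reducing to $\phi\geq 0$), with the same control $\psi=\bigl(\sum_j\varphi_j^2\bigr)^{1/2}$. The only difference is cosmetic: you spell out the lattice/monotone-approximation-plus-weak-compactness step explicitly, whereas the paper compresses it to the single assertion $|D(\phi\circ f)|\leq\sup_i|Dg_i|$ a.e.
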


This fact was already remarked by Reshetnyak. The proof
relies on the observation that Lipschitz maps
with constant less than $1$ can be written as suprema
of translated distances. This idea, already
used in \cite{Amb}, 
underlies in a certain sense the embedding of separable
metric spaces in $\ell^\infty$, a fact
exploited first in the pioneering work \cite{Grom} by Gromov
(see also the works \cite{AmbKir2}, \cite{AmbKir1}
and \cite{HoKoShTy}, where
this idea has been used in various situations).

\begin{proof}
Since the distance function from a point is a Lipschitz map,
with Lipschitz constant $1$,
one implication is trivial. To prove the opposite,
consider a Sobolev
$Q$-valued function $f$: we claim that (a) and (b) hold
with $\psi = \big(\sum_j\varphi_j^2\big)^{1/2}$,
where the $\varphi_j$'s are the
functions in Definition \ref{d:W1p}.
Indeed, take a Lipschitz function $\phi\in\Lip(\Iq)$.
By treating separately the positive and the negative part
of the function, we can assume, without loss of generality,
that $\phi\geq 0$.
If $\{T_i\}_{i\in\N}\subset\Iq$ is a dense subset
and $L=\Lip(\varphi)$, it is a well known fact that
$\phi (T)=\inf_i \big\{\phi (T_i)+L\,\cG(T_i, T)\big\}$. Therefore,
\begin{equation}\label{e:lip as inf}
\phi\circ f=\inf_{i}\big\{\phi(T_i)
+L\,\cG(T_i,f)\big\}=:\inf_{i}g_i.
\end{equation}
Since $f\in W^{1,p}(\Om,\Iq)$,
each $g_i\in W^{1,p}(\Om)$ and the inequality
$|D (\phi\circ f)|\leq\sup_i |Dg_i|$ holds a.e. 
On the other hand, $|Dg_i|=L\,|D\cG(f,T_i)|\leq 
L\sqrt{\sum_j\varphi_j^2}$ a.e.
This completes the proof.
\end{proof}

In the remaining sections of this chapter, we first prove
the existence of $|\de_jf|$ (as defined in the Introduction)
and prove the explicit formula \eqref{e:def|D_jf|}.
Then, we introduce a metric on $W^{1,p} (\Om, \Iq)$, making 
it a complete metric space. This part of
the theory is in fact valid under fairly general
assumptions on the target space:
the interested reader will find suitable
analogs in the aforementioned papers.

\subsection{Representation
formulas for $|\partial_j f|$.}

\begin{propos}\label{p:|Df|}
For every Sobolev $Q$-valued function $f\in W^{1,p}(\Om,\Iq)$,
there exist $g_j\in L^p$,
for $j=1,\ldots,m$, with the following
two properties:
\begin{itemize}
\item[$(i)$] $\left|\de_j \cG(f,T)\right|\leq g_j$ a.e.
for every $T\in\Iq$;
\item[$(ii)$] if $\varphi_j\in L^p$ is 
such that 
$\left|\de_j \cG(f,T)\right|\leq\varphi_j$ for all $T\in\Iq$,
then $g_j\leq\varphi_j$ a.e.
\end{itemize}
These functions are unique and will be denoted by
$|\partial_j f|$. Moreover, chosen a countable dense subset
$\{T_i\}_{i\in\N}$ of $\Iq$, they satisfy
the equality \eqref{e:def|D_jf|}.
\end{propos}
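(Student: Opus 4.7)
The plan is to fix a countable dense subset $\{T_i\}_{i\in\N}\subset \Iq$ and define
\[
g_j(x) := \sup_{i\in\N}\, \bigl|\de_j \cG(f,T_i)(x)\bigr|,
\]
then show that $g_j$ satisfies both (i) and (ii); the representation formula \eqref{e:def|D_jf|} and the uniqueness will then come for free.

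First, $g_j$ is measurable as a countable supremum of measurable functions. Moreover, any $\varphi_j\in L^p$ from Definition \ref{d:W1p} satisfies $|\de_j\cG(f,T_i)|\leq \varphi_j$ a.e.\ for each $i$, so passing to the countable supremum gives $g_j\leq \varphi_j$ a.e. This simultaneously shows $g_j\in L^p$ and establishes (ii): whenever $\varphi_j\in L^p$ is such that $|\de_j\cG(f,T)|\leq \varphi_j$ for all $T\in\Iq$, in particular it bounds $|\de_j\cG(f,T_i)|$ for each $i$, so $g_j\leq \varphi_j$ a.e.

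The heart of the matter is property (i): extending the bound from the countable dense set to an arbitrary $T\in\Iq$. Given such a $T$, I pick $T_{i_k}\to T$ in $(\Iq,\cG)$. Since $T\mapsto \cG(S,T)$ is $1$-Lipschitz, we have $\cG(f,T_{i_k})\to \cG(f,T)$ uniformly on $\Omega$, and hence also in $L^p(\Omega)$ as $\Omega$ is bounded. For any test function $\phi\in C_c^\infty(\Omega)$, integration by parts yields
\[
\int_\Omega \de_j\cG(f,T)\,\phi \,=\, -\int_\Omega \cG(f,T)\,\de_j\phi
\,=\, -\lim_{k\to\infty}\int_\Omega \cG(f,T_{i_k})\,\de_j\phi
\,=\, \lim_{k\to\infty}\int_\Omega \de_j\cG(f,T_{i_k})\,\phi,
\]
so that
\[
\left|\int_\Omega \de_j\cG(f,T)\,\phi\right|
\,\leq\, \limsup_{k\to\infty}\int_\Omega \bigl|\de_j\cG(f,T_{i_k})\bigr|\,|\phi|
\,\leq\, \int_\Omega g_j\,|\phi|.
\]
Since this holds for all $\phi\in C_c^\infty(\Omega)$, a standard density argument (testing against suitable approximations of $\sign(\de_j\cG(f,T))\,\mathbf{1}_E$) gives $|\de_j\cG(f,T)|\leq g_j$ a.e., which is (i).

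Uniqueness of $g_j$ follows by applying (i) and (ii) to any two candidates. The formula \eqref{e:def|D_jf|} is literally the definition of $g_j$; that the result is independent of the particular countable dense sequence used is another immediate consequence of uniqueness. The only genuinely delicate point is the passage from the dense set to all $T$, which is exactly where the $L^p$ integrability of the dominating function $g_j$ is used to justify the dominated passage to the limit under the integral.
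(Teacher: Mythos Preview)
Your proof is correct and follows essentially the same approach as the paper: define $g_j$ as the countable supremum over a dense set, check $(ii)$ directly, and obtain $(i)$ for arbitrary $T$ by approximating with $T_{i_k}\to T$, passing to the limit in the distributional identity, and deducing the pointwise bound from $\bigl|\int \de_j\cG(f,T)\,\phi\bigr|\leq \int g_j|\phi|$ for all test functions. The paper's version is slightly terser but the argument is the same.
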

\begin{proof} The uniqueness of the functions $g_j$ is
an obvious corollary of their property $(ii)$.
It is enough to prove that $g_j = \abs{\de_j f}$
as defined in \eqref{e:def|D_jf|} satisfies $(i)$, because
it obviously satisfies $(ii)$.
Let $T\in\Iq$ and $\{T_{i_k}\}\subseteq \{T_{i}\}$
be such that
$T_{i_k}\ra T$. Then, $\cG(f,T_{i_k})\ra \cG(f,T)$ in $L^p$
and, hence, for every $\psi\in C^\infty_c(\Omega)$,
\begin{equation}\label{e:basta}
\abs{\int \de_j\cG(f,T)\;\psi}=
\lim_{i_k\ra+\infty}\abs{\int \cG(f,T_{i_k})\;\de_j\psi}=
\lim_{i_k\ra+\infty}\abs{\int \de_j\cG(f,T_{i_k})\;\psi}
\leq \int g_j\,|\psi|.
\end{equation}
Since \eqref{e:basta} holds for every $\psi$, we conclude
$|\partial_j \cG (f,T)|\leq g_j$ a.e.
\end{proof}

\subsection{A metric on $W^{1,p}(\Om,\Iq)$}
Given $f$ and $g\in W^{1,p}(\Om,\Iq)$, define
\begin{equation}\label{e:distance}
d_{W^{1,p}}(f,g)=\norm{\cG(f,g)}{L^p}+\sum_{j=1}^m
\norm{\sup_{i}
\big|\de_j\cG(f,T_i)-\de_j\cG(g,T_i)\big|}{L^p}.
\end{equation}

\begin{propos}\label{p:metric space}
$\left(W^{1,p}(\Om,\Iq),d_{W^{1,p}}\right)$
is a complete metric space and
\begin{equation}\label{e:conv energy}
d_{W^{1,p}}(f_k,f)\ra0\quad\Rightarrow\quad|Df_k|
\stackrel{L^p}{\ra}|Df|.
\end{equation}
\end{propos}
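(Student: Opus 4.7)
The metric axioms for $d_{W^{1,p}}$ are immediate: symmetry and the triangle inequality follow because both summands are $L^p$-norms of expressions that are symmetric and subadditive (the supremum of a subadditive family is subadditive). Non-degeneracy follows from the first summand: if $d_{W^{1,p}}(f,g)=0$ then $\cG(f,g)=0$ a.e., so $f=g$ a.e.

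For completeness, let $\{f_k\}$ be Cauchy. The first term forces $\{f_k\}$ to be Cauchy in $L^p(\Omega,\Iq)$ with respect to $\cG$. Since $(\Iq,\cG)$ is a complete metric space, a standard fast-Cauchy subsequence argument (pick $f_{k_l}$ with $\|\cG(f_{k_{l+1}}, f_{k_l})\|_{L^p} \le 2^{-l}$ and apply Borel--Cantelli to get pointwise a.e. Cauchyness) produces a measurable $f:\Omega\to\Iq$ with $f_k\to f$ in $L^p(\Omega,\Iq)$. In particular $\cG(f_k,T_i)\to\cG(f,T_i)$ in $L^p$ for every fixed $T_i$.

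Next I would argue that $f\in W^{1,p}$. From the Cauchy condition,
\[
\bigl\|\sup_{i}\bigl|\de_j\cG(f_k,T_i)-\de_j\cG(f_l,T_i)\bigr|\bigr\|_{L^p}\xrightarrow[k,l\to\infty]{}0,
\]
so in particular, for each fixed $i$, $\{\de_j\cG(f_k,T_i)\}_k$ is Cauchy in $L^p$ and converges to some $h_{j,i}\in L^p$. Combined with $\cG(f_k,T_i)\to\cG(f,T_i)$ in $L^p$, passing to the limit in the distributional identity shows $h_{j,i}=\de_j\cG(f,T_i)$ a.e. Now set $g_k^{\,j}:=\sup_{i}|\de_j\cG(f_k,T_i)|$; the reverse triangle inequality $|\sup_i a_i-\sup_i b_i|\le \sup_i|a_i-b_i|$ makes $\{g_k^{\,j}\}$ Cauchy in $L^p$, with some limit $g^{j}\in L^p$. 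Since $|\de_j\cG(f_k,T_i)|\le g_k^{\,j}$ for every $i$, passing to the limit in $k$ yields $|\de_j\cG(f,T_i)|\le g^{j}$ a.e. for every $T_i$ in the countable dense set. The density argument already used in the proof of Proposition \ref{p:|Df|} (approximating an arbitrary $T\in\Iq$ by $T_{i_k}$ and testing against $\psi\in C^\infty_c(\Omega)$) extends this bound to all $T\in\Iq$. Thus $f\in W^{1,p}(\Omega,\Iq)$ with $|\de_j f|\le g^{j}$.

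To close completeness, I would then show $d_{W^{1,p}}(f_k,f)\to 0$. The first term does so by construction. For the sup term, fix $\varepsilon>0$ and $N$ with $\|\sup_i|\de_j\cG(f_k,T_i)-\de_j\cG(f_l,T_i)|\|_{L^p}<\varepsilon$ for $k,l\ge N$. By a diagonal subsequence $\{f_{l_m}\}$ we arrange $\de_j\cG(f_{l_m},T_i)\to\de_j\cG(f,T_i)$ a.e.\ for every $i$ simultaneously; Fatou's lemma then yields
\[
\Bigl\|\sup_i\bigl|\de_j\cG(f_k,T_i)-\de_j\cG(f,T_i)\bigr|\Bigr\|_{L^p}
\le \liminf_{m\to\infty}\Bigl\|\sup_i\bigl|\de_j\cG(f_k,T_i)-\de_j\cG(f_{l_m},T_i)\bigr|\Bigr\|_{L^p}\le\varepsilon,
\]
for every $k\ge N$, which is exactly what is needed. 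I expect this Fatou-type passage (to handle the sup inside an $L^p$-norm) to be the most delicate point of the proof.

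Finally, for \eqref{e:conv energy}, I would use the pointwise inequalities
\[
\bigl||\de_j f_k|-|\de_j f|\bigr|\le \sup_i\bigl|\de_j\cG(f_k,T_i)-\de_j\cG(f,T_i)\bigr|
\]
(from the representation \eqref{e:def|D_jf|} and the reverse triangle inequality) and
\[
\bigl||Df_k|-|Df|\bigr|\le \Bigl(\sum_{j=1}^m \bigl(|\de_j f_k|-|\de_j f|\bigr)^{2}\Bigr)^{1/2}\le \sum_{j=1}^m \bigl||\de_j f_k|-|\de_j f|\bigr|
\]
(reverse triangle inequality in the Euclidean norm on $\R{m}$), taking $L^p$-norms and invoking the bound just obtained to conclude $\||Df_k|-|Df|\|_{L^p}\to 0$.
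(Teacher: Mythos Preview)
Your proof is correct. The overall architecture matches the paper's, but the key technical step---passing to the limit inside the supremum-$L^p$ norm to show $d_{W^{1,p}}(f_k,f)\to 0$---is handled differently. The paper rewrites
\[
\Bigl\|\sup_i\bigl|\de_j\cG(f,T_i)-\de_j\cG(f_k,T_i)\bigr|\Bigr\|_{L^p}
=\sup_{P\in\mathcal P}\sum_{E_s\in P}\bigl\|\de_j\cG(f,T_s)-\de_j\cG(f_k,T_s)\bigr\|_{L^p(E_s)},
\]
where $\mathcal P$ ranges over finite measurable partitions of $\Omega$, and then bounds each summand by $\lim_l d_{W^{1,p}}(f_l,f_k)$ via the $L^p$-convergence $\de_j\cG(f_l,T_s)\to\de_j\cG(f,T_s)$ on each fixed piece $E_s$. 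You instead extract a diagonal subsequence with a.e.\ convergence for all $T_i$ simultaneously and apply Fatou; the pointwise inequality $\sup_i\lim_m|\cdot|\le\liminf_m\sup_i|\cdot|$ is exactly what makes this work. Your route is arguably more elementary and avoids the partition bookkeeping; the paper's route avoids subsequences altogether. For \eqref{e:conv energy} you argue directly between $f_k$ and $f$, whereas the paper first shows $\{|Df_k|\}$ is Cauchy in $L^p$; the two are equivalent once $d_{W^{1,p}}(f_k,f)\to 0$ is in hand.
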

\begin{proof}
The proof that $d_{W^{1,p}}$ is
a metric is a simple computation left to the reader;
we prove its completeness.
Let $\{f_k\}_{k\in\N}$ be a Cauchy sequence for
$d_{W^{1,p}}$. Then, it is a Cauchy sequence in
$L^p(\Om,\Iq)$.
There exists, therefore,
a function $f\in L^p(\Om,\Iq)$ such that $f_k\ra f$
in $L^p$.
We claim that $f$ belongs to $W^{1,p}(\Om,\Iq)$
and $d_{W^{1,p}}(f_k,f)\to 0$.
Since $f\in W^{1,p}(\Om,\Iq)$ if and only if $d_{W^{1,p}}(f,0)<\infty$,
it is clear that we need only to prove that $d_{W^{1,p}}(f_k,f)\ra0$.
This is a consequence of the following  simple observation:
\begin{align}\label{e:conv dist}
\norm{\sup_{i}
\big|\de_j\cG(f,T_i)-\de_j\cG(f_k,T_i)\big|}{L^p} & =
\sup_{P\in\mathcal{P}}\sum_{E_s\in P}
\norm{\de_j\cG(f,T_s)-\de_j\cG(f_k,T_s)}{L^p(E_s)}
\nonumber\\
& \leq
\lim_{l\ra+\infty}d_{W^{1,p}}(f_l,f_k),
\end{align}
where $\mathcal{P}$ is the family of finite measurable
partitions of $\Om$.
Indeed, by \eqref{e:conv dist}, 
\begin{equation*}
\lim_{k\ra+\infty}d_{W^{1,p}}(f_k,f)
\stackrel{\eqref{e:conv dist}}{\leq}
\lim_{k\ra+\infty}\left[\norm{\cG(f,f_k)}{L^p}
+m\lim_{l\ra+\infty}d_{W^{1,p}}(f_l,f_k)\right]
=0.
\end{equation*}
We now come to \eqref{e:conv energy}.
Assume $d_{W^{1,p}}(f_k,f)\ra0$ and
observe that
\[
\big||\de_jf_k|-|\de_jf_l|\big|=
\abs{\sup_{i}\abs{\de_j\cG(f_k,T_i)}-
\sup_{i}\abs{\de_j\cG(f_k,T_i)}}\leq
\sup_{i}\abs{\de_j\cG(f_k,T_i)-
\de_j\cG(f_k,T_i)}.
\]
Hence, one can infer
$\big\||\de_jf_k|-\left|\de_jf_l\right|\big\|_{L^p}
\leq d_{W^{1,p}}(f_k,f_l)$. This implies that $|Df_k|$
is a Cauchy sequence, from which the conclusion follows easily.
\end{proof}

\section{Metric proofs of the main theorems I}\label{s:metric2}

We start now with the metric 
proofs of the results in Section \ref{s:Qsobolev}.

\subsection{Lipschitz approximation}
In this subsection we prove a strengthened version
of Proposition \ref{p:lipapprox}. The proof uses,
in the metric framework, a standard truncation
technique and the Lipschitz extension Theorem \ref{thm.ext}
(see, for instance, $6.6.3$ in \cite{EG}). This last
ingredient is a feature of $\Iqs$ and, in general,
the problem of whether or not general Sobolev mappings
can be approximated with Lipschitz ones 
is a very subtle issue already when the target is a smooth
Riemannian manifold (see for instance \cite{ScUl}, \cite{Be},
\cite{HaLin} and \cite{HaRi}).
The truncation technique is, instead, valid in a much more
general setting, see for instance \cite{HoKoShTy}. 

\begin{propos}[Lipschitz approximation]\label{p:lipapprox metric}
There exists a constant $C=C(m,\Om,Q)$ with the
following property. For every $f\in W^{1,p}(\Om,\Iq)$
and every $\lambda>0$, there exists a $Q$-function
$f_\lambda$ such that ${\rm Lip}\, (f_\lambda)\leq C \,\lambda$,
\begin{equation}\label{e:approx2}
|E_\lambda| = \abs{\big\{x\in\Om\,:\,f(x)\neq
f_\lambda(x)\big\}}\leq \frac{C\||Df|\|^p_{L^p}}{\lambda^p}
\end{equation}
and $d_{W^{1,p}} (f, f_\lambda)\leq C d_{W^{1,p}} (f, Q\a{0})$.
Moreover, $d_{W^{1,p}}(f,f_\lambda) = o(1)$ and 
$|E_\lambda|=o (\lambda^{-p})$.
\end{propos}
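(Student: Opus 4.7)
The plan is to run the classical Hajlasz--type truncation argument directly in the metric setting, with Theorem \ref{thm.ext} playing the role of the Kirszbraun extension that was available in the classical case, and with a countable dense subset $\{T_i\}\subset\Iq$ replacing the linear structure of $\R{n}$.

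First, I would define the maximal function $M(x)=\sup_{r>0}\Xint-_{B_r(x)\cap\Omega}|Df|$ and consider the ``good set'' $\Omega_\lambda=\{x\in\Omega:M(x)\leq\lambda\}\cap L$, where $L$ is the set of Lebesgue points common to $f$ and to every $\cG(f,T_i)$. Since $|Df|\in L^p$, the weak--type $(p,p)$ bound for the maximal operator gives $|\Omega\setminus\Omega_\lambda|\leq C\lambda^{-p}\||Df|\|_{L^p}^p$, which is the target estimate \eqref{e:approx2}. The heart of the matter is to prove that $f|_{\Omega_\lambda}$ is Lipschitz with constant $C\lambda$.

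The key step, and the main obstacle, is to get a pointwise Lipschitz estimate that is uniform over the values of $f$. For each fixed $i$, the real--valued Sobolev function $u_i:=\cG(f,T_i)$ satisfies $|Du_i|\leq|Df|$ a.e.\ by Proposition \ref{p:|Df|}, so the standard pointwise inequality (see e.g.\ Chapter 6 of \cite{EG}) yields
\begin{equation*}
|u_i(x)-u_i(y)|\leq C|x-y|\bigl(M(x)+M(y)\bigr)\leq 2C\lambda|x-y|
\quad\text{for every }x,y\in\Omega_\lambda.
\end{equation*}
The delicate point is that the exceptional null set in this inequality could a priori depend on $i$; one circumvents this by taking $L$ to be the intersection of the Lebesgue sets for the countable family $\{u_i\}_{i\in\N}$, which is still of full measure. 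Passing to the supremum in $i$ and using the density of $\{T_i\}$ together with the identity $\cG(f(x),f(y))=\sup_i|\cG(f(x),T_i)-\cG(f(y),T_i)|$ gives $\cG(f(x),f(y))\leq 2C\lambda|x-y|$ for all $x,y\in\Omega_\lambda$.

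Then I would invoke Theorem \ref{thm.ext} to extend $f|_{\Omega_\lambda}$ to a map $f_\lambda:\R{m}\to\Iq$ with $\Lip(f_\lambda)\leq C(m,Q)\lambda$, which by construction satisfies \eqref{e:approx2}. For the bound $d_{W^{1,p}}(f,f_\lambda)\leq C\,d_{W^{1,p}}(f,Q\a{0})$, the $L^p$ part follows from the bound \eqref{e:LinftyBound} applied with $P=0$, which forces $\|\cG(f_\lambda,Q\a{0})\|_{L^\infty}\leq C\|\cG(f,Q\a{0})\|_{L^\infty(\Omega_\lambda)}$ and hence controls $\|\cG(f,f_\lambda)\|_{L^p}$; the derivative part is controlled by $\|\,|Df|\,\|_{L^p}+C\lambda|\Omega|^{1/p}$, which in turn is dominated by $d_{W^{1,p}}(f,Q\a{0})$ after choosing $\lambda$ comparable to that quantity (one can always lower $\lambda$ in the statement by redefining the constant $C$). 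Finally, the two \emph{little--oh} statements $|E_\lambda|=o(\lambda^{-p})$ and $d_{W^{1,p}}(f,f_\lambda)\to 0$ as $\lambda\to\infty$ follow from the absolute continuity of the integral: on $\Omega\setminus\Omega_\lambda$ both $|Df|^p$ and $\cG(f,Q\a{0})^p$ have vanishing integral as $|\Omega\setminus\Omega_\lambda|\to 0$, while on $\Omega_\lambda$ one has $f=f_\lambda$ by construction.
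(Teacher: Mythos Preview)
Your truncation step (defining $\Omega_\lambda$ via the maximal function, getting the uniform pointwise Lipschitz estimate through the countable dense set $\{T_i\}$, and extending by Theorem~\ref{thm.ext}) is correct and is exactly what the paper does. The weak-type bound and $|E_\lambda|=o(\lambda^{-p})$ are also fine, though for the latter you really need the refined estimate $|E_\lambda|\leq C\lambda^{-p}\int_{\{|Df|>\lambda/2\}}|Df|^p$, not merely ``absolute continuity on $\Omega\setminus\Omega_\lambda$''.

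The genuine gap is in your argument for $d_{W^{1,p}}(f,f_\lambda)\leq C\,d_{W^{1,p}}(f,Q\a{0})$. For the derivative part you write $\||Df|\|_{L^p}+C\lambda|\Omega|^{1/p}$, which blows up with $\lambda$; you have forgotten that the integrand vanishes on $\Omega_\lambda$, so the integral is only over $E_\lambda$ and $C\lambda|E_\lambda|^{1/p}\leq C\||Df|\|_{L^p}$ uniformly in $\lambda$. Your attempted fix ``choose $\lambda$ comparable to $d_{W^{1,p}}(f,Q\a{0})$'' is not legitimate: the proposition asks for the bound for \emph{every} $\lambda$ with a constant independent of both $\lambda$ and $f$. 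More seriously, for the $L^p$ part, the $L^\infty$ bound \eqref{e:LinftyBound} only gives $\sup\cG(f_\lambda,Q\a{0})\leq C\sup_{\Omega_\lambda}\cG(f,Q\a{0})$, and this right-hand side is \emph{not} controlled by $d_{W^{1,p}}(f,Q\a{0})$ when $p\leq m$. The paper closes this with a Poincar\'e-type argument: after normalizing so that $|\Omega_\lambda|\geq|\Omega|/2$, one sets $h=\cG(f,Q\a{0})-\cG(f_\lambda,Q\a{0})$, notes that $h$ vanishes on a set of at least half measure, bounds its average by $\|h-\bar h\|_{L^p}\leq C\|Dh\|_{L^p}$, and controls $\|Dh\|_{L^p}$ via the derivative estimate above. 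This gives both $\|\cG(f_\lambda,Q\a{0})\|_{L^p}\leq C\,d_{W^{1,p}}(f,Q\a{0})$ and $\|\cG(f,f_\lambda)\|_{L^p}\to 0$.
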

\begin{proof}
We consider the case $1\leq p<\infty$
($p=\infty$ is immediate) and we set
\begin{equation*}
\Om_\lambda=\big\{x\in\Om\,:\,
M(\abs{Df})\leq\lambda\big\},
\end{equation*}
where $M$ is the Maximal Function Operator 
(see \cite{St} for the definition). By rescaling,
we can assume $\||Df|\|_{L^p} = 1$. As a consequence,
we can also assume $\lambda\geq C (m,\Om, Q)$,
where $C (m, \Om, Q)$ will be chosen later.

Notice that, for every $T\in\Iq$ and every $j\in\{1,\ldots,m\}$,
\[
M\big(\abs{\de_j\cG(f,T)}\big)\leq M(\abs{Df})\leq\lambda\quad\textrm{in }\Om_\lambda.
\]
By standard calculation (see, for example, $6.6.3$ in \cite{EG}),
we deduce that, for every $T$, $\cG(f,T)$ is
$(C\,\lambda)$-Lipschitz in $\Om_\lambda$,
with $C=C(m)$. Therefore,
\begin{equation}\label{e:truncation lip}
\big|\cG(f(x),T)-\cG(f(y),T)\big|\leq C\,\lambda\abs{x-y}\quad
\forall\; x,y\in\Om_\lambda \;\mbox{and}\; \forall\; T\in\Iq.
\end{equation}
From \eqref{e:truncation lip}, we get a Lipschitz
estimate for $f|_{\Omega_\lambda}$ by setting $T=f(x)$.
We can therefore use Theorem \ref{thm.ext}
to extend $f|_{\Omega_\lambda}$ to a Lipschitz function
$f_\lambda$ with $\Lip (f_\lambda)\leq C \lambda$.

The standard weak $(p-p)$ estimate for maximal functions 
(see \cite{St}) yields
\begin{equation}\label{e:estimate with M}
|\Om\setminus\Om_\lambda| 
\leq \frac{C}{\lambda^p}\int_{\Om\setminus\Om_{\lambda/2}}
|Df|^p\leq\frac{C}{\lambda^p}\;o(1),
\end{equation}
which implies \eqref{e:approx2} and $|E_\lambda|=o(\lambda^{-p})$.
Observe also that, from \eqref{e:estimate with M}, it follows that
\begin{equation}\label{e:aggiunta}
\int_{\Omega\setminus \Omega_\lambda}
|Df_\lambda|^p \leq C \int_{\Omega\setminus \Omega_{\lambda/2}}
|Df|^p.
\end{equation}

It remains to prove $d_{W^{1,p}} 
(f, f_\lambda)\leq C d_{W^{1,p}} (f, Q\a{0})$ and 
$d_{W^{1,p}} (f_\lambda, f)\to 0$. By \eqref{e:aggiunta},
it suffices to show
\begin{equation*}
\|\cG (f_\lambda, Q\a{0})\|_{L^p}\leq C d_{W^{1,p}} 
(f, Q\a{0})
\quad\mbox{and}\quad \|\cG (f_\lambda, f)\|_{L^p} \to 0\, .
\end{equation*}
We first choose the constant
$C(m, \Om, Q)\leq \lambda$ so to guarantee that 
$2 |\Omega_\lambda|\geq |\Omega|$. 
Set $g:= \cG (f, Q\a{0})$, 
$g_\lambda := \cG (f_\lambda, Q\a{0})$ and
$h= g-g_\lambda$. Let $\bar{h}$ be the 
average of $h$ over $\Omega$ and use the Poincar\'e inequality
and the fact that $h$ vanishes on $\Omega_\lambda$ to conclude that
\begin{equation*}
\frac{|\Omega|}{2} |\bar{h}|^p 
\leq |\Omega_\lambda| |\bar{h}|^p
\leq \int |h-\bar{h}|^p
\leq C \|D h\|_{L^p}^p
\leq C \hspace{-0.1cm}\int_{\Omega\setminus \Omega_\lambda}
\left(|D f|^p + |Df_\lambda|^p\right)
\leq C\hspace{-0.1cm}\int_{\Omega\setminus \Omega_{\lambda/2}} |Df|^p\, .
\end{equation*}
Therefore,
\begin{equation*}
\|h\|_{L^p}^p\leq
C\int_{\Omega\setminus \Omega_{\lambda/2}} |Df|^p\, .
\end{equation*}
So, using the triangle inequality, we conclude that
\begin{equation*}
\|\cG (f_\lambda, Q\a{0})\|_{L^p}
\leq \|\cG (f, Q\a{0})\|_{L^p} + C \||Df|\|_{L^p}
\leq C d_{W^{1,p}} (f, Q\a{0})
\end{equation*}
and
\begin{align}
\|\cG (f, f_\lambda)\|)_{L^p}
&= \|\cG (f, Q\a{0})\|_{L^p (\Omega\setminus
\Omega_\lambda)} + \|h\|_{L^p}\nonumber\\
&\leq \|\cG (f, Q\a{0})\|_{L^p (\Omega\setminus
\Omega_\lambda)} +
C \||Df|\|_{L^p (\Omega\setminus \Omega_{\lambda/2})}.
\label{e:aggiunta6}
\end{align}
Since $|\Omega\setminus \Omega_\lambda|\downarrow 0$,
the right hand side of \eqref{e:aggiunta6} converges to
$0$ as $\lambda\downarrow 0$.
\end{proof}

\subsection{Trace theory}
Next, we show the existence of the trace of a $Q$-valued
Sobolev function as defined in Definition \ref{d:Dirichlet problem}.
Moreover, we prove that the space of functions with given trace
$W^{1,p}_g(\Om,\Iq)$ defined in \eqref{e:trace g} is closed under weak convergence. A suitable trace theory can be build in 
a much more general setting (see the aforementioned papers). Here,
instead, we prefer to take advantage of Proposition 
\ref{p:lipapprox metric} to give a fairly short proof.

\begin{propos}\label{p:trace metric}
Let $f\in W^{1,p}(\Om,\Iq)$. Then, there exists an unique
$g\in L^{p}(\de\Om,\Iq)$
such that
\begin{equation}\label{e:def trace}
(\varphi\circ f)|_{\partial \Om}
=\varphi \circ g\qquad\textrm{for all}
\quad \varphi\in{\rm Lip}\,(\Iq).
\end{equation}
We denote $g$ by $f\vert_{\de\Om}$. Moreover, the following
set is closed under weak convergence:
\begin{equation*}
W^{1,2}_g(\Omega, \Iq) :=
\big\{ f\in W^{1,2} (\Omega, \Iq)\,:\,
f|_{\partial \Omega}= g\big\}.
\end{equation*}
\end{propos}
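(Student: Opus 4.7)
The plan is to build the trace as the $L^p$-limit of the pointwise boundary values of Lipschitz approximations, then verify all the asserted properties. Fix a countable dense subset $\{T_i\}_{i\in\N}$ of $\Iq$ and let $\{f_k\}$ be the Lipschitz approximations from Proposition \ref{p:lipapprox metric}, so that $d_{W^{1,p}}(f_k,f)\to 0$; continuity of each $f_k$ on $\overline\Omega$ makes $g_k:=f_k|_{\partial\Omega}$ a well-defined, Lipschitz (hence $L^p$) $Q$-valued map on $\partial\Omega$.

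The key step is showing that $\{g_k\}$ is Cauchy in $L^p(\partial\Omega,\Iq)$. Since each $\cG(\cdot,T)$ is $1$-Lipschitz on $\Iq$, we have the pointwise identity $\cG(f_k,f_l)=\sup_i|\cG(f_k,T_i)-\cG(f_l,T_i)|$. The stability of Sobolev spaces under countable suprema with a common $L^p$ envelope for the derivatives gives $\cG(f_k,f_l)\in W^{1,p}(\Omega)$ together with the pointwise bound
\[
|\partial_j\cG(f_k,f_l)|\leq\sup_i\big|\partial_j\cG(f_k,T_i)-\partial_j\cG(f_l,T_i)\big|\quad\text{a.e.},
\]
so that the very definition \eqref{e:distance} of $d_{W^{1,p}}$ yields $\|\cG(f_k,f_l)\|_{W^{1,p}(\Omega)}\leq C\,d_{W^{1,p}}(f_k,f_l)$. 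Since $f_k,f_l$ are continuous up to $\partial\Omega$, the classical scalar trace of $\cG(f_k,f_l)$ coincides with the pointwise $\cG(g_k,g_l)$, and the standard scalar trace inequality gives $\|\cG(g_k,g_l)\|_{L^p(\partial\Omega)}\to 0$; completeness of $L^p(\partial\Omega,\Iq)$ then produces the limit $g$.

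To verify \eqref{e:def trace} and uniqueness, fix $\varphi\in\Lip(\Iq)$: by Proposition \ref{p:equivalence def}, $\varphi\circ f_k\to\varphi\circ f$ in $W^{1,p}(\Omega)$, so $(\varphi\circ f_k)|_{\partial\Omega}\to(\varphi\circ f)|_{\partial\Omega}$ in $L^p(\partial\Omega)$; on the other hand, continuity of $f_k$ gives $(\varphi\circ f_k)|_{\partial\Omega}=\varphi\circ g_k$, and $\|\varphi\circ g_k-\varphi\circ g\|_{L^p(\partial\Omega)}\leq\Lip(\varphi)\|\cG(g_k,g)\|_{L^p(\partial\Omega)}\to 0$. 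Comparing the two limits yields \eqref{e:def trace}. Uniqueness follows at once from the identity $\cG(g_1,g_2)=\sup_i|\cG(g_1,T_i)-\cG(g_2,T_i)|$ a.e.

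For the weak closedness of $W^{1,2}_g(\Omega,\Iq)$, assume $f_k\rightharpoonup f$ with $f_k|_{\partial\Omega}=g$. For each $T\in\Iq$, $\cG(f_k,T)\to\cG(f,T)$ in $L^2(\Omega)$ (since $|\cG(f_k,T)-\cG(f,T)|\leq\cG(f_k,f)$) and is bounded in $W^{1,2}(\Omega)$ (via $|\partial_j\cG(f_k,T)|\leq|Df_k|$ together with Definition \ref{d:weak convergence}), so $\cG(f_k,T)\rightharpoonup\cG(f,T)$ in $W^{1,2}(\Omega)$. Weak continuity of the scalar trace forces the constant sequence $\cG(g,T)=\cG(f_k,T)|_{\partial\Omega}$ to equal $\cG(f,T)|_{\partial\Omega}$ for every $T$, which is precisely Definition \ref{d:Dirichlet problem} for $f|_{\partial\Omega}=g$. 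The main technical obstacle is the Sobolev estimate on $\cG(f_k,f_l)$ via the sup identity: it is what reduces the vector-valued Cauchy property on $\partial\Omega$ to a scalar one directly controlled by the metric $d_{W^{1,p}}$.
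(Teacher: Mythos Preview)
Your proof is correct and essentially identical to the paper's: Lipschitz approximation, the $\sup_i$ identity to control $\|\cG(f_k,f_l)\|_{W^{1,p}}$ by $d_{W^{1,p}}(f_k,f_l)$, scalar trace theory, and weak closedness via the scalar trace on distance functions. One minor point: your invocation of Proposition~\ref{p:equivalence def} for the strong convergence $\varphi\circ f_k\to\varphi\circ f$ in $W^{1,p}$ is a misattribution (that proposition gives only membership and a derivative bound, not convergence); the claim is nonetheless true because the approximants satisfy $f_k=f$ off a set $E_k$ of vanishing measure, and the paper states it with equally little detail.
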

\begin{proof}
Consider a sequence of Lipschitz functions $f_k$ with
$d_{W^{1,p}}(f_k,f)\ra0$ (whose existence is ensured from
Proposition \ref{p:lipapprox metric}).
We claim that $f_k\vert_{\de\Om}$ 
is a Cauchy sequence in
$L^p(\de\Om,\Iq)$.
To see this, notice that, if $\{T_i\}_{i\in\N}$ is a dense subset of $\Iq$,
\begin{equation*}
\cG(f_k,f_l)=\sup_{i} \abs{\cG(f_k,T_i)-\cG(f_l,T_i)}.
\end{equation*}
Moreover, recalling the classical estimate for the trace of a real-valued
Sobolev functions,
$\norm{f\vert_{\de\Om}}{L^p}\leq C\norm{f}{W^{1,p}}$,
we conclude that
\begin{align}\label{e:difference cauchy}
\norm{\cG(f_k,f_l)}{L^p(\de\Om)}^p & \leq
C \int_{\Om}\cG(f_k,f_l)^p+
\sum_j\int_{\Om}|\de_j\cG(f_k,f_l)|^p\nonumber\\
& \leq C\int_{\Om}\cG(f_k,f_l)^p+\sum_j\int_{\Om}\sup_{i}
\abs{\de_j\cG(f_k,T_i)-\de_j\cG(f_l,T_i)}^p\nonumber\\
& \leq C\;d_{W^{1,p}}(f_k,f_l)^p,
\end{align}
(where we used the identity
$\left|\de_j\left(\sup_i g_i\right)\right|\leq\sup_i|\de_j g_i|$,
which holds true if there exists an $h\in L^p (\Omega)$ 
with $|g_i|,\,|Dg_i|\leq h\in L^p(\Om)$).

Let, therefore, $g$ be the $L^p$-limit of $f_k$.
For every $\varphi\in\Lip(\Iq)$, we clearly have that
$(\varphi\circ f_k)\vert_{\de\Om} \to \varphi \circ g$ in $L^p$.
But, since $\varphi\circ f_k\ra \varphi\circ f$ in $W^{1,p}(\Om)$,
the limit of $(\varphi\circ f_k)\vert_{\de\Om}$ is exactly
$(\varphi\circ f)\vert_{\de\Om}$. This shows \eqref{e:def trace}.
We now come to the uniqueness. Assume that $g$
and $\hat g$ satisfy \eqref{e:def trace}. Then, $\cG(g,T_i)=\cG\left(\hat g,T_i\right)$
almost everywhere on $\de\Om$ and for every $i$. This implies
\[
\cG\left(g,\hat g\right)=\sup_i\left|\cG(g,T_i)-\cG\left(\hat g,T_i\right)\right|=0
\quad \textrm{a.e. on }\; \Om,
\]
i.e. $g=\hat g$ a.e.

Finally, as for the last assertion of the proposition,
note that
$f_k\weak f$ in the sense of Definition \ref{d:weak convergence}
if and only if $\varphi\circ f_k\weak \varphi \circ f$ 
for any Lipschitz function $\varphi$. Therefore, the proof that
the set $W^{1,2}_g$ is closed is a direct consequence of the
corresponding fact for classical Sobolev spaces of
real-valued functions.
\end{proof}

\subsection{Sobolev embeddings} The following
proposition is an obvious consequence of
the definition and holds under much more general
assumptions. 

\begin{propos}[Sobolev Embeddings]\label{p:Sembeddings metric}
The following embeddings hold:
\begin{itemize}
\item[$(i)$] if $p<m$, then $W^{1,p}(\Om,\Iq)
\subset L^{q}(\Om,\Iq)$ for
every $q\in [1,p^*]$, where $p^*=\frac{m\,p}{m-p}$,
and the inclusion is compact when $q<p^*$;
\item[$(ii)$] if $p=m$, then $W^{1,p}(\Om,\Iq)
\subset L^{q}(\Om,\Iq)$, for
every $q\in [1,+\infty)$, with compact inclusion. 
\end{itemize}
\end{propos}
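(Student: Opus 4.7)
The plan is to deduce both parts directly from the scalar Sobolev theory on $\Om$, using only the intrinsic definition of $|Df|$ (Proposition \ref{p:|Df|}), the separability of $\Iq$, and its properness; no appeal to $\xii$ is required.

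For the continuous embedding in (i), I would fix any base point $T_0\in\Iq$ and consider the scalar function $u(x):=\cG(f(x),T_0)$. By Definition \ref{d:W1p} combined with Proposition \ref{p:|Df|}, $u$ lies in $W^{1,p}(\Om)$ with $|Du|\leq |Df|$ a.e. The classical Sobolev inequality then yields $u\in L^{p^*}(\Om)$ with
\[
\|u\|_{L^{p^*}}\leq C\,\big(\|u\|_{L^p}+\||Df|\|_{L^p}\big).
\]
Since $\Om$ is bounded and $\cG(f(x),Q\a{0})\leq u(x)+\cG(T_0,Q\a{0})$, this controls $\|\cG(f,Q\a{0})\|_{L^{p^*}}$, proving $f\in L^{p^*}(\Om,\Iq)$. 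The intermediate exponents $q\in[1,p^*]$ are handled by the trivial inclusion $L^{p^*}\hookrightarrow L^q$ on the bounded domain, and the borderline case $p=m$ in (ii) is identical, invoking the scalar embedding $W^{1,m}(\Om)\hookrightarrow L^q(\Om)$ for every finite $q$.

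For the compact embedding, given a sequence $\{f_k\}$ bounded in $W^{1,p}(\Om,\Iq)$, I would fix a countable dense set $\{T_i\}_{i\in\N}\subset \Iq$ and observe that each real-valued function $u_k^i:=\cG(f_k,T_i)$ is bounded in $W^{1,p}(\Om)$, uniformly in $k$ for fixed $i$. The classical Rellich--Kondrachov theorem combined with a diagonal extraction then produces a subsequence, not relabelled, along which $u_k^i\to h_i$ both in $L^q(\Om)$ and pointwise a.e., simultaneously for every $i\in\N$.

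The last step is to reassemble these scalar limits into a $Q$-valued limit and upgrade the convergence. At almost every $x$, the sequence $(f_k(x))_k$ is bounded in $\Iq$ since $\cG(f_k(x),T_0)\to h_0(x)$; because $\Iq$ is proper (being the metric quotient of $(\R{n})^Q$ by $\Pe_Q$), some further subsequence converges to a point $f(x)\in\Iq$. The identities $\cG(f(x),T_i)=h_i(x)$ for every $i$, together with the density of $\{T_i\}$ and the elementary formula $\cG(S,S')=\sup_i|\cG(S,T_i)-\cG(S',T_i)|$, pin down $f(x)$ uniquely, so the entire diagonal subsequence converges to $f(x)$ pointwise a.e. and $f$ is measurable. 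To close the argument I would then combine this pointwise a.e. convergence with the uniform $L^{p^*}$-bound from (i) to deduce equi-integrability of $\cG(f_k,f)^q$ for every $q<p^*$, and apply Vitali's theorem to conclude $\cG(f_k,f)\to 0$ in $L^q(\Om)$. The main obstacle is precisely this reconstruction step: one must verify that the pointwise limit genuinely defines an element of $\Iq$ and that the diagonal subsequence (rather than only a further extraction) identifies it. Completeness and properness of $\Iq$, together with the density of $\{T_i\}$, are exactly what make this step go through without resorting to an ambient Euclidean embedding.
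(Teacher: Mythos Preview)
Your proof is correct. The continuous embedding argument is essentially identical to the paper's: both reduce to the scalar Sobolev inequality applied to $\cG(f,T_0)$.

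For compactness, however, you take a genuinely different route. The paper invokes the metric Lipschitz approximation (Proposition~\ref{p:lipapprox metric}): for each level $l$ it replaces $f_k$ by a $Cl$-Lipschitz truncation $f_{k,l}$, applies Ascoli--Arzel\`a and a diagonal extraction to make $\{f_{k,l}\}_k$ Cauchy in $C^0$ for every fixed $l$, and then shows $\{f_k\}$ itself is Cauchy in $L^q$ via the three-term splitting
\[
\cG(f_k,f_{k'})\leq \cG(f_k,f_{k,l})+\cG(f_{k,l},f_{k',l})+\cG(f_{k',l},f_{k'}),
\]
controlling the outer terms by $C\,l^{1/q-1/p^*}$ through H\"older and the uniform $L^{p^*}$ bound. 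Your argument instead pushes the compactness entirely through the scalar theory: Rellich--Kondrachov on the countable family $\cG(f_k,T_i)$, pointwise reconstruction of the limit via properness of $\Iq$ and density of $\{T_i\}$, and Vitali to upgrade a.e.\ convergence to $L^q$.

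Your approach has the advantage of bypassing the Lipschitz approximation machinery (and with it the Lipschitz extension Theorem~\ref{thm.ext}); it would work verbatim for any proper metric target isometrically embedded in a separable space. The paper's approach, on the other hand, never needs to identify the limit pointwise or invoke properness, and yields the Cauchy property directly in $L^q$---which fits more naturally with the completeness of $(W^{1,p},d_{W^{1,p}})$ established just before. Both are legitimate intrinsic proofs.
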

\begin{remark} In Proposition \ref{p:Sembeddings}
we have also shown that
\begin{itemize}
\item[$(iii)$] if $p>m$, then $W^{1,p}(\Om,\Iq)
\subset C^{0,\alpha}(\Om,\Iq)$, for
$\alpha=1-\frac{m}{p}$, with compact inclusion.
\end{itemize}
It is not difficult to give an intrinsic proof of
it. However, in the regularity theory of Chapters 3 and
5, (iii) is used only in the case $m=1$, 
which has already been shown 
in Proposition \ref{p:Wselection-1}. 
\end{remark}
\begin{proof}
Recall that
$f\in L^p(\Om,\Iq)$ if and only if $\cG(f,T)\in L^p(\Om)$
for some (and, hence, any) $T$.
So, the inclusions in $(i)$ and $(ii)$ 
are a trivial corollary of the usual
Sobolev embeddings for real-valued functions, which
in fact yields the inequality 
\begin{equation}\label{e:aggiunta10}
\|\cG (f, Q\a{0})\|_{L^q (\Omega)}
\leq C (n, \Omega, Q) d_{W^{1,p}} (f, Q\a{0})\, .
\end{equation}

As for the compactness of the embeddings when $q<p^*$,
consider a sequence $\{f_k\}_{k\in\N}$ of $Q$-valued
Sobolev functions with equibounded 
$d_{W^{1,p}}$-distance from a point:
\begin{equation*}
d_{W^{1,p}}(f_k,Q\a{0})=\norm{\cG(f_k,Q\a{0})}{L^p}+
\sum_j\norm{|\de_jf_k|}{L^p}\leq C<+\infty.
\end{equation*}
For every $l\in \N$, let $f_{k,l}$ be the
function given by Proposition \ref{p:lipapprox metric}
choosing $\lambda = l$.

From the Ascoli--Arzel\`a Theorem and
a diagonal argument, we find a subsequence (not relabelled) $f_{k}$
such that, for any fixed $l$, $\{f_{k,l}\}_k$ is a 
Cauchy sequence in $C^0$.
We now use this to show that $f_{k}$ is a Cauchy sequence
in $L^q$. Indeed, 
\begin{equation}\label{e:emb compact}
\norm{\cG(f_{k},f_{k^\prime})}{L^q}\leq
\norm{\cG(f_{k},f_{k,l})}{L^q}+
\norm{\cG(f_{k,l},f_{k^\prime,l})}{L^q}+
\norm{\cG(f_{k^\prime,l},f_{k^\prime})}{L^q}.
\end{equation}
We claim that the first and third terms are
bounded by $C \, l^{1/q-1/p^*}$. It suffices to show it
for the first term. By Proposition \ref{p:lipapprox metric},
there is a constant $C$ such that $d_{W^{1,p}} (f_{k,l}, Q\a{0})
\leq C$ for every $k$ and $l$.
Therefore, we infer
\begin{align*}
\|\cG(f_{k},f_{k,l})\|_{L^q}^q
&\leq C\int_{\{f_k\neq f_{k,l}\}}
\big[\cG (f_k, Q\a{0})^q + \cG (f_{k,l}, Q\a{0})^q\big]\\
&\leq \Big(\|\cG (f_k, \a{0})\|_{L^{p^*}}^q
+ \|\cG (f_{k,l}, \a{0})\|_{L^{p^*}}^q\Big) 
|\{f_k\neq f_{k,l}\}|^{1-q/p^*} \leq C l^{q/p^*-1},
\end{align*}
where in the last line we have used
\eqref{e:aggiunta10} (in the critical case $p^*$)
and the H\"older inequality.

Let $\eps$ be a given positive number. Then
we can choose $l$ such that 
the first and third term in
\eqref{e:emb compact} are both less than $\eps/3$, independently
of $k$. On the other hand, since $\{f_{k,l}\}_k$ is
a Cauchy sequence in $C^0$, there is an $N$ such that
$\|\cG (f_{k,l}, f_{k',l})\|_{L^q}\leq \eps/3$ for
every $k,k'>N$. Clearly, for $k,k'>N$, we then have
$\|\cG(f_k, f_{k'})\|\leq\eps$. This shows that $\{f_k\}$
is a Cauchy sequence in $L^q$ and hence completes the proof of $(i)$.
The compact inclusion in $(ii)$ is analogous.
\end{proof}

\subsection{Campanato--Morrey estimate}
We conclude this section by giving another proof of the 
Campanato--Morrey estimate in
Proposition \ref{p:Campanato Morrey}.

\begin{propos}\label{p:Campanato Morrey metric}
Let $f\in W^{1,2}(B_1,\Iq)$ and $\alpha\in(0,1]$ be such that
\begin{equation*}
\int_{B_r}|Df|^2
\leq A\; r^{m-2+2\alpha}\quad
\mbox{for a.e. $r\in ]0,1]$.}
\end{equation*}
Then, for every $0<\delta<1$,
there is a constant $C=C(m,n,Q,\delta)$ such that
\begin{equation}\label{e:Campanato Morrey metric}
\sup_{x,y\in
\overline{B_{\delta}}}\frac{\cG(f(x),f(y))}{\abs{x-y}^\alpha}
=: \left[f\right]_{C^{0,\alpha}(\overline{B_{\delta}})}
\leq C\,\sqrt{A}.
\end{equation}
\end{propos}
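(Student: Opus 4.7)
The plan is to bypass the embedding $\xii$ and reduce instead to the classical scalar Campanato--Morrey estimate by testing $f$ against a countable dense family $\{T_i\}_{i\in\N}\subset\Iq$. The pivotal observation is that for every $x,y\in B_1$ at which $f$ is defined one has
\[
\cG\big(f(x),f(y)\big)=\sup_{i\in\N}\big|\cG(f(x),T_i)-\cG(f(y),T_i)\big|,
\]
since the (reverse) triangle inequality yields ``$\geq$'' for every single $i$, while taking a subsequence $T_{i_k}\to f(x)$ produces equality in the limit. This is the same reduction principle already used in the metric proofs of Propositions \ref{p:|Df|} and \ref{p:metric space}.

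First, for each $i$ set $u_i:=\cG(f,T_i)\in W^{1,2}(B_1)$. Proposition \ref{p:|Df|} gives $|\partial_j u_i|\leq|\partial_j f|$ a.e., and hence $|Du_i|\leq |Df|$ a.e., so the Morrey-type hypothesis on $f$ transfers verbatim to every $u_i$ with the same constant $A$ (where the hypothesis is read on balls $B_r(y)\subset B_1$, as in the extrinsic counterpart Proposition \ref{p:Campanato Morrey}). The classical scalar Campanato--Morrey lemma (see, e.g., 3.2 in \cite{HanLin}) then yields, for each $i$, a continuous representative $\tilde u_i$ of $u_i$ on $\overline{B_\delta}$ with
\[
[\tilde u_i]_{C^{0,\alpha}(\overline{B_\delta})}\leq C(m,\delta)\,\sqrt{A},
\]
and the constant $C$ is independent of $i$.

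Next, let $E\subset B_1$ be the set of points $x$ at which $\tilde u_i(x)=\cG(f(x),T_i)$ holds simultaneously for every $i$; as a countable intersection of full-measure sets, $E$ itself has full measure. For $x,y\in E\cap\overline{B_\delta}$ the supremum identity then gives
\[
\cG\big(f(x),f(y)\big)=\sup_i|\tilde u_i(x)-\tilde u_i(y)|\leq C\sqrt{A}\,|x-y|^\alpha,
\]
so $f|_{E\cap\overline{B_\delta}}$ is uniformly $\alpha$-H\"older with values in the complete metric space $(\Iq,\cG)$. By the density of $E\cap\overline{B_\delta}$ in $\overline{B_\delta}$, it extends uniquely by uniform continuity to an $\alpha$-H\"older representative of $f$ on $\overline{B_\delta}$ satisfying the desired bound \eqref{e:Campanato Morrey metric}.

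The only mildly delicate step is upgrading from ``each $\tilde u_i$ agrees with $\cG(f,T_i)$ on a set of full measure'' to ``they all agree on a common full-measure set $E$'', which is handled by the countability of $\{T_i\}$. Once this is in place, the sup-of-distances identity trivializes the rest of the argument, so the proof is a routine transfer from the scalar theory, in exactly the same spirit as the other metric proofs of Section \ref{s:metric2}.
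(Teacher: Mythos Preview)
Your proof is correct and follows essentially the same route as the paper: both reduce to the classical scalar Campanato--Morrey estimate by applying it to the functions $\cG(f,T)$, using $|D\cG(f,T)|\leq|Df|$ a.e.\ and the fact that the resulting H\"older constant is independent of $T$. You are simply more explicit than the paper about the measure-theoretic bookkeeping (restricting to a countable dense family, intersecting the full-measure sets, and extending from $E$), which the paper compresses into the phrase ``This implies easily \eqref{e:Campanato Morrey metric}.''
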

\begin{proof} Let $T\in\Iq$ be given. Then,
\begin{equation*}
\int_{B_r}|D\cG(f,T)|^2 \leq \int_{B_r}|Df|^2
\leq A\;r^{m-2+2\alpha} \quad
\mbox{for a.e. $r\in ]0,1]$}.
\end{equation*}
By the classical estimate (see $3.2$ in \cite{HanLin}), $\cG(f,T)$ is
$\alpha$-H\"older with
\begin{equation*}
\sup_{x,y\in
\overline{B_{\delta}}}\frac{\abs{\cG(f(x),T)-\cG(f(y),T)}}
{\abs{x-y}^\alpha}
\leq C\,\sqrt{A},
\end{equation*}
where $C$ is independent of $T$.
This implies easily \eqref{e:Campanato Morrey metric}.
\end{proof}

\section{Metric proofs of the main theorems II}
\label{s:metric3}

We give in this section metric proofs of 
the two remaining results of Section \ref{s:Qsobolev}:
the Poincar\'e inequality
in Proposition \ref{p:poincare'} and the
interpolation Lemma \ref{l:technical}.

\subsection{Poincar\'e inequality} 

\begin{propos}[Poincar\'e inequality]\label{p:poincare' metric}
Let $M$ be a connected bounded Lipschitz 
open set of a Riemannian manifold.
Then, for every $1\leq p<m$, there exists 
a constant $C=C(p,m,n,Q,M)$
with the following property:
for every function $f\in W^{1,p}(M,\Iq)$,
there exists a point $\overline f\in \Iq$ such that
\begin{equation}\label{e:poincare' metric}
\left(\int_{M} \cG (f, \overline f)^{p^*}\right)^{\frac{1}{p^*}}
\leq C \left(\int_{M}|Df|^{p}\right)^{\frac{1}{p}},
\end{equation}
where $p^*=\frac{m\,p}{m-p}$.
\end{propos}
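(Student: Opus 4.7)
The plan is to mimic the extrinsic proof of Proposition \ref{p:poincare'}, but with the role of the biLipschitz embedding $\xii$ replaced by the compact Sobolev embedding $W^{1,p}\hookrightarrow L^p$ of Proposition \ref{p:Sembeddings metric} combined with the classical scalar Poincar\'e inequality applied to the real-valued functions $x\mapsto \cG(f(x),T)$ for fixed $T\in\Iq$. Equivalently, I aim to prove
\[
\inf_{T\in\Iq}\|\cG(f,T)\|_{L^{p^*}} \;\leq\; C\,\|Df\|_{L^p};
\]
the infimum is then attained at some $\bar f\in\Iq$, since the functional $T\mapsto\|\cG(f,T)\|_{L^{p^*}}$ is continuous and coercive on the locally compact space $\Iq$. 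This $\bar f$ serves as the mean.

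Arguing by contradiction, suppose the estimate fails. Then one has $\{f_k\}\subset W^{1,p}(M,\Iq)$ with $\|Df_k\|_{L^p}\to 0$ while $\inf_T \|\cG(f_k,T)\|_{L^{p^*}}=1$. I first normalize using the translation invariance under $f\mapsto \tau_v f$: pick a minimizer $\bar f_k$ and translate so that the barycenter $\eta(\bar f_k)=0$. Applying the classical scalar Poincar\'e to the vector-valued $\eta\circ f_k$ (whose derivative satisfies $|D(\eta\circ f_k)|\leq |Df_k|$) yields $\eta\circ f_k\to 0$ in $L^{p^*}$; applying it to the scalar \emph{spread} $h_k:=\cG(f_k,Q\a{\eta\circ f_k})\in W^{1,p}(M)$ (whose derivative is dominated by $|Df_k|$ via Proposition \ref{p:chainapp}) gives $\|h_k-\bar h_k\|_{L^{p^*}}\to 0$. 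Combined with the identity $|f_k|^2=Q|\eta\circ f_k|^2+h_k^2$, and after a further rescaling by $\bar h_k$ if $\bar h_k\to\infty$, this produces a sequence uniformly bounded in $L^{p^*}(M,\Iq)$, hence bounded in $W^{1,p}$.

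Next I apply the compact embedding $W^{1,p}\hookrightarrow L^p$ (Proposition \ref{p:Sembeddings metric}) to extract, along a subsequence, $f_k\to T_*$ in $L^p(M,\Iq)$ for some $T_*\in\Iq$: the limit is constant because $|Df_k|\to 0$ in $L^p$ forces $|Df^*|=0$ a.e.\ (using the lower semicontinuity of Proposition \ref{p:metric space}), and $M$ is connected. Finally, scalar Poincar\'e applied to $u_k:=\cG(f_k,T_*)\in W^{1,p}(M)$ gives $\|u_k-\mint u_k\|_{L^{p^*}}\to 0$, while the $L^p$-convergence $f_k\to T_*$ yields $\mint u_k\to 0$. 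Thus $\|\cG(f_k,T_*)\|_{L^{p^*}}\to 0$, contradicting $\|\cG(f_k,T_*)\|_{L^{p^*}}\geq \inf_T\|\cG(f_k,T)\|_{L^{p^*}}=1$.

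The hardest part is the normalization of the second paragraph. Translation invariance removes one ``vectorial" degree of freedom ($\eta\circ f_k$), and rescaling removes a second (the overall size $\bar h_k$), but the remaining shape of the minimizer $\bar f_k\in\Iq$ still lives in a non-linear space; controlling it so that the identity $|f_k|^2=Q|\eta\circ f_k|^2+h_k^2$ delivers a uniform $L^{p^*}$-bound requires a careful case distinction depending on whether $\bar h_k$ stays bounded. This is the only non-routine step, and it is essentially the place where the combinatorial/metric nature of $\Iq$ (as opposed to a vector space) needs to be confronted directly, rather than being handled silently by Almgren's embedding as in Proposition \ref{p:poincare'}.
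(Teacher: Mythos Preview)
Your approach is genuinely different from the paper's: you attempt a compactness/contradiction argument, while the paper constructs the mean directly by isometrically embedding $\Iq$ into a separable Banach space (Lemma~\ref{l:emb}), taking the Bochner average $S_f$ there, and then projecting $S_f$ back onto $\Iq$. The linear structure of the ambient Banach space is what lets the paper average, precisely avoiding the difficulty you yourself flag in your last paragraph.

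The gap in your argument is exactly where you suspect it. In the case $\bar h_k\to\infty$ your rescaling $\tilde f_k:=f_k/\bar h_k$ does produce a bounded sequence in $W^{1,p}$, and compactness yields $\tilde f_k\to T_*$ in $L^p$; combined with scalar Poincar\'e on $\cG(\tilde f_k,T_*)$ you get $\|\cG(\tilde f_k,T_*)\|_{L^{p^*}}\to 0$. But to contradict $\inf_T\|\cG(f_k,T)\|_{L^{p^*}}=1$ you need
\[
\|\cG(f_k,\bar h_k T_*)\|_{L^{p^*}}=\bar h_k\,\|\cG(\tilde f_k,T_*)\|_{L^{p^*}}\to 0,
\]
which is an indeterminate $\infty\cdot 0$. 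Decomposing via scalar Poincar\'e gives
\[
\bar h_k\,\|\cG(\tilde f_k,T_*)\|_{L^{p^*}}\leq C\,\|Df_k\|_{L^p}+C\,\bar h_k\,\overline{\cG(\tilde f_k,T_*)}\,;
\]
the first term goes to zero, but for the second you only know $\overline{\cG(\tilde f_k,T_*)}\to 0$ \emph{without rate} (compactness gives none), so you cannot conclude. Concretely, $\tilde f_k$ might approach $T_*$ like $1/\log k$ while $\bar h_k\sim k$. The underlying structural problem is that the isometry group of $\Iq$ acting by translations is only $n$-dimensional, too small to normalize a sequence escaping in the remaining $(Q-1)n$ ``spread'' directions; rescaling collapses these directions at different rates (e.g.\ $\a{ke_1}+\a{-ke_1}+\a{e_2}$ rescales to $\a{e_1}+\a{-e_1}+\a{0}$, losing the $e_2$). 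Any fix I can see---choosing $T_k=f_k(x_k)$ for a good $x_k$, or controlling the double integral $\int\!\!\int\cG(f_k(x),f_k(y))^p$---amounts to proving a direct pointwise estimate like the paper's Lemma~\ref{l:lemma poincare}, at which point the compactness machinery is no longer needed.
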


A proof of (a variant of) this Poincar\'e-type inequality 
appears already, 
for the case $p=1$ and a fairly general target,
in the work of Ambrosio \cite{Amb}. Here we use, however, a 
different approach, based on the existence
of an isometric embedding of 
$\Iqs$ into a separable Banach space. We then exploit 
the linear structure of this larger space to take averages.
This idea, which to our knowledge appeared first 
in \cite{HoKoShTy}, works in a much more general framework,
but, to keep our presentation easy, we will
use all the structural advantages of dealing with the
metric space $\Iqs$.

The key ingredients of the proof are
the lemmas stated below. The first one
is an elementary fact, exploited first by Gromov
in the context of metric geometry (see \cite{Grom})
and used later to tackle many problems
in analysis and geometry on metric spaces
(see \cite{AmbKir2}, \cite{AmbKir1} and \cite{HoKoShTy}).
The second is an extension of a 
standard estimate in the theory of Sobolev spaces.
Both lemmas will be proved at the end of
the subsection.

\begin{lemma}\label{l:emb}
Let $(X,d)$ be a complete separable metric space.
Then, there is an isometric embedding $i:X\to B$
into a separable Banach space. 
\end{lemma}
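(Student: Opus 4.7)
The plan is to carry out the classical Kuratowski--Fr\'echet construction, embedding $X$ into $\ell^\infty$ via translated distance functions indexed by a dense countable subset, and then to pass to the closed linear span to obtain separability.

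First I would fix a countable dense subset $\{x_n\}_{n\in\N}\subset X$ together with a basepoint $x_0\in X$, and define $i:X\to \ell^\infty$ by
\[
i(x) \;=\; \bigl(d(x,x_n)-d(x_0,x_n)\bigr)_{n\in\N}.
\]
The reverse triangle inequality $|d(x,x_n)-d(x_0,x_n)|\leq d(x,x_0)$ shows that each $i(x)$ is a bounded sequence (with sup-norm at most $d(x,x_0)$, independent of $n$), so $i$ is well defined into $\ell^\infty$. A second application, in the form $|d(x,x_n)-d(y,x_n)|\leq d(x,y)$ valid for every $n$, gives the easy inequality $\|i(x)-i(y)\|_\infty\leq d(x,y)$.

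The main step (and the only one where one actually uses the density of $\{x_n\}$) is the matching lower bound. I would extract a subsequence $x_{n_k}\to x$, observe that $d(x,x_{n_k})\to 0$ and $d(y,x_{n_k})\to d(x,y)$, and pass to the limit in
\[
\bigl|d(x,x_{n_k})-d(y,x_{n_k})\bigr| \;\leq\; \|i(x)-i(y)\|_\infty,
\]
to conclude $d(x,y)\leq \|i(x)-i(y)\|_\infty$. Together with the previous paragraph this shows that $i$ is an isometric embedding into $\ell^\infty$.

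Since $\ell^\infty$ is not separable, the final step is to replace it with the closed linear span $B$ of $i(X)$ inside $\ell^\infty$: this is a Banach space in which $i$ still lands isometrically, and since $i$ is continuous and $\{x_n\}$ is dense in $X$, the countable set $\{i(x_n)\}_{n\in\N}$ is dense in $i(X)$, so finite rational linear combinations of these vectors form a countable dense subset of $B$. The only minor technical point to keep an eye on is the boundedness of $i(x)$, which I would verify through the single inequality $|d(x,x_n)-d(x_0,x_n)|\leq d(x,x_0)$; everything else is a straightforward application of the triangle inequality and a limit.
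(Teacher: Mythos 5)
Your proof is correct, but it follows a genuinely different construction from the paper's. You use the classical Kuratowski--Fr\'echet embedding: fix a countable dense set $\{x_n\}$ and a basepoint, map $x$ to the bounded sequence $\bigl(d(x,x_n)-d(x_0,x_n)\bigr)_n\in\ell^\infty$, prove the isometry by the reverse triangle inequality (upper bound) and by letting $x_n\to x$ (lower bound), and finally restore separability by passing to the closed linear span of the image. The paper instead considers the Banach space $A$ of Lipschitz functions on $X$ vanishing at a fixed basepoint, normed by the Lipschitz constant, and embeds $X$ into the dual $A'$ via evaluation functionals $[y](f)=f(y)$; the isometry $\|[y]-[z]\|_{A'}=d(y,z)$ follows by testing with the single function $f(w)=d(w,y)-d(y,x)$, and separability is again obtained by taking the closed span of $i(X)$. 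The two routes trade off as follows: yours is more concrete and elementary (the target is a sequence space, no duality is invoked), at the price of choosing a dense subset up front and using it already in the proof of the isometry; the paper's embedding is canonical (independent of any choice of dense subset, which enters only in the separability step) and is the Kantorovich--Rubinstein/Arens--Eells-type construction. Both arguments use only separability of $X$, not completeness, and both are complete proofs of the lemma.
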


\begin{lemma}\label{l:lemma poincare}
For every $1\leq p<m$ and $r>0$,
there exists a constant $C=C(p,m,n,Q)$ such that,
for every $f\in W^{1,p}(B_r,\Iq)\cap {\rm Lip}\,(B_r,\Iq)$
and every $z\in B_r$,
\begin{equation}\label{e:poinc1}
\int_{B_r}\cG(f(x),f(z))^p dx\leq C\,r^{p+m-1}\int_{B_r}
|Df|(x)^p\,|x-z|^{1-m}\,dx.
\end{equation}
\end{lemma}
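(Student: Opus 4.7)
The plan is to mimic the classical proof of the Poincar\'e inequality via integration along rays, after first establishing a metric ``fundamental theorem of calculus'' for the function $x\mapsto\cG(f(x),f(z))$.

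The key step (and essentially the only place where the $Q$-valued nature of $f$ enters) is the pointwise bound
\begin{equation*}
\cG(f(x),f(z))\leq|x-z|\int_0^1|Df|\bigl(z+t(x-z)\bigr)\,dt\qquad\text{for every }x,z\in B_r.
\end{equation*}
To prove this, I fix $z$ and consider the real-valued function $g(x):=\cG(f(x),f(z))$, which is Lipschitz with $\Lip(g)\leq\Lip(f)$ by the triangle inequality. Applying Proposition \ref{p:|Df|}(i) with the fixed $Q$-point $T=f(z)$ gives $|\de_jg|\leq|\de_jf|$ a.e.\ on $B_r$ for every $j$, hence $|\nabla g|(x)\leq|Df|(x)$ a.e. Since $g$ is Lipschitz, $g(z)=0$, and the segment $[z,x]$ lies in the convex set $B_r$, the classical fundamental theorem of calculus applied to $g$ on $[z,x]$ yields the desired bound.

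Once this is in place, raising to the $p$-th power and using Jensen's inequality on the probability measure $dt$ on $[0,1]$ gives
\begin{equation*}
\cG(f(x),f(z))^p\leq|x-z|^p\int_0^1|Df|\bigl(z+t(x-z)\bigr)^p\,dt.
\end{equation*}
I then integrate this in $x\in B_r$, pass to polar coordinates $x=z+\rho\omega$ centred at $z$ (with $\rho\in[0,R(\omega)]$ and $R(\omega)\leq 2r$ since $z\in B_r$), and perform the substitution $s=t\rho$ in the inner integral, which rewrites $\int_0^1|Df|(z+t\rho\omega)^p\,dt$ as $\rho^{-1}\int_0^\rho|Df|(z+s\omega)^p\,ds$. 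Swapping the $\rho$- and $s$-integrations by Fubini leaves an inner $\rho$-integral bounded by $(2r)^{p+m-1}/(p+m-1)\leq Cr^{p+m-1}$. Converting back to Cartesian coordinates via $y=z+s\omega$, so that $ds\,d\omega=|y-z|^{1-m}\,dy$, and using the convexity of $B_r$ to identify the image $\{z+s\omega:\omega\in\s^{m-1},\,s\in[0,R(\omega)]\}$ with $B_r$ itself, one recovers exactly the right-hand side of \eqref{e:poinc1}.

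The main obstacle is really the pointwise inequality above; every other step is a routine Fubini and polar-coordinates computation. The trick of working with the scalar function $x\mapsto\cG(f(x),f(z))$ rather than directly with $f$ is what reduces the metric-valued statement to the classical scalar Sobolev setting and bypasses the need to build a Lipschitz selection along each segment $[z,x]$ (which would also work, via Proposition \ref{p:Wselection-1} applied to the $1$-dimensional restriction, but is heavier).
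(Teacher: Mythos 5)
Your proof is correct and follows essentially the same route as the paper: the same pointwise bound $\cG(f(x),f(z))\leq|x-z|\int_0^1|Df|(z+t(x-z))\,dt$ followed by the same Fubini/polar-coordinate computation (the paper organizes the integration over spheres $\partial B_s(z)\cap B_r$ and then in $s$, which is equivalent to your substitution $s=t\rho$). The only difference is cosmetic: you justify the pointwise bound via the scalar Lipschitz function $x\mapsto\cG(f(x),f(z))$ and Proposition \ref{p:|Df|}, whereas the paper invokes Rademacher for the restriction of $f$ to the segment; both are fine, and yours is arguably the cleaner reduction to the scalar case.
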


\begin{proof}[Proof of Proposition \ref{p:poincare' metric}]
\textit{Step $1$.}
We first assume $M=B_r\subset\R{m}$ and $f$ Lipschitz. 
We regard $f$ as a map taking values in
the Banach space $B$ of Lemma \ref{l:emb}.
Since $B$ is a Banach space, we can integrate 
$B$-valued functions on Riemannian manifolds using 
the Bochner integral. Indeed,
being $f$ Lipschitz and $B$ a separable Banach space,
in our case it is straightforward to check that $f$ is integrable
in the sense  of Bochner (see 
\cite{DieUhl}; in fact the theory of the Bochner 
integral can be applied in much more general situations).

Consider therefore the average of $f$ on $M$, which
we denote by $S_f$. We will show that
\begin{equation}\label{e:poincare in A}
\int_{B_r} \|f-S_f\|_B^{p}
\leq C r^p \int_{B_r}|Df|^{p}.
\end{equation}
First note that, by the usual
convexity of the Bochner integral,
\begin{equation*}
\|f(x)-S_f\|_B \leq 
\mint \| f(z) -  f(x)\|_B\, dz
= \mint \cG (f(z), f(x))\, dz.
\end{equation*}
Hence, \eqref{e:poincare in A}
is a direct consequence of Lemma \ref{l:lemma poincare}: 
\begin{align*}
\int_{B_r} \|f(x)-S_f\|^{p}_B\,dx
&\leq  \int_{B_r} \mint_{B_r} \cG(f(x),f(z))^{p}\, dz\,dx
\nonumber\\
& \leq  C\,r^{p+m-1}\mint_{B_r}
\int_{B_r} |w-z|^{1-m}|Df|(w)^p \,dw\,dz\nonumber\\
&\leq  C\,r^{p}\int_{B_r}|Df|(w)^p\, dw.
\end{align*}

\textit{Step $2$.}
Assuming $M=B_r\subset\R{m}$ and $f$ Lipschitz,
we find a point $\overline f$ such that
\begin{equation}\label{e:p-p lip B}
\int_{B_r}\cG\left(f,\overline f\right)^p\leq C r^p \int_{B_r}|Df|^p.
\end{equation}
Consider, indeed, $\overline f\in\Iq$ 
a point such that
\begin{equation}\label{e:projection}
\|S_f-\overline f\|_B=\min_{T\in\Iq}\|S_f-T\|_B.
\end{equation}
Note that $\overline{f}$ exists because $\Iq$ is 
locally compact.
Then, we have
\begin{align*}
\int_{B_r} \cG \left(f, \overline f\right)^{p}
& \leq C \int_{B_r} \|f-S_f\|_B^{p}+
\int_{B_r} \|S_f- \overline f\|_B^{p}\nonumber\\
&\stackrel{\mathclap{\eqref{e:poincare in A},\,\eqref{e:projection}}}{\leq}\quad 
C\, r^p \int_{B_r}|Df|^{p}
+ C\int_{B_r} \|S_f- f\|_B^{p}
\stackrel{\eqref{e:poincare in A}}{\leq} 
C\, r^p \int_{B_r}|Df|^{p}.
\end{align*}

\textit{Step $3$.} Now we consider the case of a generic
$f\in W^{1,p}(B_r,\Iq)$.
From the Lipschitz approximation Theorem
\ref{p:lipapprox metric}, we find a sequence of Lipschitz
functions $f_k$ converging to $f$, $d_{W^{1,p}}(f_k,f)\ra0$.
Fix, now, an index $k$ such that
\begin{equation}\label{e:position}
\int_{B_r}\cG(f_k,f)^p\leq r^p \int_{B_r}|Df|^p
\quad\textrm{and}\quad
\int_{B_r}|Df_k|^p\leq 2\int_{B_r}|Df|^p,
\end{equation}
and set $\overline f=\overline{f_k}$, with the $\overline{f_k}$
found in the previous step.
With this choice, we conclude
\begin{equation}\label{e:p-p B}
\int_{B_r}\cG\left(f,\overline f\right)^p \leq
C \int_{B_r}\cG(f,f_k)^p+
\int_{B_r}\cG\left(f_k,\overline f_k\right)^p\,
\stackrel{\eqref{e:p-p lip B},\,\eqref{e:position}}{\leq} C\,
r^p \int_{B_r}|Df|^p.
\end{equation}

\textit{Step $4$.} Using classical Sobolev embeddings,
we prove \eqref{e:poincare' metric} in the case of
$M=B_r$.
Indeed, since $\cG(f,\overline f)\in W^{1,p}(B_r)$,
we conclude
\begin{equation*}
\norm{\cG(f,\overline f)}{L^{p^*}} \leq
\norm{\cG(f,\overline f)}{W^{1,p}}
\stackrel{\eqref{e:p-p B}}{\leq} C
\left(\int_{B_r}|Df|^p
\right)^{\frac{1}{p}}.
\end{equation*}

\textit{Step $5$.}
Finally, we drop the hypothesis of $M$ being a ball.
Using the compactness and connectedness 
of $\overline M$, we cover $M$ by finitely many domains
$A_1, \ldots, A_N$ 
biLipschitz to a ball such that $A_k\cap \cup_{i<k} A_i
\neq \emptyset$.
This reduces the proof of the general statement to
that in the case $M=A\cup B$, where $A$ and $B$ are
two domains such that
$A\cap B\neq\emptyset$ and the Poincar\'e
inequality is valid for both.
Under these assumptions, denoting by $f_A$ and $f_B$ two means
for $f$ over $A$ and $B$, we estimate
\begin{equation*}
\cG(f_A,f_B)^{p^*}=\mint_{A\cap B}\cG(f_A,f_B)^{p^*}
\leq C \mint_{A}\cG(f_A,f)^{p^*}+
C\mint_{B}\cG(f,f_B)^{p^*}\leq C
\left(\int_M |Df|^p\right)^{\frac{p^*}{p}}.
\end{equation*}
Therefore,
\begin{align*}
\int_{A\cup B}\cG(f,f_A)^{p^*} & \leq
\int_{A}\cG(f,f_A)^{p^*}+
\int_{B}\cG(f,f_A)^{p^*}\\
& \leq \int_{A}\cG(f,f_{A})^{p^*}
+C \int_{B}\cG(f,f_{B})^{p^*}
+C\, \cG (f_A, f_B)^{p^*} |B|
\leq C \left(\int_M
|Df|^p\right)^{\frac{p^*}{p}}.
\end{align*}
\end{proof}

\begin{proof}[Proof of Lemma \ref{l:emb}] We choose a point $x\in X$
and consider the Banach space $A:= \{f\in \Lip (X, \R{}):
f(x)=0\}$ with the norm $\|f\|_A= \Lip (f)$. Consider
the dual $A'$ and let $i:X\to A'$ be the mapping
that to each $y\in X$ associates the element $[y]\in A'$
given by the linear functional $[y] (f) = f (y)$. First
of all we claim that $i$ is an isometry, which amounts
to prove the following identity:
\begin{equation}\label{e:isom}
d(z,y) = \|[y]-[z]\|_{A'} =
\sup_{f (x)=0,\, \Lip (f)\leq 1} |f (y) - f(z)|\, 
\quad \forall x,y\in X.
\end{equation}
The inequality $|f(y)-f(z)|\leq d(y,z)$ follows
from the fact that $\Lip (f)=1$. On the other hand,
consider the function $f(w):= d (w,y)-d(y,x)$.
Then $f(x)=0$, $\Lip (f)=1$ and $|f(y)-f(z)|= d (y,z)$.

Next, let $C$ be the subspace generated by finite linear
combinations of elements of $i (X)$. Note that $C$ is separable
and contains $i(X)$: its closure in $A'$ is
the desired separable Banach space $B$.
\end{proof}

\begin{proof}[Proof of Lemma \ref{l:lemma poincare}]
Fix $z\in B_r$. Clearly the restriction of $f$
to any segment $[x,z]$ is Lipschitz. Using Rademacher,
it is easy to justify the following inequality
for a.e. $x$:
\begin{equation}\label{e:dist-grad1}
\cG(f(x),f(z))\leq|x-z|\int_0^1|Df|(z+t(x-z))\,dt.
\end{equation}
Hence, one has
\begin{align}\label{e:dist-grad2}
\int_{B_r\cap \de B_s(z)}\cG(f(x),f(z))^p\,dx
& \stackrel{\mathclap{\eqref{e:dist-grad1}}}{\leq} 
\int_{B_r\cap\de B_s(z)}\int_0^1|x-z|^p\,|Df|(z+t(x-z))^p\,dt\,dx
\nonumber\\
& \leq  s^p\int_0^1\int_{B_r\cap \de B_{ts}(z)}
t^{1-n}|Df|(w)^p\, dw\, dt\nonumber\\
& = s^{p+m-1}\int_0^1\int_{B_r\cap \de B_{ts}(z)}
|w-z|^{1-m}|Df|(w)^p\, dw\, dt\nonumber\\
& \leq s^{p+m-2}\int_{B_r}
|w-z|^{1-m}|Df|(w)^p\, dw.
\end{align}
Integrating in $s$ the inequality \eqref{e:dist-grad2},
we conclude \eqref{e:poinc1},
\begin{equation*}
\int_{B_r}\cG(f(x),f(z))^p\, dx\leq
C\,r^{p+m-1}\int_{B_r}
|w-z|^{1-m}|Df|(w)^p\, dw.
\end{equation*}
\end{proof}

\subsection{Interpolation Lemma}
We prove in this section Lemma \ref{l:technical} (the statement
below is, in fact, slightly simpler: Lemma \ref{l:technical}
follows however from elementary scaling arguments).
In this case, the proof relies in an essential way
on the properties of $\Iqs$ and we believe that
generalizations are possible only under some structural
assumptions on the metric target.

\begin{lemma}[Interpolation Lemma]\label{l:technical metric}
There exists a constant $C=C(m,n,Q)$ with the following property.
For any $g,\,\tilde g\in W^{1,2}(\de B_1,\Iq)$,
there is 
$h\in W^{1,2}(B_1\setminus B_{1-\eps},\Iq)$
such that
\begin{equation*}
h\left(x\right)=g(x),\qquad
h\left((1-\eps)\,x\right)=\tilde g(x),\quad\textrm{for }x\in\de B_1,
\end{equation*}
and
\begin{equation*}
\D(h,B_1\setminus B_{1-\eps})
\leq C \left\{\eps \,\D(g,\de B_1)+\eps\,\D(\tilde g,\de B_{1})+
\eps^{-1}\int_{\de B_1}
\cG\left(g,\tilde g\right)^2 \right\}\, .
\end{equation*}
\end{lemma}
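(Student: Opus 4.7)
The plan is to construct $h$ by radial componentwise linear interpolation in $\Iqs$, after matching the atoms of $g(x)$ and $\tilde g(x)$ via a measurably chosen optimal permutation. This is the natural intrinsic analogue of the Banach-linear interpolation used in the extrinsic proof via $\xii$ and $\ro$.

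First I would reduce to the case when $g$ and $\tilde g$ are Lipschitz. By the metric Lipschitz approximation (Proposition \ref{p:lipapprox metric}), pick Lipschitz sequences $g_k\to g$ and $\tilde g_k\to\tilde g$ in $W^{1,2}(\de B_1,\Iq)$. If the lemma holds for each Lipschitz pair, the resulting maps $h_k$ have uniformly bounded Dirichlet energy, and a weak limit $h$ exists whose trace agrees with $g$ on $\de B_1$ and with $\tilde g(\cdot)$ on $\de B_{1-\eps}$ by Proposition \ref{p:trace metric}, and whose energy satisfies the required bound by the lower semicontinuity of $\D$ established in Chapter 2.

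For Lipschitz $g=\sum_i\a{g_i}$ and $\tilde g=\sum_i\a{\tilde g_i}$ (measurable selections via Proposition \ref{p:selection}), I would build a measurable map $\sigma:\de B_1\to\Pe_Q$ realizing the optimal coupling,
\begin{equation*}
\cG(g(x),\tilde g(x))^2 \;=\; \sum_i \abs{g_i(x)-\tilde g_{\sigma(x)(i)}(x)}^2 \qquad \text{for a.e. } x,
\end{equation*}
by partitioning $\de B_1$ into finitely many measurable sets $\{A_\sigma\}_{\sigma\in\Pe_Q}$ using a fixed tie-breaking rule on the finite set $\Pe_Q$. Then I would set, for $(r,x)\in[1-\eps,1]\times\de B_1$ with $\alpha(r):=(r-(1-\eps))/\eps$,
\begin{equation*}
h(rx) \;:=\; \sum_{i=1}^Q \a{\alpha(r)\,g_i(x)+(1-\alpha(r))\,\tilde g_{\sigma(x)(i)}(x)}.
\end{equation*}
The boundary conditions $h|_{\de B_1}=g$ and $h((1-\eps)x)=\tilde g(x)$ are immediate from $\alpha(1)=1$ and $\alpha(1-\eps)=0$. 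The energy estimate decomposes in polar coordinates: the radial derivative satisfies $|\de_r h(rx)|^2=\cG(g(x),\tilde g(x))^2/\eps^2$ by the chain rule (Proposition \ref{p:chainapp}), which integrates to the $\eps^{-1}\int_{\de B_1}\cG(g,\tilde g)^2$ contribution; on each $A_\sigma$ (where $\sigma$ is constant so contributes nothing to tangential differentiation) the tangential derivative is a convex combination of the $\de_\tau g_i$ and $\de_\tau\tilde g_{\sigma(i)}$, yielding $\eps[\D(g,\de B_1)+\D(\tilde g,\de B_1)]$ after integration.

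The main obstacle is verifying that $h$ genuinely lies in $W^{1,2}(B_1\setminus B_{1-\eps},\Iq)$: across interfaces $\de A_\sigma\cap\de A_{\sigma'}$ where two distinct permutations both realize the optimal coupling, the interpolated $Q$-points at intermediate radii can genuinely differ as elements of $\Iq$, creating tangential discontinuities of $h$ that are incompatible with $W^{1,2}$ unless concentrated on an $\cH^{m-1}$-null set. To handle this, I would exploit the local decomposition Proposition \ref{p.10} applied to the pair $(g,\tilde g)$ on sufficiently small balls of $\de B_1$: by iteratively splitting off Lipschitz sub-pieces whose atoms are pairwise well separated relative to the oscillation, the optimal matching becomes unique on each sub-piece and the interpolation reduces unambiguously to the single-valued linear one in $\R{n}$. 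A Whitney-type covering argument (in the spirit of Theorem \ref{thm.ext}) then glues these local unambiguous interpolations into a globally $W^{1,2}$ map, at the cost of only universal constants in the energy bound.
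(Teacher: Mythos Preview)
Your approach is quite different from the paper's, and there is a genuine gap at exactly the point you flag. The difficulty you identify---that on the interfaces $\de A_\sigma\cap\de A_{\sigma'}$ two optimal matchings give \emph{different} interpolated $Q$-points at intermediate radii---is real and not cured by the fix you propose. Proposition~\ref{p.10} splits a \emph{single} Lipschitz $Q$-function according to separation of its own atoms; it says nothing about making the transport matching between $g(x)$ and $\tilde g(x)$ unique, and in general no such unique matching exists (take $Q=2$, $g(x)=\a{(1,0)}+\a{(-1,0)}$, $\tilde g(x)=\a{(0,1)}+\a{(0,-1)}$: both permutations are optimal and yield distinct midpoints in $\I{2}$). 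A Whitney covering would only shift the problem to the cube interfaces: the locally chosen matchings need not agree there, so the glued map again has tangential jumps across $(m-1)$-dimensional sets in the annulus, and you have no mechanism to bound the resulting energy. There is also a second, more basic issue: your measurable selections $g_i,\tilde g_i$ are not themselves $W^{1,2}$ for $m\geq 3$, so the identity ``the tangential derivative is a convex combination of $\de_\tau g_i$ and $\de_\tau\tilde g_{\sigma(i)}$'' has no direct meaning; you would need to work entirely through the intrinsic $|\de_j f|$ and the approximate differential, and the jump problem above blocks that computation anyway.

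The paper bypasses the matching problem entirely. It never tries to pair atoms of $g$ with atoms of $\tilde g$. Instead it discretizes: on a dyadic grid of cubes in a flat model $A\times[0,\eps]$, it assigns to each vertex a \emph{single} $Q$-point (a Poincar\'e mean of $g$ or $\tilde g$ over the adjacent cube), and then invokes the Lipschitz Extension Theorem~\ref{thm.ext} skeleton by skeleton to fill in each cell. The energy bound comes from controlling $\cG$ between means on adjacent cubes via Poincar\'e, and the construction is manifestly $W^{1,2}$ since each approximant $f_k$ is globally Lipschitz; a compactness argument then passes to the limit. The moral is that the correct ``intrinsic linear interpolation'' in $\Iq$ is not displacement interpolation along Wasserstein geodesics (your construction), but the Lipschitz extension theorem applied to discrete data.
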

\begin{proof}
For the sake of clarity, we divide the proof into two steps:
in the first one we prove the lemma in a simplified geometry 
(two parallel hyperplanes instead of two
concentric spheres); then, we adapt the 
construction to the case of interest.

\textit{Step 1. Interpolation between parallel planes.} We let $A=[-1,1]^{m-1}$,
$B= A\times [0,\eps]$ and 
consider two functions $g,\,\tilde g\in W^{1,2}(A,\Iq)$. 
We then want to find a function $h:B\ra\Iq$ such that
\begin{equation}\label{e:marginals}
h(x,0)=g(x)\quad\textrm{and}\quad h(x,\eps)=\tilde g(x);
\end{equation}
\begin{equation}\label{e:estimate energy}
\D(h,B)\leq C\left(\eps\,\D(g,A)+\eps\,\D(\tilde g,A)+
\eps^{-1}\int_A\cG(g,\tilde g)^2\right),
\end{equation}
where the constant $C$ depends only on $m$, $n$ and $Q$.

For every $k\in\N_+$, set $A_k=[-1-k^{-1},1+k^{-1}]^{m-1}$, 
and decompose $A_k$ in the union of
$(k+1)^{m-1}$ cubes
$\left\{C_{k,l}\right\}_{l=1,\ldots,(k+1)^{m-1}}$ with
disjoint interiors, side
length equal to $2/k$ and faces parallel to the
coordinate hyperplanes.
We denote by $x_{k,l}$ their centers. Therefore,
$C_{k,l}=x_{k,l}+\left[-\frac{1}{k},\frac{1}{k}\right]^{m-1}$.
Finally, we subdivide $A$ into the cubes $\{D_{k,l}\}_{l=1, \ldots,
k^{m-1}}$ of side $2/k$ and having the points $x_{k,l}$ as vertices, 
(so $\{D_{k,l}\}$ is the decomposition ``dual'' to $\{C_{k,l}\}$;
see Figure \ref{f:dual}).

\begin{figure}[htbp]
\begin{center}
    \input{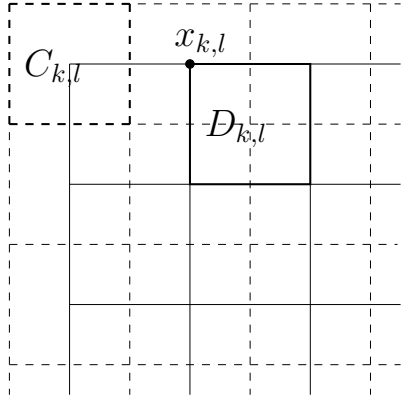}
    \caption{The cubes $C_{k,l}$ and $D_{k,l}$.}
    \label{f:dual}
\end{center}
\end{figure}

On each $C_{k,l}$ take a mean $\overline{g}_{k,l}$ of $g$
on $C_{k,l}\cap A$. On $A_k$
we define the piecewise constant functions $g_k$
which takes the constant value $\overline{g}_{k,l}$ on each $C_{k,l}$: 
\[
g_k\equiv \overline g_{k,l}\quad\textrm{in }
\; C_{k,l}, \quad\textrm{with}\quad
\int_{C_{k,l}\cap A}\cG(g,\overline g_{k,l})^2\leq \frac{C}{k^2}
\int_{C_{k,l}\cap A}|Dg|^2.
\]
In an analogous way, we define $\tilde{g}_k$ from $\tilde g$
and denote by $\tilde{g}_{k,l}$ the corresponding averages.
Note that $g_k\ra g$ and $\tilde g_k\ra\tilde g$
in $L^2(A,\Iq)$.

We next define a Lipschitz function $f_k:B\ra\Iq$.
We set $f_k (x_{k,l}, 0)=\bar{g}_{k,l}$ and $f_k (x_{k,l}, \eps) =
\tilde{g}_{k,l}$. We then use Theorem \ref{thm.ext}
to extend $f_k$ on the $1$-skeleton of the cubical
decomposition given by $D_{k,l}\times [0,\eps]$. We apply
inductively Theorem \ref{thm.ext} to extend $f_k$
to the $j$-skeletons.

If $V_{k,l}$ and $Z_{k,l}$ denote, respectively, the set of vertices of 
$D_{k,l}\times\{0\}$ and $D_{k,l}\times \{\eps\}$,
we then conclude that 
\begin{equation}\label{e:stimalip2}
\Lip (f_k|_{D_{k,l}\times \{\eps\}})\leq C\,
\Lip (f_k|_{Z_{k,l}})
\quad\text{and}\quad
\Lip (f_k|_{D_{k,l}\times\{0\}})\leq
C\,\Lip (f_k|_{V_{k,l}}).
\end{equation}
Let $(x_{k,i},0)$
and $(x_{k,j},0)$ be two adjacent vertices in $V_{k,l}$.
Then,
\begin{align}\label{e:near1}
\cG(f_k(x_{k,i},0),f_k(x_{k,j},0))^2 &=
\cG(g_k(x_{k,i}),g_k(x_{k,j}))^2 =
\mint_{C_{k,i}\cap C_{k,j}\cap A}
\cG(g_k(x_{k,i}),g_k(x_{k,j}))^2\nonumber\\
& \leq  C \mint_{C_{k,i}\cap A}\cG(\overline g_{k,i},g)^2
+C\mint_{C_{k,j}\cap A}\cG(g,\overline g_{k,j})^2\nonumber\\
& \leq \frac{C}{k^{m+1}}\int_{C_{k,i}\cup C_{k,j}}|Dg|^2.
\end{align}
In the same way, if $(x_{k,i},\eps)$
and $(x_{k,j},\eps)$ are two adjacent vertices in $Z_{k,l}$,
then
\begin{equation*}
\cG(f_k(x_{k,i},\eps),f_k(x_{k,j},\eps))^2
\leq \frac{C}{k^{m+1}}\int_{C_{k,i}\cup C_{k,j}}
|D\tilde g|^2.
\end{equation*}
Finally, for $(x_{k,i},0)$ and $(x_{k,i},\eps)$, we have
\begin{equation*}
\cG\big(f_k(x_{k,i},0),f_k(x_{k,i},\eps)\big)^2
= \eps^{-2}\,\cG(g_{k,i},\tilde g_{k,i})^2\leq
\mint_{C_{k,i}\cap A}\eps^{-2}\,\cG(g_{k},\tilde g_k)^2.
\end{equation*}
Hence, if $\{C_{k,\alpha}\}_{\alpha=1,\ldots, 2^{m-1}}$
are all the cubes intersecting $D_{k,l}$,
we conclude that the Lipschitz constant of $f_k$
in $D_{k,l}\times [0,\eps]$ is bounded in the following way:
\begin{equation*}
\Lip(f_k\vert_{D_{k,l}\times[0,\eps]})^2\leq
\frac{C}{k^{m-1}} \int_{\cup_{\alpha} C_{k,\alpha}}
\big(|Dg|^2+|D\tilde g|^2
+\eps^{-2}\cG(g_k,\tilde g_k)^2\big).
\end{equation*}
Observe that each $C_{k,\alpha}$ intersects at most $N$
cubes $D_{k,l}$, for some dimensional constant $N$. Thus,
summing over $l$, we conclude
\begin{equation}\label{e:energy approx}
\D(f_k,A\times[0,\eps])\leq C\left(
\eps \int_A |Dg|^2+\eps\int_A |D\tilde g|^2+
\eps^{-1}\int_A \cG(g_k,\tilde g_k)^2\right).
\end{equation}
Next, having fixed $D_{k,l}$, consider one of its
vertices, say $x'$. By \eqref{e:stimalip2} and \eqref{e:near1}, we conclude
\[
\max_{y\in D_{k,l}} \cG (f_k (y,0), f_k (x', 0))^2
\leq \frac{C}{k^{m+1}}\int_{\cup_\alpha C_{k,\alpha}}
|Dg|^2.
\]
For any $x\in D_{k,l}$, $g_k (x)$ is equal to
$f_k (x',0)$ for some vertex $x'\in D_{k,l}$. Thus, we can
estimate
\begin{equation}\label{e:tracciag}
\int_A \cG (f_k (x,0), g_k (x))^2\, dx
\leq \frac{C}{k^2} \int_A |Dg|^2.
\end{equation}
Recalling that $g_k\to g$ in $L^2$, we conclude, 
therefore, that $f_k (\cdot, 0)$ converges to
$g$. A similar conclusion can be inferred for $f_k (\cdot, \eps)$.

Finally, from \eqref{e:energy approx}
and \eqref{e:tracciag}, we conclude a uniform bound on
$\||f_k|\|_{L^2 (B)}$. Using the compactness of the embedding
$W^{1,2}\subset L^2$, we conclude the existence of a subsequence
converging strongly in $L^2$ to a function $h\in W^{1,2} (B)$.
Obviously, $h$ satisfies \eqref{e:estimate energy}.
We now want to show that \eqref{e:marginals} holds.

Let $\delta\in ]0, \eps[$ and assume that
$f_k (\cdot , \delta)\to f (\cdot, \delta)$ in $L^2$
(which in fact holds for a.e. $\delta$). Then,
a standard argument shows that
\[
\int_A \cG (f (x,\delta), g (x))^2\, dx
= \lim_{k\uparrow\infty}
\int_A \cG (f_k (x,\delta), g_k (x))^2 \, dx
\leq \limsup_{k\uparrow\infty} \delta \||Df_k|\|^2_{L^2 (B)}
\leq C\delta.
\]
Clearly, this implies that $f (\cdot,0) = g$.
An analogous computation shows $f(\cdot,\eps)=\tilde{g}$.

\medskip

\textit{Step $2$. Interpolation between two spherical shells.}
In what follows, we denote by $D$ the closed
$(m-1)$-dimensional ball
and assume that $\phi_+: D\to \de B_1\cap \{x_m\geq 0\}$ 
is a diffeomorphism. Define $\phi_-: D\to
\de B_1 \cap \{x_m\leq 0\}$ by simply setting 
$\phi_- (x) = - \phi_+ (x)$. 
Next, let $\phi : A\to D$ be a biLipschitz 
homeomorphism, where $A$ is the set in Step 1, and set 
\[
\varphi_{\pm}= \phi_{\pm}\circ \phi,\quad  
g_{k,\pm}=g\circ\ph_{\pm}\,\quad\mbox{and}
\quad \tilde g_{k,\pm}=\tilde g\circ\ph_{\pm}.
\] 
Consider
the Lipschitz approximating functions constructed in Step $1$,
$f_{k,+}:A\times[0,\eps]\ra\Iq$ interpolating between
$g_{k,+}$ and $\tilde{g}_{k,-}$.

Next, to construct $f_{k,-}$,
we use again the cell decomposition of Step 1. We follow
the same procedure to attribute the 
values $f_{k,-} (x_{k,l}, 0)$ and $f_{k,-} (x_{k,l},\eps)$
on the vertices $x_{k,l}\not\in \de A$. We instead set   
$f_{k,-} (x_{k,l}, 0) = f_{k,+} (x_{k,l}, 0)$ and 
$f_{k,-} (x_{k,l}, \eps) = f_{k,+} (x_{k,l}, \eps)$ when
$x_{k,l}\in \de A$. Finally, when using Theorem \ref{thm.ext}
as in Step 1, we take care to set $f_{k,+}= f_{k,-}$
on the skeleta lying in $\partial A$ and we define
\[
f_k(x)=
\begin{cases}
f_{k,+}(\varphi_{+}^{-1} (x/|x|),1-\abs{x}) & \textrm{if } x_m\geq0\\
f_{k,-}(\varphi_{-}^{-1} (x/|x|),1-\abs{x}) & \textrm{if } x_m\leq0\, .
\end{cases}
\]
Then, $f_k$ is a Lipschitz map. We want to use
the estimates of Step 1 in order to conclude the
existence of a sequence converging to a function $h$ which satisfies
the requirements of the proposition. This is straightforward on
$\{x_m\geq 0\}$. On $\{x_m\leq 0\}$ we just have to control
the estimates of Step 1 for vertices lying on $\de A$. 
Fix a vertex $x_{k,l}\in \de A$.
 
In the procedure of Step 1, $f_{k,-} (x_{k,l}, 0)$
and $f_{k,-} (x_{k,l}, \eps)$ are defined by taking the averages
$h_{k,l}$ and $\tilde{h}_{k,l}$ for
$g \circ \varphi_-$ and $\tilde{g}\circ \varphi_-$ 
on the cell $C_{k,l}\cap A$. In the procedure specified above
the values of  $f_{k,-} (x_{k,l}, 0)$
and $f_{k,-} (x_{k,l}, \eps)$ are given by the averages of
$g\circ \varphi_+$ and $\tilde{g}\circ \varphi_+$, which we denote by
$g_{k,l}$ and $\tilde{g}_{k,l}$.
However, we can estimate the difference in the following way
\[
|g_{k,l}- h_{k,l}|\leq \frac{C}{k^{m+2}} \int_{E_{k,l}} |Dg|^2,
\]
where $E_{k,l}$ is a suitable cell in $\de B_1$ containing
$\varphi_+ (C_{k,l})$ and $\varphi_- (C_{k,l})$. Since these
two cells have a face in common and $\varphi_{\pm}$
are biLipschitz homeomorphisms, we can estimate the diameter of $E_{k,l}$
with $C/k$ (see Figure \ref{f:sfera}). 
Therefore the estimates \eqref{e:energy approx} and
\eqref{e:tracciag} proved in Step 1 hold with (possibly) worse constants. 
\end{proof}

\begin{figure}[htbp]
\begin{center}
    \input{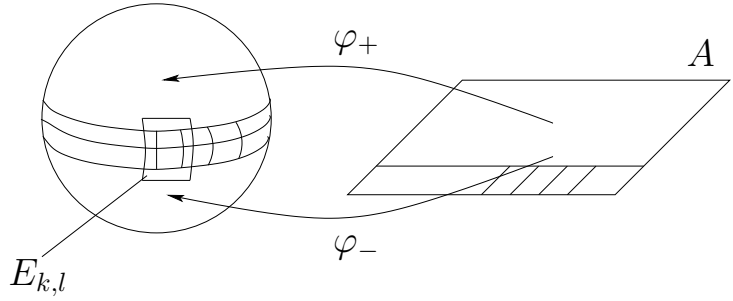}
    \caption{The maps $\varphi_\pm$ and the cells $E_{k,l}$.}
    \label{f:sfera}
\end{center}
\end{figure}

\chapter{The improved estimate of the singular set in $2$ dimensions}

In this final part of the paper we prove Theorem
\ref{t:finite}. The first section gives a more stringent
description of $2$-d tangent functions to $\D$-minimizing
functions. The second section uses a comparison
argument to show a certain rate of convergence for
the frequency function of $f$.
This rate implies the uniqueness of the tangent function.
In Section \ref{s:isolated}, we use this uniqueness to 
get a better description of a $\D$-minimizing functions
around a singular point: an induction argument on $Q$ yields
finally Theorem \ref{t:finite}.

Throughout the rest of the paper we use the notation introduced
in Remark \ref{r:complex notation} and sometimes
use $(r,\theta)$ in place of $r\, e^{i\theta}$.

\section{Characterization of $2$-d tangent $Q$-valued functions}
\label{s:2d classification}

In this section we analyze further
$\D$-minimizing functions $f: \disk\ra\Iqs$ which are homogeneous, that is
\begin{equation}\label{e:homogeneous function}
f(r, \theta)=r^\alpha\,g (\theta)
\quad \mbox{for some $\alpha> 0$.}
\end{equation}
Recall that, for $T=\sum_i \a{T_i}$ we denote by $\etaa (T)$
the center of mass $Q^{-1} \sum_i T_i$.

\begin{propos}\label{p:2d classification}
Let $f:\disk\ra\Iqs$ be a nontrivial, $\alpha$-homogeneous function which is
$\D$-minimizing. Assume in addition that $\etaa \circ f =0$. Then,
\begin{itemize}
\item[$(a)$] $\alpha=\frac{n^*}{Q^*}\in\Q$,
with ${\rm MCD}\, (n^*,Q^*)=1$;
\item[$(b)$] there exist injective ($\R{}$-)linear maps
$L_j:\C\to \R{n}$
and $k_j\in \N$ such that
\begin{equation}\label{e:hom decomposition}
f(x)=k_0\a{0}+\sum_{j=1}^J k_j \sum_{z^{Q^*}=x}
\a{L_j \cdot z^{n^*}} =:
k_0\a{0} + \sum_{j=1}^J k_j \a{f_j (x)}.
\end{equation}
Moreover, $J\geq 1$ and $k_j\geq 1$ for all $j\geq 1$.
If $Q^*=1$, either $J\geq 2$ or $k_0>0$. 
\item[$(c)$] For any $i\neq j$ and any $x\neq 0$,
the supports of $f_i(x)$ and $f_j (x)$
are disjoint. 
\end{itemize}
\end{propos}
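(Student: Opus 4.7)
The plan is to pass to $g := f|_{\s^1}$, apply the irreducible decomposition of Proposition~\ref{p:Wselection}, and then use the $\D$-minimality of $f$ combined with its $\alpha$-homogeneity to force each piece to extend as a very rigid harmonic map on the disk.

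First I would write $g=\sum_{l=1}^J\a{g_l}$ with each $g_l$ an irreducible $k_l$-valued $W^{1,2}$ map, and choose $\gamma_l\colon\s^1\to\R{n}$ with $g_l(x)=\sum_{\zeta^{k_l}=x}\a{\gamma_l(\zeta)}$. Homogeneity gives $f_l(re^{i\theta}):=r^\alpha g_l(e^{i\theta})=\sum_{z^{k_l}=re^{i\theta}}\a{\tilde\zeta_l(z)}$, where $\tilde\zeta_l(z):=|z|^{\alpha k_l}\gamma_l(z/|z|)$ is the $(\alpha k_l)$-homogeneous extension of $\gamma_l$, and $f=\sum_l\a{f_l}$. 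Replacing each $\tilde\zeta_l$ by the harmonic extension $\hat\zeta_l$ of $\gamma_l$ and rolling up via the $k_l$-fold cover produces a competitor $\tilde f$ with $\tilde f|_{\s^1}=g$. The same chain of (in)equalities used in the planar case of Proposition~\ref{p:basic}, combined with Lemma~\ref{l:rolling}, gives
\[
\D(f,\disk)\le\D(\tilde f,\disk)=\sum_l\int_\disk|D\hat\zeta_l|^2\le\sum_l\int_\disk|D\tilde\zeta_l|^2=\D(f,\disk),
\]
so $\D$-minimality forces equality throughout; by uniqueness of the Dirichlet minimizer with prescribed boundary, $\tilde\zeta_l=\hat\zeta_l$ for each $l$, i.e.\ every $\tilde\zeta_l$ is harmonic. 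This identification is the crux of the argument and is where I expect the main technical work to lie.

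Next I would classify homogeneous harmonic $\R{n}$-valued maps on $\disk$: separation of variables together with single-valuedness gives either $\gamma_l\equiv 0$ (in which case irreducibility of $g_l$ forces $k_l=1$ and yields a $\a{0}$ summand) or $\alpha k_l\in\N_+$ and $\tilde\zeta_l(z)=L_l(z^{\alpha k_l})$ for an $\R{}$-linear $L_l\colon\C\to\R{n}$; the case $\alpha\le 0$ is excluded by the divergence of $\D(f,\disk)$ when $g$ is nonconstant. A direct computation converts the cardinality condition of Proposition~\ref{p:Wselection}(i) into the conjunction ``$L_l$ injective and $\gcd(\alpha k_l,k_l)=1$''. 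Writing $\alpha=n^*/Q^*$ in lowest terms, this forces $k_l=Q^*$ and $\alpha k_l=n^*$ for every nontrivial piece, proving (a). Grouping trivial pieces as $k_0\a{0}$ and merging those nontrivial $f_l$'s whose $L_l$'s are related by precomposition with a $Q^*$-th root of unity (which makes the corresponding $f_l$'s equal) into multiplicities $k_j$ produces the form (b). The hypothesis $\etaa\circ f=0$ is automatic when $Q^*\ge 2$ since $\sum_{\omega^{Q^*}=1}\omega^{n^*}=0$; for $Q^*=1$ it reduces to $\sum_j k_j L_j=0$ and, together with injectivity of each $L_j$, rules out the case $J=1$, $k_0=0$.

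Finally, for (c) I would argue by contradiction. Suppose $\supp f_i(x_0)\cap\supp f_j(x_0)\neq\emptyset$ for some $i\neq j$ and $x_0\neq 0$. By the $\alpha$-homogeneity of each $f_l$, the intersection is nonempty at every point of the ray $\{\lambda x_0:\lambda>0\}$: two specific (real-)analytic branches of the selections $z\mapsto L_i(z^{n^*})$ and $z\mapsto L_j(z^{n^*})$ of $f$ coincide along that ray. Since $f_i\not\equiv f_j$ after the merging in the previous step, these two branches cannot agree on any open set; hence every point of the ray fails the alternative in Definition~\ref{d:regular}, so $\Sigma_f$ contains the ray. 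This contradicts Theorem~\ref{t:structure}, which states that the singular set of a planar $\D$-minimizer is at most countable.
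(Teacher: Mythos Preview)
Your argument for parts (a) and (b) is essentially the paper's own proof. The paper also decomposes $g=f|_{\s^1}$ into irreducibles via Proposition~\ref{p:Wselection}, unrolls each piece to $\gamma_j$, and uses the planar competitor construction (exactly the chain of inequalities from the proof of Proposition~\ref{p:basic} together with Lemma~\ref{l:rolling}) to conclude that the homogeneous extension of each $\gamma_j$ is harmonic. The classification of homogeneous harmonic maps, the deduction that each $Q_j$ must equal $Q^*$ via irreducibility, and the injectivity of $L_j$ are all handled the same way. Your treatment of the case $Q^*=1$ via $\sum_j k_j L_j =0$ is a slight variant of the paper's observation that $J=1$, $k_0=0$ would force $f_1=\etaa\circ f=0$.

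For part (c), however, you take a genuinely different route. The paper argues directly: assuming $\supp g_i(x_0)\cap\supp g_j(x_0)\neq\emptyset$, it glues the two unrollings $\gamma_i,\gamma_j$ at the common value to obtain a single map $\xi:\s^1\to\R{n}$ with
\[
\a{f_i(x)}+\a{f_j(x)}=\sum_{z^{2Q^*}=x}\a{\xi(z)};
\]
minimality then forces the $(2n^*)$-homogeneous extension of $\xi$ to be harmonic, hence $\xi(z)=L\cdot z^{2n^*}$, which collapses the support to at most $Q^*$ points and yields $g_i=g_j$, a contradiction. Your argument instead propagates the coincidence along the ray through $x_0$ by homogeneity, observes that the two real-analytic branches $L_i(z_1(\cdot)^{n^*})$ and $L_j(z_2(\cdot)^{n^*})$ agree on that ray but (since $f_i\neq f_j$) not on any open set, and concludes via the characterization $\Sigma_f=\{\sigma\text{ discontinuous}\}$ that the whole ray lies in $\Sigma_f$; this contradicts Theorem~\ref{t:structure}. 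Your approach is shorter and avoids the gluing construction, at the price of invoking the already-established countability of the planar singular set. The paper's argument is self-contained and produces slightly more: it identifies precisely what the putative coincidence would force. One small point you should make explicit is that agreement of the two branches on an open set would give $L_i = L_j\circ(\textrm{mult.\ by }\omega^{n^*})$ for some $Q^*$-th root of unity $\omega$, and since $\gcd(n^*,Q^*)=1$ this yields $f_i=f_j$; this is the precise content of ``cannot agree on any open set''.
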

\begin{proof}
Let $f$ be a homogeneous $\D$-minimizing $Q$-valued function.
We decompose $g=f|_{\s^1}$ into irreducible
$W^{1,2}$ pieces
as described in Proposition \ref{p:Wselection}. 
Hence, we can write $g (\theta) = k_0 \a{0} + \sum_{j=1}^J k_j \a{g_j (x)}$, where
\begin{itemize}
\item[(i)] $k_0$ might vanish, while $k_j>0$ for every $j>0$,
\item[(ii)] the $g_j$'s are all distinct, 
$Q_j$-valued irreducible $W^{1,2}$ maps such that
$g_j (x)\neq Q\a{0}$ for some $x\in \s^1$.
\end{itemize}
By the characterization of irreducible pieces,
there are $W^{1,2}$ maps $\gamma_j:\s^1\to \R{n}$ such that
\begin{equation}\label{e:gamma_j}
g_j (x)= \sum_{z^{Q_j} = x} \a{\gamma_j (z)}.
\end{equation}
Recalling \eqref{e:homogeneous function}, we extend
$\gamma_j$ to a function $\beta_j$ on the disk by setting
$\beta_j (r, \theta)=r^{\alpha\,Q_j} \gamma_j (\theta)$ 
and we conclude that
\[
f (x) = k_0 \a{0} + \sum_{j=1}^J 
\sum_{z^{Q_j}=x} \a{\beta_j (z)}
=: k_0 \a{0} + \sum_{j=1}^J k_j \a{f_j (x)}.
\]

It follows that each $f_j$ is an $\alpha$-homogeneous,
$\D$-minimizing function which assumes
values different from $Q\a{0}$ somewhere. By Lemma \ref{l:rolling},
$\beta_j$ is necessarily a $\D$-minimizing $\R{n}$-valued
function. Since $\beta_j$ is $(\alpha\, Q_j)$-homogeneous,
its coordinates must be homogeneous harmonic polynomials.
Moreover, $\beta_j$ does not vanish identically.
Therefore, we conclude that $n_j=\alpha\, Q_j$ is a positive integer.
Thus, the components of each $\beta_j$ are linear combinations
of the harmonic functions $(r,\theta)\mapsto r^{n_j} \cos (n_j\theta)$
and $(r,\theta)\mapsto r^{n_j} \sin (n_j\theta)$.
It follows that there are 
(nonzero) $\R{}$-linear map $L_j : \C\to\R{n}$ such that
$\beta_j (z) = L_j\cdot z^{n_j}$.

Next, let $n^*$ and $Q^*$ be the two positive integers determined by
$\alpha = n^*/Q^*$ and ${\rm MCD}\, (n^*, Q^*)=1$.
Since $n_j/Q_j = \alpha = n^*/Q^*$,
we necessarily have $Q_j = m_j Q^*$
for some integer $m_j=\frac{n_j}{n^*}\geq 1$.
Hence,
\[
g_j (x)=\sum_{z^{m_j Q^*}=x} \a{L_j \cdot z^{m_j n^*}}.
\]
However, if $m_j>1$, then
$\supp(g_j)\equiv Q^*\neq Q_j$, so that
$g_j$ would not be irreducible. Therefore,
$Q_j = Q^*$ for every $j$. 

Next, since $\D (f, \disk) >0$, $J\geq 1$. If $Q^*=1$, 
$J=1$ and $k_0=0$, then $f = Q\a{f_1}$ and $f_1$
is an $\R{n}$-valued function. But then $f_1=\etaa\circ f 
= 0$, contradicting $\D (f, \disk)>0$. 
Moreover, again using the irreducibility
of $g_j$, for all $x\in \s^1$, the points
\[
L_j \cdot z^{n^*}\quad\mbox{with}\quad z^{Q^*}=x
\]
are all distinct.
This implies that $L_j$ is injective.
Indeed, assume by contradiction that 
$L_j\cdot v=0$ for some $v\neq 0$. Then, necessarily
$Q^*\geq 2$ and,
without loss of generality, we can assume
that $v=e_1$.
Let $x= e^{i\theta/n^*}\in \s^1$,
with $\theta/Q^* = \pi/2 - \pi/Q^*$,
and let us consider the set 
\[
R:= \{z^{n^*}\in\s^1:
z^{Q^*}= x\} = \{e^{i (\theta + 2\pi k)/Q^*}\}.
\] 
Therefore $w_1 = e^{i\theta/Q^*}$
and $w_2 = e^{i(\theta + 2\pi)/Q^*} = e^{i\pi- i\theta/Q^*}$
are two distinct elements of $R$. However, it is easy to see
that $w_1 - w_2 = 2\cos (\theta/Q^*) e_1$.
Therefore, $L_j w_1 = L_j w_2$, which is a contradiction.
This shows that $L_j$ is injective and concludes the proof of $(b)$.

Finally, we argue by contradiction for $(c)$. 
If (c) were false, up to rotation of the plane and 
relabelling of the $g_i$'s, we assume that
$\supp (g_1(0))$ and $\supp g_2 (0)$ have a point $P$ in common.
We can, then, choose the functions
$\gamma_1$ and $\gamma_2$ of \eqref{e:gamma_j} so that 
\[
\gamma_1 (0)=\gamma_1 (2\pi) = 
\gamma_2 (0)= \gamma_2 (2\pi)=P.
\]
We then define $\xi:\disk\to\R{n}$ in the following way:
\[
\xi (r,\theta) = \left\{ \begin{array}{ll}
r^{2\,\alpha\, Q^*} \,\gamma_1 (2\theta) 
& \mbox{if $\theta\in [0, \pi]$},\\
r^{2\,\alpha\, Q^*}\, \gamma_2 (2\theta) & 
\mbox{if $\theta\in [\pi, 2\pi]$}.
\end{array}\right.
\]
Then, it is immediate to verify that
\begin{equation}\label{e:doppiogiro}
\a{f_1(x)}+\a{f_2(x)}=\sum_{z^{2Q^*}=\,x} \a{\xi (z)}.
\end{equation}
Therefore, $f$ can be decomposed as 
\[
f (x) = \sum_{z^{2\,Q^*}=x} \a{\xi (z)}
+ \left\{k_0 \a{0} + (k_1-1) \a{f_1 (x)} + (k_2-1)
\a{f_2 (x)} + \sum_{j\geq J} k_j\a{f_i (x)}\right\}.
\]
It turns out that the map in \eqref{e:doppiogiro}
is a $\D$-minimizing function, and, hence, that 
$\xi$ is a $(2\,\alpha\,Q^*)$-homogeneous
$\D$-minimizing function. Since $2\,\alpha\,Q^*= 2\,n^*$
we conclude the
existence of a linear $L:\C\to \R{n}$ such that
\[
\a{f_1 (x)}+\a{f_2 (x)} = \sum_{z^{2Q^*}=x} \a{L\cdot
z^{2n^*}}= 2 \sum_{z^{Q^*}=x} \a{L\cdot z^{n^*}}.
\]
Hence, for any $x\in \s^1$, the cardinality of the support
of $\a{g_1 (x)}+\a{g_2 (x)}$ is at most $Q^*$. 
Since each $g_i$ is irreducible,
the cardinality of the support of $\a{g_i (x)}$ 
is everywhere exactly $Q^*$.
We conclude thus that $g_1 (x)=g_2 (x)$ for every $x$,
which is a contradiction to assumption (ii) in our 
decomposition.
\end{proof}

\section{Uniqueness of $2$-d tangent functions}\label{s:unique}

The key point of this section is the rate of convergence
for the frequency function, as stated in Proposition \ref{p:rate I}.
We use here the functions
$H_{x,f}$, $D_{x,f}$ and $I_{x,f}$ introduced in Definition \ref{d:frequency}
and drop the subscripts when $f$ is clear from the context and $x=0$.

\begin{propos}\label{p:rate I}
Let $f\in W^{1,2}(\disk,\Iq)$ be $\D$-minimizing,
with $\D(f,\disk)>0$ and set $\alpha = I_{0,f} (0) = I (0)$.
Then, there exist constants $\gamma>0$, $C>0$, $H_0>0$ and $D_0>0$ such that,
for every $0<r\leq1$,
\begin{equation}\label{e:rate I}
0\leq I (r) - \alpha\leq C\,r^\gamma,
\end{equation}
\begin{equation}\label{e:rate HD}
0\leq\frac{H(r)}{r^{2\alpha+1}}-H_0\leq C\,r^\gamma
\quad\textrm{and}\quad
0\leq\frac{D(r)}{r^{2\alpha}}-D_0\leq C\,r^\gamma.
\end{equation}
\end{propos}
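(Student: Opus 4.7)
The plan is to reduce the estimates in \eqref{e:rate HD} to the frequency rate \eqref{e:rate I}, and to obtain the latter from a \L{}ojasiewicz-type differential inequality built on the rigidity of two-dimensional tangent maps. Specializing \eqref{e:deriv} of Corollary \ref{c:estimates frequency} to $m=2$ gives
\[
\frac{d}{dr}\log\!\Big(\frac{H(r)}{r^{2\alpha+1}}\Big)\;=\;\frac{2(I(r)-\alpha)}{r}\;\ge\;0,
\]
so $r\mapsto H(r)/r^{2\alpha+1}$ is nondecreasing in $r$ and admits a limit $H_0\in [0,\infty)$ as $r\downarrow 0$. Granted \eqref{e:rate I}, the function $(I(r)-\alpha)/r$ is integrable on $(0,1]$; integrating the identity above from $0$ to $r$ then forces $H_0>0$ and yields
\[
0\;\le\;\log\bigl(H(r)/r^{2\alpha+1}\bigr)-\log H_0\;\le\; 2C\int_0^r s^{\gamma-1}\,ds\;=\;\frac{2C}{\gamma}\,r^\gamma,
\]
which gives the first inequality of \eqref{e:rate HD}. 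The analogous bound for $D$ then follows from the identity $D(r)/r^{2\alpha}=I(r)\cdot H(r)/r^{2\alpha+1}$ together with \eqref{e:rate I}, on setting $D_0:=\alpha H_0$.

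The core task is therefore \eqref{e:rate I}. I would prove it by establishing, for all sufficiently small $r>0$, a differential inequality
\[
I'(r)\;\ge\;\frac{c}{r}\,\bigl(I(r)-\alpha\bigr),\qquad c=c(Q,n)>0,
\]
which immediately implies that $(I(r)-\alpha)/r^c$ is nondecreasing in $r$, and hence $I(r)-\alpha\le\bigl[(I(r_0)-\alpha)/r_0^c\bigr]\,r^c$ for $0<r\le r_0$, i.e.\ \eqref{e:rate I} with $\gamma=c$. The starting point is the Cauchy--Schwarz representation of $I'(r)$ obtained in the proof of Theorem \ref{t:frequency}: the nonnegativity of $I'$ is an instance of Cauchy--Schwarz, and the task is to quantify the resulting deficit by a lower bound of the form $c\,(I(r)-\alpha)\,H(r)^2/r^2$.

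The decisive input here is the rigidity result of Proposition \ref{p:2d classification}: every nontrivial $\alpha'$-homogeneous $\D$-minimizing $Q$-function has frequency $\alpha'=n/q$ with $n\in\N$ and $1\le q\le Q$, so the admissible frequencies form a discrete subset of $\Q$ and there is a spectral gap $\eta=\eta(Q)>0$ with $|\alpha'-\alpha|\ge\eta$ for every admissible $\alpha'\ne\alpha$. The plan is to exploit this gap by testing the minimality of $f$ against competitors built from the $\alpha$-homogeneous extension of the trace $g_r:=f|_{\partial B_r}$, glued to $f$ via the Interpolation Lemma \ref{l:technical}, while using crucially the two-dimensional equipartition $\int_{\partial B_r}|\partial_\nu f|^2=\int_{\partial B_r}|\partial_\tau f|^2$ (a direct consequence of \eqref{e:cono} with $m=2$): an excess $I(r)-\alpha>0$ forces $g_r$ to carry a sizable component at a ``Fourier frequency'' at distance $\ge\eta$ from $\alpha$, and through the minimality of $f$ this contributes positively to the Cauchy--Schwarz deficit in the amount required.

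The principal obstacle is this last, quantitative step: proving the \L{}ojasiewicz-type lower bound of the Cauchy--Schwarz deficit in terms of $I(r)-\alpha$. This requires decomposing $g_r$ into its irreducible $W^{1,2}$ pieces (Proposition \ref{p:Wselection}) together with a careful spectral analysis leveraging the discrete classification from Proposition \ref{p:2d classification}. Once this lower bound is secured, the remaining steps are routine.
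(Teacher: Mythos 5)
Your reduction of \eqref{e:rate HD} to \eqref{e:rate I} is correct and is essentially what the paper does: monotonicity of $H(r)/r^{2\alpha+1}$ from the identity $\bigl(\log (H(r)/r^{2\alpha+1})\bigr)'=2(I(r)-\alpha)/r$, integrability of $(I(r)-\alpha)/r$ once \eqref{e:rate I} is known (which forces $H_0>0$), and the splitting $D(r)/r^{2\alpha}-\alpha H_0=(I(r)-\alpha)\,H(r)/r^{2\alpha+1}+\alpha\bigl(H(r)/r^{2\alpha+1}-H_0\bigr)$. You have also correctly identified that everything hinges on a differential inequality of the form $I'(r)\geq \frac{c}{r}\,(I(r)-\alpha)$.

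That differential inequality, however, is the entire content of the proposition, and you do not prove it: you yourself flag the ``\L{}ojasiewicz-type lower bound of the Cauchy--Schwarz deficit'' as the principal obstacle and leave it as a programme, so the proposal has a genuine gap at its core. Moreover, the route you sketch --- quantifying the Cauchy--Schwarz deficit in \eqref{e:I'} by a spectral gap in the set of admissible homogeneities of tangent maps from Proposition \ref{p:2d classification} --- is not the paper's route, and it is far from clear it can be executed as described: the excess $I(r)-\alpha$ is not naturally localized at tangent-map frequencies, and relating the deficit on $\partial B_r$ to such a gap is precisely the hard quantitative step you postpone. The paper sidesteps the deficit altogether. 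Using \eqref{e:I'0}, the inequality $I'\geq\frac{2}{r}(\alpha+\gamma-I)(I-\alpha)$ is rewritten as $(2\alpha+\gamma)\,D(r)\leq \frac{r\,D'(r)}{2}+\frac{\alpha(\alpha+\gamma)\,H(r)}{r}$. One then decomposes $f|_{\partial B_r}$ into irreducible pieces (Proposition \ref{p:Wselection}), unrolls them to single-valued maps $\gamma_j$ on $\s^1$, expands in Fourier series, and obtains exact mode-by-mode expressions for $D'(r)$ and $H(r)$ together with an upper bound for $D(r)$ coming from minimality against the rolled-up harmonic extensions (Lemma \ref{l:rolling}). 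The target inequality then reduces to the elementary numerical inequality $\gamma\,Q_j\,(l-\alpha Q_j)\leq (l-\alpha Q_j)^2$ for all $l\in\N$ and all $Q_j\leq Q$, which holds with the explicit choice $\gamma=\min_{1\leq k\leq Q}\bigl(\lfloor\alpha k\rfloor+1-\alpha k\bigr)/k>0$. Note that this argument uses neither Proposition \ref{p:2d classification} nor any gap between tangent-map frequencies; the discreteness that matters is that of the Fourier modes $l/Q_j$ of the unrolled boundary traces relative to $\alpha$. This Fourier-mode comparison is the missing step you must supply.
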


The proof of this result follows computations similar
to those of \cite{Ch}. A simple corollary of \eqref{e:rate I} and \eqref{e:rate HD}
is the uniqueness of tangent functions.

\begin{theorem}\label{t:unique blowup}
Let $f:\disk\ra\Iqs$ be a $\D$-minimizing $Q$-valued functions,
with $\D(f,\disk)>0$ and $f(0)=Q\a{0}$.
Then, there exists a unique tangent map $g$ to $f$ at $0$ (i.e. the maps
$f_{0, \rho}$ defined in \eqref{e:blowup} converge locally uniformly to $g$).
\end{theorem}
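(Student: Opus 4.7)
The plan is to show that $u(r,\cdot) := r^{-\alpha}\,f(r\,e^{i\cdot})$ is Cauchy in $L^2(\partial B_1,\Iq)$ as $r\downarrow 0$, and then to combine this with the subsequential uniform convergence provided by Theorem \ref{t:blowup} to pin down the tangent. Throughout I invoke Proposition \ref{p:rate I}: the hypotheses guarantee $H(r),D(r)>0$ for all small $r$, so $I(r)$ is defined, $I(r)-\alpha\leq C r^\gamma$, and both $\psi(r):=D(r)/r^{2\alpha}\to D_0>0$ and $H(r)/r^{2\alpha+1}\to H_0>0$.

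The first step is the identity
\begin{equation*}
\int_{\partial B_r}\sum_i |\partial_r u_i|^2 \;=\; \tfrac{1}{2}\,\psi'(r)-\alpha\,(I(r)-\alpha)\,\frac{H(r)}{r^{2\alpha+2}}.
\end{equation*}
To derive it, decompose $f=\sum_i\a{f_i}$ locally via a Sobolev selection, set $u_i=r^{-\alpha}f_i$, and expand $|\partial_r u_i|^2=\alpha^2 r^{-2\alpha-2}|f_i|^2-2\alpha r^{-2\alpha-1}f_i\cdot\partial_\nu f_i+r^{-2\alpha}|\partial_\nu f_i|^2$. Integrating over $\partial B_r$ and summing in $i$, \eqref{e:perparti} converts the middle integral into $D(r)$, while the $m=2$ case of \eqref{e:cono} yields the equipartition $\int_{\partial B_r}\sum_i|\partial_\nu f_i|^2=\tfrac12 D'(r)$. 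Substituting $D(r)=I(r)H(r)/r$ and simplifying gives the identity; since both sides are nonnegative, $\psi$ is also nondecreasing.

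Integrating from $r$ to $r_1$ and letting $r\downarrow 0$, using $\psi(r)\geq D_0$ and $\psi(r_1)-D_0\leq C r_1^\gamma$ from Proposition \ref{p:rate I}, gives
\begin{equation*}
\int_0^{r_1}\int_{\partial B_s}\sum_i|\partial_r u_i|^2\,ds \;\leq\; \tfrac{1}{2}\,(\psi(r_1)-D_0)\;\leq\; C r_1^\gamma.
\end{equation*}
Combining Cauchy--Schwarz in $s$ with the Fubini identity $\int_{\partial B_1}\phi(se^{i\theta})\,d\theta=s^{-1}\int_{\partial B_s}\phi$ then yields the dyadic bound $\int_{\partial B_1}\cG(u(r,\cdot),u(2r,\cdot))^2\,d\theta \leq C r^\gamma$. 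Telescoping shows $\{u(2^{-k},\cdot)\}_k$ is Cauchy in $L^2(\partial B_1,\Iq)$, and sandwiching any $\rho_k\downarrow 0$ between consecutive dyadics upgrades this to $u(r,\cdot)\to v$ in $L^2(\partial B_1,\Iq)$ for some $v$.

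Finally, in $m=2$ the blow-up is $f_{0,\rho}(e^{i\theta}) = f(\rho e^{i\theta})/\sqrt{D(\rho)} = u(\rho,e^{i\theta})/\sqrt{\psi(\rho)}$ and $\psi(\rho)\to D_0>0$, so $f_{0,\rho}|_{\partial B_1}\to v/\sqrt{D_0}$ in $L^2$. For any $\rho_k\downarrow 0$, Theorem \ref{t:blowup} extracts a subsequence along which $f_{0,\rho_k}\to g$ locally uniformly for some $\alpha$-homogeneous tangent $g$; continuity identifies $g|_{\partial B_1}$ with the unique continuous representative of $v/\sqrt{D_0}$, and then $\alpha$-homogeneity extends this uniquely to $\disk$. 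Since every subsequential limit coincides with this same $g$, the full family $f_{0,\rho}$ converges locally uniformly to it, giving uniqueness of the tangent. The only real work is engineering the pointwise identity above so that Proposition \ref{p:rate I} feeds directly into summable $L^2$-control of dyadic increments; the $Q$-valued subtlety (differentiating selections pointwise a.e.) is handled exactly as in Proposition \ref{p:equivalence def Dir}.
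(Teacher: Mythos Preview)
Your proof is correct and follows essentially the same route as the paper: both show that $r\mapsto r^{-\alpha}f(r\,e^{i\cdot})$ is Cauchy in $L^2(\s^1)$ by expanding $|\partial_r u_i|^2$, invoking \eqref{e:cono}--\eqref{e:perparti} in the $m=2$ case, using Proposition~\ref{p:rate I} to get a summable dyadic bound, and then upgrading $L^2$ convergence on $\s^1$ to uniqueness of the uniform tangent via Theorem~\ref{t:blowup}. The only cosmetic difference is that you package the key computation as the identity $\int_{\partial B_s}|\partial_r u|^2=\tfrac12\psi'(s)-\alpha(I(s)-\alpha)H(s)/s^{2\alpha+2}$ and integrate directly to $\tfrac12(\psi(r_1)-D_0)\leq Cr_1^\gamma$, whereas the paper writes the integrand as $\tfrac{1}{2t}\psi'(t)+(\text{nonpositive})$ and integrates by parts; these are the same computation organized slightly differently.
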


In the first subsection we prove Theorem \ref{t:unique blowup} assuming 
Proposition \ref{p:rate I}, which will be then proved in the second subsection.

\subsection{Proof of Theorem \ref{t:unique blowup}}
Set $\alpha=I_{0,f}(0)$ and note that, by Theorem \ref{t:blowup} and Proposition
\ref{p:rate I}, $\alpha = D_0/H_0>0$, where $D_0$ and $H_0$ are as
in \eqref{e:rate HD}. Without loss of generality, we might assume $D_0=1$.
So, by \eqref{e:rate HD}, recalling the definition of blow-up $f_\varrho$, it follows that
\begin{equation}\label{e:mod blowup}
f_\varrho (r, \theta)= 
\varrho^{-\alpha} f (r\, \varrho, \theta)\,
(1+O(\varrho^{\gamma/2})).
\end{equation}
Our goal is to show 
the existence of a limit function (in the uniform topology)
for the blow-up $f_\varrho$.
From \eqref{e:mod blowup}, it is enough to show the existence of a
uniform
limit for the functions $h_\varrho(r,\theta)=\varrho^{-\alpha}f_\varrho(r\,\varrho,\theta)$.
Since $h_{\varrho}(r,\theta)=r^\alpha h_{r\,\varrho}(1,\theta)$,
it suffices to prove the existence of a uniform limit for
$h_\varrho\vert_{\s^1}$. On the other hand, 
the family of functions $\{h_\varrho\}_{\varrho>0}$ is
equi-H\"older
(cp. with Theorem \ref{t:blowup} and \eqref{e:rate HD} 
in Proposition \ref{p:rate I}). Therefore,
the existence of an uniform limit is equivalent to the existence of an $L^2$ limit.

So, we consider $r/2\leq s\leq r$ and estimate 
\begin{align}\label{e:l2 distance1}
\int_0^{2\pi}\cG\left(h_r,h_s\right)^2 & =
\int_0^{2\pi}\cG\left(\frac{f(r,\theta)}{r^\alpha},\frac{f(s,\theta)}{s^\alpha}
\right)^2 d\theta
\leq \int_0^{2\pi}\left(\int_s^r\left|\frac{d}{dt}
\left(\frac{f(t,\theta)}{t^\alpha}
\right)\right|dt\right)^2d\theta\nonumber\\
& \leq (r-s)\int_0^{2\pi}\int_s^r\left|\frac{d}{dt}
\left(\frac{f(t,\theta)}{t^\alpha}
\right)\right|^2 dt\,d\theta.
\end{align}
This computation can be easily justified
because $r\mapsto f(r,\theta)$ is a $W^{1,2}$ function
for a.e. $\theta$. 
Using the chain rule in Proposition \ref{p:chain} and the
variation formulas \eqref{e:cono}, \eqref{e:perparti}
in Proposition \ref{p:import}, we
estimate \eqref{e:l2 distance1} in the following way:
\begin{align}\label{e:l2 distance2}
\int_0^{2\pi}\cG\left(h_r, h_s\right)^2 & \leq
(r-s)\int_0^{2\pi}\int_s^r\sum_i\left\{
\alpha^2\frac{|f_i|^2}{t^{2\alpha+2}}+\frac{|\de_\nu f_i|^2}{t^{2\alpha}}
-2\,\alpha\,\frac{\langle\de_\nu f_i,f_i\rangle}{t^{2\alpha+1}}\right\}
\nonumber\\
& \stackrel{\mathclap{\eqref{e:cono},\;\eqref{e:perparti}}}{=}\quad (r-s)\int_s^r\left\{
\alpha^2\,\frac{H(t)}{t^{2\alpha+3}}+\frac{D^\prime(t)}{2\,t^{2\alpha+1}}
-2\,\alpha\,\frac{D(t)}{t^{2\alpha+2}}\right\}dt\nonumber\\
& = (r-s)\int_s^r\left\{
\frac{1}{2t}\left(\frac{D(t)}{t^{2\alpha}}\right)^\prime
+\alpha^2\,\frac{H(t)}{2\,t^{2\alpha+3}}
-\alpha\,\frac{D(t)}{t^{2\alpha+2}}\right\}dt\nonumber\\
& =  (r-s)\int_s^r\left\{
\frac{1}{2t}\left(\frac{D(t)}{t^{2\alpha}}\right)^\prime
+\alpha\,\frac{H(t)}{2\,t^{2\alpha+3}}
\big(\alpha
-I_{0,f}(t)\big)\right\}dt\nonumber\\
&\leq (r-s)\int_s^r
\frac{1}{2t}\left(\frac{D(t)}{t^{2\alpha}}\right)^\prime dt
= (r-s)\int_s^r
\frac{1}{2t}\left(\frac{D(t)}{t^{2\alpha}}-D_0\right)^\prime dt
\end{align}
where the last inequality follows from the monotonicity of the
frequency function, which implies, in particular, that
$\alpha\leq I_{0,f}(t)$ for
every $t$.
Integrating by parts the last integral of \eqref{e:l2 distance2},
we get
\begin{multline*}
\int_0^{2\pi}\cG\left(h_r, h_s\right)^2 \leq
(r-s)\left[\frac{1}{2\,r} \left(\frac{D(r)}{r^{2\alpha}} - D_0\right)
-\frac{1}{2\,s}\left(\frac{D(s)}{s^{2\alpha}} - D_0\right)\right]+\\
+(r-s)
\int_s^r \frac{1}{2t^2} \left(
\frac{D(r)}{r^{2\alpha}}
- D_0\right).
\end{multline*}
Recalling that $0\leq D(r)/r^{2\alpha}- D_0\leq C r^\gamma$ 
and $s=r/2$ we estimate
\begin{equation}\label{e:l2 distance3}
\int_0^{2\pi}\cG\left(h_r, h_s\right)^2 \leq
\frac{r-s}{s}\, r^\gamma + (r-s) \int_s^r \frac{1}{2t^{2-\gamma}}
\leq C r^\gamma.
\end{equation}
Let now $s\leq r$ and choose $L\in\N$ such that
$r/2^{L+1}< s\leq r/2^{L}$. Iterating \eqref{e:l2 distance3},
we reach
\begin{equation*}
\norm{\cG\left(h_r,h_s\right)}{L^2}\leq
\sum_{l=0}^{L-1}\norm{\cG\left(h_{r/2^l},h_{r/2^{l+1}}\right)}{L^2}+
\norm{\cG\left(h_{r/2^{L}},h_s\right)}{L^2}
\leq 
\sum_{l=0}^{L}\frac{r^{\gamma/2}}{\left(2^{\gamma/2}\right)^l}\leq
C\,{r^{\gamma/2}}.
\end{equation*}
This shows that $h_\varrho|_{\s^1}$ is a Cauchy sequence in $L^2$ and,
hence, concludes the proof.

\subsection{Proof of Proposition \ref{p:rate I}}
The key of the proof is the following estimate:
\begin{equation}\label{e:estimate I'}
I^\prime(r)\geq \frac{2}{r}\left(\alpha +\gamma-I(r)\right)
\left(I-\alpha\right).
\end{equation}
We will prove \eqref{e:estimate I'} in a second step.
First we show how to conclude
the various statements of the proposition.

\medskip

\textit{Step 1. \eqref{e:estimate I'}$\Longrightarrow$ Proposition
\ref{p:rate I}.} 
Since $I$ is monotone nondecreasing
(as proved in Theorem \ref{t:frequency}), there exists $r_0>0$ such that
$\alpha +\gamma-I(r)\geq \gamma/2$ for every $r\leq r_0$.
Therefore,
\begin{equation}\label{e:estimate I'2}
I^\prime(r)\geq \frac{\gamma}{r}\left(I(r)-\alpha\right)
\quad \forall \;r\leq r_0.
\end{equation}
Integrating the differential inequality \eqref{e:estimate I'2}, we get
the desired conclusion:
\begin{equation*}\label{e:estimate I}
I(r)-\alpha\leq r^\gamma\left(I(r_0)-\alpha\right)=C\,r^\gamma.
\end{equation*}
From the computation of $H^\prime$ in \eqref{e:H'3}, we deduce
easily that
\begin{equation}\label{e:H'4}
\left(\frac{H(r)}{r}\right)^\prime=\frac{2\,D(r)}{r}.
\end{equation}
This implies the following identity:
\begin{equation}\label{e:H'5}
\left(\log\frac{H(r)}{r^{2\alpha+1}}\right)^\prime=
\left(\log\frac{H(r)}{r}-\log r^{2\alpha}\right)^\prime=
\left(\frac{H(r)}{r}\right)^\prime-\frac{2\alpha}{r}
\stackrel{\eqref{e:H'4}}{=}\frac{2}{r}\left(I(r)-\alpha\right)\geq0.
\end{equation}
So, in particular, we infer the monotonicity of $\log\frac{H(r)}{r^{2\alpha+1}}$ and, hence,
of $\frac{H(r)}{r^{2\alpha+1}}$.
We can, therefore, integrate \eqref{e:H'5} and use
\eqref{e:rate I} in order to achieve that, for $0<s<r\leq 1$
and for a suitable constant $C_{\gamma}$,
the function
\begin{equation*}
\log\frac{H(r)}{r^{2\alpha+1}}-C_{\gamma}\,r^\gamma=
\log\left(\frac{H(r)\,e^{-C_{\gamma}\,r^\gamma}}{r^{2\alpha+1}}\right)
\end{equation*}
is decreasing.
So, we conclude the existence of the following limits:
\begin{equation*}
\lim_{r\ra0}\frac{H(r)\,e^{-C_{\gamma}\,r^\gamma}}{r^{2\alpha+1}}=
\lim_{r\ra0}\frac{H(r)}{r^{2\alpha+1}}=H_0>0,
\end{equation*}
with the bounds, for $r$ small enough,
\begin{equation*}
\frac{H(r)}{r^{2\alpha+1}}\left(1-C\,r^\gamma\right)
\leq \frac{H(r)\,e^{-C_{\gamma}\,r^\gamma}}{r^{2\alpha+1}}
\leq
H_0\leq \frac{H(r)}{r^{2\alpha+1}}.
\end{equation*}
This easily concludes the first half of \eqref{e:rate HD}.
The rest of \eqref{e:rate HD}
follows from the following identity:
\[
\frac{D(r)}{r^{2\alpha}} -D_0= (I(r) - I_0)\, \frac{H(r)}{r^{2\alpha+1}}
+ I_0 \,\left(\frac{H(r)}{r^{2\alpha+1}} - H_0\right).
\]
Indeed, both addendum are positive and bounded by $C\,r^\gamma$.

\medskip

\textit{Step 2. Proof of \eqref{e:estimate I'}.}
Recalling the computation
in \eqref{e:I'0}, 
\eqref{e:estimate I'} is equivalent to
\[
\frac{r\,D^\prime(r)}{H(r)}-\frac{2\,I(r)^2}{r}\geq
\frac{2}{r}\big(\alpha +\gamma-I(r)\big)
\left(I(r)-\alpha\right),
\]
which, in turn, reduces to
\begin{equation}\label{e:estimate I'3}
\left(2\alpha+\gamma\right)D(r)\leq
\frac{r\,D^\prime(r)}{2}+\frac{\alpha(\alpha+\gamma)\,H(r)}{r}.
\end{equation}
To prove \eqref{e:estimate I'3}, we exploit once again
the harmonic competitor constructed in the proof of
the H\"older regularity for the planar case in Proposition \ref{p:basic}.
Let $r>0$ be a fixed radius and
$f(re^{i\theta}) =g(\theta)=\sum_{j=1}^J\a{g_j (\theta)}$ 
be an irreducible
decomposition as in Proposition \ref{p:Wselection}.
For each irreducible $g_j$, we find 
$\gamma_j\in W^{1,2} (\s^1, \R{n})$ and $Q_j$ such that
\[
g_j (\theta) = \sum_{i=1}^{Q_j} \a{\gamma_j \left(
\frac{\theta+2\pi i}{Q_j}\right)}.
\]
We write now the different quantities in \eqref{e:estimate I'3}
in terms of the Fourier coefficients of the $\gamma_j$'s.
To this aim, consider the Fourier expansions of the $\gamma_j$'s,
\begin{equation*}
\gamma_j(\theta)=\frac{a_{j,0}}{2}+\sum_{l=1}^{+\infty}r^l
\big\{a_{j,l}\cos(l\,\theta)+b_{j,l}\sin(l\,\theta)\big\},
\end{equation*}
and their harmonic extensions
\begin{equation*}
\zeta_j(\varrho,\theta)=\frac{a_{j,0}}{2}+\sum_{l=1}^{+\infty}\varrho^l
\big\{a_{j,l}\cos(l\,\theta)+b_{j,l}\sin(l\,\theta)\big\}.
\end{equation*}
Recalling Lemma \ref{l:rolling}, we infer
the following equalities:
\begin{equation}\label{e:fourier D'}
D^\prime(r) =
2\sum_j\D(g_j,r\,\s^1)=
\sum_j\frac{2\,\D(\gamma_j,r\,\s^1)}{Q_j}
=
2\,\pi\sum_j\sum_l \frac{r^{2l-1}\,l^2}{Q_j}\left(
a_{j,l}^2+b_{j,l}^2\right),
\end{equation}
\begin{equation}\label{e:fourier H}
H(r)=
\sum_j\hspace{-0.05cm}\int_{r\,\s^1}\left|g_j\right|^2
=\sum_j Q_j \int_{r\,\s^1}
\left|\gamma_j\right|^2
=\pi\sum_j Q_j\left\{\frac{r\,a_{j,0}^2}{2}+\sum_l r^{2l+1}
\left(
a_{j,l}^2+b_{j,l}^2\right)\right\}.
\end{equation}
Finally, using the minimality of $f$,
\begin{equation}\label{e:fourier D}
D(r) \leq \sum_j\D(\zeta_j,B_r)
=\pi\sum_j \sum_l r^{2l}\, l\left(
a_{j,l}^2+b_{j,l}^2\right).
\end{equation}
We deduce from \eqref{e:fourier D'}, \eqref{e:fourier H} and
\eqref{e:fourier D} that, to prove \eqref{e:estimate I'3},
it is enough to find a $\gamma$ such that
\begin{equation*}\label{e:estimate I'4}
\left(2\alpha+\gamma\right)\,l\leq
\frac{l^2}{Q_j}+\alpha\,(\alpha+\gamma)\,Q_j,
\quad
\mbox{for every $l\in\N$ and every $Q_j$,}
\end{equation*}
which, in turn, is equivalent to
\begin{equation}\label{e:estimate I'5}
\gamma\,Q_j \,(l-\alpha\,Q_j)\leq(l-\alpha\,Q_j)^2.
\end{equation}
Note that the $Q_j$'s depend on $r$,
the radius we fixed. However, they are always natural numbers less
or equal than $Q$.
It is, hence, easy to verify that the following $\gamma$ satisfies
\eqref{e:estimate I'5}:
\begin{equation}\label{e:gamma}
\gamma=\min_{1\leq k\leq Q}
\left\{\frac{\lfloor\alpha\,k\rfloor+1-\alpha\,k}{k}\right\}.
\end{equation}

\section{The singularities of $2$-d $\D$-minimizing functions
are isolated}\label{s:isolated}

We are finally ready to prove Theorem \ref{t:finite}.

\begin{proof}[Proof of Theorem \ref{t:finite}]
Our aim is to prove that, if $f:\Omega\to \Iq$
is $\D$-minimizing, then the singular points of $f$
are isolated.
The proof is by induction on the number
of values $Q$.
The basic step of the induction procedure, $Q=1$,
is clearly trivial, since $\Sigma_f=\emptyset$.
Now, we assume that the claim is true
for any $Q^\prime<Q$ and we will show that it holds for $Q$ 
as well. 

So, we fix $f: \R{2}\supset \Omega\to \Iq$ $\D$-minimizing.
Since the function $f-Q\a{\etaa\circ f}$ is still $\D$-minimizing
and has the same singular set as $f$ (notations
as in Lemma \ref{l:harm+harm}),
it is not restrictive to assume $\etaa\circ f\equiv 0$.

Next, let $\Sigma_{Q,f}=\{x: f(x) = Q\a{0}\}$
and recall that, by the proof of Theorem \ref{t:structure},
either $\Sigma_{Q,f}=\Om$ or
$\Sigma_{Q,f}$ consists of isolated points.
Assuming to be in the latter case,
on $\disk\setminus \Sigma_{Q,f}$,
we can locally decompose $f$ as the sum of a $Q_1$-valued and
a $Q_2$-valued $\D$-minimizing function with $Q_1, Q_2<Q$.
We can therefore use the inductive hypothesis to
conclude that the points of $\Sigma_f\setminus \Sigma_{Q,f}$ 
are isolated. It remains to show
that no $x\in\Sigma_{Q,f}$ is the limit 
of a sequence of points in $\Sigma_f\setminus \Sigma_{Q,f}$. 

Fix $x_0\in \Sigma_{Q,f}$. Without loss of generality, 
we may assume $x_0=0$. Note that $0\in\Sigma_{Q,f}$ implies
$D(r)>0$ for every $r$ such that $B_r\subset \Om$.
Let $g$ be the tangent
function to $f$ in $0$ .
By the characterization in Proposition \ref{p:2d classification},
we have
\[
g =k_0\a{0}+\sum_{j=1}^Jk_j\a{g_j},
\]
where the $g_j$'s are $Q^*$-valued functions
satisfying $(a)$-$(c)$ of Proposition \ref{p:2d classification}
(in particular $\alpha=n^*/Q^*$ is the frequency in $0$).
So, we are necessarily in one of the following
cases:
\begin{itemize}
\item[(i)] $\max \{k_0, J-1\} >0$;
\item[(ii)] $J=1$, $k_0=0$ and $k_1<Q$.
\end{itemize}
If case (i) holds, we define
\begin{equation}\label{e:choice eps}
d_{i,j}:=\min_{x\in \s^1}\;
\dist \big(\supp (g_i (x)),\supp( g_j (x))\big)
\quad\textrm{and}\quad\eps= \min_{i\neq j}\frac{d_{i,j}}{4}.
\end{equation}
By Proposition \ref{p:2d classification}(c), we have
$\eps>0$.
From the uniform convergence of
the blow-ups to $g$,
there exists $r_0>0$ such that
\begin{equation}\label{e:splitting}
\cG\left(f(x),g(x)\right)\leq\eps\,|x|^\alpha
\quad\textrm{for every }
\; |x|\leq r_0.
\end{equation}
The choice of $\eps$ in \eqref{e:choice eps} and
\eqref{e:splitting} easily implies the existence of
$f_j$, with $j\in \{0, \ldots, J\}$,
such that $f_0$ is a $W^{1,2}$ $k_0$-valued function, each 
$f_j$ is a $W^{1,2}$ $(k_j\,Q^*)$-valued function for $j>0$,
and
\begin{equation}\label{e:splitting2}
f\vert_{B_{r_0}}=\sum_{j=0}^J \a{f_j}.
\end{equation}
It follows that each $f_j$ is
a $\D$-minimizing function.
The sum \eqref{e:splitting2} contains at least two terms:
so each $f_j$ take less than $Q$ values and  
we can use our inductive hypothesis 
to conclude that
$\Sigma_f\cap B_{r_0}=\bigcup_j 
\Sigma_{f_j}\cap B_{r_0}$ consists of isolated points.

If case (ii) holds, then $k\, Q^*=Q$, with $k<Q$, and $g$ is of the
form 
\[
g (x) = \sum_{z^{Q^*}=x} k \a{L\cdot z^{n^*}}, 
\]
where $L$ is injective.
In this case, set 
\[
d (r):= \min_{z_1^{Q^*} = z_2^{Q^*},\, z_1\neq z_2,\, |z_i|=r^{1/Q^*}}
|L\cdot z_1^{n^*}- L\cdot z_2^{n^*}|.
\]
Note that 
\[
d (r) = c\,r^\alpha\quad\textrm{and}
\quad\max_{|x|=r}\, \dist \big(\supp (f(x)),\supp(g(x))\big) = o (r^\alpha).
\]
This implies the existence of $r>0$ and
$\zeta\in C (B_{r}, \Is{k})$ such that 
\[
f(x)=\sum_{z^{Q^*}=x} \a{\zeta (z)}
\qquad \mbox{for $|x|< r$.}
\]
Set $\rho = r^{Q^*}$.
If $x\neq B_\rho\setminus 0$ and $\sigma<\min\{|x|, \rho-|x|\}$,
then obviously $\zeta\in W^{1,2} (B_\sigma (x))$.
Thus, $\zeta\in W^{1,2} (B_\rho\setminus B_\sigma)$
for every $\sigma>0$. On the other hand,
after the same computations as in Lemma \ref{l:rolling},
it is easy to show that $\D (\zeta, B_\rho \setminus B_\sigma)$
is bounded independently of $\rho$. We conclude
that $\zeta\in W^{1,2} (B_\rho\setminus \{0\})$. This implies
that $\zeta\in W^{1,2} (B_\rho)$ (see below) and hence we can apply
the same arguments of Lemma \ref{l:rolling}
to show that $\zeta$ is $\D$-minimizing. Therefore,
by inductive hypothesis, $\Sigma_\zeta$ consists of isolated
points. So, $\zeta$ is necessarily regular in a punctured
disk $B_{\sigma}(0)\setminus \{0\}$, which implies the
regularity of $f$ in the punctured disk $B_{\sigma^{1/Q^*}}\setminus
\{0\}$.

For the reader's convenience, we give a
short proof of the claim $\zeta\in W^{1,2} (B_\rho)$.
This is in fact a consequence of the 
identity $W^{1,2} (B_\rho\setminus \{0\}) = W^{1,2} (B_\rho)$
for classical Sobolev spaces, a byproduct of the
fact that $2$-capacity of a single point in the plain is finite.

Indeed, we claim
that, for every $T\in \Is{k}$,
the function
$h_T := \cG (\zeta, T)$ belongs to $W^{1,2} (B_\rho)$.
Fix a test function
$\varphi\in C^{\infty}_c (B_\rho)$ and denote by
$\Lambda^i$ the distributional derivative $\partial_{x_i} h_T$ 
in $B_\rho\setminus \{0\}$. For
every $\sigma\in (0, \rho)$ let
$\psi_\sigma\in C^\infty_c (B_\sigma)$ be a cutoff
function with the properties:
\begin{itemize}
\item[(i)] $0\leq \psi_\sigma\leq 1$;
\item[(ii)] $\|D \psi_\sigma\|_{C^0}\leq C \sigma^{-1}$,
where $C$ is a geometric constant independent of
$\sigma$.
\end{itemize}
Then, 
\begin{align}
\int h_T\, \partial_{x_i} \varphi &=
\int h_T\, \partial_{x_i} (\varphi\, \psi_\sigma)
+ \int h_T\, \partial_{x_i} ((1-\psi_\sigma)\, \varphi)\nonumber\\
&= \underbrace{\int_{B_\sigma} h_T\, 
\partial_{x_i} (\varphi\, \psi_\sigma)}_{(I)}
- \underbrace{\int \Lambda^i ((1-\psi_\sigma)\,
\varphi)}_{(II)} .\nonumber
\end{align}
Letting $\sigma\downarrow 0$, (II) converges
to $\int \Lambda^i \varphi$.
As for (I), we estimate it as follows:
\begin{equation*}
|(I)| \leq \|\partial_{x_i} (\varphi\, \psi_\sigma)\|_{L^2 (B_\sigma)}\,
\|h_T\|_{L^2 (B_\sigma)} .
\end{equation*}
By the absolute continuity of the integral, 
$\|h_T\|_{L^2 (B_\sigma)}\to 0$ as $\sigma\downarrow 0$.
On the other hand, we have the pointwise inequality
$|\partial_{x_i} (\varphi\, \psi_\sigma)|\leq C (1 + \sigma^{-1})$.
Therefore, $\|\partial_{x_i} (\varphi\, \psi_\sigma)\|_{L^2 (B_\sigma)}$
is bounded independently of $\sigma$. This shows
that $(I)\downarrow 0$ and hence we conclude the identity
$\int h_T\, \partial_{x_i} \varphi = - \int \Lambda^i \varphi$.
Thus, $\Lambda$ is the distributional derivative of
$h_T$ in $B_\rho$. 
\end{proof}

\begin{remark}
Theorem \ref{t:finite} is optimal.
There are $\D$-minimizing functions
for which the singular set is not empty.
Any holomorphic varieties which can be written as
graph of a multi-valued function is $\D$-minimizing.
For example, the function
\[
\disk\ni z\mapsto \a{z^{\frac{1}{2}}}+\a{-z^{\frac{1}{2}}}\in \I{2}(\R{4}),
\]
whose graph is the complex variety
$\mathscr{V}=\{(z,w)\in \C^2\,:\,|z|<1,\,w^2=z\}$,
is an example of a $\D$-minimizing function with a singular point in the origin.
A proof of this result is contained in \cite{Alm}. 
The question will be addressed also 
in \cite{Sp}.
\end{remark}

\backmatter
\nocite{*}
\bibliographystyle{amsalpha}
\bibliography{QVal}

\providecommand{\bysame}{\leavevmode\hbox to3em{\hrulefill}\thinspace}
\providecommand{\MR}{\relax\ifhmode\unskip\space\fi MR }
\providecommand{\MRhref}[2]{%
  \href{http://www.ams.org/mathscinet-getitem?mr=#1}{#2}
}
\providecommand{\href}[2]{#2}
\begin{thebibliography}{HKST01b}

\bibitem[Ada75]{Adams}
Robert~A. Adams, \emph{Sobolev spaces}, Academic Press, New York-London, 1975,
  Pure and Applied Mathematics, Vol. 65.

\bibitem[AF88]{AceFus}
Emilio Acerbi and Nicola Fusco, \emph{An approximation lemma for {$W\sp {1,p}$}
  functions}, Material instabilities in continuum mechanics (Edinburgh,
  1985--1986), Oxford Sci. Publ., Oxford Univ. Press, New York, 1988, pp.~1--5.

\bibitem[AGS05]{AmGiSa}
Luigi Ambrosio, Nicola Gigli, and Giuseppe Savar{\'e}, \emph{Gradient flows in
  metric spaces and in the space of probability measures}, Lectures in
  Mathematics ETH Z\"urich, Birkh\"auser Verlag, Basel, 2005.

\bibitem[AK00a]{AmbKir2}
Luigi Ambrosio and Bernd Kirchheim, \emph{Currents in metric spaces}, Acta
  Math. \textbf{185} (2000), no.~1, 1--80.

\bibitem[AK00b]{AmbKir1}
\bysame, \emph{Rectifiable sets in metric and {B}anach spaces}, Math. Ann.
  \textbf{318} (2000), no.~3, 527--555.

\bibitem[Alm83]{Alm2}
Frederick~J. Almgren, Jr., \emph{Approximation of rectifiable currents by
  {L}ipschitz {$Q$}-valued functions}, Seminar on minimal submanifolds, Ann. of
  Math. Stud., vol. 103, Princeton Univ. Press, Princeton, NJ, 1983,
  pp.~243--259.

\bibitem[Alm86]{Alm1}
\bysame, \emph{Deformations and multiple-valued functions}, Geometric measure
  theory and the calculus of variations (Arcata, Calif., 1984), Proc. Sympos.
  Pure Math., vol.~44, Amer. Math. Soc., Providence, RI, 1986, pp.~29--130.

\bibitem[Alm00]{Alm}
\bysame, \emph{Almgren's big regularity paper}, World Scientific Monograph
  Series in Mathematics, vol.~1, World Scientific Publishing Co. Inc., River
  Edge, NJ, 2000.

\bibitem[Amb90]{Amb}
Luigi Ambrosio, \emph{Metric space valued functions of bounded variation}, Ann.
  Scuola Norm. Sup. Pisa Cl. Sci. (4) \textbf{17} (1990), no.~3, 439--478.

\bibitem[Bet91]{Be}
Fabrice Bethuel, \emph{The approximation problem for {S}obolev maps between two
  manifolds}, Acta Math. \textbf{167} (1991), no.~3-4, 153--206.

\bibitem[Cha88]{Ch}
Sheldon Xu-Dong Chang, \emph{Two-dimensional area minimizing integral currents
  are classical minimal surfaces}, J. Amer. Math. Soc. \textbf{1} (1988),
  no.~4, 699--778.

\bibitem[CL01]{CaLin}
Luca Capogna and Fang-Hua Lin, \emph{Legendrian energy minimizers. {I}.
  {H}eisenberg group target}, Calc. Var. Partial Differential Equations
  \textbf{12} (2001), no.~2, 145--171.

\bibitem[DLGT04]{dLGT}
Camillo De~Lellis, Carlo~Romano Grisanti, and Paolo Tilli, \emph{Regular
  selections for multiple-valued functions}, Ann. Mat. Pura Appl. (4)
  \textbf{183} (2004), no.~1, 79--95.

\bibitem[DLS]{DLSp}
Camillo De~Lellis and Emanuele~Nunzio Spadaro, \emph{Higher integrability and
  approximation of minimal currents}, Forthcoming.

\bibitem[DU77]{DieUhl}
J.~Diestel and J.~J. Uhl, Jr., \emph{Vector measures}, American Mathematical
  Society, Providence, R.I., 1977, With a foreword by B. J. Pettis,
  Mathematical Surveys, No. 15.

\bibitem[EG92]{EG}
Lawrence~C. Evans and Ronald~F. Gariepy, \emph{Measure theory and fine
  properties of functions}, Studies in Advanced Mathematics, CRC Press, Boca
  Raton, FL, 1992.

\bibitem[Fed69]{Fed}
Herbert Federer, \emph{Geometric measure theory}, Die Grundlehren der
  mathematischen Wissenschaften, Band 153, Springer-Verlag New York Inc., New
  York, 1969.

\bibitem[GL86]{GaLi1}
Nicola Garofalo and Fang-Hua Lin, \emph{Monotonicity properties of variational
  integrals, {$A\sb p$} weights and unique continuation}, Indiana Univ. Math.
  J. \textbf{35} (1986), no.~2, 245--268.

\bibitem[GL87]{GaLi2}
\bysame, \emph{Unique continuation for elliptic operators: a
  geometric-variational approach}, Comm. Pure Appl. Math. \textbf{40} (1987),
  no.~3, 347--366.

\bibitem[Gob06a]{Gob2}
Jordan Goblet, \emph{Lipschitz extension of multiple valued {B}anach-valued
  functions in the sense of {A}lmgren}, Preprint, to appear in {\em Houston
  Journal of Mathematics}, arXiv:math/0609606 (2006).

\bibitem[Gob06b]{Gob1}
\bysame, \emph{A selection theory for multiple-valued functions in the sense of
  {A}lmgren}, Ann. Acad. Sci. Fenn. Math. \textbf{31} (2006), no.~2, 297--314.

\bibitem[Gob08]{Gob3}
\bysame, \emph{A {P}eano type theorem for a class of non-convex valued
  differential inclusions}, Set-Valued analysis (2008).

\bibitem[Gro83]{Grom}
Mikhael Gromov, \emph{Filling {R}iemannian manifolds}, J. Differential Geom.
  \textbf{18} (1983), no.~1, 1--147.

\bibitem[GS92]{GrSc}
Mikhail Gromov and Richard Schoen, \emph{Harmonic maps into singular spaces and
  {$p$}-adic superrigidity for lattices in groups of rank one}, Inst. Hautes
  \'Etudes Sci. Publ. Math. (1992), no.~76, 165--246.

\bibitem[GT01]{GiTr}
David Gilbarg and Neil~S. Trudinger, \emph{Elliptic partial differential
  equations of second order}, Classics in Mathematics, Springer-Verlag, Berlin,
  2001, Reprint of the 1998 edition.

\bibitem[GZ07]{GobZhu}
Jordan Goblet and Wei Zhu, \emph{Regularity of {D}irichlet nearly minimizing
  multiple-valued functions}, Preprint arXiv:0706.1077 (2007).

\bibitem[HHL98]{HHL}
Qing Han, Robert Hardt, and Fanghua Lin, \emph{Geometric measure of singular
  sets of elliptic equations}, Comm. Pure Appl. Math. \textbf{51} (1998),
  no.~11-12, 1425--1443.

\bibitem[HKST01a]{Hei}
Juha Heinonen, Pekka Koskela, Nageswari Shanmugalingam, and Jeremy~T. Tyson,
  \emph{Sobolev classes of {B}anach space-valued functions and quasiconformal
  mappings}, J. Anal. Math. \textbf{85} (2001), 87--139.

\bibitem[HKST01b]{HoKoShTy}
\bysame, \emph{Sobolev classes of {B}anach space-valued functions and
  quasiconformal mappings}, J. Anal. Math. \textbf{85} (2001), 87--139.

\bibitem[HL97]{HanLin}
Qing Han and Fanghua Lin, \emph{Elliptic partial differential equations},
  Courant Lecture Notes in Mathematics, vol.~1, New York University Courant
  Institute of Mathematical Sciences, New York, 1997.

\bibitem[HL03]{HaLin}
Fengbo Hang and Fanghua Lin, \emph{Topology of {S}obolev mappings. {II}}, Acta
  Math. \textbf{191} (2003), no.~1, 55--107.

\bibitem[HR07]{HaRi}
Robert Hardt and Tristan Rivi\`ere, \emph{Connecting rational homotopy type
  singularities}, Preprint, to appear in {\em Acta Math.} (2007).

\bibitem[Jos97]{Jost}
J{\"u}rgen Jost, \emph{Generalized {D}irichlet forms and harmonic maps}, Calc.
  Var. Partial Differential Equations \textbf{5} (1997), no.~1, 1--19.

\bibitem[JZ00]{JoZu}
J.~Jost and K.~Zuo, \emph{Harmonic maps into {B}ruhat-{T}its buildings and
  factorizations of {$p$}-adically unbounded representations of {$\pi\sb 1$} of
  algebraic varieties. {I}}, J. Algebraic Geom. \textbf{9} (2000), no.~1,
  1--42.

\bibitem[KS93]{KaSc1}
Nicholas~J. Korevaar and Richard~M. Schoen, \emph{Sobolev spaces and harmonic
  maps for metric space targets}, Comm. Anal. Geom. \textbf{1} (1993), no.~3-4,
  561--659.

\bibitem[KS97]{KaSc2}
\bysame, \emph{Global existence theorems for harmonic maps to non-locally
  compact spaces}, Comm. Anal. Geom. \textbf{5} (1997), no.~2, 333--387.

\bibitem[KST04]{KoNaTy}
Pekka Koskela, Nageswari Shanmugalingam, and Jeremy~T. Tyson, \emph{Dirichlet
  forms, {P}oincar\'e inequalities, and the {S}obolev spaces of {K}orevaar and
  {S}choen}, Potential Anal. \textbf{21} (2004), no.~3, 241--262.

\bibitem[Lin91]{Lin}
Fang-Hua Lin, \emph{Nodal sets of solutions of elliptic and parabolic
  equations}, Comm. Pure Appl. Math. \textbf{44} (1991), no.~3, 287--308.

\bibitem[LS97]{LanSch}
U.~Lang and V.~Schroeder, \emph{Kirszbraun's theorem and metric spaces of
  bounded curvature}, Geom. Funct. Anal. \textbf{7} (1997), no.~3, 535--560.

\bibitem[LS05]{Lang}
Urs Lang and Thilo Schlichenmaier, \emph{Nagata dimension, quasisymmetric
  embeddings, and {L}ipschitz extensions}, Int. Math. Res. Not. (2005), no.~58,
  3625--3655.

\bibitem[Mat83]{Mat}
Pertti Mattila, \emph{Lower semicontinuity, existence and regularity theorems
  for elliptic variational integrals of multiple valued functions}, Trans.
  Amer. Math. Soc. \textbf{280} (1983), no.~2, 589--610.

\bibitem[Res97]{Res1}
Yu.~G. Reshetnyak, \emph{Sobolev classes of functions with values in a metric
  space}, Sibirsk. Mat. Zh. \textbf{38} (1997), no.~3, 657--675, iii--iv.

\bibitem[Res04]{Res}
\bysame, \emph{Sobolev classes of functions with values in a metric space.
  {II}}, Sibirsk. Mat. Zh. \textbf{45} (2004), no.~4, 855--870.

\bibitem[Res07]{Res3}
\bysame, \emph{Sobolev-type classes of mappings with values in metric spaces},
  The interaction of analysis and geometry, Contemp. Math., vol. 424, Amer.
  Math. Soc., Providence, RI, 2007, pp.~209--226.

\bibitem[RT04]{RiTi}
Tristan Rivi{\`e}re and Gang Tian, \emph{The singular set of {$J$}-holomorphic
  maps into projective algebraic varieties}, J. Reine Angew. Math. \textbf{570}
  (2004), 47--87.

\bibitem[Ser94]{Se}
Tomasz Serbinowski, \emph{Boundary regularity of harmonic maps to nonpositively
  curved metric spaces}, Comm. Anal. Geom. \textbf{2} (1994), no.~1, 139--153.

\bibitem[Sim83]{Sim}
Leon Simon, \emph{Lectures on geometric measure theory}, Proceedings of the
  Centre for Mathematical Analysis, Australian National University, vol.~3,
  Australian National University Centre for Mathematical Analysis, Canberra,
  1983.

\bibitem[Spa09]{Sp}
Emanuele~Nunzio Spadaro, \emph{Phd thesis}, University of Z\"urich, In
  preparation, 2009.

\bibitem[Ste70]{St1}
Elias~M. Stein, \emph{Singular integrals and differentiability properties of
  functions}, Princeton Mathematical Series, No. 30, Princeton University
  Press, Princeton, N.J., 1970. \MR{MR0290095 (44 \#7280)}

\bibitem[Ste93]{St}
\bysame, \emph{Harmonic analysis: real-variable methods, orthogonality, and
  oscillatory integrals}, Princeton Mathematical Series, vol.~43, Princeton
  University Press, Princeton, NJ, 1993, With the assistance of Timothy S.
  Murphy, Monographs in Harmonic Analysis, III.

\bibitem[SU82]{ScUl}
Richard Schoen and Karen Uhlenbeck, \emph{A regularity theory for harmonic
  maps}, J. Differential Geom. \textbf{17} (1982), no.~2, 307--335.

\bibitem[Tau00]{Ta}
Clifford~Henry Taubes, \emph{{$\rm SW\Rightarrow Gr$}: from the
  {S}eiberg-{W}itten equations to pseudo-holomorphic curves}, Seiberg Witten
  and Gromov invariants for symplectic 4-manifolds, First Int. Press Lect.
  Ser., vol.~2, Int. Press, Somerville, MA, 2000, pp.~1--97.

\bibitem[Vil03]{Villani}
C{\'e}dric Villani, \emph{Topics in optimal transportation}, Graduate Studies
  in Mathematics, vol.~58, American Mathematical Society, Providence, RI, 2003.

\bibitem[Whi83]{Wh}
Brian White, \emph{Tangent cones to two-dimensional area-minimizing integral
  currents are unique}, Duke Math. J. \textbf{50} (1983), no.~1, 143--160.

\bibitem[Zhu06a]{Zhu4}
Wei Zhu, \emph{Analysis on metric space {Q}}, Preprint arXiv:math/0607560
  (2006).

\bibitem[Zhu06b]{Zhu3}
\bysame, \emph{An energy reducing flow for multiple-valued functions}, Preprint
  arXiv:math/0606478 (2006).

\bibitem[Zhu06c]{Zhu1}
\bysame, \emph{A regularity theory for multiple-valued dirichlet minimizing
  maps}, Preprint arXiv:math/0608178 (2006).

\bibitem[Zhu06d]{Zhu2}
\bysame, \emph{A theorem on frequency function for multiple-valued dirichlet
  minimizing functions}, Preprint arXiv:math/0607576 (2006).

\bibitem[Zie89]{Ziemer}
William~P. Ziemer, \emph{Weakly differentiable functions}, Graduate Texts in
  Mathematics, vol. 120, Springer-Verlag, New York, 1989.

\end{thebibliography}

\end{document}